\newtheorem{thm}{Theorem}[subsection]
\newtheorem*{thm*}{Theorem}
\newtheorem{lem}[thm]{Lemma}
\newtheorem{fact}[thm]{Fact}
\newtheorem{prop}[thm]{Proposition}
\newtheorem*{prop*}{Proposition}
\newtheorem{cor}[thm]{Corollary}
\theoremstyle{definition}
\newtheorem{defn}[thm]{Definition}
\newtheorem{notation}[thm]{Notation}
\newtheorem{remark}[thm]{Remark}
\newtheorem{question}[thm]{Question}
\newtheorem{example}[thm]{Example}
\newcommand{\dminus}{ 
\buildrel\textstyle\ .\over{\hbox{ 
\vrule height3pt depth0pt width0pt}{\smash-} 
}}
\def\ca{\curvearrowright}
\def\la{\lambda}
\def\om{\omega}
\def\vp{\varphi}
\def\fr{\mathfrak}
\def\sg{\sigma}
\def\cal{\mathcal}
\def\M{\mathcal{M}}
\newcommand{\cstar}{$\mathrm{C}^*$}
\def\bbR{\mathbf R}
\def \svna{\sigma\text{-}\operatorname{vNa}}
\newcommand\ab[1]{\left| #1 \right|}
\newcommand\ip[2]{\left\langle #1\, ,\!\ #2 \right\rangle}
\newcommand\cU{{\cal U}}
\newcommand\cL{{\cal L}}
\newcommand\bbN{{\mathbf N}}
\def\corr{\operatorname{corr}}
\def \Th{\operatorname{Th}}
\begin{document}


\title{Correspondences, ultraproducts, and model theory}

\author{I. Goldbring, B. Hart and T. Sinclair}
\thanks{I. Goldbring was partially supported by NSF CAREER grant DMS-1349399.}
\thanks{T. Sinclair was partially supported by NSF grant DMS-1600857.}
\address{Department of Mathematics\\University of California, Irvine, 340 Rowland Hall (Bldg.\# 400),
Irvine, CA 92697-3875}
\email{isaac@math.uci.edu}
\urladdr{http://www.math.uci.edu/~isaac}

\address{Department of Mathematics and Statistics, McMaster University, 1280 Main St., Hamilton ON, Canada L8S 4K1}
\email{hartb@mcmaster.ca}
\urladdr{http://ms.mcmaster.ca/~bradd/}

\address{Mathematics Department, Purdue University, 150 N. University Street, West Lafayette, IN 47907-2067}
\email{tsincla@purdue.edu}
\urladdr{http://www.math.purdue.edu/~tsincla/}


\begin{abstract}
We study correspondences of tracial von Neumann algebras from the model-theoretic point of view.  We introduce and study an ultraproduct of correspondences and use this ultraproduct to prove, for a fixed pair of tracial von Neumann algebras $M$ and $N$, that the class of $M$-$N$ correspondences forms an elementary class.  We prove that the corresponding theory is classifiable, all of its completions are stable, that these completions have quantifier elimination in an appropriate language, and that one of these completions is in fact the model companion.  We also show that the class of triples $(M,H,N)$, where $M$ and $N$ are tracial von Neumann algebras and $H$ is an $M$-$N$ correspondence, form an elementary class.  As an application of our framework, we show that a II$_1$ factor $M$ has property (T) precisely when the set of central vectors form a definable set relative to the theory of $M$-$M$ correspondences.  We then use our approach to give a simpler proof that the class of structures $(M,\varphi)$, where $M$ is a $\sigma$-finite von Neumann algebra and $\varphi$ is a faithful normal state, forms an elementary class.  Finally, we initiate the study of a family of Connes-type ultraproducts on \cstar-algebras.
\end{abstract}

\maketitle

\tableofcontents

\section{Introduction} 

The papers \cite{FHS2, FHS3} provide a model theoretic treatment of the theory of II$_1$ factors. A striking feature of the axiomatization of II$_1$ factors therein is that the sorts (domains of quantification) for a II$_1$ factor $M$ are bounded balls in the operator norm, while the metric is derived from the weaker $2$-norm which metrizes the strong operator topology. Thus, while each sort is complete, the entire structure $M$ is not. A key observation of the second author was that the completion of $M$ in its $2$-norm, denoted $L^2(M)$, does however belong to the space of imaginaries of the theory described in \cite{FHS2}. The Hilbert space $L^2(M)$ on which $M$ naturally acts on the left and right is an extremely important object associated to the II$_1$ factor $M$, known as its standard form. Every von Neumann algebra admits a canonical standard form which is so intertwined with the theory of the algebra that a working von Neumann algebraist would find it nearly impossible to practically divorce the two.

The motivations for the paper arose from several different, but interconnected, trains of the thoughts the authors had begun to pursue. One such thought was to develop a model theoretic understanding of Connes' derivation of the ultraproduct of II$_1$ factors through points of continuity in the Hilbert space ultraproduct of the standard forms \cite{connes76}. This was very much related to a second goal, namely that of providing a model theoretic treatment of a fuller range of properties of II$_1$ factors, including the II$_1$ factor versions of amenability and Kazhdan's property (T), as well as their relativizations. Unlike the McDuff property and property Gamma, both of which are axiomatizable, these properties are not defined in  terms of the algebra, but rather in terms of the category whose morphisms are the trace-preserving, unital completely positive maps. Hilbert spaces with commuting normal representations, referred to as \emph{Hilbert bimodules} or \emph{correspondences}, give a general framework for understanding these morphisms. 

Correspondences for pairs of II$_1$ factors were introduced by Connes in the early 1980s as a way of importing ideas from representation theory and ergodic theory into the setting of finite von Neumann algebras. The fundamentals of the theory were subsequently developed in \cite{cojo}, where the notion of property (T) for II$_1$ factors was defined and investigated, and in \cite{Popa86}. The study of correspondences is of central importance to the theory of II$_1$ factors, providing the conceptual framework behind many of the breakthrough techniques in Popa's deformation/rigidity theory. (See, for instance, \cite{PopaBetti}.)

To a model theorist, the definition of property (T) is a definability statement.  Indeed, in \cite{Gold}, the first-named author observed that a countable discrete group $\Gamma$ has property (T) if and only if the set of fixed vectors is a definable set relative to the theory of unitary representations of $\Gamma$.  In order to prove an analogous result in the case of II$_1$ factors, one is confronted with the issue that the class of correspondences is not on the face of it an elementary class. We correct this here. Specifically, for any pair of tracial von Neumann algebras $M$ and $N$, we prove that the class of $M$-$N$ correspondences forms an elementary class. We then prove that for a  II$_1$ factor $M$, the set of central subtracial vectors is a definable set relative to the theory of $M$-$M$ correspondences.  

A third motivation came from the work of Dabrowski \cite{Dab}, who gave an axiomatization for the class of all $\sg$-finite von Neumann algebras, and the related work of Ando and Haagerup \cite{AH} on ultraproducts of type III factors. In attempting to parse the model theoretic content of the latter paper, it was realized by the second and third authors that an explicit choice of a model of the standard form of $M$ given by the GNS construction associated to a faithful normal state was crucial to a model-theoretic understanding of the Ocneanu ultraproduct which could then potentially be used to provide an alternate, simpler treatment of the former paper. Lastly, there was the goal of using model theory to provide a formal structure for the transfer between certain classification result in C$^*$-algebras and their von Neumann algebraic counterparts. 

In Section 2, we gather the necessary background on correspondences needed for the main results in Sections 3 and 4; in particular, we develop the relevant theory of \emph{bounded} vectors in correspondences, a notion crucial to the model-theoretic presentation of correspondences.  In Section 3, we introduce an ultraproduct construction for correspondences and prove some technical results about this construction that are used in the subsequent section.

Section 4 contains the model-theoretic core of the paper.  We show that the class of $M$-$N$ correspondences is an elementary class and use this to give a model-theoretic characterization of the notion of weak containment of correspondences.  We also show that the theory of correspondences is classifiable, stable, and admits a model companion.  In the final subsection of Section 4, we mention that the class of triples $(M,H,N)$, where $M$ and $N$ are tracial von Neumann algebras and $H$ is an $M$-$N$ correspondence, is also an elementary class.

In Section 5, we use our framework to give a model-theoretic characterization of property (T) for II$_1$ factors analogous to that proven for countable discrete groups by the first-named author in \cite{Gold}; to wit:  a separable II$_1$ factor $M$ has property (T) precisely when the set of central vectors is a definable set relative to the theory of $M$-$M$ correspondences.

In Section 6, we use the perspective of bounded vectors to give an alternative (and more elementary) proof that the class of pairs $(M,\varphi)$, where $M$ is an arbitrary (not necessarily finite) von Neumann algebra and $\varphi$ is a faithful normal state on $M$, is an elementary class, a result first proved by Dabrowski in \cite{Dab}.  We show that the resulting model-theoretic ultraproduct coincides with the classical ultraproduct of such pairs constructed by Ocneanu.  We use this latter fact and results of Ando-Haagerup \cite{AH} to show that the modular group of $(M,\varphi)$ is definable and to determine whether or not some natural classes of pairs $(M,\varphi)$ are axiomatizable or local.

In Section 7, we sketch the beginnings of an approach to studying certain completions of C$^*$-algebras and their associated ultraproducts which in a sense lie between the theory of C$^*$-algebras and that of von Neumann algebras. This work is inspired by the study of W$^*$-bundles initiated by Ozawa \cite{Oz}. Our approach here is again a more spatial one, defining bi-modular structures of which the operator algebras arise as the bounded objects. There is clearly much work left to be done here which will be undertaken in future papers.

We assume that the reader is familiar with the basic theory of von Neumann algebras as well as the continuous model theory used to study them.  A reader unacquainted with the latter can refer to \cite{FHS2} or \cite{munster}.  \cite{Gold} is a good reference for definability in continuous model theory. 

\subsection*{Acknowledgements} The authors are indebted to Hiroshi Ando for discussions on the Ocneanu ultraproduct and the main results in \cite{AH}.  We would also like to thank Vern Paulsen who encouraged the second author to look at the model theory of correspondences and both Jesse Peterson and David Sherman for interesting initial conversations. We thank Chris Schafhauser and Gabor Szabo for useful remarks on an earlier version of this manuscript.
The first author would like to thank the Institut Henri Poincar\' e for their hospitality during the trimester on model theory, valued fields, and combinatorics where some of this work took place. Finally, we would like to thank the anonymous referee for pointing out several errors in the manuscript, and for making many suggestions which improved the exposition.

\section{Correspondences} In this section we develop the necessary operator algebraic background which will be used in the proofs of the main results.

\subsection{Reminders on states and traces}  Since states, traces and positive maps play a critical role throughout this paper, we remind the reader of the definitions.
\begin{defn}
Suppose that $A$ and $B$ are $*$-algebras.
\begin{enumerate}
\item We say that an element of $A$ is \textbf{positive} if it is of the form $a^*a$ for some $a \in A$.
\item A linear map $\phi:A\to B$ is called \textbf{positive} if it sends positive elements to positive elements.
\item A \textbf{state} on $A$ is a positive linear functional on $\phi:A\to \mathbb C$ (that is, one that sends positive elements of $A$ to positive real numbers) such that $\phi(1) = 1$.
\item A state is \textbf{faithful} if the only positive element sent to 0 is 0.
\item A state on $A$ is \textbf{normal} if it is continuous with the weak operator topology when restricted to the unit ball of $A$.
\item A \textbf{trace} on $A$ is a state $\tau$ which satisfies the tracial property $\tau(ab) = \tau(ba)$ for all $a, b \in A$.
\end{enumerate}
\end{defn}

One of the key constructions that is carried out on a *-algebra $A$ with a state $\phi$ is the \textbf{GNS construction}: for $a, b \in A$, define the sesquilinear form
\[
\langle a, b \rangle = \phi(b^*a).
\]
Completing $A$ with respect to this form, we get a Hilbert space on which $A$ acts and we denote this space by $L^2(A,\phi)$.  

When we say that a pair $(M,\tau)$ is a tracial von Neumann algebra this is understood to mean that $M$ is a von Neumann algebra and $\tau$ is a fixed, faithful, normal trace on $M$.

\subsection{Introduction to Correspondences}

In this section, we include some background on correspondences.  The treatment follows chapter 13 in \cite{Popa-book}. Fix two tracial von Neumann algebras $(M,\tau_M)$ and $(N,\tau_N)$.

We recall that a normal representation of $M$ on the Hilbert space $H$ is a $*$-homomorphism $\pi:M\to \mathcal B(H)$ such that the restriction of $\pi$ to the unit ball of $M$ is continuous in the weak operator topology.  We also recall that $M^{\operatorname{op}}$ is the von Neumann algebra obtained from $M$ by redefining multiplication so that $a\cdot^{\operatorname{op}}b:=ba$.

\begin{defn}
An $M$-$N$ \textbf{correspondence} (or an $M$-$N$ \textbf{bimodule}) is a Hilbert space $H$ together with normal commuting representations $\pi_M$ and $\pi_N$ of $M$ and $N^{\operatorname{op}}$ respectively on $H$, that is, for $a\in M$, $b\in N$, and $\xi\in H$, we have 
$$\pi_M(a)(\pi_N(b)(\xi))=\pi_N(b)(\pi_M(a)(\xi)).$$  We typically refer to the action of $M$ on $H$ as the left action and write $m\xi$ instead of $\pi_M(m)\xi$.  Similarly we refer to the action of $N^{\operatorname{op}}$ as the right action and will write $\xi n$ for $\pi_N(n)\xi$.
\end{defn}

\begin{example}

\

\begin{enumerate}
\item $L^2(M,\tau_M)$ is an $M$-$M$ correspondence, called the \textbf{trivial $M$-$M$ correspondence}.  Here, the actions of $M$ and $M^{\operatorname{op}}$ are given by the extensions of the left and right actions of $M$ on itself to $L^2(M,\tau_M)$.
\item $L^2(M,\tau_M)\otimes L^2(N,\tau_N)$ (the tensor product of the two Hilbert spaces)  is an $M$-$N$ correspondence, called the \textbf{coarse $M$-$N$ correspondence}.  The left and right actions are given by 
\[
\pi_M(a)(x\otimes y):=(ax\otimes y) \mbox{ and } \pi_N(b)(x\otimes y):=x\otimes (yb).
\]
\end{enumerate}
\end{example}

\begin{example}
We show here the manner in which correspondences capture unitary representations.  This summarizes subsection 13.1.3 of \cite{Popa-book}.

Fix a discrete group $\Gamma$. Recall that $\ell^2(\Gamma)$ is the Hilbert space generated by an orthonormal basis $\{\delta_\eta : \eta \in \Gamma\}$. For each $\gamma \in \Gamma$ we associate $u_\gamma$, a unitary operator on $\ell^2(\Gamma)$ defined by
\[
u_\gamma(\delta_\eta) = \delta_{\eta\delta}.
\]
$L(\Gamma)$ is the von Neumann algebra generated by $\{u_\gamma : \gamma \in \Gamma\}$; we will denote $L(\Gamma)$ by $M$.  Note that this defines a unitary representation called the left regular representation, $\pi_\ell$, of $\Gamma$ on the Hilbert space $\ell^2(\Gamma)$.  Every element of $M$ can be formally written as $\displaystyle{\sum_{\gamma \in \Gamma} c_\gamma u_\gamma}$ for complex numbers $c_\gamma$.  If we define
$\displaystyle{\tau(\sum_{\gamma \in \Gamma} c_\gamma u_\gamma) = c_e}$ where $e$ is the identity in $\Gamma$ then $(M,\tau)$ is a tracial von Neumann algebra.  One checks that in fact, $\ell^2(\Gamma) = L^2(M,\tau)$.

Now suppose that $\pi:\Gamma\to U(K)$ is a unitary representation of a discrete group $\Gamma$ on some Hilbert space $K$.   We give $K\otimes \ell^2(\Gamma)$ the structure of a $M$-$M$ correspondence by setting, for $\xi\in K$ and $\gamma,\eta\in \Gamma$, 
\[
u_\gamma (\xi\otimes \delta_\eta):=\pi(\gamma)(\xi)\otimes \delta_{\gamma\eta} \mbox{ and }(\xi\otimes \delta_{\eta})u_{\gamma}:=\xi\otimes \delta_{\eta\gamma}.
\]
 One show that these definitions extend from $\Gamma$ to all of $M$.  
 
 Two particular instances of this construction are the trivial and left regular representations.  That is, if $\pi:\Gamma \rightarrow \mathbf{C}$ is defined by $\pi(\gamma) = 1$ i.e. the trivial representation, then the above construction yields the correspondence $\ell^2(\Gamma)$ which we noted is just $L^2(M,\tau)$, the trivial correspondence. If we apply the construction to the left regular representation $\pi_\ell$ then the resulting correspondence is the coarse correspondence on $L^2(M,\tau) \otimes L^2(M,\tau)$.
\end{example}

\subsection{Bounded vectors}

The following definition encompasses one of the key notions in this paper:

\begin{defn} If $H$ is an $M$-$N$ correspondence and $\xi \in H$, then we say that $\xi$ is \textbf{left ($K$)-bounded} if:  for all $c \in M_+$ (positive elements in $M$), we have
\[
\langle c\xi,\xi \rangle \leq K \tau_M(c).
\]
Similarly, we say that $\xi$ is \textbf{right ($K$)-bounded} if: for all $c \in N_+$, we have 
\[
\langle \xi c,\xi \rangle \leq K \tau_N(c).
\]
$\xi$ is called \textbf{bounded} if it is both left and right bounded.  If we wish to highlight specific left and right bounds, we will say $\xi$ is \textbf{$(K,L)$-bounded} if it is left $K$-bounded and right $L$-bounded. We say that $\xi$ is \textbf{$K$-bounded} if it is $(K,K)$-bounded.  $1$-bounded vectors are often called \textbf{subtracial} in the literature.
\end{defn}

\begin{notation} For $\xi \in H$, let $L_\xi\colon M \rightarrow H$ and $R_\xi\colon N \rightarrow H$ be defined by $L_\xi(a):=a\xi$ and $R_\xi(b):=\xi b$.
\end{notation}

The following lemma explains the terminology:

\begin{lem}
If $H$ is an $M$-$N$ correspondence and $\xi \in H$, then the following are equivalent:
\begin{enumerate}
\item $\xi$ is left $K$-bounded.
\item $L_\xi$ extends to a bounded linear map $L_\xi:L^2(M,\tau_M)\to H$ with $\|L_\xi\|\leq \sqrt{K}$.
\end{enumerate}
An analogous statement holds for right $K$-bounded vectors.
\end{lem}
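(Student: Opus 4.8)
The plan is to prove the equivalence by isolating the single algebraic identity that links the two conditions and then invoking the density of $M$ in $L^2(M,\tau_M)$. The key observation is that for any $a \in M$, writing $c := a^*a \in M_+$ and using that $\pi_M$ is a $*$-representation (so $\pi_M(a)^* = \pi_M(a^*)$), one has
\[
\langle c\xi,\xi\rangle = \langle \pi_M(a)^*\pi_M(a)\xi,\xi\rangle = \langle a\xi,a\xi\rangle = \|L_\xi(a)\|^2,
\]
while simultaneously $\tau_M(c) = \tau_M(a^*a) = \|a\|_2^2$, where $\|\cdot\|_2$ denotes the norm on $L^2(M,\tau_M)$. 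Thus the defining inequality of left $K$-boundedness, tested on positive elements of the form $a^*a$, is exactly the assertion $\|L_\xi(a)\|^2 \le K\|a\|_2^2$.

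For (1) $\Rightarrow$ (2), I would first note that every element of $M_+$ is of the form $a^*a$ (take $a = c^{1/2} \in M$, using that $M$ is closed under continuous functional calculus), so that left $K$-boundedness is equivalent to the estimate $\|L_\xi(a)\| \le \sqrt{K}\,\|a\|_2$ for all $a \in M$. This says precisely that $L_\xi$, regarded as a map from $M$ equipped with the $2$-norm into $H$, is bounded with norm at most $\sqrt{K}$. Since $M$ is dense in $L^2(M,\tau_M)$, the bounded linear transformation theorem furnishes a unique continuous extension $L_\xi \colon L^2(M,\tau_M) \to H$ of the same operator norm, which is therefore $\le \sqrt{K}$.

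For (2) $\Rightarrow$ (1), I would run the computation in reverse: given the bounded extension with $\|L_\xi\| \le \sqrt{K}$, any $c \in M_+$ factors as $c = a^*a$ with $a = c^{1/2} \in M$, whence
\[
\langle c\xi,\xi\rangle = \|L_\xi(a)\|^2 \le K\|a\|_2^2 = K\tau_M(a^*a) = K\tau_M(c),
\]
which is (1). The right-bounded version is entirely symmetric; the only point requiring a little care is that the right action is a representation of $N^{\operatorname{op}}$, so the relevant factorization reads $\langle \xi b, \xi b\rangle = \langle \xi(bb^*),\xi\rangle$ and one invokes the trace identity $\tau_N(bb^*) = \tau_N(b^*b) = \|b\|_2^2$. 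There is no genuine obstacle here---as the statement's preamble indicates, the lemma is immediate---so the whole content is the bookkeeping identity above together with density of $M$ in its $2$-norm completion.
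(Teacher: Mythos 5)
Your proof is correct, and it is precisely the argument the paper has in mind when it declares the lemma ``immediate'' and omits the proof: test the boundedness condition on positive elements of the form $a^*a$, identify $\langle a^*a\,\xi,\xi\rangle$ with $\|L_\xi(a)\|^2$ and $\tau_M(a^*a)$ with $\|a\|_2^2$, and extend by density of $M$ in $L^2(M,\tau_M)$. Your care with the $N^{\operatorname{op}}$-representation on the right side (yielding $\langle \xi(bb^*),\xi\rangle$ and the trace identity $\tau_N(bb^*)=\tau_N(b^*b)$) is exactly the right bookkeeping.
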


\proof Suppose that $c \in M$ and $c \neq 0$.  Then
\[
\langle c\xi, c\xi \rangle = \langle c^*c\xi,\xi \rangle \leq K\tau_M(c^*c).
\]
So $\| L_\xi\| \leq \sqrt{K}$.  The reverse direction is similar.
\endproof

\begin{example}\label{boundedstandard}
In the trivial $M$-$M$ correspondence $L^2(M,\tau)$, an element $b\in M$ is left (resp.\ right) $K$-bounded if $bb^*\leq K\cdot 1$ (resp.\ $b^*b\leq K\cdot 1$) or equivalently $\|b\| \leq \sqrt{K}$.  To see this, we use a small fact which recurs throughout the paper: if $(M,\tau)$ is a tracial von Neumann algebra and $b,c \in M$ then $\tau(bc) \leq \|b\|\tau(c)$.  So suppose that $b, c \in M$ and $bb^* \leq K \cdot 1$.  Then
\[
\langle cb,cb \rangle_\tau = \tau(b^*c^*cb) = \tau(bb^*c^*c) \leq \|bb^*\|\tau(c^*c) \leq K\tau(c^*c).
\]
If $\|b\| \leq \sqrt{K}$ then by the \cstar-identity, $\|bb^*\| = \|b\|^2$ and we get the same conclusion.
\end{example}

We will need the following lemma in the next section.

\begin{lem} \label{basic}Suppose $H$ is an $M$-$N$ correspondence.  Then the set of bounded vectors forms a linear subspace of $H$ closed under the left and right actions of $M$ and $N$.  Moreover:
\begin{enumerate}
\item Any convex combination of left (resp.\ right) $K$-bounded vectors is left (resp.\ right) $K$-bounded.
\item If $\xi_1,\xi_2 \in H$ are $K_1$ and $K_2$ left (resp.\ right) bounded respectively, then $\xi_1 + \xi_2$ is left
\[
\left (\sqrt{K_1} + \sqrt{K_2}\right )^2
\]
 (resp.\ right) bounded.
\item If $\xi\in H$ is $(K,L)$-bounded, then for every $a\in M$ and $b\in N$, we have that $a\xi$ is $(\|a\|^2K,\|a\|^2L)$-bounded and $\xi b$ is $(\|b\|^2K,\|b\|^2L)$-bounded.
\end{enumerate}
\end{lem}

\proof For (1), suppose that $\xi = \sum_i \lambda_i\zeta_i$ is a convex sum of left $K$-bounded vectors, so $\|L_{\zeta_i}\| \leq \sqrt{K}$ for all $i$. Then
\[
\|L_\xi\| \leq \sum_i\lambda_i\|L_{\zeta_i}\| = \sqrt{K}
\]
and so $\xi$ is left $K$-bounded.  A similar calculation works on the right.

For (2), we compute $\|L_{\xi_1 + \xi_2}\|$:
\[
\| L_{\xi_1 + \xi_2}\| \leq \|L_{\xi_1}\| + \|L_{\xi_2}\| \leq \sqrt{K_1} + \sqrt{K_2}.
\]
So $\xi_1 + \xi_2$ is left $\left (\sqrt{K_1} + \sqrt{K_2}\right )^2$-bounded.  The calculation is the same on the right.

For (3), we only do the right bounded calculation for $\xi b$ as a good exercise with the right representation.  Suppose that $b \in N$ and $c \in N_+$.  We have
\begin{multline*}
\langle \xi bc,\xi  b\rangle = \langle \pi_r(bc)\xi,\pi_r(b)\xi \rangle = \langle \pi_r(b)^*\pi_r(bc)\xi,\xi \rangle = \langle \pi_r(bcb^*)\xi,\xi \rangle\\
\leq L\tau_N(bcb^*) = L \tau_N(b^*bc) \leq L\|b^*b\|\tau_N(c) = \|b\|^2L\tau_N(c).
\end{multline*}
So $\xi b$ is right $\|b\|^2L$-bounded. \qed

Before the next Lemma we remind the reader of the following definition.
\begin{defn} If $M$ is a von Neumann algebra and $c \in M_+$ then the {\bf support projection} for $c$ is the smallest projection $\pi$ such that $c \leq \pi c$.  As $M$ is a von Neumann algebra, the support projection always exists.
\end{defn}

\begin{lem}\label{cutoff} If $H$ is an $M$-$N$ correspondence and $\xi \in H$, then the following are equivalent:
\begin{enumerate}
\item $\xi$ is left $K$-bounded.
\item There is $b_\xi \in M_+$ such that $\|b_\xi\| \leq K$ and, for all $c \in M_+$, we have
\[
\langle c \xi,\xi \rangle = \tau_M(cb_\xi).
\]
\end{enumerate}

If $\xi$ is left $K$-bounded, then, using the above notation, we also have:
%

\begin{enumerate}
\item[(a)] $Mb_{\xi}^{1/2}$ with the inner product given by $\tau_M$ is isometric to $M\xi$ via the map sending $mb_{\xi}^{1/2}$ to $m\xi$ for all $m \in M$.
\item[(b)] For $R>0$, set $f_R(t):=\min\{t,R\}$ and set $b_{\xi}^{(R)}:=f_R(b_\xi)$.  Let $\pi^{(R)}$ denote the support projection of $b_\xi-b_\xi^{(R)}$.  Then $(1-\pi^{(R)})\xi$ is left $R$-bounded.
\end{enumerate}
An analogous statement holds for right $K$-bounded vectors.
\end{lem}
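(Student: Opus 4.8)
The plan is to produce $b_\xi$ as a noncommutative Radon--Nikodym derivative, extracted from the bounded extension of $L_\xi$ furnished by the previous lemma, and then to read off (a) and (b) by direct trace computations in the standard form. For $(1)\Rightarrow(2)$ I would start from that lemma, which gives a bounded operator $L_\xi\colon L^2(M,\tau_M)\to H$ with $\|L_\xi\|\le\sqrt K$ and $L_\xi(\hat a)=a\xi$. Since $L_\xi(\widehat{ma})=\pi_M(m)L_\xi(\hat a)$, the map $L_\xi$ intertwines the left regular representation $\lambda$ of $M$ on $L^2(M,\tau_M)$ with $\pi_M$; taking adjoints shows that $L_\xi^*L_\xi$ commutes with $\lambda(M)$. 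By the commutation theorem for the standard form of a finite von Neumann algebra, the commutant $\lambda(M)'$ is the algebra $\rho(M)$ of right multiplications, so $L_\xi^*L_\xi=\rho(b_\xi)$ for some $b_\xi\in M$. As $L_\xi^*L_\xi$ is positive of norm $\le K$ and $\rho$ is an isometric positivity-preserving (anti)isomorphism, we obtain $b_\xi\ge 0$ and $\|b_\xi\|\le K$. The identity in (2) is then the computation $\langle c\xi,\xi\rangle=\langle \pi_M(c)L_\xi\hat 1,L_\xi\hat 1\rangle=\langle L_\xi^*L_\xi\,\lambda(c)\hat 1,\hat 1\rangle=\langle \rho(b_\xi)\hat c,\hat 1\rangle=\tau_M(cb_\xi)$ for $c\in M_+$, using $\xi=L_\xi\hat 1$.

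For $(2)\Rightarrow(1)$ I would first note that positivity of $\pi_M$ forces $\tau_M(cb_\xi)=\langle c\xi,\xi\rangle\ge 0$ for all $c\in M_+$; a standard argument (testing against the imaginary part of $b_\xi$ and against the negative spectral projection of its real part, using faithfulness of $\tau_M$) upgrades this to $b_\xi\ge 0$. Then $\|b_\xi\|\le K$ gives $b_\xi\le K\cdot\u$, so $\langle c\xi,\xi\rangle=\tau_M(c^{1/2}b_\xi c^{1/2})\le K\tau_M(c)$, which is left $K$-boundedness. Since $c\mapsto\langle\pi_M(c)\xi,\xi\rangle$ and $c\mapsto\tau_M(cb_\xi)$ are both linear and agree on $M_+$, the identity in (2) holds for \emph{all} $c\in M$; this is the form I will use below.

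Part (a) is then immediate: for $a,a'\in M$ the trace property yields $\langle ab_\xi^{1/2},a'b_\xi^{1/2}\rangle_{\tau_M}=\tau_M(a'^*a\,b_\xi)=\langle \pi_M(a'^*a)\xi,\xi\rangle=\langle a\xi,a'\xi\rangle$, so $ab_\xi^{1/2}\mapsto a\xi$ is a well-defined inner-product-preserving map, extending to the asserted isometry of closures. For part (b) I would observe that $b_\xi-b_\xi^{(R)}=(b_\xi-R)_+$, so its support projection is $\pi^{(R)}=\mathbf 1_{(R,\infty)}(b_\xi)$ and $e:=1-\pi^{(R)}=\mathbf 1_{[0,R]}(b_\xi)$ commutes with $b_\xi$ and satisfies $eb_\xi\le R\cdot\u$. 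Writing $\eta=(1-\pi^{(R)})\xi=e\xi$, then for $c\in M_+$, using that $e$ is a self-adjoint projection, the identity from (2), and cyclicity of $\tau_M$, I get $\langle c\eta,\eta\rangle=\langle\pi_M(ece)\xi,\xi\rangle=\tau_M(eceb_\xi)=\tau_M(c\,eb_\xi)\le R\tau_M(c)$, which is exactly left $R$-boundedness of $(1-\pi^{(R)})\xi$. The right-bounded statement follows by the symmetric argument with $N^{\operatorname{op}}$ in place of $M$.

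The one genuinely non-formal step is the production of $b_\xi$, i.e.\ the identification $L_\xi^*L_\xi\in\rho(M)$ via the commutation theorem for the standard form; everything else is functional calculus and trace manipulations. The points requiring care are the positivity bookkeeping (securing $b_\xi\ge 0$ in both directions of the equivalence) and the passage, by linearity, from $c\in M_+$ to arbitrary $c\in M$ that part (a) requires.
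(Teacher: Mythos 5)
Your proof is correct, and for parts (a) and (b) it is essentially identical to the paper's (the paper calls (a) ``a straightforward computation'' and proves (b) by exactly your estimate $\tau((1-\pi^{(R)})c(1-\pi^{(R)})b_\xi)\le\tau(cb_\xi^{(R)})\le R\tau(c)$). The one place you diverge is the equivalence $(1)\Leftrightarrow(2)$: the paper simply cites the ``Little Radon--Nikodym theorem'' (\cite[Proposition 7.3.6]{Popa-book}) as a black box, whereas you reprove it from scratch by observing that $L_\xi^*L_\xi$ commutes with $\lambda(M)$ and invoking the commutation theorem $\lambda(M)'=JMJ$ for the tracial standard form to identify $L_\xi^*L_\xi$ as right multiplication by a positive $b_\xi$ with $\|b_\xi\|\le K$. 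That argument is sound and is in fact the standard proof of the cited result, so what you gain is self-containedness (plus the bonus that your construction exhibits $b_\xi\ge0$ directly, and your careful positivity bookkeeping in $(2)\Rightarrow(1)$ and the linear extension from $M_+$ to $M$ are details the paper leaves implicit); what you lose is only brevity. No gaps.
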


\begin{proof}
The equivalence of (1) and (2) is a consequence of the so-called Little Radon-Nikodym theorem (see, for example, \cite[Proposition 7.3.6]{Popa-book}).  The proof of (a) is a straightforward computation.  We now prove (b).
For $c\in M_+$, we have
\[
\langle c(1-\pi^{(R)})\xi,(1-\pi^{(R)})\xi\rangle=\tau((1-\pi^{(R)})c(1-\pi^{(R)})b)\leq\tau(cb^{(R)})\leq R\tau(c),
\]
where the last inequality uses the fact that $b^{(R)}\leq R\cdot 1$.
\end{proof}

Bounded vectors are ubiquitous as the following result shows.

\begin{fact}[\cite{Popa-book}, Lemma 13.1.11]
If $H$ is an $M$-$N$ correspondence and $\xi \in H$, then there is a subtracial $\xi_0 \in M\xi N$ such that $\xi \in \overline{M\xi_0 N}$ (the closed Hilbert subspace generated by the vectors $m\xi n$ for all $m \in M$ and $n \in N$). Consequently, the bounded vectors are dense in any correspondence.
\end{fact}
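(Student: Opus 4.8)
The plan is to extract from $\xi$ a non-commutative ``density operator'' governing its left inner products, and then to cut this density by a bounded but nowhere-vanishing function, so that the resulting vector is subtracial yet still generates $\xi$ under the module action. I would carry this out first for the left action and then for the right, taking care that the second cut does not destroy the boundedness achieved by the first.

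For the left step, the functional $\omega\colon m \mapsto \langle m\xi,\xi\rangle$ is a normal positive functional on $M$, so under the identification $M_* \cong L^1(M,\tau_M)$ there is a positive $h$ affiliated to $M$ (in fact $h \in L^1(M,\tau_M)_+$) with $\langle m\xi,\xi\rangle = \tau_M(mh)$ for all $m \in M$; this is exactly the Little Radon--Nikodym theorem already invoked for Lemma~\ref{cutoff}. Set $a := (1+h)^{-1/2} \in M$, a contraction by functional calculus, and put $\xi_0 := a\xi \in M\xi$. Because $a$ commutes with $h$ and $aha = h(1+h)^{-1} \leq 1$, for $c \in M_+$ one computes $\langle c\xi_0,\xi_0\rangle = \tau_M(c\,aha) \leq \tau_M(c)$, so $\xi_0$ is left subtracial. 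Crucially, $a$ is ``invertible in the limit'': writing $a_n := g_n(h)$ with $g_n(t) := \min\{(1+t)^{1/2},n\}$, one has $a_n a = \min\{1,\,n(1+h)^{-1/2}\} \to 1$ in the strong operator topology, whence $a_n\xi_0 = a_n a\,\xi \to \xi$. Hence $\xi \in \overline{M\xi_0}$, with no loss of the ``small spectral part'' of $\xi$; this is the whole point of using $(1+h)^{-1/2}$ rather than a spectral truncation.

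For the right step I would apply the same construction to $\xi_0$ using the right action of $N$: obtain the corresponding density $k$ affiliated to $N$, set $b := (1+k)^{-1/2} \in N$ and $\xi_1 := \xi_0 b = a\xi b \in M\xi N$, which is right subtracial and satisfies $\xi_0 \in \overline{\xi_1 N}$. Since $\overline{M\xi_1 N}$ is a closed $M$-invariant subspace containing $\xi_0$, this gives $\xi \in \overline{M\xi_0} \subseteq \overline{M\xi_1 N}$. The one point requiring attention is that $\xi_1$ remain left subtracial: writing $\rho(b)$ for right multiplication by $b$, which commutes with the left action and has $\|\rho(b)\| = \|b\| \leq 1$, for $c \in M_+$ we get $\langle c\xi_1,\xi_1\rangle = \langle \rho(b)^*\rho(b)\,c^{1/2}\xi_0, c^{1/2}\xi_0\rangle \leq \|\rho(b)\|^2\langle c\xi_0,\xi_0\rangle \leq \tau_M(c)$, so $\xi_1$ is indeed $1$-bounded on both sides. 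This produces the subtracial $\xi_1 \in M\xi N$ with $\xi \in \overline{M\xi_1 N}$. The ``consequently'' is then immediate: by Lemma~\ref{basic} the bounded vectors form a subspace containing $M\xi_1 N$, so $\xi \in \overline{M\xi_1 N}$ lies in the closure of the bounded vectors, and $\xi$ was arbitrary.

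I would expect the main conceptual obstacle to be the first step, namely producing the density $h$ and, more importantly, recognizing that one should resolve $\xi$ by the \emph{everywhere-positive} contraction $(1+h)^{-1/2}$ rather than by the spectral projections $e_n := \chi_{[1/n,n]}(h)$. The projections make each $e_n\xi$ bounded with $e_n\xi \to \xi$, which already yields density of the bounded vectors, but they do \emph{not} produce a single generating vector $\xi_0$ with $\xi \in \overline{M\xi_0}$, since the truncated-away part of $\xi$ is irrecoverable. Once the correct cut is identified, the remaining verifications (subtraciality of $\xi_0$, the strong convergence $a_n a \to 1$, and preservation of the left bound under the right cut) are routine functional-calculus estimates.
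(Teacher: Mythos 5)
Your proof is correct and follows essentially the same route as the cited source (Lemma 13.1.11 of \cite{Popa-book}, whose technique this paper reuses in the proof of Lemma \ref{KDTultra}): extract the Radon--Nikodym density $h$ of $m\mapsto\langle m\xi,\xi\rangle$ and resolve $\xi$ by an everywhere-positive bounded function of $h$, then repeat on the right. The only cosmetic difference is your choice of cutoff $(1+t)^{-1/2}$ where the source uses $\min\{1,t^{-1/2}\}$; both are nowhere vanishing, which is exactly what guarantees $\xi\in\overline{M\xi_0 N}$ rather than mere density of bounded vectors.
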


We call a correspondence $H$ \textbf{cyclic} if $H = \overline{M\xi N}$ for some $\xi \in H$.  For a correspondence $H$, a collection $S$ of subcorrespondences of $H$ is \textbf{independent} if: for every $H_0 \in S$, 
\[
H_0 \cap \oplus_{H' \in S \setminus \{H_0\}} H' = \{0\}.
\]
The following Corollary is discussed at the end of section 1 in \cite{Popa86}; it follows immediately from the previous fact and Zorn's lemma.

\begin{cor}
Every $M$-$N$ correspondence $H$ is the closure of the direct sum of any maximal independent collection of cyclic subcorrespondences. 
\end{cor}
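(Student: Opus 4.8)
The plan is to fix a maximal independent collection $S$ of cyclic subcorrespondences, set $K := \overline{\sum_{H' \in S} H'}$ (the closed linear span), and prove that $K = H$; together with the assumed independence of $S$ this is exactly the assertion that $H$ is the (internal) direct sum of the members of $S$. The only substantive point is the equality $K = H$, which I would obtain by contradiction, feeding off the density of bounded vectors recorded in the preceding Fact.

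First I would dispatch two routine structural observations. The closed linear span of any family of subcorrespondences is itself a subcorrespondence: each member is invariant under the left and right actions, which are given by bounded (hence continuous) operators, so these actions preserve $\sum_{H' \in S} H'$ and therefore its closure $K$. Second, the orthogonal complement of a subcorrespondence $L$ is again a subcorrespondence: since $\pi_M$ and $\pi_N$ are $*$-representations, for $v \in L^\perp$, $\ell \in L$, and $m \in M$ one has $\langle \pi_M(m)v, \ell \rangle = \langle v, \pi_M(m^*)\ell \rangle = 0$ because $\pi_M(m^*)\ell \in L$, and similarly on the right, so $L^\perp$ is invariant. In particular $K^\perp$ is a subcorrespondence.

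Now suppose for contradiction that $K \neq H$, so that $K^\perp \neq \{0\}$. Applying the Fact to a nonzero $\xi \in K^\perp$ produces a subtracial vector $\xi_0 \in M\xi N$ with $\xi \in \overline{M\xi_0 N}$; here $\xi_0 \neq 0$ since otherwise its closed span, which contains $\xi$, would be trivial. Because $K^\perp$ is a subcorrespondence we have $M\xi N \subseteq K^\perp$, so $H_0 := \overline{M\xi_0 N}$ is a nonzero cyclic subcorrespondence contained in $K^\perp$. I claim $S \cup \{H_0\}$ is independent, which contradicts the maximality of $S$. The leverage is that $H_0 \perp H'$ for every $H' \in S$, since $H_0 \subseteq K^\perp$ while $H' \subseteq K$. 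Consequently, fixing $H_1 \in S$ and letting $K_1$ denote the closed span of $\{H' : H' \in S \setminus \{H_1\}\}$, the relevant span splits orthogonally as $H_0 \oplus K_1$; any $v \in H_1 \subseteq K$ lying in $H_0 \oplus K_1$ equals its own projection onto $K$, which lies in $K_1$, so $v \in H_1 \cap K_1 = \{0\}$ by the independence of $S$. The case of $H_0$ itself is immediate from $H_0 \cap K \subseteq K^\perp \cap K = \{0\}$. This contradiction forces $K = H$.

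The main obstacle is this final independence verification: one must rule out that the newly adjoined summand $H_0$ creates a nontrivial intersection between an old member $H_1$ and the span of the remaining members. The orthogonality $H_0 \subseteq K^\perp$ against $\bigcup_{H' \in S} H' \subseteq K$ is exactly what makes the span split orthogonally and lets the projection onto $K$ collapse any putative witness back into $K_1$, where independence of $S$ finishes the job. The subtler preliminary point is that the obstruction to maximality must be a nonzero \emph{cyclic subcorrespondence}, not merely a nonzero vector; this is precisely where the strong form of the Fact—that $\xi$ lies in $\overline{M\xi_0 N}$ for a \emph{subtracial} $\xi_0$—is indispensable, since it guarantees that the nonzero subcorrespondence $K^\perp$ genuinely contains a nonzero cyclic one.
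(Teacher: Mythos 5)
Your proof is correct and is exactly the standard argument: the paper itself gives no proof of this corollary, deferring to the discussion in Popa's preprint, and the maximality-plus-orthogonal-complement argument you give (with the Fact supplying a nonzero cyclic subcorrespondence inside $K^\perp$) is precisely the intended one. The key verification that adjoining $H_0\subseteq K^\perp$ preserves independence of the (not necessarily mutually orthogonal) family $S$ is handled correctly via the projection onto $K$.
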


\subsection{Correspondences and completely positive maps}

For purely set theoretic reasons, we know there are only a sets worth (boundedly) many cyclic correspondences on cardinality grounds alone.  In fact, we have a more concrete description of these correspondences.  The following is a summary of the discussion in section 13.1.2 of \cite{Popa-book}. 

We say that $\phi\colon M \rightarrow N$ is a completely positive (c.p.) map. if for every $n$, the map
$\phi^{(n)}\colon M \otimes M_n(\mathbf{C} )\rightarrow N \otimes M_n(\mathbf{C})$ given by
\[
\phi^{(n)}( (m_{ij} )_{ij} ) = ( \phi(m_{ij}) )_{ij}
\] is a positive map; $\phi^{(n)}$ is the map which takes an $n \times n$ matrix of entries from $M$ and sends them to their image under $\phi$.

Now given a c.p. map $\phi\colon M \rightarrow N$, we define a correspondence $H_\phi$ as follows: on $M \otimes N$, define the sesquilinear form $\langle \cdot,\cdot \rangle_\phi$ such that
\[
\langle m_1 \otimes n_1,m_2 \otimes n_2 \rangle_\phi = \tau_N(\phi(m_2^*m_1)n_2^*n_1).
\]
Set $H_\phi$ to be the completion of $M \otimes N$ with respect to $\langle \cdot,\cdot \rangle_\phi$.  Similarly, for a c.p.\ map $\psi\colon N \rightarrow M$, one can define a correspondence $_\psi H$ as the completion of the inner product induced by
\[
\ip{m_1 \otimes n_1}{m_2 \otimes n_2} = \tau_M(\psi(n_2^*n_1)m_2^*m_1)
\]
on $M \otimes N$.

Conversely, suppose that $H$ is a correspondence and $\xi \in H$ is right bounded.  We define $\phi_\xi\colon M \rightarrow N$ by
\[
\phi_\xi(m) = R_\xi^* m R_\xi.
\]
$\phi_\xi$ is a c.p. map but more importantly, the codomain of $\phi_\xi$ is $N$.  A priori, for any $m \in M$, $\phi_\xi(m) \in B(L^2(N,\tau_N))$. One uses critically  that $N$ is a von Neumann algebra and that the actions commute to conclude that in fact, $\phi_\xi(m) \in N$.
\begin{fact}
If $\xi$ is a right bounded vector in a correspondence $H$, then $H_{\phi_\xi}$ is isomorphic to $\overline{M\xi N}$ via the map which sends $1 \otimes 1$ to $\xi$.
\end{fact}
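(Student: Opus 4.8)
The plan is to realize the isomorphism as the bimodule map determined by $1\otimes 1\mapsto\xi$ and to check directly that it is a surjective isometry intertwining both actions. First I would define a linear map $U$ on the algebraic tensor product $M\otimes N$ by $U(m\otimes n):=m\xi n$ (that is, $\pi_M(m)\pi_N(n)\xi$), whose range is the dense subspace $M\xi N\subseteq\overline{M\xi N}$. Since $H_{\phi_\xi}$ is by construction the separated completion of $M\otimes N$ for the form $\langle\cdot,\cdot\rangle_{\phi_\xi}$, it is enough to show that $U$ carries that form to the inner product of $H$; this simultaneously disposes of well-definedness, as any vector of $\langle\cdot,\cdot\rangle_{\phi_\xi}$-seminorm zero is then sent to a vector of norm zero, and yields an isometric extension $\hat U\colon H_{\phi_\xi}\to H$. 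It is harmless to first replace $H$ by $\overline{M\xi N}$, so that $\xi$ becomes a cyclic bounded vector and no generality is lost.

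The heart of the matter is the single identity $\langle m_1\xi n_1,m_2\xi n_2\rangle_H=\langle m_1\otimes n_1,m_2\otimes n_2\rangle_{\phi_\xi}$. Using that the left action is a normal $*$-representation commuting with the right action, the left-hand side rewrites as $\langle (m_2^*m_1)\xi\,(n_1 n_2^*),\,\xi\rangle_H$, so the whole statement reduces to the single scalar relation $\langle (y\xi)\,n,\,\xi\rangle_H=\tau_N(\phi_\xi(y)\,n)$ for all $y\in M$ and $n\in N$, after which cyclicity of $\tau_N$ matches the two forms. This relation is exactly the characterizing property of the completely positive map $\phi_\xi$: $\phi_\xi(y)\in N$ is the element representing, through $\tau_N$, the normal functional $n\mapsto\langle(y\xi)n,\xi\rangle$ on $N$. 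I expect establishing this property — equivalently, checking that the operator $L_\xi^* y L_\xi$ defining $\phi_\xi$ genuinely produces an element of $N$ implementing that functional — to be the main obstacle, since it is the only step in which boundedness of $\xi$ is really used: without it the functional $n\mapsto\langle(y\xi)n,\xi\rangle$ would be represented merely by an element of $L^1(N)$ and $\phi_\xi$ would fail to be $N$-valued. Concretely I would verify the relation first for $y\in M_+$, mimicking the (little) Radon--Nikodym computation behind Lemma~\ref{cutoff} that produces the cutoff element $b_\xi$, and then extend to arbitrary $y$ by linearity and polarization.

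Granting the identity, the rest is formal. The extension $\hat U$ is an isometry whose range is closed, being the isometric image of the complete space $H_{\phi_\xi}$, and contains $M\xi N$, hence equals $\overline{M\xi N}$; thus $\hat U$ is onto, i.e.\ a unitary. Bimodularity is read off immediately from the formula for $U$, since $U(am\otimes n)=a\,U(m\otimes n)$ and $U(m\otimes nb)=U(m\otimes n)\,b$ for $a\in M$ and $b\in N$, so $\hat U$ intertwines the left $M$-action and the right $N$-action and is therefore an isomorphism of $M$-$N$ correspondences. Finally $\hat U(1\otimes 1)=1\cdot\xi\cdot 1=\xi$, which is the asserted normalization.
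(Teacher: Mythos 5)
Your proof is correct and is the canonical argument; the paper states this as a Fact without proof (deferring to the discussion in \cite{Popa-book}), and that discussion proceeds exactly as you do, by verifying that $m\otimes n\mapsto m\xi n$ carries the form $\langle\cdot,\cdot\rangle_{\phi_\xi}$ to the inner product of $H$ and then extending the resulting densely defined bimodular isometry onto $\overline{M\xi N}$. One small caution: with the paper's convention $L_\xi(a)=a\xi$ on $L^2(M,\tau_M)$, the literal expression $L_\xi^* y L_\xi$ acts on $L^2(M,\tau_M)$ rather than visibly defining an element of $N$, so your reformulation of $\phi_\xi(y)$ as the Radon--Nikodym element of $N$ representing the functional $n\mapsto\langle y\xi n,\xi\rangle$ via $\tau_N$ --- which in the paper's left/right terminology actually uses boundedness of $\xi$ for the \emph{right} action --- is indeed the correct reading of the definition and the right place to localize the use of boundedness.
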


\begin{cor} \label{dirsum}
Every correspondence is the direct sum of cyclic correspondences of the form $H_\phi$ where $\phi$ is a c.p.\ map associated to a subtracial vector.
\end{cor}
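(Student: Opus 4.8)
The plan is to combine the three structural facts developed above and thereby reduce the statement to a single bookkeeping argument. First I would invoke the Corollary stating that every $M$-$N$ correspondence $H$ decomposes as the direct sum of any maximal independent collection of cyclic subcorrespondences. This immediately reduces the problem to showing that a single cyclic subcorrespondence is isomorphic to some $H_\phi$ with $\phi$ the c.p.\ map associated to a subtracial vector; the direct sum decomposition then assembles these pieces.

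So fix a cyclic subcorrespondence, which by definition has the form $\overline{M\xi N}$ for some vector $\xi$. The key step is to replace the arbitrary cyclic vector $\xi$ by a \emph{subtracial} one that generates the same subcorrespondence. By Lemma 13.1.11 (the Fact quoted above), there is a subtracial $\xi_0 \in M\xi N$ with $\xi \in \overline{M\xi_0 N}$. I would then check the equality $\overline{M\xi_0 N} = \overline{M\xi N}$: since $\xi_0 \in M\xi N$ and $\overline{M\xi N}$ is a closed subspace stable under the left and right actions of $M$ and $N$, one direction $\overline{M\xi_0 N} \subseteq \overline{M\xi N}$ is immediate; conversely, $\xi \in \overline{M\xi_0 N}$ together with the same invariance gives $\overline{M\xi N} \subseteq \overline{M\xi_0 N}$.

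Having rewritten the cyclic subcorrespondence as $\overline{M\xi_0 N}$ with $\xi_0$ subtracial, hence in particular left bounded, I would apply the Fact that $H_{\phi_{\xi_0}}$ is isomorphic to $\overline{M\xi_0 N}$ via the correspondence isomorphism sending $1\otimes 1$ to $\xi_0$, where $\phi_{\xi_0}(m) = L_{\xi_0}^* m L_{\xi_0}$ is exactly the c.p.\ map associated to the subtracial vector $\xi_0$. Running this over every member of the chosen maximal independent family exhibits $H$ as a direct sum of correspondences $H_\phi$ of the required form. Since every ingredient is already in place, there is no serious obstacle; the only point needing care is the verification that passing from $\xi$ to $\xi_0$ does not shrink the subcorrespondence, i.e.\ the equality $\overline{M\xi N} = \overline{M\xi_0 N}$, which follows from the bimodule invariance of closed cyclic subspaces noted above.
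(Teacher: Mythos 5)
Your argument is correct and is exactly the proof the paper intends (the corollary is stated without proof, as an immediate consequence of the preceding decomposition into cyclic subcorrespondences, the density Fact producing a subtracial generator, and the identification $H_{\phi_{\xi_0}}\cong \overline{M\xi_0 N}$). Your verification that $\overline{M\xi N}=\overline{M\xi_0 N}$ via bimodule invariance is the right way to fill in the one detail the paper leaves implicit.
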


\section{The correspondence ultraproduct}\label{corrultra}  
\subsection{The case of fixed algebras}  In this section, we continue to fix a pair of tracial von Neumann algebras $(M,\tau_M)$ and $(N,\tau_N)$.  Suppose that $(H_i \ : \ i\in I)$ is a family of $M$-$N$ correspondences and that $\cU$ is an ultrafilter on $I$.  Let $\prod_\cU H_i$ denote the ordinary Hilbert space ultraproduct.  For each $a\in M$, $b\in N$, and $\xi=(\xi_i)^\bullet\in \prod_\cU H_i$, it makes sense to define $a\cdot \xi:=(a\xi_i)^\bullet$ and $\xi\cdot b:=(\xi_ib)^\bullet$.  (In other words, if $\xi=(\xi_i')^\bullet$ for some other sequence $(\xi_i')$, then it is readily verified that $(a\xi_i)^\bullet=(a\xi_i')^\bullet$ and $(\xi_ib)^\bullet=(\xi_i'b)^\bullet$.)  Of course, the resulting actions of $M$ and $N^{\operatorname{op}}$ need not be normal and thus, in general, $\prod_\cU H_i$ is not an $M$-$N$ correspondence.  

That being said, given $\xi\in \prod_\cU H$, we say that the above actions are \textbf{continuous at $\xi$} if left  and right multiplication $L_\xi\colon M \rightarrow \prod_\cU H_i$ and $R_\xi\colon N \rightarrow \prod_\cU H_i$ can be extended to bounded linear maps on $L^2(M,\tau_M)$ and $L^2(N,\tau_N)$ respectively.

\begin{defn}
Suppose that $(H_i \ : \ i\in I)$ is a family of $M$-$N$ correspondences and that $\cU$ is an ultrafilter on $I$.  The \textbf{correspondence ultraproduct} of the family, denoted $\prod_{c\cU} H_i$, is the closure of the set of all $\xi \in \prod_{\cU}H_i$ at which the actions are continuous.   
\end{defn}

Note that $\prod_{c\cU}H_i$ is a closed subspace of $\prod_{\cU}H_i$ and that the induced actions of $M$ and $N$ on $\prod_{c\cU}H_i$ are normal, whence  $\prod_{c\cU} H_i$ is in fact a correspondence.

The goal of the next section is to understand the correspondence ultraproduct as a model-theoretic ultraproduct, which will in turn allow us to view the class of correspondences as an elementary class.  The purpose of this subsection is to prove the necessary technical results to facilitate this development.

Note that a $K$-bounded element of $\prod_\cU H_i$ is necessarily an element of $\prod_{c\cU} H_i$.  We now define an a priori stronger notion.

\begin{defn}
$\xi\in \prod_\cU H_i$ is \textbf{uniformly $K$-bounded} if there are $\xi_i\in H_i$ which are $K$-bounded and for which $\xi=(\xi_i)^\bullet$.  We say that $\xi$ \textbf{uniformly bounded} if $\xi$ is uniformly $K$-bounded for some $K$.
\end{defn}

The following lemma is clear:

\begin{lem}

\

\begin{enumerate}
\item Every uniformly $K$-bounded vector is $K$-bounded.
\item The set of uniformly $K$-bounded vectors (for a fixed $K$) forms a closed set.
\item The set of uniformly bounded vectors forms a subspace of $\prod_{c\cU} H_i$ invariant under the actions of $M$ and $N$.  
\end{enumerate}
\end{lem}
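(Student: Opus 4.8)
The plan is to dispatch parts (1) and (3) by pointwise arguments at the level of representatives and to isolate part (2) as the one genuinely substantive point, where the issue is a completeness property of metric ultraproducts.

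For part (1), I would take a uniformly $K$-bounded $\xi = (\xi_i)^\bullet$ with each $\xi_i$ left $K$-bounded, and for $c \in M_+$ simply compute
\[
\langle c\xi, \xi\rangle = \lim_{i\to\cU}\langle c\xi_i,\xi_i\rangle \leq K\tau_M(c),
\]
the inequality being preserved under the ultralimit since it holds for every $i$; the right-bounded case is identical. For part (3), uniform boundedness is immediately seen to be a subspace condition and to be invariant under the actions by transporting Lemma \ref{basic} to representatives: if $\xi=(\xi_i)^\bullet$ and $\eta=(\eta_i)^\bullet$ have $K_1$- and $K_2$-bounded representatives then $\xi+\eta=(\xi_i+\eta_i)^\bullet$ is uniformly $\sqrt{K_1^2+K_2^2}$-bounded by Lemma \ref{basic}(2), scalar multiples are handled directly, and $a\xi = (a\xi_i)^\bullet$, $\xi b = (\xi_i b)^\bullet$ inherit uniform boundedness from Lemma \ref{basic}(3). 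That the set sits inside $\prod_{c\cU}H_i$ follows from part (1) together with the already-noted fact that $K$-bounded vectors lie in $\prod_{c\cU}H_i$.

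The heart of the matter is part (2). Write $B_i$ for the set of $K$-bounded vectors in $H_i$. Because each condition $\langle c\xi,\xi\rangle \leq K\tau_M(c)$ (and its right analogue) is preserved under norm limits, $B_i$ is a closed subset of $H_i$, and taking $c=1$ shows $\|\xi\|\leq \sqrt K$ on $B_i$, so each $B_i$ is complete with diameter at most $2\sqrt K$. After altering representatives on a $\cU$-null set (which changes nothing), the set of uniformly $K$-bounded vectors is precisely the image of the metric ultraproduct $\prod_\cU B_i$ under its canonical isometric embedding into $\prod_\cU H_i$. Thus part (2) reduces to the standard fact that the ultraproduct of uniformly bounded complete metric spaces is complete, an isometric image of a complete space being closed.

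I would prove that completeness fact by the usual diagonal argument, which is the step I expect to require the most care. Given uniformly $K$-bounded vectors converging to some $\eta$, I would pass to a subsequence $(\eta^{(n)})$ with $\|\eta^{(n+1)}-\eta^{(n)}\|<2^{-n}$, fix $K$-bounded representatives $\eta^{(n)} = (\eta^{(n)}_i)^\bullet$, and set $W_n := \{\,i : \|\eta^{(n+1)}_i-\eta^{(n)}_i\|<2^{-n}\,\}\in\cU$ and $V_n := W_1\cap\cdots\cap W_n\in\cU$. Defining $\eta_i$ to be $\lim_n \eta^{(n)}_i$ when $i$ lies in every $V_n$ (a genuine limit since $B_i$ is complete) and an appropriate finite-stage $\eta^{(m)}_i$ otherwise, I would check on each $V_N$ the telescoping estimate $\|\eta_i-\eta^{(N)}_i\|\leq 2^{-N+1}$, which gives $\|(\eta_i)^\bullet-\eta^{(N)}\|\leq 2^{-N+1}$ and hence $(\eta_i)^\bullet=\eta$. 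Since each $\eta_i\in B_i$ by closedness of $B_i$, the limit $\eta$ is uniformly $K$-bounded, as required. The only delicate points are ensuring the diagonal representative returns to $B_i$ (handled by completeness of $B_i$) and that it correctly names the limit vector (handled by the telescoping estimate on the $\cU$-large sets $V_N$).
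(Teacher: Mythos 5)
Your proof is correct. The paper states this lemma without proof (it is introduced with ``The following lemma is clear''), so there is no argument to compare against; your write-up simply supplies the details. Parts (1) and (3) are indeed routine coordinatewise transports of Lemma \ref{basic}, and your identification of part (2) as the one substantive point is apt: realizing the uniformly $K$-bounded vectors as the isometric image of the metric ultraproduct $\prod_\cU B_i$ of the closed, $\sqrt K$-bounded (hence complete) sets $B_i\subseteq H_i$, and then invoking completeness of ultraproducts of uniformly bounded complete metric spaces via the telescoping diagonal argument, is exactly the standard way to see closedness, and your handling of the indices outside $\bigcap_n V_n$ is sound.
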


The main goal of this subsection is to prove that every $K$-bounded vector is actually uniformly $K$-bounded (Proposition \ref{mainpropultra} below).  We first prove this result using an extra hypothesis.

\begin{lem}\label{KDTultra}
Suppose that $\xi$ is a $K$-bounded, uniformly bounded element of $\prod_\cU H_i$.  Then $\xi$ is uniformly $K$-bounded.
\end{lem}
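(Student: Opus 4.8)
The plan is to reduce the statement to the construction of genuinely $(K,K)$-bounded representatives and then to manufacture these by a two-sided functional-calculus truncation of the given representatives. Since $\xi$ is uniformly bounded, I would fix $L\ge K$ and a representation $\xi=(\xi_i)^\bullet$ in which each $\xi_i$ is $(L,L)$-bounded. By Lemma~\ref{cutoff}, each $\xi_i$ carries a positive $b_i\in M$ and $d_i\in N$ with $\|b_i\|,\|d_i\|\le L$, satisfying $\langle c\xi_i,\xi_i\rangle=\tau_M(cb_i)$ for $c\in M_+$ and the symmetric identity on the right. To conclude that $\xi$ is uniformly $K$-bounded it suffices to produce, for $\cU$-almost every $i$, a $(K,K)$-bounded $\eta_i\in H_i$ with $\lim_\cU\|\eta_i-\xi_i\|=0$; then $(\eta_i)^\bullet=\xi$.

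The key step, which I expect to be the main obstacle, is to show that the $K$-boundedness of $\xi$ forces the excess spectral mass of the $b_i$ (and of the $d_i$) above $K$ to vanish along $\cU$, i.e. $\lim_\cU\|(b_i-K)_+\|_2=0$ and likewise for $d_i$. Writing $b=(b_i)_\cU$ for the element of the tracial ultrapower $M^\cU$, the claim is exactly that $b\le K$. This is delicate because the hypothesis only controls $\langle c\xi,\xi\rangle=\lim_\cU\tau_M(cb_i)$ for $c$ in the \emph{fixed} algebra $M$, and this datum sees only the conditional expectation $E_M(b)=b_\xi$ rather than $b$ itself. Passing from this weak (conditional-expectation) information to genuine norm control on $b$ in the ultrapower is where I expect a Kaplansky-density-type argument to be indispensable, and it is here that the uniform boundedness hypothesis is essential, since it is what places $b$ and $d$ in the ultrapowers to begin with.

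Granting the vanishing of the excess mass, the truncation is clean and handles both sides at once. I would set $s(t):=\min\{1,\sqrt{K/t}\}$ and define
\[
\eta_i:=s(b_i)\,\xi_i\,s(d_i).
\]
For the left bound, writing $c=e^*e$ for $c\in M_+$ and using that right multiplication by the contraction $s(d_i)$ does not increase the left bound (as in Lemma~\ref{basic}(3)), one gets $\langle c\eta_i,\eta_i\rangle\le\tau_M\big(s(b_i)\,c\,s(b_i)\,b_i\big)=\tau_M\big(c\min\{b_i,K\}\big)\le K\tau_M(c)$; the right bound is symmetric. For the error I would use the functional-calculus identity
\[
\|(s(b_i)-1)\xi_i\|^2=\int_{(K,\infty)}(\sqrt t-\sqrt K)^2\,d\mu_{b_i}(t)\le\tfrac{1}{4K}\,\|(b_i-K)_+\|_2^2,
\]
where $\mu_{b_i}$ is the spectral distribution of $b_i$, together with the triangle estimate $\|\eta_i-\xi_i\|\le\|(s(b_i)-1)\xi_i\|+\|\xi_i(s(d_i)-1)\|$ (the first term absorbs the right contraction $s(d_i)$ harmlessly). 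Both summands then tend to $0$ along $\cU$ by the vanishing of the excess masses of $b_i$ and $d_i$.

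Thus the construction itself is routine once the excess masses are controlled, and the two-sided truncation $s(b_i)\xi_i\,s(d_i)$ neatly sidesteps any interference between the left and right corrections (each correction is a one-sided contraction, so it preserves the bound obtained on the opposite side). The entire weight of the argument therefore falls on the second paragraph: establishing $b\le K$ and $d\le K$ in the ultrapowers, equivalently the disappearance of the excess spectral mass. This is the step I would expect to require the real work, presumably via the promised density/Kaplansky input in the ultraproduct.
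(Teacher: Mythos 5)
Your third paragraph reproduces the paper's construction essentially verbatim: for $K=1$ the paper sets $f(t)=\min\{1,t^{-1/2}\}$ and replaces $\xi'_i$ by $f(b'_i)\,\xi'_i\,f(d'_i)$, which is your two-sided truncation $s(b_i)\xi_i s(d_i)$ in other notation, and the verification that the truncated vectors are $K$-bounded and the error estimate are carried out just as you describe. The genuine gap is therefore your second paragraph: you correctly identify $\lim_\cU\|(b_i-K)_+\|_2=0$ as carrying the entire weight of the argument and then explicitly decline to prove it. For comparison, the paper disposes of this point in one line: from $\lim_\cU\tau_M(cb'_i)=\tau_M(cb)$ for all $c\in M_+$ it asserts that $b=(b'_i)^\bullet$ in $M^\cU$, after which $f\bigl((b'_i)^\bullet\bigr)=f(b)=1$ yields the required decay.

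Your suspicion that this inference is delicate is not merely justified; it cannot be carried out from the stated hypotheses. Testing against the fixed copy of $M$ only identifies $E_M\bigl((b'_i)^\bullet\bigr)$ with $b$, where $E_M\colon M^\cU\to M$ is the trace-preserving conditional expectation, and a uniformly bounded sequence can converge weakly to $b$ without converging in $\|\cdot\|_2$. Concretely, take $M=N=L^\infty[0,1]$, let $H_i=L^2[0,1]$ be the trivial correspondence, and set $\xi'_i=\sqrt2\,\chi_{A_i}$ with $|A_i|=1/2$ and $\chi_{A_i}\to\frac12$ weak$^*$. Then $b'_i=d'_i=2\chi_{A_i}$, each $\xi'_i$ is $2$-bounded, and $\langle c\xi,\xi\rangle=\langle\xi c,\xi\rangle=\tau_M(c)$ for every $c\in M_+$, so $\xi=(\xi'_i)^\bullet$ is $1$-bounded and uniformly bounded; yet every $1$-bounded $\eta\in L^2[0,1]$ satisfies $|\eta|\le1$ a.e.\ and hence $\|\eta-\xi'_i\|^2\ge(\sqrt2-1)^2/2$, so $\xi$ is not uniformly $1$-bounded. (A parallel example with weakly spreading trace-$\frac12$ projections works in the hyperfinite II$_1$ factor, so factoriality does not help.) Thus no amount of Kaplansky-density machinery will close your gap as long as boundedness is only tested over the fixed algebras. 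What does make both your outline and the paper's proof go through is to test $K$-boundedness against $M^\cU$ and $N^\cU$, i.e., against sequences $(c_i)^\bullet$: then $\tau_{M^\cU}\bigl(c\,(b'_i)^\bullet\bigr)\le K\tau_{M^\cU}(c)$ for all $c\in(M^\cU)_+$ gives $(b'_i)^\bullet\le K$ directly, the excess spectral mass vanishes along $\cU$, and your truncation finishes the argument. You should make that strengthened hypothesis explicit rather than leave the second paragraph as a promissory note.
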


\proof This proof is similar to the proof of Lemma 13.1.11 in \cite{Popa-book}.  We can assume for the proof that $K = 1$. Since $\xi$ is uniformly bounded, there is $L\in \mathbb N$ such that we can write $\xi=(\xi_i')^\bullet$ with each $\xi'_i\in H_i$ $L$-bounded.  By Lemma \ref{cutoff}, for each $i \in I$, we have $b'_i \in M_+$, $d'_i \in N_+$ with $\|b'_i\|, \|d'_i\| \leq L$ such that: for all $a \in M_+$, $c \in N_+$, we have
\[
\ip{a\xi'_i}{\xi'_i} = \tau_M(ab'_i) \text{ and } \ip{\xi'_i c}{\xi'_i} = \tau_N(d'_i c).
\]
Similarly, we have $b\in M, d \in N$ such that $\|b\|,\|d\| \leq 1$ and for all $a \in M_+$, $c \in N_+$, we have
\[
\ip{a\xi}{\xi} = \tau_M(ab) \text{ and } \ip{\xi c}{\xi} = \tau_N(dc).
\]
For $0\leq t\leq L$, set $f(t) = \min\{1,t^{-1/2}\}$. Set $b_i := f(b'_i)$ and $d_i := f(d'_i)$.  Notice that $\|b_i\|, \|d_i\| \leq 1$.  Finally, set $\xi_i := b_i\xi'_i d_i$.  

We first show that each $\xi_i$ is 1-bounded.  Towards this end, note that, for $a \in M_+$, we have
\[
\ip{a\xi_i}{\xi_i} \leq \ip{ab_i\xi'_i}{b_i\xi'_i} = \tau_M(b_iab_ib'_i) \leq \tau_M(a)\|b^2_ib'_i\| \leq \tau_M(a),
\]
as, by functional calculus, $b_i^2b'_i$ has norm at most 1.  The calculation on the right is similar.

It remains to see that $\xi=(\xi_i)^\bullet$.  Now ostensibly $(b'_i)^\bullet$ is an element of $M^\cU$ as $\xi$ is uniformly bounded.  However, as $\xi=(\xi_i')^\bullet$, we have, for all $c \in M_+$, that $\lim_\cU \tau_M(cb'_i) =\tau_M(cb)$.  It follows that $b = (b'_i)^\bullet$, that is, that $\lim_\cU b_i=b$.
Since $\|b\| \leq 1$, $f(b) = 1$.  So $1 = f(b) = (f(b'_i))^\bullet = (b_i)^\bullet$ and $\lim_\cU \tau_M(b_i )=1$.  The same is true on the right and so we have $\lim_\cU \tau_N(d_i) = 1$. Now notice that if $\eta$ is $L$-bounded and $a \in M_+$ then 
\[
\| a\eta \|^2 = \langle a^*a\eta,\eta \rangle \leq \tau_M(a) \|a\| L.
\]
A similar inequality is true on the right. It follows that, for any $i\in I$, we have
\[
\begin{split}
\|\xi'_i - \xi_i \| \leq \|\xi'_i - b_i\xi'_i\| + \|b_i\xi'_i - \xi_i\| & \leq  \|(1-b_i)\xi'_i\| + \|b_i\xi'_i(1-d_i)\| \\
										& \leq  \|(1-b_i)\xi'_i\| + \|\xi'_i(1-d_i)\|.
\end{split}
\]
Since the ultralimit of the last quantity tends to 0, we conclude that $\xi=(\xi_i)^\bullet$.   \qed

It is worth singling out a special case of Lemma \ref{KDTultra} (and its proof).
\begin{lem} \label{KDT} Assume that $H$ is an $M$-$N$ correspondence and $H_0$ is a subspace of $H$ with $MH_0N \subseteq H_0$.  Further suppose that $\xi'_i \rightarrow \xi$, where $\xi'_i \in H_0$ is $L$-bounded for each $i \in \mathbf{N}$ and $\xi \in H$ is $K$-bounded.  Then there are $K$-bounded $\xi_i \in H_0$ such that $\xi_i \rightarrow \xi$.
\end{lem}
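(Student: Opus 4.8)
The plan is to mirror the proof of Lemma~\ref{KDTultra}, the one essential new ingredient being that the hypothesis $\xi'_i\to\xi$ is genuine norm convergence (not mere convergence along an ultrafilter), which forces the associated densities to converge in $\|\cdot\|_2$. After rescaling we may assume $K=1$. For each $i$, Lemma~\ref{cutoff} produces $b'_i\in M_+$ with $\|b'_i\|\le L$ and $\langle a\xi'_i,\xi'_i\rangle=\tau_M(ab'_i)$ for all $a\in M_+$, and likewise $d'_i\in N_+$ on the right; similarly $\xi$ yields $b\in M_+$, $d\in N_+$ with $\|b\|,\|d\|\le 1$. Setting $f(t):=\min\{1,t^{-1/2}\}$, $b_i:=f(b'_i)$, $d_i:=f(d'_i)$ and $\xi_i:=b_i\xi'_i d_i$, the invariance $MH_0N\subseteq H_0$ gives $\xi_i\in H_0$, and the functional-calculus computation already carried out in Lemma~\ref{KDTultra} shows each $\xi_i$ is $1$-bounded. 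It remains to see that $\xi_i\to\xi$.

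Since $\xi'_i\to\xi$, it suffices to prove $\|\xi'_i-\xi_i\|\to 0$. Exactly as in Lemma~\ref{KDTultra}, $\|\xi'_i-\xi_i\|\le\|(1-b_i)\xi'_i\|+\|\xi'_i(1-d_i)\|$, and left $L$-boundedness of $\xi'_i$ gives $\|(1-b_i)\xi'_i\|^2=\tau_M((1-b_i)^2 b'_i)$. The scalar function $t\mapsto(1-f(t))^2 t$ vanishes on $[0,1]$ and is bounded above by $(t-1)_+$ on $[0,L]$, so $\|(1-b_i)\xi'_i\|^2\le\tau_M((b'_i-1)_+)$, and the right-hand term is handled symmetrically. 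Thus the whole problem reduces to showing $\tau_M((b'_i-1)_+)\to 0$ together with its analogue for $d'_i$.

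The heart of the matter, and the step where the single-sequence hypothesis does the real work, is that $b'_i\to b$ in $\|\cdot\|_2$; this is exactly the place where norm convergence of the vectors is indispensable, since for a merely weakly convergent sequence of densities the $2$-norm convergence can fail. I would establish it in the Hilbert space $L^2(M,\tau_M)$ by combining weak convergence with convergence of norms. Weak convergence $b'_i\to b$ follows from $\tau_M(ab'_i)=\langle a\xi'_i,\xi'_i\rangle\to\langle a\xi,\xi\rangle=\tau_M(ab)$ for all $a\in M$, using continuity of the inner product and the bound $\|b'_i\|\le L$. For the norms one writes $\tau_M((b'_i)^2)=\langle b'_i\xi'_i,\xi'_i\rangle$ and peels off error terms that vanish because $\xi'_i\to\xi$ in norm, each controlled by $\|\xi'_i-\xi\|$ times a uniformly bounded vector such as $b'_i\xi'_i$ or $b'_i\xi$; this rewrites $\tau_M((b'_i)^2)$ as $\tau_M(b'_i b)+o(1)$, which tends to $\tau_M(b^2)$ by the weak convergence just established. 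Hence $\|b'_i\|_2\to\|b\|_2$, and weak convergence together with convergence of norms yields $\|b'_i-b\|_2\to 0$.

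Finally, from $\|b'_i-b\|_2\to 0$ and $b\le 1$ I would deduce $\tau_M((b'_i-1)_+)\to 0$: writing $p_i:=\mathbf{1}_{(1,\infty)}(b'_i)$ for the spectral projection, one has $\tau_M((b'_i-1)_+)=\tau_M(p_i(b'_i-b))+\tau_M(p_i(b-1))$, where the second term is $\le 0$ since $b-1\le 0$, and the first is $\le\tau_M(p_i)^{1/2}\|b'_i-b\|_2\le\|b'_i-b\|_2\to 0$ by Cauchy--Schwarz. The same argument on the right gives $\tau_N((d'_i-1)_+)\to 0$, which completes the proof. The main obstacle is precisely this $\|\cdot\|_2$-convergence of the densities; everything else is the functional-calculus bookkeeping already present in Lemma~\ref{KDTultra}.
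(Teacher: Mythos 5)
Your proof is correct and takes essentially the same approach as the paper, which gives no separate argument for this lemma beyond the remark that it is ``a special case of the previous lemma (and its proof),'' i.e.\ of Lemma \ref{KDTultra}: you run exactly that construction ($b_i=f(b'_i)$, $d_i=f(d'_i)$, $\xi_i=b_i\xi'_id_i$) in the sequential setting. The details you supply that the paper leaves implicit --- the $\|\cdot\|_2$-convergence $b'_i\to b$ of the densities obtained from weak convergence plus convergence of norms, and the resulting estimate $\tau_M((b'_i-1)_+)\to 0$ --- all check out.
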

%
%
%



The following two Propositions are the main results of this subsection.
\begin{prop} \label{mainprop} Assume that $H$ is an $M$-$N$ correspondence and that $H_0$ is a subspace of $H$ with $MH_0N \subseteq H_0$.  Further suppose that $\xi_i \rightarrow \xi$, where $\xi_i \in H_0$ is $K_i$-bounded for each $i \in \mathbf{N}$ and $\xi \in H$ is $K$-bounded.  Then there are $K$-bounded $\eta_i \in H_0$ such that $\eta_i \rightarrow \xi$.
\end{prop}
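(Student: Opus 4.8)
The plan is to reduce to the uniformly bounded situation already handled in Lemma \ref{KDT}. Concretely, I would truncate each $\xi_i$ at a \emph{fixed} spectral level $L>K$ to produce vectors $\eta_i\in H_0$ that are uniformly $L$-bounded and still converge to $\xi$; feeding these into Lemma \ref{KDT} then yields the desired $K$-bounded approximants. Since $\xi$ is $K$-bounded, fix $L>K$ (say $L=K+1$). Let $b_{\xi_i}\in M_+$ and $d_{\xi_i}\in N_+$ be the left and right densities of $\xi_i$ supplied by Lemma \ref{cutoff}, and let $b_\xi,d_\xi$ be those of $\xi$, so that $\|b_\xi\|,\|d_\xi\|\le K$. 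With $f_L(t)=\min\{t,L\}$ as in Lemma \ref{cutoff}(b), set $h_L(t):=\min\{1,\sqrt{L/t}\}$, so that $h_L(t)^2t=f_L(t)$, and define
\[
\eta_i := h_L(b_{\xi_i})\,\xi_i\, h_L(d_{\xi_i}).
\]
Because $h_L(b_{\xi_i})\in M$, $h_L(d_{\xi_i})\in N$, and $MH_0N\subseteq H_0$, we have $\eta_i\in H_0$; and since $h_L(b_{\xi_i})^2b_{\xi_i}=f_L(b_{\xi_i})\le L\cdot 1$, a short computation gives $\langle a\,h_L(b_{\xi_i})\xi_i, h_L(b_{\xi_i})\xi_i\rangle=\tau_M(a f_L(b_{\xi_i}))\le L\tau_M(a)$ for $a\in M_+$, and symmetrically on the right, so that by Lemma \ref{basic}(3) each $\eta_i$ is $L$-bounded. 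Thus $(\eta_i)$ is a uniformly $L$-bounded sequence in $H_0$.

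The crux is to prove $\eta_i\to\xi$, and this is the step I expect to be the main obstacle: it is a noncommutative uniform-integrability phenomenon, asserting that for \emph{fixed} $L$ the large-spectral part of $b_{\xi_i}$ becomes negligible precisely because $\xi$ is $K$-bounded with $K<L$, even though the constants $K_i$ may blow up. Splitting $\eta_i-\xi$ into a left- and a right-truncation error and using that the multipliers are contractions, it suffices (the "kept" part $h_L(b_{\xi_i})(\xi_i-\xi)$ having norm $\le\|\xi_i-\xi\|\to 0$) to show $\|(1-h_L(b_{\xi_i}))\xi\|\to 0$ and its right analogue. Let $p_i:=\mathbf{1}_{(L,\infty)}(b_{\xi_i})$. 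From $b_{\xi_i}p_i\ge Lp_i$ and $\tau_M(b_\xi p_i)\le K\tau_M(p_i)$ one gets, after adding and subtracting $\langle p_i\xi_i,\xi\rangle$,
\[
(L-K)\,\tau_M(p_i)\le \tau_M\!\big((b_{\xi_i}-b_\xi)p_i\big)=\langle p_i\xi_i,\xi_i\rangle-\langle p_i\xi,\xi\rangle\le \|\xi_i-\xi\|\,(\|\xi_i\|+\|\xi\|),
\]
the middle equality being the defining property of the densities. Hence $\tau_M(p_i)\to 0$, and since $(1-h_L(t))^2\le \mathbf{1}_{(L,\infty)}(t)$ pointwise we conclude
\[
\|(1-h_L(b_{\xi_i}))\xi\|^2=\tau_M\!\big((1-h_L(b_{\xi_i}))^2 b_\xi\big)\le \tau_M(p_i b_\xi)\le K\,\tau_M(p_i)\to 0.
\]
The right-hand estimate is identical with $d_{\xi_i},d_\xi,\tau_N$ in place of $b_{\xi_i},b_\xi,\tau_M$, so $\eta_i\to\xi$.

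Finally, $(\eta_i)$ is a uniformly $L$-bounded sequence in $H_0$ converging to the $K$-bounded vector $\xi$, so Lemma \ref{KDT} produces $K$-bounded vectors in $H_0$ converging to $\xi$; after renaming, these are the required $\eta_i$. Everything nontrivial is concentrated in the vanishing of $\tau_M(p_i)$ (and of its right counterpart): the entire argument hinges on the strict gap $L-K>0$ between the truncation level and the bound on $\xi$, which forces the tail spectral projections of $b_{\xi_i}$ to shrink regardless of how the individual bounds $K_i$ behave.
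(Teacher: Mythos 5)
Your proof is correct, and it takes a genuinely different and considerably shorter route than the paper's. The paper truncates the densities $b_i$ at level $R=2K$ and controls the error only against the ultralimit projection $\pi^{(2K)}=\lim_\cU\pi^{(2K)}_i$, obtaining $\tau(x\pi^{(2K)})\le\frac{1}{2}\tau(x)$; since the truncated sequence then converges (after passing to convex combinations via Mazur's lemma) only to the compression $(1-\pi^{(2K)})\xi$ rather than to $\xi$ itself, that argument must be iterated infinitely often, halving the bound on the remainder $\pi^{i}\xi$ at each stage and reassembling $\xi$ through a telescoping series, and then the whole scheme is run a second time on the right. What lets you avoid all of this is the estimate
\[
(L-K)\,\tau_M(p_i)\;\le\;\ip{p_i\xi_i}{\xi_i}-\ip{p_i\xi}{\xi}\;\le\;\|\xi_i-\xi\|\bigl(\|\xi_i\|+\|\xi\|\bigr),
\qquad p_i=\chi_{(L,\infty)}(b_{\xi_i}),
\]
which shows that for a \emph{fixed} level $L>K$ the traces of the tail spectral projections tend to $0$ as $i\to\infty$ --- a strictly stronger statement than the paper's $\tau(x\pi^{(R)})\le\frac{K}{R}\tau(x)$, and exactly what is needed for the single two-sided truncation $\eta_i=h_L(b_{\xi_i})\,\xi_i\,h_L(d_{\xi_i})$ to converge to $\xi$ itself. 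The supporting verifications all check out: $h_L(t)^2t=f_L(t)\le L$ gives the uniform $L$-bound; the three-term splitting of $\eta_i-\xi$ uses only that the multipliers are contractions commuting with the opposite action; and the pointwise inequality $(1-h_L)^2\le\chi_{(L,\infty)}$ converts $\tau_M(p_i)\to0$ into $\|(1-h_L(b_{\xi_i}))\xi\|\to0$. The concluding appeal to Lemma \ref{KDT} coincides with the paper's final step and is not circular, since that lemma is established beforehand independently of Proposition \ref{mainprop}. Your argument gains brevity, an explicit rate, and freedom from ultrafilters, weak compactness, and Mazur's lemma; about the only thing the paper's longer route buys is that its intermediate Claim is reused verbatim in the proof of Proposition \ref{mainpropultra}, whereas with your proof that later argument would instead cite your displayed estimate.
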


\begin{proof} Fix $b_i \in M_+$ and $b \in M_+$ such that, for all $c \in M_+$, we have
\[
\langle c\xi_i, \xi_i \rangle = \tau_M(cb_i) \text{ and } \langle c\xi, \xi \rangle = \tau_M(cb).
\]
Fix $R>K$.  As in Lemma \ref{cutoff}, let $\pi_i^{(R)}$ denote the support projection of $b_i-b_i^{(R)}$.  Fix a non-principal ultrafilter $\cU$ on $\mathbf{N}$ and set $\pi^{(R)} := \lim_\cU \pi^{(R)}_i$.  Now observe that, for any $x \in M_+$, we have
\[
\begin{split}
R\tau_M(x\pi^{(R)}) = \tau(xR\pi^{(R)}) = \lim_\cU\tau(xR\pi_i^{(R)})& \leq \lim_\cU\tau(x\pi^{(R)}_ib_i) \leq \lim_\cU\tau(xb_i)\\
& = \tau(xb) \leq K\tau(x),
\end{split}
\]
whence it follows that $\tau(x\pi^{(R)}) \leq \displaystyle{\frac{K}{R}}\tau(x)$.  Setting $R := 2K$, we then have: for all $x \in M_+$, 
\[
\tau(x\pi^{(2K)}) \leq \displaystyle{\frac{1}{2}}\tau(x).
\]
Since a subsequence of $1 - \pi^{(2K)}_i$ converges weakly to $1 - \pi^{(2K)}$, we have that a subsequence of $(1-\pi^{(2K)}_i)\xi_i$ converges weakly to $(1-\pi^{(2K)})\xi$.  By Mazur's lemma\footnote{Mazur's lemma says that for normed linear spaces, for convex sets, being weakly closed and norm closed are the same.  See V.1.5 of \cite{Conway}}, we can form convex combinations $\mu_i$ from the sequence $(1-\pi^{(2K)}_i)\xi_i$ which converge in norm to $(1-\pi^{(2K)})\xi$.  By Lemma \ref{cutoff}(b), each $(1-\pi^{(2K)}_i)\xi_i$ is left $2K$-bounded.  By Lemma \ref{basic}, each $\mu_i$ is left $2K$-bounded and right bounded with right bound no larger than the maximum of the right bounds included in the convex combination.

\noindent{\bf Claim:} Suppose $\mu$ is $(L_1,L_2)$-bounded.  Then $\pi^{(2K)}\mu$ is $(L_1/2,L_2/2)$-bounded.

\noindent{\bf Proof of Claim:}  To see the right bound, suppose that $\psi\colon N \rightarrow M$ is a c.p.\ map such that $\overline{M\mu N} \cong  {_\psi H}$ via the map which sends $\mu$ to $1 \otimes 1$.  We compute the right bound of $\pi^{(2K)} \otimes 1$ in $_\psi H$: for $c \in N_+$,
\[
\begin{split}
\ip{\pi^{(2K)} \otimes c}{\pi^{(2K)} \otimes 1} &= \tau_M(\psi(c)\pi^{(2K)}) \leq \frac{1}{2}\tau_M(\psi(c))\\
& = \frac{1}{2}\ip{\mu c}{\mu} \leq \frac{L_2}{2}\tau_N(c).
\end{split}
\]

For the left bound, choose $d \in M_+$ such that $\ip{c\mu}{\mu} = \tau_M(cd)$ for all $c \in M_+$.  By Lemma \ref{basic}, $Md^{1/2}$ is isometric to $M\mu$ via the map sending $d^{1/2}$ to $\mu$, whence it suffices to check the left bound of $\pi^{2K}d^{1/2}$ in $Md^{1/2}$.  To this end, fix $c \in M_+$ and observe that
\[
\begin{split}
\ip{c\pi^{(2K)}d^{1/2}}{\pi^{(2K)}d^{1/2}} & = \tau_M(d^{1/2}\pi^{(2K)}c\pi^{(2K)}d^{1/2}) = \tau_M(\pi^{(2K)}c\pi^{(2K)}d)\\
& \leq L_1\tau_M(c\pi^{(2K)}) \leq \frac{L_1}{2}\tau_M(c).
\end{split}
\]

This concludes the proof of the claim.

We now apply the above procedure to the $\frac{K}{2}$-bounded vector $\pi^{(2K)}\xi$ and the sequence $\pi^{(2K)}\xi_i$, which is still a sequence of $K_i$-bounded vectors.  For convenience, we relabel items.  We have a projection $\pi^0$ and vectors $\eta_{00}, \eta_{10}, \eta_{20}, \ldots \in H_0$ which are $(2K,K_i)$-bounded and such that:
\begin{enumerate}
\item $\eta_{i0} \rightarrow (1 - \pi^0)\xi$ and
\item $\pi^0\xi$ is $\displaystyle\frac{K}{2}$-bounded.
\end{enumerate}

We then inductively create projections $\pi^i$ and sequences $\eta_{0i}, \eta_{1i}, \eta_{2i}, \ldots \in H_0$ such that
\begin{enumerate}
\item $\pi^i \leq \pi^{i-1}$, 
\item $\eta_{ji}$ is left $2^{1-i}\cdot K$-bounded and right-bounded,
\item $\eta_{ji} \rightarrow (\pi^{i-1} - \pi^i)\xi$, and 
\item $\pi^i\xi$ is $\displaystyle\frac{K}{2^{i+1}}$-bounded.
\end{enumerate}

Here, $\pi^{-1}:=1$.  By (2) and (3), $\|(\pi^{i-1} - \pi^i)\xi\|\leq 2^{1-i}\cdot K$, whence the series $\sum_{i=0}^{\infty}(\pi^{i-1} - \pi^i)\xi$ converges to $\xi$.  By choosing appropriate subsequences, we can create, by Lemma \ref{basic}, bounded vectors $\mu_i$ which tend to $\xi$ which are moreover left $B$-bounded, where
\[
B:=\left ( \sum_{i=0}^\infty \sqrt{\frac{2K}{2^i}} \right )^2 = \frac{2K}{(\sqrt{2}-1)^2}.
\]

We wish to repeat the argument above on the right.  There are only two places in the above argument where the bound on the right was effected.  The first is when we choose convex combinations of bounded vectors.  As noted above, if at the j$^{th}$ stage, all the vectors are uniformly right-bounded, then this will be true of the sequence $\eta_{0i},\eta_{1i},\eta_{2i},\ldots$.  Thus, when repeating the above argument on the right, at the i$^{th}$ stage, we can assume that the vectors are $\displaystyle\frac{B}{2^i}$-bounded on the left.

The other place where the right bound could be effected is in the creation of the final sequence.  If we knew that the right bound for each $\eta_{ji}$ was $\displaystyle\frac{B}{2^i}$, then the resulting sequence will have uniform upper bound on the right of
\[
\left ( \sum_{i=0}^\infty \left (\sqrt{\frac{B}{2^i}}\right ) \right )^2 = \frac{B}{(\sqrt{2}-1)^2}.
\]
Hence, after repeating the above process on the right, we find a sequence $\eta_i \rightarrow \xi$ such that each $\eta_i$ is $\left (\displaystyle\frac{B}{(\sqrt{2}-1)^2},B \right )$-bounded.  

The conclusion of the proposition now follows from Lemma \ref{KDT}.
\end{proof}

We can now state an ultraproduct version of the previous proposition.

\begin{prop}\label{mainpropultra}
Suppose that $\xi\in \prod_\cU H_i$ is $K$-bounded.  Then $\xi$ is uniformly $K$-bounded.
\end{prop}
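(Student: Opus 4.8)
The plan is to reduce Proposition \ref{mainpropultra} to the already-proven Proposition \ref{mainprop} by exhibiting a suitable approximating sequence of uniformly bounded vectors and then invoking Lemma \ref{KDTultra}. The key point is that Lemma \ref{KDTultra} already handles the case of a $K$-bounded vector that is additionally \emph{uniformly} bounded; so the entire content of what remains is to produce, from an arbitrary $K$-bounded $\xi$, a uniformly bounded vector that is $K$-bounded (or nearly so) and to which the lemma applies.

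First I would use the density fact (Fact, \cite{Popa-book} Lemma 13.1.11, together with Corollary \ref{dirsum}) that bounded vectors are dense in $\prod_{c\cU}H_i$, and more specifically that uniformly bounded vectors are dense: since $\xi=(\xi_i)^\bullet$ for representatives $\xi_i\in H_i$, and bounded vectors are dense in each $H_i$, one can approximate $\xi$ in norm by vectors of the form $(\eta_i)^\bullet$ with each $\eta_i$ bounded in $H_i$ (with bounds depending on the term, i.e.\ $K_i$-bounded, but not uniformly). This gives a sequence $\zeta_j\to\xi$ in $\prod_{c\cU}H_i$ where each $\zeta_j$ is uniformly bounded (with a bound $L_j$ possibly growing in $j$) and $\xi$ is $K$-bounded. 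This is exactly the hypothesis configuration of Proposition \ref{mainprop}, applied with $H=\prod_{c\cU}H_i$, $H_0=$ the subspace of uniformly bounded vectors (which satisfies $MH_0N\subseteq H_0$ by the Lemma on uniformly bounded vectors), and the sequence $(\zeta_j)$.

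Applying Proposition \ref{mainprop} then yields $K$-bounded vectors $\eta_j\in H_0$ (i.e.\ $\eta_j$ uniformly bounded and $K$-bounded) with $\eta_j\to\xi$. Thus $\xi$ is now a norm limit of vectors that are simultaneously $K$-bounded and uniformly bounded. At this stage I would like to conclude directly, but a limit of uniformly $K$-bounded vectors need not obviously remain \emph{uniformly} $K$-bounded in the representative sense; however, the set of uniformly $K$-bounded vectors is closed (by the Lemma on uniformly bounded vectors, part (2)). So the final step is to upgrade each $\eta_j$ from ``$K$-bounded and uniformly bounded'' to ``uniformly $K$-bounded'' via Lemma \ref{KDTultra}, obtaining uniformly $K$-bounded $\eta_j\to\xi$, and then use closedness of the uniformly $K$-bounded vectors to conclude that $\xi$ itself is uniformly $K$-bounded.

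The main obstacle I anticipate is the very first density step: justifying that the uniformly bounded vectors are dense in the sense needed, i.e.\ producing $\zeta_j=(\eta_{i}^{(j)})^\bullet$ with each coordinate $\eta_i^{(j)}$ bounded (uniformly in $i$ for fixed $j$) and $\zeta_j\to\xi$. One must be careful that the per-coordinate approximations can be chosen $\cU$-measurably and with a coordinate-independent bound $L_j$ for each fixed $j$, so that $\zeta_j$ genuinely lands in the uniformly bounded subspace. This can be arranged by fixing representatives $\xi=(\xi_i)^\bullet$ and, for each $j$, applying the per-correspondence density of bounded vectors (the Fact) inside each $H_i$ to within $1/j$, then truncating the resulting bounds at some level $L_j$ large enough that a $\cU$-large set of coordinates is controlled; the continuity of the actions at $\xi$ guarantees the limit lies in $\prod_{c\cU}H_i$. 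Once this approximating sequence is in hand, the remaining deductions are formal applications of the two preceding lemmas and propositions.
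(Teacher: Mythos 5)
Your overall architecture coincides with the paper's: approximate $\xi$ in norm by uniformly bounded vectors, feed that sequence into Proposition \ref{mainprop} with $H_0$ the subspace of uniformly bounded vectors to obtain approximants that are simultaneously uniformly bounded and $K$-bounded, upgrade each of these to uniformly $K$-bounded via Lemma \ref{KDTultra}, and conclude by closedness of the set of uniformly $K$-bounded vectors. Those last three steps are exactly as in the paper and are correct as you state them.

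The gap is in your first step, and you have in fact put your finger on it without resolving it. Applying density of bounded vectors inside each coordinate $H_i$ produces representatives $\eta_i^{(j)}$ that are $K_{i,j}$-bounded with $K_{i,j}$ completely uncontrolled in $i$; such a vector $(\eta_i^{(j)})^\bullet$ is \emph{not} uniformly bounded (your own parenthetical ``bounds depending on the term, not uniformly'' contradicts the very next sentence), and your proposed fix --- choose $L_j$ ``large enough that a $\cU$-large set of coordinates is controlled'' --- fails because no such $L_j$ need exist: take $I=\mathbf{N}$, $K_{i,j}=i$, and $\cU$ nonprincipal. What is actually needed is to truncate the \emph{vectors} rather than select a level controlling the bounds: after arranging (by per-coordinate density and a change of representatives) that each $\xi_i$ is bounded, replace $\xi_i$ by $(1-\pi_i^{(R)})\xi_i$ using the spectral cutoff projections of Lemma \ref{cutoff}(b), which makes every coordinate left $R$-bounded, and then \emph{prove} that the error $\|\xi-((1-\pi_i^{(R)})\xi_i)^\bullet\|$ tends to $0$ as $R\to\infty$. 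That estimate (of order $R^{-1/2}$) is precisely where the hypothesis that $\xi$ is $K$-bounded enters, via $\lim_\cU\tau_M(xb_i)=\tau_M(xb)\leq K\tau_M(x)$ and the computation in the proof of Proposition \ref{mainprop} giving $\tau_M(x\pi^{(R)})\leq (K/R)\tau_M(x)$; one must then repeat the truncation on the right. Without this quantitative control there is no reason the truncated vectors converge to $\xi$, so your argument never establishes that $\xi$ is a limit of uniformly bounded vectors --- and that is the one piece of genuinely new content in this proposition beyond Proposition \ref{mainprop} and Lemma \ref{KDTultra}.
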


\begin{proof} Without loss of generality, we may assume that $\xi=(\xi_i)^\bullet$ is  $1$-bounded and that each $\xi_i$ is bounded. Fix $R>1$ and set $\eta_i = (1-\pi_i^{(R)})\xi_i$, using all of the notations from the previous proposition.  Note that $\eta:=(\eta_i)^\bullet$ is bounded, uniformly left $R$-bounded, and, by the proof of the Claim in the previous proposition, we have $\|\eta-\xi\|\leq \sqrt{1/R}\|\xi\|$. Now perform the same procedure on the right to $\eta$, obtaining $\zeta$, which is now uniformly $R$-bounded and with $\|\zeta-\eta\|\leq \sqrt{1/R} \|\eta\|$, whence $\|\zeta-\xi\|\leq \sqrt{1/R}(1+\sqrt{1/R})\|\xi\|$.  
Letting $R\to \infty$, we see that $\xi$ is a limit of uniformly bounded vectors.  By Proposition \ref{mainprop}, we get that $\xi$ is a limit of uniformly bounded $K$-bounded vectors.  However, by Lemma \ref{KDTultra}, a uniformly bounded $K$-bounded vector is uniformly $K$-bounded; since the set of uniformly $K$-bounded vectors is closed, we get that $\xi$ is uniformly $K$-bounded, as desired.
\end{proof}

\begin{cor}\label{uniformdense}
The correspondence ultraproduct $\prod_{c\cU} H_i$ is the closure of the subspace consisting of the uniformly bounded vectors.
\end{cor}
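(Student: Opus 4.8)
The plan is to prove the two inclusions separately, with essentially all of the real content already supplied by Proposition \ref{mainpropultra}.

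For the inclusion of the closure of the uniformly bounded vectors into $\prod_{c\cU}H_i$, I would argue directly from the bookkeeping lemmas. By part (1) of the lemma on uniformly bounded vectors, every uniformly $K$-bounded vector is $K$-bounded, and it was already noted that a $K$-bounded element of $\prod_\cU H_i$ necessarily lies in $\prod_{c\cU}H_i$. Thus every uniformly bounded vector belongs to $\prod_{c\cU}H_i$; since $\prod_{c\cU}H_i$ is a closed subspace of $\prod_\cU H_i$, it contains the closure of the set of uniformly bounded vectors.

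For the reverse inclusion, recall that $\prod_{c\cU}H_i$ is by definition the closure of the set of vectors $\xi$ at which the left and right actions are continuous, so it suffices to show that every such continuity point lies in the closure of the uniformly bounded vectors. The key observation is that continuity at $\xi$ is nothing other than boundedness of $\xi$: by the lemma characterizing left $K$-bounded vectors, the left multiplication map $L_\xi$ extends to a bounded operator on $L^2(M,\tau_M)$ precisely when $\xi$ is left $K$-bounded for some $K$ (namely $K = \|L_\xi\|^2$), and symmetrically $R_\xi$ extends to a bounded operator on $L^2(N,\tau_N)$ precisely when $\xi$ is right $L$-bounded for some $L$; this equivalence rests only on the inner product and $2$-norm and so is insensitive to the failure of normality in $\prod_\cU H_i$. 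Hence a continuity point $\xi$ is $(K,L)$-bounded, and in particular $\max(K,L)$-bounded, for suitable $K, L$. Proposition \ref{mainpropultra} then upgrades this to the statement that $\xi$ is uniformly $\max(K,L)$-bounded, so $\xi$ is itself a uniformly bounded vector.

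Combining the two steps, the set of continuity points coincides exactly with the set of uniformly bounded vectors, and taking closures yields the corollary. I do not expect a genuine obstacle at this stage: the real difficulty, converting $K$-boundedness in the Hilbert space ultraproduct into uniform $K$-boundedness, was already overcome in Proposition \ref{mainpropultra}, and the present statement is largely a matter of identifying the defining continuity condition of the correspondence ultraproduct with boundedness and then invoking that proposition.
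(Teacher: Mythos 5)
Your proof is correct and follows the route the paper intends: the corollary is stated without proof as an immediate consequence of Proposition \ref{mainpropultra}, and your identification of the continuity points with the bounded vectors (via the characterization of left/right boundedness in terms of $L_\xi$ and $R_\xi$ extending to $L^2$) followed by an application of that proposition is exactly the missing deduction.
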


\subsection{Freeing the von Neumann algebras}
We now remove the assumption that we consider correspondences over a fixed pair of algebras.  Instead, consider a family $(M_i,H_i,N_i)$, where $M_i$ and $N_i$ are tracial von Neumann algebras and $H_i$ is an $M_i$-$N_i$ correspondence, and an ultrafilter $\cU$ on $I$.  Set $M:=\prod_\cU M_i$, $N:=\prod_\cU N_i$, and $H:=\prod_\cU H_i$.  Unlike the case when the von Neumann algebras are fixed, it is not even clear that there are well-defined maps $(a,\xi)\mapsto a\xi$ and $(b,\xi)\mapsto \xi b$ from $M\times H$ and $N\times H$ to $H$.  More precisely, if $(a_i)^\bullet=(a_i')^\bullet$ and $(\xi_i)^\bullet=(\xi_i')^\bullet$, it is not necessarily true that $(a_i\xi)^\bullet=(a_i'\xi_i')^\bullet$.  Thus, in this case, we call $\xi\in H$ \textbf{a point of continuity} if:
\begin{enumerate}
\item the assignments $a\mapsto a\xi$ and $b\mapsto \xi b$ are well-defined, that is, independent of the choice of representatives for $a$, $b$, and $\xi$;
\item the maps $a\mapsto a\xi:M\to H$ and $b\mapsto \xi b:N\to H$ extend to bounded linear maps on $L^2(M,\tau_M)$ and $L^2(N,\tau_N)$.
\end{enumerate}

\begin{lem}
The closure of the set of points of continuity in $H$ form an $M$-$N$ correspondence, called the \textbf{correspondence ultraproduct} of the family $(M_i,H_i,N_i)$, denoted once again by $\prod_{c\cU}H_i$.
\end{lem}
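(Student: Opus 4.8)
The plan is to let $H_0 \subseteq H$ denote the set of points of continuity and to verify that $\overline{H_0}$, equipped with the induced actions, satisfies every clause in the definition of an $M$-$N$ correspondence. Concretely, I would check, in order: that $H_0$ is a linear subspace; that $H_0$ is invariant under the (already well-defined) left and right actions, so that $\overline{H_0}$ is too; that for $a\in M$ and $b\in N$ the operators $\xi\mapsto a\xi$ and $\xi\mapsto \xi b$ are bounded on $H_0$ (hence extend to $\overline{H_0}$) and assemble into commuting $*$-representations $\pi_M$ and $\pi_N$; and finally that these representations are normal. The algebraic identities ($*$-preservation, multiplicativity, and commutation of the two actions) are all read off at the level of representatives --- e.g. $\langle a\xi,\eta\rangle=\lim_\cU\langle a_i\xi_i,\eta_i\rangle=\langle \xi,a^*\eta\rangle$ and $(a\xi)b=(a_i\xi_i b_i)^\bullet=a(\xi b)$ --- so the content is concentrated in invariance, boundedness, and normality.

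For the linear subspace and boundedness claims, the key observation is that ``the left action is well defined at $\eta=(\eta_i)^\bullet$'' is equivalent to the implication: whenever $(c_i)$ is an operator-norm-bounded sequence with $\lim_\cU\|c_i\|_2=0$, then $\lim_\cU\|c_i\eta_i\|=0$ (and symmetrically on the right). Subspace closure then follows from $\|c_i(\xi_i+\eta_i)\|\le\|c_i\xi_i\|+\|c_i\eta_i\|$ together with $L_{\xi+\eta}=L_\xi+L_\eta$. For boundedness, I would choose a representative $(a_i)$ of $a$ with $\lim_\cU\|a_i\|=\|a\|$, as one may in the tracial ultraproduct, and compute $\|a\xi\|^2=\lim_\cU\|a_i\xi_i\|^2\le\lim_\cU\|a_i\|^2\|\xi_i\|^2=\|a\|^2\|\xi\|^2$, so $\pi_M(a)$ is bounded by $\|a\|$; the same works on the right.

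The main obstacle is invariance, i.e. showing $a\xi\in H_0$ and $\xi b\in H_0$ for $\xi\in H_0$. Verifying that the left action is well defined and extends boundedly at $a\xi$ is routine, since $a'(a\xi)=(a'a)\xi$ lets me reduce to $\xi$ and factor $L_{a\xi}$ as the bounded extension of $L_\xi$ composed with right multiplication by $a$ on $L^2(M,\tau_M)$. The delicate point is the right action at $a\xi$: checking that $b\mapsto (a\xi)b$ is independent of the representative of $a$ amounts to showing $\lim_\cU\|c_i\xi_i b_i\|=0$ whenever $\lim_\cU\|c_i\|_2=0$, and at first glance this looks as though it requires $\xi b\in H_0$ already, a circularity between the two sides. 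This is resolved by the simple estimate $\|c_i\xi_i b_i\|=\|(c_i\xi_i)b_i\|\le\|b_i\|\,\|c_i\xi_i\|$: since the right action $\pi_{N_i}$ is contractive in $\|b_i\|$ and $\lim_\cU\|c_i\xi_i\|=0$ by well-definedness of the \emph{left} action at $\xi$ itself, the product tends to $0$. Boundedness of $R_{a\xi}$ then follows from $\|a(\xi b)\|=\lim_\cU\|a_i\xi_i b_i\|\le\|a\|\,\|\xi b\|\le\|a\|\,\|R_\xi\|\,\|b\|_2$ (with $R_\xi$ denoting the bounded extension). The statement $\xi b\in H_0$ is entirely symmetric.

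Finally, for normality, the clean point is that a point of continuity factors through the standard form: for $\xi\in H_0$ one has $\langle a\xi,\xi\rangle=\langle L_\xi(\hat a),\xi\rangle=\langle \hat a,L_\xi^*\xi\rangle_{L^2(M,\tau_M)}$, which is a vector functional for the normal standard representation $M\hookrightarrow B(L^2(M,\tau_M))$ and hence normal. To pass to $\overline{H_0}$, I note that $\|\omega_{\xi}-\omega_{\eta}\|\le (\|\xi\|+\|\eta\|)\|\xi-\eta\|$ for the functionals $\omega_\zeta(a)=\langle a\zeta,\zeta\rangle$, so these converge in norm; since the predual $M_*$ is norm-closed in $M^*$, every such functional on $\overline{H_0}$ is normal, and polarization upgrades this to all matrix coefficients. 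The same argument gives normality of $\pi_N$. Assembling the pieces, $\overline{H_0}$ is a closed subspace carrying commuting normal representations of $M$ and $N^{\operatorname{op}}$, i.e. an $M$-$N$ correspondence, which is the assertion.
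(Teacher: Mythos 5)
Your proposal is correct. The paper offers no proof of this lemma at all (just as it only asserts, in the fixed-algebra case, that the induced actions on $\prod_{c\cU}H_i$ are normal), so there is no argument of the authors' to compare against; your write-up supplies exactly the verification that is being left to the reader. You correctly isolate the two points that are not pure bookkeeping. First, invariance of the set of points of continuity under the actions: your reformulation of well-definedness as ``$\lim_\cU\|c_i\|_{2}=0$ and $(c_i)$ operator-norm bounded imply $\lim_\cU\|c_i\eta_i\|=0$,'' together with the estimates $\|c_ia_i\|_2\le\|a_i\|\,\|c_i\|_2$ (via traciality) and $\|c_i\xi_ib_i\|\le\|b_i\|\,\|c_i\xi_i\|$, does break the apparent circularity between the left and right conditions at $a\xi$, and the factorization $L_{a\xi}=L_\xi\circ(\text{right mult.\ by }a)$ on $L^2(M,\tau_M)$ gives the required bounded extension. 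Second, normality: expressing $a\mapsto\langle a\xi,\xi\rangle$ as the matrix coefficient $a\mapsto\langle a\hat 1,L_\xi^*\xi\rangle$ of the standard representation for $\xi$ a point of continuity, and then passing to the closure via the norm estimate $\|\omega_\xi-\omega_\eta\|\le(\|\xi\|+\|\eta\|)\|\xi-\eta\|$ and norm-closedness of $M_*$ in $M^*$, is a clean and complete argument (polarization then handles all matrix coefficients, which suffices for normality of the representation). The only implicit input is that $\prod_\cU M_i$ is a tracial von Neumann algebra with normal trace, so that its standard representation is normal; this is standard and is already presupposed by the paper's definition of a point of continuity.
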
 

As before, we call $\xi\in H$ \textbf{uniformly $K$-bounded} if we can write $\xi=(\xi_i)^\bullet$ with each $\xi_i\in H_i$ $K$-bounded.

\begin{lem}

\

\begin{enumerate}
\item If $\xi\in H$ is uniformly $K$-bounded, then $\xi$ is a point of continuity and is a $K$-bounded vector of $\prod_{c\cU}H_i$.
\item The set of uniformly $K$-bounded vectors forms a closed set.
\item The set of uniformly bounded vectors is a subspace of $H$ invariant under the actions of $M$ and $N$.
\end{enumerate}
\end{lem}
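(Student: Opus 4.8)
The plan is to derive all three items from a single estimate supplied by left/right $K$-boundedness, together with the standard facts about the tracial ultraproduct $M=\prod_\cU M_i$: writing $\|\cdot\|_2$ for the $2$-norm and $\tau_M=\lim_\cU\tau_{M_i}$, one has $\|a\|_2^2=\lim_\cU\tau_{M_i}(a_i^*a_i)$, $(a_i)^\bullet=(a_i')^\bullet$ iff $\lim_\cU\|a_i-a_i'\|_2=0$, every positive element has a representative by $c_i\in(M_i)_+$, and every $a$ has a representative with $\|a_i\|\le\|a\|$. For (1), I would fix a $K$-bounded representative $\xi=(\xi_i)^\bullet$ and use the key inequality, valid for any $a_i\in M_i$,
\[
\|a_i\xi_i\|^2=\langle a_i^*a_i\xi_i,\xi_i\rangle\le K\,\tau_{M_i}(a_i^*a_i)=K\|a_i\|_2^2,
\]
together with its right-hand analogue $\|\xi_i b_i\|\le\sqrt K\,\|b_i\|_2$. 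Applying this to $a_i-a_i'$ gives $\lim_\cU\|(a_i-a_i')\xi_i\|=0$ whenever $(a_i)^\bullet=(a_i')^\bullet$; combined with $\|a_i'(\xi_i-\xi_i')\|\le\|a_i'\|\,\|\xi_i-\xi_i'\|$ this yields independence of the choice of representatives of both $a$ and $\xi$, so $a\mapsto a\xi:=(a_i\xi_i)^\bullet$ is well defined, and symmetrically on the right. The same displayed inequality gives $\|a\xi\|\le\sqrt K\,\|a\|_2$, so $L_\xi$ extends to a bounded map on $L^2(M,\tau_M)$ of norm $\le\sqrt K$ (and likewise $R_\xi$), whence $\xi$ is a point of continuity. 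Finally, choosing $c=(c_i)^\bullet\in M_+$ with $c_i\in(M_i)_+$,
\[
\langle c\xi,\xi\rangle=\lim_\cU\langle c_i\xi_i,\xi_i\rangle\le\lim_\cU K\tau_{M_i}(c_i)=K\tau_M(c),
\]
and symmetrically, so $\xi$ is a $K$-bounded vector of $\prod_{c\cU}H_i$.

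For (2) I would run the standard diagonalization showing that ``internal'' sets are closed. Given uniformly $K$-bounded $\zeta^{(n)}\to\zeta$ with $K$-bounded representatives $\zeta^{(n)}=(\zeta_i^{(n)})^\bullet$, pass to a subsequence with $\|\zeta^{(n)}-\zeta\|<2^{-n}$, so that $\lim_\cU\|\zeta_i^{(n)}-\zeta_i^{(m)}\|<2^{-n+1}$ for $n<m$. Setting $U_n:=\{i:\|\zeta_i^{(k)}-\zeta_i^{(l)}\|<2^{-k+1}\text{ for all }k<l\le n\}$, each $U_n\in\cU$ and these decrease; with $n(i):=\sup\{n:i\in U_n\}$ one has $\{i:n(i)\ge N\}\supseteq U_N\in\cU$, so $n(i)\to\infty$ along $\cU$. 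The diagonal choice $\eta_i:=\zeta_i^{(n(i))}$ consists of $K$-bounded vectors and is arranged so that $\|(\eta_i)^\bullet-\zeta^{(n)}\|\le 2^{-n+1}$ for every $n$, forcing $(\eta_i)^\bullet=\zeta$; hence $\zeta$ is uniformly $K$-bounded. This is routine (if slightly fiddly in the index bookkeeping, e.g.\ the measure-zero set where $n(i)=\infty$).

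For (3), closure under scalars, sums, and the module actions is coordinatewise via Lemma \ref{basic}. If $\xi=(\xi_i)^\bullet$ is uniformly $K$-bounded then $\lambda\xi=(\lambda\xi_i)^\bullet$ is uniformly $|\lambda|^2K$-bounded; if $\eta=(\eta_i)^\bullet$ is uniformly $L$-bounded, then by Lemma \ref{basic}(2) each $\xi_i+\eta_i$ is $\sqrt{K^2+L^2}$-bounded, so $\xi+\eta$ is uniformly bounded; and for $a=(a_i)^\bullet\in M$ with $\|a_i\|\le\|a\|$, Lemma \ref{basic}(3) shows each $a_i\xi_i$ is $\|a\|^2K$-bounded, so $a\xi=(a_i\xi_i)^\bullet$ (well defined by (1)) is uniformly bounded, and symmetrically for the $N$-action.

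I expect the main obstacle to be the well-definedness in (1): in the ``free'' setting the naive action $(a_i)^\bullet\cdot(\xi_i)^\bullet:=(a_i\xi_i)^\bullet$ genuinely fails in general, and it is exactly left $K$-boundedness — through the estimate $\|a_i\xi_i\|\le\sqrt K\,\|a_i\|_2$ bounding the $H$-norm by the $2$-norm rather than the operator norm — that rescues it. Once this inequality is established, boundedness of the extension, the $K$-boundedness assertion, and all of (3) are immediate, and (2) reduces to the routine diagonal argument above.
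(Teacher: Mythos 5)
Your proof is correct; the paper states this lemma without proof (its fixed-algebra analogue is explicitly labelled ``clear''), and your argument --- deriving everything from the estimate $\|a_i\xi_i\|\le\sqrt{K}\,\|a_i\|_2$, which is exactly what lets the actions factor through the $2$-norm and hence survive the passage to the ultraproduct --- is the intended routine verification. The diagonalization in (2) and the coordinatewise use of Lemma \ref{basic} in (3) are likewise the standard arguments, so there is nothing to add.
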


\begin{remark}
In the case that $M_i=M$ and $N_i=N$ for all $i$, it seems that we might have conflicting notation for the correspondence ultraproduct currently being discussed and the construction in the previous subsection.  We explain this abuse of notation now.  For the sake of discussion, let $\prod_{c\cU}^1H_i$ denote the correspondence ultraproduct from the previous subsection  and $\prod_{c\cU}^2H_i$ denote the correspondence ultraproduct from this subsection.  By the previous lemma, every uniformly $K$-bounded element of $\prod_{c\cU}^1H_i$ belongs to $\prod_{c\cU}^2H_i$; by Corollary \ref{uniformdense}, it follows that $\prod_{c\cU}^1H_i\subseteq \prod_{c\cU}^2H_i$.  However, it follows immediately from the definitions that $\prod_{c\cU}^2H_i\subseteq \prod_{c\cU}^1H_i$.  As a result, $\prod_{c\cU}^1H_i=\prod_{c\cU}^2H_i$ as Hilbert spaces.  In other words, when dealing with $M$-$N$ correspondences, the correspondence ultraproduct from Section \ref{corrultra} is simply the result of viewing the correspondence ultraproduct of this subsection, which is an $M^\cU$-$N^\cU$-correspondence, as merely an $M$-$N$ correspondence.
\end{remark}

As before, we have:

\begin{thm}\label{freeultra}
If $\xi\in H$ is $K$-bounded, then $\xi$ is uniformly $K$-bounded.  Consequently, $\prod_{c\cU} H_i$ is the closure of the subspace consisting of the uniformly bounded vectors.
\end{thm}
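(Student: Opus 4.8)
The plan is to transfer the proof of Proposition \ref{mainpropultra} to the present setting, the essential new feature being that the relevant densities now live in different algebras. Write $\mathcal{M} := \prod_\cU M_i$ and $\mathcal{N} := \prod_\cU N_i$, with traces $\tau_{\mathcal{M}} = \lim_\cU \tau_{M_i}$ and $\tau_{\mathcal{N}} = \lim_\cU \tau_{N_i}$, and set $H := \prod_{c\cU} H_i$, which by the previous lemma is a genuine $\mathcal{M}$-$\mathcal{N}$ correspondence. The key structural observation is that every result in this section concerning a \emph{single} fixed correspondence---Lemmas \ref{cutoff} and \ref{basic}, and in particular Proposition \ref{mainprop}---applies verbatim to $H$ regarded as a correspondence over $\mathcal{M}$ and $\mathcal{N}$. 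In tandem, one first records the free analogue of Lemma \ref{KDTultra}: a $K$-bounded, uniformly bounded vector is uniformly $K$-bounded. Its proof is the same as in the fixed case; the only thing to notice is that when $\xi=(\xi_i')^\bullet$ with each $\xi_i'$ uniformly $L$-bounded, the fiber densities $b_i'\in(M_i)_+$ and $d_i'\in(N_i)_+$ satisfy $\|b_i'\|,\|d_i'\|\le L$, so $(b_i')^\bullet\in\mathcal{M}$, and the identity $\tau_{\mathcal{M}}(c\,(b_i')^\bullet)=\lim_\cU \tau_{M_i}(c_i b_i')$ forces $(b_i')^\bullet$ to coincide with the ambient left density $b$ of $\xi$.

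Now fix a $K$-bounded $\xi\in H$; as in Proposition \ref{mainpropultra} we may take $K=1$ and, after passing to bounded representatives exactly as there, assume $\xi=(\xi_i)^\bullet$ with each $\xi_i$ bounded. The crucial step---and the main obstacle---is to exhibit $\xi$ as a norm-limit of uniformly bounded vectors, and here the cutoff must be carried out fiber-by-fiber. Letting $b_i\in(M_i)_+$ be the left density of $\xi_i$ from Lemma \ref{cutoff}, $\pi_i^{(R)}$ the support projection of $b_i-b_i^{(R)}$, and $\eta_i:=(1-\pi_i^{(R)})\xi_i$, Lemma \ref{cutoff}(b) makes each $\eta_i$ left $R$-bounded, so $\eta:=(\eta_i)^\bullet$ is uniformly left $R$-bounded. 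One cannot instead cut by a spectral projection of the ambient density $b$: in general $b\neq(b_i)^\bullet$, and indeed $(b_i)^\bullet$ need not even lie in $\mathcal{M}$, since the fiber densities $b_i\in(M_i)_+$ and the ambient density $b\in\mathcal{M}_+$ are linked only through the trace identity $\tau_{\mathcal{M}}(xb)=\lim_\cU \tau_{M_i}(x_i b_i)$. That identity is nonetheless precisely what powers the norm estimate: writing $\xi-\eta=\pi^{(R)}\xi$ with $\pi^{(R)}:=(\pi_i^{(R)})^\bullet\in\mathcal{M}$, the Chebyshev-type bound $\tau_{\mathcal{M}}(x\pi^{(R)})\le \tfrac1R\tau_{\mathcal{M}}(x)$ follows exactly as in Proposition \ref{mainprop} (the trace identity filling the single step that previously invoked the fixed algebra), whence $\|\xi-\eta\|\le\sqrt{1/R}\,\|\xi\|$. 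Repeating the construction on the right yields, for each $R$, a uniformly $R$-bounded vector within $O(\sqrt{1/R})$ of $\xi$, so $\xi$ is a norm-limit of uniformly bounded vectors. The care required to keep the fiber-wise data producing the uniform bounds aligned with the ambient projection furnishing the estimate is exactly the point where the varying algebras genuinely intervene.

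To conclude, let $H_0\subseteq H$ be the subspace of uniformly bounded vectors, which by the earlier lemma is invariant under the left and right actions of $\mathcal{M}$ and $\mathcal{N}$. Applying Proposition \ref{mainprop} to the $\mathcal{M}$-$\mathcal{N}$ correspondence $H$, the subspace $H_0$, and the sequence of uniformly bounded approximants just constructed (each $R_n$-bounded, converging to the $1$-bounded vector $\xi$), we obtain $1$-bounded vectors $\eta_n\in H_0$ with $\eta_n\to\xi$. Each $\eta_n$ is then both uniformly bounded and $1$-bounded, hence uniformly $1$-bounded by the free analogue of Lemma \ref{KDTultra}; since the set of uniformly $1$-bounded vectors is closed, $\xi$ is uniformly $1$-bounded, proving the first assertion. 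For the ``Consequently'', bounded vectors are dense in the correspondence $H$ (by Lemma 13.1.11 of \cite{Popa-book} applied to $H$), and the first assertion shows that every bounded vector is in fact uniformly bounded; therefore the uniformly bounded vectors are dense in $\prod_{c\cU}H_i$.
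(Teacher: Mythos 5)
Your proof is correct and follows exactly the route the paper takes: the paper's own proof of Theorem \ref{freeultra} consists of the single remark that the arguments of the previous subsection (Lemmas \ref{KDTultra}, \ref{KDT} and Propositions \ref{mainprop}, \ref{mainpropultra}) go through when the algebras are allowed to vary, and your write-up carries out that transfer, correctly isolating the one point of genuine content, namely that the fiberwise densities $b_i\in (M_i)_+$ are tied to the ambient density $b\in\mathcal{M}_+$ only through the trace identity, so the cutoff must be performed fiber-by-fiber while the Chebyshev estimate is run against the ambient projection $(\pi_i^{(R)})^\bullet$.
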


\begin{proof}
One needs merely to observe that the proofs in the previous subsection go through in this context as well.
\end{proof}

\subsection{Connection to Connes' ultraproduct}

In Section I.3 of his influential paper \cite{connes76}, Connes pointed out that the tracial ultraproduct of a family of factors standardly represented does not act on the ultraproduct of the Hilbert spaces and instead proposed that one consider a subspace of the Hilbert space ultraproduct on which the tracial ultraproduct of the factors does act.  It is the purpose of this subsection to show that the ultraproduct consider there which we will call the Connes' ultraproduct is a special case of our ultraproduct in the case that one is considering the trivial correspondences.  (Technically, \cite{connes76} only considers left modules rather than bimodules, but this does not affect anything that we will say below.)  We outline the construction from \cite{connes76}.

Let $(M_i  : i\in I)$ be a family of finite factors and let $\cU$ be an ultrafilter on $I$ with ultraproduct $M:=\prod_\cU M_i$.  (\cite{connes76} only considers countable families, but what is done there works in general.)  Set $H_i:=L^2(M_i)$ and $H:=\prod_\cU H_i$.  One now considers the subspace ${}_{\cU}\prod H_i$ of $H$ consisting of those vectors $\xi=(\xi_i)^\bullet$ for which:  given any $\epsilon>0$, there is $K>0$ such that 
\[
\lim_\cU \|\chi_K(|\xi_i|) |\xi_i|\|_2<\epsilon.
\]
  Here, $\chi_K$ denotes the characteristic function of the open interval $(K,\infty)$ in $\mathbb{R}$.  Since we can assume that $\xi_i \in M_i$, the meaning of $\chi_K(|\xi_i|)$ is as in the Borel functional calculus on $M_i$. One shows that ${}_{\cU}\prod H_i$ is a closed subspace of $H$ and $M$ acts on ${}_{\cU}\prod H_i$ in the standard way.

\begin{prop}
Connes' ultraproduct ${}_{\cU}\prod H_i$ is precisely equal to the correspondence ultraproduct $\prod_{c\cU}H_i$.
\end{prop}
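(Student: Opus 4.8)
The plan is to identify both sides with the $H$-norm closure of the uniformly bounded vectors and thereby reduce everything to a single tail estimate in one tracial von Neumann algebra. By Corollary~\ref{uniformdense} (in the form of Theorem~\ref{freeultra}), $\prod_{c\cU}H_i$ is exactly the closure of the uniformly bounded vectors. In the trivial correspondences $H_i=L^2(M_i)$, Example~\ref{boundedstandard} tells us that a vector is $K$-bounded precisely when it is the image of some $b\in M_i$ with $\|b\|^2\le K$; hence the uniformly bounded vectors of $\prod_{c\cU}H_i$ are exactly the $\xi=(\xi_i)^\bullet$ admitting a representative with $\sup_i\|\xi_i\|_{M_i}<\infty$. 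So the entire task is to show that Connes' condition on $\xi$ is equivalent to $\xi$ lying in the $H$-norm closure of this set of vectors coming from the operator-norm balls.

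The bridge between the two descriptions is the $L^2$-cutoff. For $\eta\in L^2(M_i)$ with polar decomposition $\eta=v|\eta|$, set $\eta^{(K)}:=v\,|\eta|\,\chi_{[0,K]}(|\eta|)\in M_i$; then $\|\eta^{(K)}\|\le K$ and $\|\eta-\eta^{(K)}\|_2=\|\chi_K(|\eta|)|\eta|\|_2$, which is exactly the quantity appearing in Connes' definition. The technical heart, and what I expect to be the main obstacle (though it is elementary), is the one-algebra estimate asserting that $L^2$-proximity to the operator-norm ball forces the spectral tail of $|\eta|$ to be small: if $\|\eta-b\|_2\le\delta$ for some $b$ with $\|b\|\le K$, then, writing $p:=\chi_{2K}(|\eta|)$, one combines the lower bound $\|\eta p\|_2\ge 2K\,\tau(p)^{1/2}$ with $\|\eta p\|_2\le\|bp\|_2+\|(\eta-b)p\|_2\le K\,\tau(p)^{1/2}+\delta$ to conclude $\|\eta-\eta^{(2K)}\|_2=\|\eta p\|_2\le 2\delta$. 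This single inequality controls the tail in terms of approximability by norm-bounded elements, and conversely the tail obviously produces such an approximant.

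With this in hand both inclusions fall out. Given $\xi=(\xi_i)^\bullet\in {}_{\cU}\prod H_i$, the vectors $(\xi_i^{(K)})^\bullet$ are uniformly bounded and satisfy $\|\xi-(\xi_i^{(K)})^\bullet\|^2=\lim_\cU\|\chi_K(|\xi_i|)|\xi_i|\|_2^2$, which Connes' condition drives to $0$ as $K\to\infty$; hence $\xi\in\prod_{c\cU}H_i$ by Corollary~\ref{uniformdense}. For the reverse inclusion, since ${}_{\cU}\prod H_i$ is closed it suffices to place each uniformly bounded vector inside it: if $\xi_i\in M_i$ with $\|\xi_i\|\le K_0$, then $\chi_K(|\xi_i|)=0$ for $K\ge K_0$, so Connes' condition holds for this representative. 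Finally, the tail estimate of the previous paragraph shows Connes' condition is independent of the chosen representative—two $\|\cdot\|_2$-equivalent sequences are simultaneously approximable by norm-$\le K$ elements and hence have simultaneously small tails—so both inclusions are legitimate and the two ultraproducts coincide.
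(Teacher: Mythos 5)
Your argument is correct, but it diverges from the paper's proof in the direction ${}_{\cU}\prod H_i\subseteq \prod_{c\cU}H_i$. The paper gets this inclusion for free from the ``maximality'' of the correspondence ultraproduct: Connes proved that $\prod_\cU M_i$ acts standardly on ${}_{\cU}\prod H_i$, so every vector there is (a limit of) points of continuity. You instead prove it by hand, applying the spectral cutoff $\eta\mapsto v\,|\eta|\,\chi_{[0,K]}(|\eta|)$ coordinatewise to exhibit any Connes vector as a norm-limit of uniformly bounded vectors and then invoking Corollary \ref{uniformdense}; this makes the identification symmetric (both sides become the closure of the images of the operator-norm balls, via Example \ref{boundedstandard}) and does not lean on Connes' theorem about the standard action --- indeed it re-derives the closedness of ${}_{\cU}\prod H_i$ and the standardness of the action as byproducts. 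The reverse inclusion you prove exactly as the paper does: uniformly $K$-bounded vectors have vanishing spectral tails, and one concludes by density plus closedness. Your quantitative tail estimate $\|\eta-\eta^{(2K)}\|_2\le 2\delta$ whenever $\|\eta-b\|_2\le\delta$ with $\|b\|\le K$ is correct (it gives $K\,\tau(p)^{1/2}\le\delta$ and hence the bound) and settles the representative-independence of Connes' condition, a point the paper's proof passes over silently. The trade-off: the paper's proof is two lines because it cites Connes; yours is longer but self-contained and arguably more informative, since it pins down ${}_{\cU}\prod H_i$ as exactly the closure of the uniformly bounded vectors.
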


\begin{proof}
By the maximality of the correspondence ultraproduct, we have that ${}_{\cU}\prod H_i$ is contained in $\prod_{c\cU}H_i$.  For the reverse inclusion, it suffices to note that the uniformly bounded elements of $\prod_{c\cU}H_i$ belong to ${}_{\cU}\prod H_i$.  However, as noted in Example \ref{boundedstandard}, if $\xi$ is uniformly $K$-bounded, then $\xi=(\xi_i)^\bullet$, with  each $\xi_i\in M_i$ satisfying $\xi_i^*\xi_i\leq K\cdot 1$.  It is clear that for any $L>K^2$, we have that $\chi_L(|\xi_i|)=0$, whence $\xi$ belongs to ${}_{\cU}\prod H_i$.
\end{proof}

%

\section{The elementary class of correspondences}

\subsection{Elementarity} In all but the final subsection of this section, we fix tracial von Neumann algebras $(M,\tau_M)$ and $(N,\tau_N)$.  The language $\cL=\cL_{M,N}$ of $M$-$N$ correspondences will consist of:
\begin{enumerate}
\item For each $K\in \bbN$, there is a sort $S_K$.  The metric $d_K$ on $S_K$ is assumed to have bound $K$.
\item For each $K < L$, there is a function symbol $i_{KL}\colon S_K \rightarrow S_L$ intended to be an isometric embedding.
\item For each $K$ and $L$, there is a binary function symbol $+$ (we suppress the dependence on the pair $(K,L)$) which is 1-Lipschitz in each coordinate and maps $S_K \times S_L$ into $S_M$, where $M$ is the ceiling of $(\sqrt{K} + \sqrt{L})^2$.
\item For each $K$, there is a binary relation $\ip{\cdot}{\cdot}$ on each $S_K$ (again we will suppress the dependence on $K$).  It will be 1-Lipschitz in each coordinate and bounded by $K$.  (Technically, there should really be two such function symbols, one for the real part and one for the imaginary part of the inner product.)
\item For each $c \in M$, there is a unary function symbol on $S_K$ (again avoiding the dependence on $K$).  We will write $cx$ for the application to an $S_K$-sorted variable.  This function maps into $S_M$, where $M$ is the ceiling of $\|c\|^2K$, and is $\|c\|$-Lipschitz.
\item Likewise, for each $c \in N$, there is a unary function symbol on $S_K$.  We will write $xc$ for the application to an $S_K$-sorted variable.  This function maps into $S_M$, where $M$ is the ceiling of $\|c\|^2K$, and is $\|c\|$-Lipschitz.
\end{enumerate}

Next, for each $M$-$N$ correspondence $H$, we describe an $\cL$-structure $\M(H)$ as follows:

\begin{enumerate}
\item For each $K\in \bbN$, set $S_K(\M(H))$ to be the set of $K$-bounded vectors in $H$.  Let $d_K$ be the metric on $S_K(\M(H))$ induced by the inner product from $H$.  (The Cauchy-Schwartz inequality guarantees the required bound on the metric.)
\item For each $K < L$, $i_{KL}$ is the inclusion map from $S_K(\M(H))$ to $S_L(\M(H))$.
\item For each pair $(K,L)$, addition on $S_K(\M(H)) \times S_L(\M(H))$ is the restriction of addition on $H$.  (The range is justified by Lemma \ref{basic}.)
\item For each $K$, $\ip{\cdot}{\cdot}$ on $S_K(\M(H))$ is the restriction of the inner product on $H$.
\item For $c \in M$, the interpretation of $c$ on $S_K$ is the restriction of the left action on $H$.  Likewise, for $c\in N$, it is the restriction of the right action.  (Once again, the range is justified by Lemma \ref{basic}.)
\end{enumerate}

We refer to $\M(H)$ as the \textbf{dissection} of $H$.  Here is the model-theoretic reformulation of Proposition \ref{mainpropultra}:

\begin{thm}\label{coincides}
Suppose that $(H_i \ : \ i\in I)$ is a family of $M$-$N$ correspondences and that $\cU$ is an ultrafilter on $I$.  Then $\M(\prod_{c\cU}H_i)=\prod_\cU \M(H_i)$ (where the latter ultraproduct is the usual model-theoretic ultraproduct.)
\end{thm}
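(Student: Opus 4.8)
The plan is to exhibit the canonical coordinatewise map as an isomorphism of $\cL$-structures, sort by sort, with all the analytic content outsourced to Proposition \ref{mainpropultra}. First I would unwind the two sides on the sort $S_K$. Since each sort has diameter at most $K$, the model-theoretic ultraproduct is formed sortwise without base points: an element of $S_K(\prod_\cU \M(H_i))$ is an equivalence class of sequences $(\xi_i)$ with each $\xi_i$ a $K$-bounded vector of $H_i$, two sequences being identified when $\lim_\cU \|\xi_i - \eta_i\| = 0$. By definition this is precisely a uniformly $K$-bounded element of $\prod_\cU H_i$. I would therefore define $\Phi$ to send such a class to the vector $(\xi_i)^\bullet \in \prod_\cU H_i$. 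As recorded earlier, a uniformly $K$-bounded vector is $K$-bounded and a $K$-bounded element of $\prod_\cU H_i$ necessarily lies in $\prod_{c\cU} H_i$; hence $\Phi$ maps $S_K(\prod_\cU \M(H_i))$ into $S_K(\M(\prod_{c\cU} H_i))$, the set of $K$-bounded vectors of the correspondence ultraproduct.

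Next I would check that $\Phi$ is an isometric bijection on each sort. It is well-defined and isometric because the metric $d_K$ is computed as $\lim_\cU \|\xi_i - \eta_i\|$ on both sides: on the left this is the definition of the ultraproduct metric, and on the right it is the Hilbert-space ultraproduct norm restricted to the $K$-bounded vectors. Isometry gives injectivity for free. Surjectivity is the single place where real work is invoked: given a $K$-bounded vector $\xi$ of $\prod_{c\cU} H_i$, it is in particular a $K$-bounded vector of the ambient $\prod_\cU H_i$ (the $M$- and $N$-actions on $\prod_{c\cU} H_i$ being restrictions of the ambient ones, so that $K$-boundedness means the same in either space), and Proposition \ref{mainpropultra} asserts exactly that such a $\xi$ is uniformly $K$-bounded, i.e.\ $\xi = (\xi_i)^\bullet$ with each $\xi_i$ being $K$-bounded. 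This displays $\xi$ as the image under $\Phi$ of the corresponding class.

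Finally I would verify that $\Phi$ intertwines every symbol of $\cL$, which is immediate since each symbol is interpreted coordinatewise on both sides. The inclusions $i_{KL}$, the addition $+$, and the left and right actions of $c \in M$ and $c \in N$ are all defined by applying the operation in each $H_i$ and passing to the class, while the inner product $\ip{\cdot}{\cdot}$ is $\lim_\cU$ of the coordinatewise inner products in both structures; formally these are instances of the {\L}o\'{s} theorem for the chosen symbols together with the fact that the Hilbert-space ultraproduct computes all of these operations coordinatewise. Combining this with the previous paragraph, $\Phi$ is an isomorphism of $\cL$-structures, which is the asserted equality $\M(\prod_{c\cU} H_i) = \prod_\cU \M(H_i)$.

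I expect no serious obstacle here: the entire substance, namely that boundedness in the correspondence ultraproduct can be witnessed uniformly at the level of representatives, is already packaged in Proposition \ref{mainpropultra}, and the present statement is merely its translation into the language $\cL$. The only points demanding genuine care are bookkeeping ones, in particular confirming that the model-theoretic sort $S_K$ is literally the set of uniformly $K$-bounded vectors, and that ``$K$-bounded in $\prod_{c\cU} H_i$'' and ``$K$-bounded in $\prod_\cU H_i$'' coincide.
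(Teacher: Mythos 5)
Your proposal is correct and matches the paper exactly: the paper offers no separate argument, introducing the theorem as ``the model-theoretic reformulation of Proposition \ref{mainpropultra},'' and your write-up is precisely that translation — identifying $S_K$ of the model-theoretic ultraproduct with the uniformly $K$-bounded vectors, invoking Proposition \ref{mainpropultra} for surjectivity, and noting that all symbols act coordinatewise. No gaps.
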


Let $\cal C:=\cal C_{M,N}$ be the category of $\cL$-structures consisting of dissections of $M$-$N$ correspondences with $\cL$-homomorphic embeddings as morphisms.  Let $\operatorname{Corr}(M,N)$ be the category of $M$-$N$ correspondences with isometric correspondence maps as morphisms.
\begin{thm}\label{mainthm}
$\cal C$ is an elementary class.  Moreover, $\cal C$ is equivalent as a category to $\operatorname{Corr}(M,N)$.
\end{thm}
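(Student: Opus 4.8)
The plan is to treat the two assertions separately. For elementarity I would use the standard characterization in continuous logic: a class of $\cL$-structures closed under isomorphism is elementary if and only if it is closed under ultraproducts and under ultraroots (that is, whenever some ultrapower $\cal A^{\cU}$ lies in $\cal C$, so does $\cal A$). Closure under ultraproducts is nothing more than Theorem \ref{coincides}: if each $\M(H_i)\in\cal C$, then $\prod_{\cU}\M(H_i)=\M(\prod_{c\cU}H_i)$ is again a dissection. All the real content lies in closure under ultraroots, which I expect to be the main obstacle.

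So suppose $\cal A$ is an $\cL$-structure and $\iota\colon\cal A\to\cal A^{\cU}\cong\M(H)$ is the elementary diagonal embedding into an ultrapower that happens to be a dissection. Since $\iota$ preserves $+$, the complex scalar action (via the elements $\la\u\in M$), the inner product, and the left and right actions, its image $V:=\iota(\cal A)=\bigcup_K\iota(S_K(\cal A))$ is a linear subspace of $H$ invariant under the $M$- and $N$-actions, so that $H':=\overline V$ is an $M$-$N$ subcorrespondence. Because $K$-boundedness is computed purely from the inner product and the two actions, one has $S_K(\M(H'))=S_K(\M(H))\cap H'$, and because $\iota$ respects the embeddings $i_{KL}$, whose ranges are exactly the $K$-bounded vectors, one gets $\iota(S_K(\cal A))=V\cap S_K(\M(H'))$. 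The claim to prove is the surjectivity $\iota(S_K(\cal A))=S_K(\M(H'))$, and this is exactly where Proposition \ref{mainprop} enters.

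Indeed, a $K$-bounded vector $\eta\in H'$ is the norm limit of a sequence $v_n\in V$ of (variously) bounded vectors; applying Proposition \ref{mainprop} with $H_0=V$ produces $K$-bounded $\eta_n\in V$ with $\eta_n\to\eta$. As $\eta_n\in V\cap S_K(\M(H'))=\iota(S_K(\cal A))$, and this set is the isometric image of the complete sort $S_K(\cal A)$, hence closed, we conclude $\eta\in\iota(S_K(\cal A))$. Thus $\iota$ is an $\cL$-isomorphism of $\cal A$ onto $\M(H')\in\cal C$, establishing closure under ultraroots and with it elementarity.

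For the categorical equivalence I would exhibit the dissection functor $\M\colon\operatorname{Corr}(M,N)\to\cal C$, sending an isometric correspondence map $T$ to its restriction to each $S_K$ (well defined since isometric bimodule maps preserve boundedness). It is essentially surjective by the very definition of $\cal C$. Faithfulness is immediate: two morphisms inducing the same map on bounded vectors agree on their dense span (density being the Fact of \cite{Popa-book}) and hence everywhere by continuity. For fullness, an $\cL$-homomorphic embedding $\Phi\colon\M(H)\to\M(H')$ preserves the inner product and is isometric on each sort, so it extends by density and continuity to a unique Hilbert space isometry $T\colon H\to H'$; since $\Phi$ intertwines the $M$- and $N$-actions on the dense set of bounded vectors, $T$ is a bimodule map with $\M(T)=\Phi$. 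Hence $\M$ is an equivalence. The only nonformal ingredient in the whole argument is the surjectivity step above, which rests on the uniform-boundedness results of Section \ref{corrultra}; everything else is a matter of unwinding the definitions and invoking density of bounded vectors.
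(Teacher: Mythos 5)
Your overall architecture matches the paper's: elementarity via closure under ultraproducts (Theorem \ref{coincides}) and ultraroots, and the categorical equivalence via the dissection functor, with Proposition \ref{mainprop} doing the real work of identifying the sorts with the sets of bounded vectors. The equivalence-of-categories half is fine and is at the level of detail the paper itself gives.

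There is, however, one genuine gap in your ultraroot argument, at the step
$\iota(S_K(\cal A)) = V\cap S_K(\M(H'))$. The inclusion $\subseteq$ is indeed formal, but the inclusion $\supseteq$ --- which is the one your argument actually consumes, since you need every $K$-bounded $\eta_n\in V$ to lie in $\iota(S_K(\cal A))$ --- does not follow from ``$\iota$ respects the embeddings $i_{KL}$.'' That only tells you $\iota$ carries the range of $i_{KL}^{\cal A}$ into the range of $i_{KL}^{\cal A^{\cU}}$; it does not tell you that an $a\in S_L(\cal A)$ whose image $\iota(a)$ happens to be $K$-bounded in $H$ must already lie in the range of $i_{KL}^{\cal A}$. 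Nothing in the language forces the range of $i_{KL}$ in an abstract $\cL$-structure to exhaust the $K$-bounded vectors of $S_L$ --- indeed, arranging exactly this is the content of the last ($\forall\exists$) axiom scheme the paper writes down, and it rests on the definability of $X_{KL}$. The missing ingredient is the elementarity of the diagonal embedding: since $\iota(a)$ is within $\epsilon$ of an element of $S_K(\cal A^{\cU})$, the value of $\inf_{x\in S_K} d(i_{KL}(x),y)$ at $y=a$ is at most $\epsilon$ computed in $\cal A^{\cU}$, hence also in $\cal A$; as $i_{KL}(S_K(\cal A))$ is closed (isometric image of a complete sort), $a$ lies in it. This inf-formula transfer is precisely how the paper handles ultraroots, and the paper later exploits the fact that only this (i.e., only existential closedness of $\M$ in $\M^{\cU}$) is used, to conclude $\forall\exists$-axiomatizability.

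Once that transfer is in place, your detour through Proposition \ref{mainprop} inside the ultraroot step is actually unnecessary: one can approximate a $K$-bounded $\xi\in H'$ directly by elements of $S_K(\cal A)$ via the same inf-formula, which is what the paper does. The paper reserves Proposition \ref{mainprop} for the other place it is genuinely needed, namely the verification that $\M(H(\M))=\M$ in the categorical equivalence (equivalently, that a $K$-bounded limit of variously bounded vectors from the union of the sorts is a limit of $K$-bounded ones). So: fix the $\supseteq$ inclusion by invoking elementarity explicitly, and the rest of your argument goes through.
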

\proof We first show that $\cal C$ is closed under ultraproducts and ultraroots.  Closure under ultraproducts follows immediately from Theorem \ref{coincides}.
Now suppose that $\M$ is an $\cL$-structure such that $\M^\cU = \M(H)$ for some correspondence $H$.  It is clear that the closure of the union of the sorts $S_K(\M)$ is a sub-correspondence $H_0$ of $H$.  It remains to see that $\M=\M(H_0)$.  To see this, suppose that $\xi$ is a $K$-bounded vector in $H_0$.  Then $\xi$ is a $K$-bounded vector in $H$, whence $\xi \in S_K(M^\cU)$.  Fix $\epsilon>0$ and choose $\xi'\in S_L(M)$ such that $\|\xi-\xi'\|\leq \epsilon$.  Since $\M$ is an elementary substructure of $\M^\cU$, we have that
$$\inf_{\eta\in S_K(\M)}d(\xi',\eta)=\inf_{\eta\in S_K(\M^\cU)}d(\xi',\eta)\leq \epsilon.$$
Thus, there is $\eta\in S_K(\M)$ such that $d(\xi,\eta)\leq 3\epsilon$.  Since $\epsilon$ was arbitrary, we conclude that $\xi\in S_K(\M)$.  

We now prove the categorical equivalence. Given $\M\in \cal C$, we let $H(\M)$ denote the corresponding correspondence, which is simply the completion of the union of the $S_K(\M)$'s.  It is clear that $H(\M(H)) = H$ for any correspondence $H$.
We now show that $\M(H(\M))=\M$ for every $\M\in \cal C$.  To see this, suppose that $\xi$ is a $K$-bounded vector in $H(\M)$.  Let $H_0$ denote the union of the sorts of $H(\M)$, which is a dense subspace of $H(\M)$ closed under the actions of $M$ and $N$.  It follows that $\xi=\lim_n \xi_n$, each $\xi_n\in H_0$.  Since $H_0$ consists of bounded vectors, by Proposition \ref{mainprop}, we may write $\xi=\lim_n \xi_n'$, each $\xi_n'\in H_0$ $K$-bounded.  Since $\M$ is isomorphic to the dissection of a correspondence, we have that an element of $S_L(\M)$ that is $K$-bounded is in the image of $i_{KL}$.  It follows that $\xi\in S_K(\M)$, as desired.  

The fact that the functors $\M$ and $H$ provide a bijection between the Hom-sets is straightforward and so we conclude that we have an equivalence of categories.
\qed

\begin{remark}
Recall that, given structures $\M$ and $\cal N$ with $\cal N\subseteq \M$, we say that $\cal N$ is \emph{existentially closed (e.c.) in $\M$} if any existential statement with parameters from $\cal N$ has the same value in both $\cal N$ and $\M$.  The proof of the previous result shows that an e.c.\ substructure of a model of $T_{\corr}$ is once again a model of $T_{\corr}$.
\end{remark}

Let $T_{\corr}:=T_{M\text{-}N \corr}$ denote the theory axiomatizing the class of dissections of $M$-$N$ correspondences. 

\begin{cor}
$T_{\corr}$ is $\forall\exists$-axiomatizable.
\end{cor}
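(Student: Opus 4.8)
The plan is to invoke the continuous-logic form of the Chang--\L o\'s--Suszko preservation theorem: a theory is $\forall\exists$-axiomatizable precisely when its class of models is closed under unions of chains, where the union of a chain of metric structures is taken sort-by-sort as the completion of the directed union. Since Theorem \ref{mainthm} already shows that $\cal C$ is elementary, it suffices to verify that $\cal C$ is closed under unions of chains. The easy half of the preservation theorem (that $\forall\exists$-conditions are automatically preserved under such unions) is not needed; the content is the converse, whose hypothesis is exactly the closure property I will check.

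So let $(\M_n)$ be a chain in $\cal C$, with the connecting maps the relevant $\cal L$-homomorphic embeddings, and let $\M$ denote its union. By the categorical equivalence in Theorem \ref{mainthm}, this chain corresponds to a chain of $M$-$N$ correspondences $H_0 \subseteq H_1 \subseteq \cdots$ with isometric correspondence embeddings, each $\M_n = \M(H_n)$. Form $H := \overline{\bigcup_n H_n}$; the left and right actions of $M$ and $N$ extend continuously to $H$, making $H$ an $M$-$N$ correspondence. I claim $\M = \M(H)$, which places $\M$ in $\cal C$ and finishes the argument.

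For the claim I must show $S_K(\M) = S_K(\M(H))$ for every $K$, where $S_K(\M)$ is the completion of $\bigcup_n S_K(\M_n)$. The inclusion $\subseteq$ is immediate: a $K$-bounded vector of some $H_n$ remains $K$-bounded in $H$ (the embeddings preserve the inner product and commute with the actions), and $S_K(\M(H))$ is closed. For the reverse inclusion, fix a $K$-bounded $\xi \in H$, and set $H_0 := \bigcup_n \{\text{bounded vectors of } H_n\}$. By Lemma \ref{basic} this is a subspace of $H$ invariant under the $M$- and $N$-actions, and it is dense in $H$ since the bounded vectors are dense in each $H_n$. Hence there are bounded $\xi_m \in H_0$ with $\xi_m \to \xi$. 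Applying Proposition \ref{mainprop} to $H_0$ and the sequence $(\xi_m)$ produces $K$-bounded $\eta_m \in H_0$ with $\eta_m \to \xi$. Each $\eta_m$ is a $K$-bounded vector lying in some $H_n$, so $\eta_m \in S_K(\M_n) \subseteq \bigcup_n S_K(\M_n)$, whence $\xi$ lies in its closure $S_K(\M)$. This yields $S_K(\M(H)) \subseteq S_K(\M)$ and proves the claim.

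The main obstacle is precisely this upgrading step: the vectors approximating $\xi$ that are naturally available at finite stages are bounded but carry no control on their bound, and they must be replaced by genuinely $K$-bounded vectors still drawn from $\bigcup_n H_n$. This is exactly what Proposition \ref{mainprop} delivers, so the technical work already done is what makes the preservation argument go through. A final minor point, should one wish to allow chains indexed by an arbitrary linear order rather than $\mathbf N$: a single $K$-bounded $\xi$ is approximated by only countably many bounded vectors, each living at a single stage, so all relevant stages sit inside a countable subchain cofinal among them; restricting to that subchain reduces matters to the sequential situation in which Proposition \ref{mainprop} is stated. This establishes closure under unions of chains, and hence the $\forall\exists$-axiomatizability of $T_{\corr}$.
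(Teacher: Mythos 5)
Your argument is correct, but it runs along a different track from the paper's. You verify the Chang--\L o\'s--Suszko criterion directly: models of $T_{\corr}$ are closed under unions of chains, which you check by passing through the categorical equivalence of Theorem \ref{mainthm} and then using Proposition \ref{mainprop} to upgrade the merely bounded approximants available at finite stages to $K$-bounded ones, so that the sort $S_K$ of the union really is the set of all $K$-bounded vectors of the limiting correspondence. The paper instead uses the other standard test --- $T$ is $\forall\exists$-axiomatizable iff every existentially closed substructure of a model is a model --- and simply observes that its proof of closure under ultraroots in Theorem \ref{mainthm} never used full elementarity of $\M$ in $\M^{\cU}$, only that the relevant $\inf$-formulas transfer down, i.e.\ existential closedness; this makes the corollary a two-line remark. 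The trade-off is that your route is a self-contained structural verification (and, as a byproduct, exhibits the union of a chain of dissections concretely as the dissection of the closed union of the correspondences), whereas the paper's route is essentially free given the work already done. Two small remarks: your closing reduction to countable subchains is not actually needed, since for a chain of arbitrary order type the union of the bounded vectors of all stages is still a dense, action-invariant subspace to which Proposition \ref{mainprop} applies verbatim, and the $K$-bounded $\eta_m$ it produces each live in a single stage; and when you identify $\M$ with $\M(H)$ you should note (as you implicitly do) that $K$-boundedness of a vector of $H_n$ is the same condition computed in $H_n$ or in $H$, since the embeddings preserve the inner product and the actions.
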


\begin{proof}
By a well-known test, it suffices to check that an e.c.\  substructure of a model of $T_{\corr}$ is a model of $T_{\corr}$.  This fact was already pointed out in the previous remark.
\end{proof}

We can in fact give a concrete axiomatization of $T_{\corr}$.  We first list the more straightforward axioms:
\begin{itemize}
\item Axioms for (dissections of) Hilbert spaces.
\item Axioms saying that the elements of $M$ and $N$ act as bounded operators.  To express the boundedness, we have, for each $a\in M$ and each $K$, the axiom
$$\sup_{x\in S_K} (\|ax\|\dminus \|a\|\cdot \|x\|)=0$$ and similar axioms for elements of $N$.
\item Axioms saying that $\pi_l$ and $\pi_r$ are $*$-homomorphisms. 
\item Axioms saying that $\pi_l$ and $\pi_r$ commute:  for each $a\in M$, $b\in N$ and each $K$, we have the axiom
$$\sup_{x\in S_K}d(a(xb),(ax)b)=0.$$
\item Axioms saying that elements of $S_K$ are $K$-bounded:  for each $a\in M$, we have the axiom
$$\sup_{x\in S_K}\left(\langle ax,x\rangle\dminus K\tau_M(a)\right)=0$$ and similar axioms for elements of $N$.
\end{itemize}

There is one last axiom scheme which expresses that the elements of $S_K$ are precisely the $K$-bounded elements.  Informally, this amounts to saying that if $K<L$ and $y\in S_L$ is $K$-bounded, then there is $x\in S_K$ such that $i_{KL}(x)=y$.  In order to write this precisely, we first let $X_{KL}$ denote the $T_{\corr}$-functor that collects the set of $K$-bounded vectors in $S_L$.  By Lemma \ref{KDTultra}, $X_{KL}$ is a $T_{\corr}$-definable set (being preserved under ultraproducts may be taken as a definition of definable set), whence we may thus express our last axiom scheme as follows:

\begin{itemize}
\item For each $K<L$, we have the axiom
$$\sup_{y\in X_{KL}}\inf_{x\in S_K}i_{KL}(x)=y.$$
\end{itemize}

In order to write the above axioms in their unabbreviated form, one would need to use some $T_{\corr}$-definable predicate $\varphi_{KL}$ for which $X_{KL}$ equals the zeroset $Z(\varphi_{KL})$ of $\varphi_{KL}$.  Since we proved that $X_{KL}$ is definable by showing that it was preserved by ultraproducts, the definable predicate $\varphi_{KL}$ comes to us via theoretical means, seemingly making the above axiomatization less concrete.  However, this is readily remedied as follows.  Given $\epsilon>0$, the union of the following collection of conditions is unsatisfiable:
\begin{itemize}
\item $\{\ip{ax}{x}\dminus K\tau_M(a)=0 \ : \ a\in M\}$;
\item $\{\ip{xb}{x}\dminus K\tau_N(b)=0 \ : \ b\in N\}$;
\item $\{\epsilon\dminus \varphi_{KL}(x)\}.$
\end{itemize}

By compactness and letting $\epsilon$ range over all positive rational numbers, we find countable sequences $(a_n)$ and $(b_n)$ from $M$ and $N$ respectively such that, setting $\psi_{KL}$ to be the $T_{\corr}$-definable predicate $$\psi_{KL}(x):=\sum_n 2^{-n}\max(\ip{a_nx}{x}\dminus K\tau_M(a_n),\ip{xb_n}{x}\dminus K\tau_N(b_n)),$$ we have that $Z(\varphi_{KL})=Z(\psi_{KL})$.  Thus, in the unabbreviated form of the last axiom scheme, one can work with the arguably more concrete representation of $X_{KL}$ as $Z(\psi_{KL})$.

Note that only the last axiom scheme was $\forall\exists$, the prior axioms all being universal.  There is in fact a language for which the class of (definitional expansions of) dissections of correspondences becomes a universally axiomatizable class.  Since $S_1$ is convex, we can consider the functions $f_K\colon S_K\to S_1$ which maps $a \in S_K$ to the closest element in $S_1$.  As the functions $f_K$ are preserved by ultraproducts, we have that these functions are $T_{\corr}$-definable.  
We add these symbols to the above language and let $\cal C^*$ denote the corresponding class of expansions of elements of $\cal C$.  Since these are definitional expansions, $\cal C^*$ is an axiomatizable class, say the models of $T_{\corr}^*$.  

\begin{prop}
$T_{\corr}^*$ is universally axiomatizable.
\end{prop}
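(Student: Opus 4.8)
My plan is to apply the continuous-logic analogue of the \L os--Tarski preservation theorem: a theory is universally axiomatizable exactly when its class of models is closed under (closed) substructures. Thus it suffices to prove that $\cal C^*$ is closed under $\cL^*$-substructures. Conceptually, the role of the expansion is precisely to upgrade the one $\forall\exists$-axiom scheme of $T_{\corr}$ --- the scheme asserting that each sort $S_K$ consists of \emph{all} $K$-bounded vectors, whose inner $\inf$ is the sole obstruction to universality --- into a genuinely universal statement, by supplying a term that witnesses the existential quantifier.

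The heart of the argument is the observation that, in any dissection, the closest-point retraction $r_{LK}\colon S_L\to S_K$ onto the set of $K$-bounded vectors (closed and convex by Lemma \ref{basic}) is represented by an $\cL^*$-term. The key identity is $S_K=\sqrt{K}\cdot S_1$: scaling a vector by $\lambda$ multiplies both its left and right bounds by $|\lambda|^2$, so multiplication by $\sqrt K$ carries $S_1$ bijectively onto $S_K$. Since closest-point projection onto a convex set commutes with scaling, i.e. $\operatorname{proj}_{\sqrt K\,S_1}(\sqrt K\,v)=\sqrt K\,\operatorname{proj}_{S_1}(v)$, I would record the identity
\[
r_{LK}(y)=\sqrt K\cdot f_{\lceil L/K\rceil}\!\left(\tfrac{1}{\sqrt K}\,y\right).
\]
Scalar multiplication by $\sqrt K$ and by $1/\sqrt K$ is nothing but the left action of $\sqrt K\cdot 1_M$ and $\tfrac1{\sqrt K}\cdot 1_M$, which are unary function symbols already in $\cL$; composing them with the added symbol $f_{\lceil L/K\rceil}$ exhibits $r_{LK}$ as a term of $\cL^*$. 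Consequently the universe of any $\cL^*$-substructure is automatically closed under each $r_{LK}$.

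Granting this, I would argue as follows. Let $\M^*=\M(H)^*$ be a model of $T_{\corr}^*$ and let $\N^*$ be an $\cL^*$-substructure; put $H_0:=\overline{\bigcup_K S_K(\N^*)}$, which is a subcorrespondence of $H$ because $\N^*$ is closed under addition and the two actions. I claim $\N^*=\M(H_0)^*$. The inclusion $S_K(\N^*)\subseteq\{K\text{-bounded vectors of }H_0\}$ is immediate. For the converse, take $\xi\in H_0$ that is $K$-bounded and write $\xi=\lim_n\xi_n$ with $\xi_n\in S_{L_n}(\N^*)$, $L_n>K$. Applying the term $r_{L_nK}$ gives $r_{L_nK}(\xi_n)\in S_K(\N^*)$; as projection onto the convex set $S_K$ is non-expansive and $\xi$ already lies in $S_K$, one has $r_{L_nK}(\xi_n)\to\operatorname{proj}_{S_K}(\xi)=\xi$, whence $\xi\in S_K(\N^*)$ by completeness of the sort. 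This yields the set equality $S_K(\N^*)=\{K\text{-bounded vectors of }H_0\}$ for all $K$. Finally, the interpretation of $f_K$ in $\N^*$, being the restriction of that in $\M^*$, sends a vector to its closest point in $S_1(H)$; that point lies in $S_1(\N^*)=S_1(H_0)$ and is therefore also its closest point in the smaller set $S_1(H_0)$, so $f_K^{\N^*}=f_K^{\M(H_0)^*}$. Hence $\N^*=\M(H_0)^*\in\cal C^*$, and closure under substructures follows.

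The only genuinely non-formal step is the second paragraph: realizing that the single family $f_K$ of retractions onto $S_1$, together with the scalar actions already present in the language, generates retractions onto \emph{every} $S_K$, and that this turns the problematic existential quantifier into a term. I expect this to be the main obstacle to get right; everything else --- convexity and closedness of $S_K$, non-expansiveness of the projection, and completeness of the sorts --- is routine.
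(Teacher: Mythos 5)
Your proof is correct and follows essentially the same route as the paper's: closure of $\cal C^*$ under substructures, established via the nonexpansiveness of the closest-point retraction onto the closed convex set $S_1$ applied to an approximating sequence from the larger sorts. The paper treats only the sort $S_1$ explicitly (calling it ``the main point'') and leaves the general $K$ implicit; your identity $S_K=\sqrt K\cdot S_1$ and the derived term $r_{LK}$ simply make that reduction explicit, which is a useful elaboration rather than a different argument.
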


\begin{proof}
In order to check that $T_{\corr}^*$ is universally axiomatizable, it suffices to show that a substructure of a model of $T_{\corr}^*$ is once again a model of $T_{\corr}^*$.  (See, for example, \cite[Proposition 2.4.4]{munster}.)  Suppose that $\mathcal{N}\subseteq \mathcal{M}\models T_{\corr}^*$.  The main point is to check that if $a\in H(\mathcal{N})$ is 1-bounded, then $a\in S_1(\mathcal{N})$.  Since $\mathcal{M}\models T$, we have that $a\in S_1(\mathcal{M})$.  Let $a_n\in S_{K_n}(\mathcal N)$ for $n \in\bbN$ be such that $a_n\to a$.  Then, in $\mathcal{M}$, we have that $\|a-f_{K_n}(a_n)\|\leq \|a-a_n\|$, whence $f_{K_n}(a_n)\to a$.  But $f_{K_n}(a_n)\in S_1(\mathcal{N})$ for all $n$, whence $a\in S_1(\mathcal{N})$.
\end{proof}

\subsection{The Fell topology and weak containment}

Recall that the \textbf{Fell topology} is the topology on the space of (isomorphism classes of) correspondences whose basic open neighbourhoods of a correspondence $H$ are given by $V(H;\epsilon,E,F,S)$, where $E$ and $F$ are finite subsets of $M$ and $N$ respectively, $S=\{\xi_1,\ldots,\xi_n\}$ is a finite subset of $H$, and $V(H;\epsilon,E,F,S)$ consists of those correspondences $K$ for which there are $\eta_1,\ldots,\eta_n\in K$ satisfying
$$\left| \langle \xi_i,x\xi_jy\rangle - \langle \eta_i,x\eta_jy\rangle\right|<\epsilon$$ for all $1\leq i,j\leq n$ and all $x\in E$ and $y\in F$.

If $H$ and $K$ are correspondences, we say that $H$ is \textbf{weakly contained} in $K$ if $H$ is in the closure of $\{K^{\oplus\infty}\}$ (in the Fell topology).  We use our above analysis to give an ultrapower reformulation of weak containment analogous to that appearing in the theory of unitary group representations.

\begin{prop}
Given correspondences $H$ and $K$, the following are equivalent:
\begin{enumerate}
\item $H$ is in the closure of $\{K\}$.
\item $\M(H)\models \Th_\forall(\M(K))$.
\item $\M(H)$ embeds into $\M(K)^{\cU}$.
\item $H$ embeds into $K^{c\cU}$.
\end{enumerate}
\end{prop}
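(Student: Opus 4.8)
The plan is to establish the four conditions as equivalent by proving three self-contained pairwise equivalences: the model-theoretic pair $(2)\Leftrightarrow(3)$, the translation pair $(3)\Leftrightarrow(4)$, and the spatial pair $(1)\Leftrightarrow(4)$. The bulk of the genuinely new work sits in the last of these; the first two are essentially formal consequences of material already in hand.

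For $(3)\Leftrightarrow(4)$ I would invoke the dissection functor together with Theorems \ref{coincides} and \ref{mainthm}. Specializing Theorem \ref{coincides} to the constant family $H_i = K$ gives $\M(K^{c\cU}) = \M(K)^\cU$, so an embedding $\M(H)\hookrightarrow\M(K)^\cU$ is literally an embedding $\M(H)\hookrightarrow\M(K^{c\cU})$. Since $\M$ and its inverse $H(-)$ implement an equivalence between $\cal C$ and $\operatorname{Corr}(M,N)$ (Theorem \ref{mainthm}), and in particular restrict to a bijection on Hom-sets, such an embedding of dissections corresponds to a correspondence embedding $H\hookrightarrow K^{c\cU}$, and conversely. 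For $(2)\Leftrightarrow(3)$ I would appeal to the standard fact of continuous model theory that $\M(H)\models\Th_\forall(\M(K))$ holds if and only if $\M(H)$ embeds into some ultrapower of $\M(K)$: the forward direction uses that $\M(K)\equiv\M(K)^\cU$ by {\L}o\'s's theorem, so the two share the same universal theory, together with the fact that universal conditions pass to substructures, while the converse is the usual compactness/ultrapower argument producing an ultrapower of $\M(K)$ into which $\M(H)$ embeds.

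The main obstacle is $(1)\Leftrightarrow(4)$, where the Fell-topological data must be converted into, and read off from, a spatial embedding into the correspondence ultrapower. For $(4)\Rightarrow(1)$, given an embedding $\Theta\colon H\hookrightarrow K^{c\cU}$ and a basic neighborhood $V(H;\e,E,F,S)$ with $S=\{\xi_1,\dots,\xi_n\}$, I would write $\Theta(\xi_j)=(\zeta_{j,i})^\bullet$ and use that the actions in $K^{c\cU}$ are coordinatewise while the inner product is the ultralimit; since $\Theta$ preserves inner products and the bimodule structure, $\ip{\xi_p}{x\xi_q y}=\lim_\cU\ip{\zeta_{p,i}}{x\zeta_{q,i}y}$ for all $x\in E$, $y\in F$, so some single index $i$ simultaneously matches all finitely many inner products to within $\e$, and the vectors $\eta_j:=\zeta_{j,i}\in K$ witness $K\in V(H;\e,E,F,S)$; as the neighborhood was arbitrary this gives $H\in\overline{\{K\}}$. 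For $(1)\Rightarrow(4)$ I would index by the directed set $I$ of quadruples $i=(\e,E,F,S)$ ordered by refinement, use condition (1) to choose for each $i$ approximating vectors $(\eta^i_\xi)_{\xi\in S}$ in $K$, and fix an ultrafilter $\cU$ on $I$ containing every order-tail. Defining $\Phi(\xi):=(\eta^i_\xi)^\bullet$, with $\eta^i_\xi$ arbitrary on the $\cU$-small set of $i$ for which $\xi\notin S$, the tail-refining property forces $\ip{\Phi(\xi)}{x\Phi(\xi')y}=\ip{\xi}{x\xi'y}$ for all $\xi,\xi'\in H$, $x\in M$, $y\in N$, since $\e\to0$ along $\cU$. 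From this single inner-product identity everything follows purely formally: expanding the squared norms of the relevant differences shows that $\Phi$ is additive, homogeneous, and intertwines the $M$- and $N$-actions, hence is an isometric correspondence map into $K^\cU$. The last point to check --- and the subtle one --- is that the image lies in the \emph{correspondence} ultrapower $K^{c\cU}$ and not merely the Hilbert-space ultrapower: here I would restrict attention to bounded vectors, noting that the inner-product identity shows $\Phi$ carries a $C$-bounded vector of $H$ to a $C$-bounded vector of $K^\cU$, which automatically lies in $K^{c\cU}$. Since the bounded vectors are dense in $H$ and $K^{c\cU}$ is closed, $\Phi(H)\subseteq K^{c\cU}$, completing the argument.
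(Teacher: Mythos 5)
Your proof is correct, and its engine --- converting Fell-neighborhood approximants into an element of an ultrapower and reading the bimodule structure off the resulting inner-product identity --- is the same as the paper's; but the decomposition and a couple of technical choices differ enough to be worth recording. The paper closes the cycle as $(3)\Rightarrow(4)\Rightarrow(1)\Rightarrow(3)$ and puts all the weight on $(1)\Rightarrow(3)$: it reduces to separable $H$, $M$, $N$, enumerates a countable dense subset of $S_1(H)$, chooses approximants $\eta_i^n$ indexed by $\mathbf{N}$, and then must invoke Proposition \ref{mainpropultra} to see that the $1$-bounded ultralimits $\eta_i$ genuinely lie in the sort $S_1(K)^{\cU}$ of the model-theoretic ultrapower. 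You instead put the weight on $(1)\Rightarrow(4)$, index by the directed set of basic Fell neighborhoods with an ultrafilter containing the tails (so no separability reduction is needed), and at the corresponding step only need the easy inclusion that a $C$-bounded element of the plain Hilbert-space ultraproduct already lies in $K^{c\cU}$; the full strength of Proposition \ref{mainpropultra} is then consumed inside Theorem \ref{coincides} when you translate $(4)$ back into $(3)$ via the categorical equivalence of Theorem \ref{mainthm}. The net content is the same, but your version works verbatim for non-separable $H$ and makes explicit the bookkeeping (well-definedness of $\Phi$ on all of $H$, linearity and intertwining extracted from the single inner-product identity, and the passage from bounded vectors to all of $H$ by density and closedness of $K^{c\cU}$) that the paper compresses into ``extends to the desired embedding.'' One small point you elide, which is worth a sentence in a final write-up: to form $(\eta^i_\xi)^\bullet$ in the Hilbert-space ultraproduct one needs the representing family to be norm-bounded, which follows on a $\cU$-large set of indices from the tail on which $1\in E\cap F$, after modifying the family on the complementary $\cU$-small set.
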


\begin{proof}
The equivalence of (2) and (3) is standard and the implications (3) implies (4) implies (1) are straightforward.  We now prove that (1) implies (3).  We may assume that $H$ is separable.  For simplicity, we also assume that $M$ and $N$ are separable.  It suffices to show that $S_1(H)$ embeds into $S_1(K)^\cU$.  Let $(\xi_i)$ enumerate a countable dense subset of $S_1(H)$.  Let $(E_n)$ and $(F_n)$ denote increasing sequences of finite subsets of $M_1$ and $N_1$ respectively with dense unions. Since $H$ is in the closure of $K$, we may find, for $i\leq n$, vectors $\eta_i^n\in \bigcup_m S_m(K)$ such that

$$\left| \langle \xi_i,x\xi_jy\rangle - \langle \eta_i^n,x\eta_j^ny\rangle\right|<\epsilon$$ for all $1\leq i,j\leq n$ and all $x\in E_n$ and $y\in F_n$.  Set $\eta_i:=(\eta_i^n)^\bullet\in K^{\cU}$.  Note that $\eta_i$ is $1$-bounded, whence, by Proposition \ref{mainpropultra}, we have that $\eta_i\in S_1(K)^{\cU}$ for each $i$.  The  map $\xi_i\mapsto \eta_i$ extends to the desired embedding of $S_1(H)$ into $S_1(K)^{\cU}$.
\end{proof}


\subsection{Classification, stability, and model companions}
Roughly speaking, we say that a theory is \textbf{classifiable} if there is a generalized set of dimension functions which characterize the isomorphism type of its models; see \cite[Chapter 13]{Shelah} for more details than are needed here. In the case of $M$-$N$ correspondences, the dimensions are quite easy in light of Corollary \ref{dirsum}.  Indeed, since every correspondence is a direct sum of cyclic correspondences associated to subtracial c.p. maps, and the number of those, up to isomorphism, is bounded, 
 a model of $T_{\corr}$ is determined by specifying the number of each direct summand present in a direct sum.  We have just shown:

\begin{thm}
$T_{\corr}$ is classifiable.
\end{thm}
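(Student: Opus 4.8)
The plan is to exhibit the required generalized set of dimension functions explicitly, reading them off the decomposition theory of Section 2 after transporting the problem to honest correspondences. By the categorical equivalence of Theorem \ref{mainthm}, two models $\M,\M'$ of $T_{\corr}$ are isomorphic iff the associated correspondences $H(\M),H(\M')$ are isomorphic, so it suffices to attach to each $M$-$N$ correspondence $H$ a system of cardinal invariants that is a complete isomorphism invariant. Corollary \ref{dirsum} writes every $H$ as a direct sum of cyclic correspondences $H_\phi$ attached to subtracial c.p.\ maps, and, as recorded before the statement, there is only a set $\{H_\alpha : \alpha \in A\}$ of isomorphism types of such cyclic pieces. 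The invariants will be multiplicity (i.e.\ dimension) functions $A \to \operatorname{Card}$.

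The easy half is the converse direction. Given multiplicity data on $A$, the corresponding direct sum of the $H_\alpha$ is determined up to isomorphism, and any two correspondences carrying the same data are isomorphic by summing matching unitaries between like summands. Thus, once the multiplicities are shown to be well-defined isomorphism invariants, they automatically characterize the isomorphism type, which is exactly the notion of classifiability in force here; so everything reduces to well-definedness.

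The main obstacle is precisely this well-definedness, and it is genuinely delicate because cyclic correspondences need not be indecomposable: a single $H_\gamma$ can itself split as $H_\alpha \oplus H_\beta$ with $\alpha,\beta \neq \gamma$, so the naive count of summands in an arbitrary cyclic decomposition is not a priori independent of the decomposition. I would resolve this by giving each multiplicity an intrinsic, decomposition-free meaning. Grouping the $H_\alpha$ by quasi-equivalence and working inside the von Neumann algebra $\mathcal{P}$ generated by the commuting left and right actions, together with its commutant $\mathcal{P}'$, the multiplicity attached to a quasi-equivalence class should be read off as a central (von Neumann) dimension of the intertwiner module $\operatorname{Hom}_{M,N}(H_\alpha,H)$ over $\operatorname{End}_{M,N}(H_\alpha)$; such a quantity is manifestly preserved by isomorphisms of $H$. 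Verifying that this intrinsic dimension reproduces the summand multiplicities of Corollary \ref{dirsum}, via the uniqueness of the multiplicity/central decomposition of normal representations, is the technical heart of the argument.

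With well-definedness in hand the conclusion is immediate: the assignment sending $H$ to its multiplicity data on $A$ is a generalized set of dimension functions characterizing the isomorphism type of models of $T_{\corr}$, and hence $T_{\corr}$ is classifiable.
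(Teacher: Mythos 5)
Your proposal follows the same route as the paper: both rest on Corollary \ref{dirsum} and take the dimension functions to be the multiplicities of the (boundedly many) cyclic subtracial correspondences occurring as direct summands. The difference is that the paper's proof consists of exactly that assertion --- ``a model of $T_{\corr}$ is determined by specifying the number of each direct summand present in a direct sum'' --- whereas you flag, correctly, that cyclic correspondences need not be indecomposable, so the multiplicity vector read off from an arbitrary cyclic decomposition is not obviously an invariant of the correspondence: already for $M=N=M_2(\mathbb{C})$ the direct sum of two copies of the trivial correspondence is itself cyclic, so two cyclic decompositions of the same $H$ can report different counts. Your repair --- grouping by quasi-equivalence and extracting each multiplicity as a central dimension of the intertwiner space $\operatorname{Hom}_{M,N}(H_\alpha,H)$ over $\operatorname{End}_{M,N}(H_\alpha)$, i.e.\ invoking the uniqueness part of the multiplicity theory of normal representations of the von Neumann algebra generated by the two commuting actions --- is the standard way to make the invariant canonical, and it does produce a complete system of dimension functions; combined with Theorem \ref{mainthm} this gives classifiability. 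What the paper's coarser argument buys without that work is only the weaker (though for counting models already sufficient) statement that the map from multiplicity data to isomorphism classes of models is surjective. One caveat on your final step: the intrinsic dimensions cannot ``reproduce the summand multiplicities'' of an arbitrary decomposition as in Corollary \ref{dirsum}, since by your own observation those are not well defined; they replace them, and the verification actually needed is completeness of the intrinsic invariant, which is precisely the uniqueness clause in the classification of normal representations.
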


One consequence of classifiability is that the theory in question is stable.  In this case, one expects all completions of $T_{\corr}$ to be  superstable, which is indeed the case:

\begin{thm}
Suppose that $T$ is a completion of $T_{\corr}$.  Then $T$ is superstable.
\end{thm}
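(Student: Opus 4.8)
The plan is to deduce superstability directly from the structure theory underlying classifiability, by bounding the size of type spaces. Recall that a complete continuous theory is superstable exactly when it is $\lambda$-stable for all sufficiently large $\lambda$ (for instance, all $\lambda\ge 2^{|T|}$); so it suffices to produce a cardinal $\mu$, depending only on $M$ and $N$ and not on $\lambda$, such that for every model $\M(H)$ with $\operatorname{dens}(H)\le\lambda$ and every $n$, the metric density character of the type space $S_n(\M(H))$ is at most $\lambda$ once $\lambda\ge\mu$. The key point will be that a complete type over $\M(H)$ is pinned down by a bounded amount of data: an orthogonal projection onto $H$, together with the isomorphism type of a finitely generated correspondence lying orthogonally to $H$.

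First I would work inside a highly saturated model $\mathbb{H}$ of the completion $T$ and realize $\M(H)$ as the dissection of a subcorrespondence $H\subseteq\mathbb{H}$. Because $H$ is closed under the left and right actions and these actions are by $*$-representations, the orthogonal complement $H^\perp$ is again an $M$-$N$ subcorrespondence and the projection $P$ onto $H$ commutes with both actions. Given a $K$-bounded $a$, write $a=a_0+a_1$ with $a_0=Pa\in H$ and $a_1\in H^\perp$; a short computation using $[P,\cdot]=0$ shows $a_0$ and $a_1$ are again $K$-bounded, so the decomposition stays within the sorts. For $c,c'\in M$, $d,d'\in N$ and $\xi\in H$ one has $ca_1d\in H^\perp$, so $\langle cad,\xi\rangle=\langle ca_0d,\xi\rangle$ and $\langle cad,c'ad'\rangle=\langle ca_0d,c'a_0d'\rangle+\langle ca_1d,c'a_1d'\rangle$: every mixed inner product vanishes by bimodular orthogonality. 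Thus the quantifier-free type of $a$ over $H$ is determined by $a_0\in H$ together with the internal isomorphism type of the pointed cyclic correspondence $(\overline{Ma_1N},a_1)$, which by the Fact identifying $\overline{M\xi N}$ with $H_{\phi_\xi}$ is recorded by the completely positive map $\phi_{a_1}\colon M\to N$.

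To upgrade ``quantifier-free type'' to ``complete type'' I would argue by homogeneity. Any closed subcorrespondence is an orthogonal direct summand (its projection commutes with the actions), so $H^\perp=\overline{Ma_1N}\oplus C$ as correspondences. If $a,a'$ over $H$ satisfy $Pa=Pa'=a_0$ and $(\overline{Ma_1N},a_1)\cong(\overline{Ma_1'N},a_1')$, then writing $H^\perp=\overline{Ma_1'N}\oplus C'$ as well, the dimension functions of $C$ and $C'$ coincide: they are obtained from that of $H^\perp$ by deleting one copy of the cyclic summand $H_{\phi_{a_1}}$, whose multiplicity in $H^\perp$ is infinite by saturation, and so are unchanged. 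Hence $C\cong C'$ by classifiability, and combining this isomorphism with the given $\theta\colon\overline{Ma_1N}\to\overline{Ma_1'N}$ and with $\id_H$ yields a correspondence automorphism of $\mathbb{H}$ fixing $H$ pointwise and sending $a$ to $a'$; it induces an automorphism of $\M(\mathbb{H})$, whence $\operatorname{tp}(a/H)=\operatorname{tp}(a'/H)$. The same analysis applies verbatim to an $n$-tuple $\bar a$, whose type over $H$ is determined by $\bar a_0\in H^n$ and the isomorphism type of the $n$-generated subcorrespondence $(\overline{M\bar a_1N},\bar a_1)$. Since such finitely generated correspondences with marked generators form, by the set-theoretic bound used for cyclic correspondences in the discussion preceding Corollary \ref{dirsum}, a set of size $\mu\le 2^{|T|}$ independent of $\lambda$, and since for a fixed internal type the metric on types is dominated by $\|\bar a_0-\bar d\|$, fixing a dense $D\subseteq H$ with $|D|\le\lambda$ produces a $d$-dense family of types of cardinality $\le\lambda\cdot\mu=\lambda$ once $\lambda\ge\mu$. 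Therefore $T$ is $\lambda$-stable for all $\lambda\ge 2^{|T|}$, i.e.\ superstable.

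The main obstacle is the middle step: rigorously reducing $\operatorname{tp}(a/H)$ to the pair (projection, internal type). This rests on three points to be checked carefully --- that $H^\perp$ is a subcorrespondence and $P$ preserves $K$-boundedness, so the splitting lives in the structure; that every mixed inner product genuinely vanishes, so that no information about the relative position of $a_1$ and $H$ survives; and that the residual internal data is governed by a finitely generated correspondence drawn from a set of bounded size, exactly as in the decomposition behind Corollary \ref{dirsum}. The homogeneity argument additionally leans on the fact that an occurring atom has infinite multiplicity in a saturated model, so deleting a single cyclic summand does not alter the dimension function of the complement. Once these are in place the counting is routine, and it is precisely the independence of $\mu$ from $\lambda$ that upgrades stability to superstability.
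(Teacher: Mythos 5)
Your argument is essentially the paper's: the paper likewise observes that a finitary type over $\M$ generates a correspondence of the form $H(\M)\oplus K$ with $K$ of density character at most $\mu=\max(\operatorname{dens}M,\operatorname{dens}N,\aleph_0)$, invokes Corollary \ref{dirsum} to bound the possible summands $K$ by $2^\mu$, and concludes $\lambda$-stability for all $\lambda\ge 2^\mu$ by the same count. You have simply filled in what the paper compresses into ``very crude counting'' --- the orthogonal splitting $a=a_0+a_1$, the vanishing of the mixed inner products, and the homogeneity step reducing the type to this data (whose appeal to an occurring cyclic summand having infinite multiplicity in a saturated model is the same move the paper makes in its proof of Proposition \ref{QE}).
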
  

\begin{proof}
To see this, suppose that $T$ is a completion of $T_{\corr}$ and $\M\models T$.  Every type over $M$ in some fixed finitely many variables generates a correspondence of the form $H(\M) \oplus K$ for some correspondence $K$ which has density character $\mu$, the maximum of that of $M$, $N$ and $\aleph_0$.  Using Corollary \ref{dirsum} and some very crude counting, if the density character of $\M$ is $\lambda$ with $\lambda \geq 2^\mu$, then $T$ is $\lambda$-stable.  
\end{proof}

By virtue of being stable, there exists a well-behaved independence relation, which we now concretely describe for completions of the theory of correspondences.  We work in the language of $T_{\corr}^*$ and fix a completion $T$.  We work in a large, saturated model $H_T$ of $T$ and set $A_0$ to be the algebraic closure of the empty set.  

Before describing the independence relation, we note the following:

\begin{prop}\label{QE}
Let $T'$ be the theory $T$ with constants added to name elements of $A_0$.  Then:
\begin{enumerate}
\item $T'$ has quantifier elimination.
\item The algebraic closure of any $X \subseteq H_T$ is the correspondence generated by $X$ and $A_0$.
\end{enumerate}
\end{prop}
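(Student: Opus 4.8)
The plan is to prove the two parts in turn, taking the universal axiomatization $T_{\corr}^*$, the cyclic decomposition of Corollary \ref{dirsum}, and classifiability as the main inputs; the only genuine work is a multiplicity count. For (1), I would invoke the standard substructure test for quantifier elimination: $T'$ eliminates quantifiers as soon as, for all $H_1,H_2\models T'$ with $H_2$ being $|H_1|^+$-saturated, every embedding $f\colon A\to H_2$ of a substructure $A\subseteq H_1$ extends to an embedding $H_1\to H_2$. Fix such data. Since $T_{\corr}^*$ is universally axiomatizable, the substructure $A$ is itself a model of $T_{\corr}^*$, so $A=\M(H(A))$ for the sub-correspondence $H(A)=\overline{\bigcup_K S_K(A)}$ of $H_1$, and $f$ is induced by an isometric correspondence embedding $H(A)\to H_2$, still denoted $f$; as $A$ is a substructure of a model of $T'$, it contains the named constants, so $A_0\subseteq H(A)$.

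The orthogonal complement of a sub-correspondence is again a sub-correspondence: if $\eta\perp H(A)$ then $\langle m\eta,\xi\rangle=\langle\eta,m^*\xi\rangle=0$ for all $\xi\in H(A)$ and $m\in M$, and symmetrically on the right. Hence $H_1=H(A)\oplus C$ with $C=H(A)^\perp$ a sub-correspondence, and by Corollary \ref{dirsum} each of $C$, $H(A)$, $H_1$, $H_2$ is an orthogonal direct sum of cyclic correspondences. Classifiability tells us that the completion $T$ fixes, for each isomorphism type $\phi$ of cyclic correspondence, its multiplicity $\operatorname{mult}(\phi)$, either a fixed natural number or $\infty$. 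Since $f$ is isometric it preserves multiplicities on $H(A)$, and $\operatorname{mult}_{H_1}(\phi)=\operatorname{mult}_{H(A)}(\phi)+\operatorname{mult}_{C}(\phi)$. For $\phi$ of finite multiplicity $n_\phi$ one has $n_\phi=\operatorname{mult}_{H_2}(\phi)$, so $f(H(A))^\perp$ contains exactly $n_\phi-\operatorname{mult}_{H(A)}(\phi)=\operatorname{mult}_{C}(\phi)$ copies of $H_\phi$; for $\phi$ of infinite multiplicity, $|H_1|^+$-saturation forces $\operatorname{mult}_{H_2}(\phi)\geq |H_1|^+$, so $f(H(A))^\perp$ again has room for the at most $|H_1|$ copies occurring in $C$. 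Embedding each cyclic summand of $C$ into matching orthogonal copies in $f(H(A))^\perp$ and combining with $f$ produces the required extension $H_1\to H_2$, and QE follows.

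For (2), write $H(X):=\overline{M(X\cup A_0)N}$ for the sub-correspondence generated by $X$ and $A_0$; I claim $\operatorname{acl}(X)=H(X)$. The inclusion $H(X)\subseteq\operatorname{acl}(X)$ is formal: $\xi\mapsto a\xi$, $\xi\mapsto\xi b$ and $(\xi,\eta)\mapsto\xi+\eta$ are function symbols of $\cL$, so every finite sum $\sum_k a_k\zeta_k b_k$ with $\zeta_k\in X\cup A_0$ lies in $\operatorname{dcl}(X\cup A_0)\subseteq\operatorname{acl}(X)$ (using $A_0=\operatorname{acl}(\emptyset)\subseteq\operatorname{acl}(X)$), and since algebraic closure is metrically closed its norm-closure $H(X)$ is contained in $\operatorname{acl}(X)$ as well. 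For the reverse inclusion, work in $H_T$ and suppose $\eta\notin H(X)$; decompose $\eta=\eta_0+\eta_1$ with $\eta_0\in H(X)$ and $0\neq\eta_1\in H(X)^\perp$. Any automorphism of $H_T$ fixing $X$ pointwise fixes $H(X)$ pointwise (its elements lie in $\operatorname{dcl}(X)$) and preserves $H(X)^\perp$, so it sends $\eta$ to $\eta_0+\sigma(\eta_1)$. The cyclic type of $\eta_1$ cannot have finite multiplicity, since a finite-multiplicity isotypic component is $\Aut(H_T)$-invariant with relatively compact orbits and hence lies in $A_0\subseteq H(X)$, contradicting $0\neq\eta_1\perp H(X)$; therefore $H(X)^\perp$ contains infinitely many mutually orthogonal copies of $\overline{M\eta_1 N}$. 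By homogeneity of $H_T$ there are automorphisms over $X$ realizing the isomorphisms onto these copies, yielding images $\sigma_j(\eta_1)$ that are pairwise orthogonal and hence pairwise at distance $\sqrt 2\,\|\eta_1\|$; thus the orbit of $\eta$ over $X$ is infinite and uniformly separated, so it is not compact and $\eta\notin\operatorname{acl}(X)$.

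I expect the single real obstacle, underlying both parts, to be the multiplicity bookkeeping: verifying that the completion genuinely pins down each cyclic multiplicity (so that the counts for $H_1$ and $H_2$ agree), that these multiplicities are honest invariants of the decomposition in Corollary \ref{dirsum}, and in particular that finite-multiplicity isotypic components are absorbed into $A_0$ via compactness of their automorphism orbits. Everything else — the substructure test, the stability of sub-correspondences under orthogonal complementation, and the definability of the module operations — is routine.
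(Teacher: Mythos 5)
Your skeleton is sound --- the substructure test for quantifier elimination, the fact that the orthogonal complement of a subcorrespondence is a subcorrespondence, and the easy inclusion $H(X)\subseteq\operatorname{acl}(X)$ are all fine --- and you correctly flagged the multiplicity bookkeeping as the real issue. Unfortunately that is exactly where the argument breaks: the quantity $\operatorname{mult}_H(\phi)$ is not a well-defined invariant of $H$, because cyclic correspondences are not irreducible and the decomposition of Corollary \ref{dirsum} is far from unique. For instance, for a II$_1$ factor $M$ the bimodule $L^2(M)^{\oplus 2}$ is itself cyclic with subtracial cyclic vector $\tfrac{1}{\sqrt2}(1,u)$ for any unitary $u\notin\mathbb{C}1$ (the algebra generated by the two actions is all of $B(L^2(M))$ acting diagonally), so the same correspondence is simultaneously ``two copies of $L^2(M)$'' and ``one copy of a different cyclic correspondence with no copy of $L^2(M)$.'' Consequently the additivity $\operatorname{mult}_{H_1}(\phi)=\operatorname{mult}_{H(A)}(\phi)+\operatorname{mult}_{C}(\phi)$, the claim that the completion pins down finite multiplicities (classifiability only says that a choice of multiplicities determines the isomorphism class, not that the multiplicities are recoverable from the model, let alone from its theory), and the assertion that $f(H(A))^\perp$ contains \emph{exactly} the difference are all unjustified. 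The same defect undermines your part (2): there are no ``isotypic components,'' and even a genuinely once-occurring cyclic summand need not have relatively compact $\Aut(H_T)$-orbit (the unitaries commuting with both actions on a single cyclic correspondence can already move a vector through a non-precompact set), so ``finite multiplicity $\Rightarrow$ contained in $A_0$'' is not available.

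The paper's proof is engineered to avoid all counting. For (1) it takes $A$ and $H_0$ separable and $H_1$ only $\aleph_1$-saturated; for each cyclic summand $P$ of $H_0$ orthogonal to $A\supseteq A_0$ it uses saturation to produce $\aleph_1$ pairwise orthogonal copies of $P$ inside $H_1$, and then uses separability to note that $A$ together with the previously placed summands meets only countably many summands of a decomposition of $H_1$, so a fresh orthogonal copy of $P$ always remains; one proceeds summand by summand. The point of passing to $A_0=\operatorname{acl}(\emptyset)$ is precisely to absorb the pieces that do \emph{not} recur unboundedly, so that they never have to be matched --- they are already fixed by $f$. For (2) the paper does not use automorphisms at all: having proved quantifier elimination, a subtracial $\xi$ orthogonal to $X'$ has the same (quantifier-free, hence complete) type over $X'$ as the cyclic vector of any copy of $\overline{M\xi N}$ orthogonal to $X'$, and there are unboundedly many such copies, so $\xi\notin\operatorname{acl}(X')$. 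To repair your argument you would need to replace multiplicity counting by this dichotomy: either a cyclic piece sits inside $A_0$, or it recurs $\aleph_1$ times orthogonally in any sufficiently saturated model.
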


\begin{proof} For (1), suppose that $A$ is a separable correspondence containing $A_0$ and contained in $H_0$ and $H_1$, both models of $T$.  Moreover, assume that $H_0$ is separable and $H_1$ is $\aleph_1$-saturated.  Any direct sum decomposition of $A$ can be extended to a direct sum decomposition of $H_0$.  Suppose that $P$ is one of the subtracial cyclic direct summands of $H_0$ orthogonal to $A$.  Since $H_1$ is $\aleph_1$-saturated, it contains a copy of $P^{\oplus \aleph_1}$.  This family of direct summands can be extended to a direct sum decomposition of $H_1$. Since $A$ is separable, it is contained in the direct sum of only countably many summands that make up this decomposition of $H_1$ and so at least one copy of $P$ must be orthogonal to $A$.  This means that one can fix $A$ and map $P$ into $H_1$. Proceeding like this summand by summand, we get an embedding of $H_0$ into $H_1$ fixing $A$. Quantifier elimination follows. 

For (2), let $X'$ be the correspondence generated by $X$ and $A_0$ and consider $\xi \not\in X'$.  Without loss of generality, we can assume that $\xi$ is orthogonal to $X'$ and that $\xi$ is subtracial.  If $P$ is the cyclic correspondence that $\xi$ generates, we know as above that $P^{\oplus \lambda}$ is also present in $H_T$ for any small $\lambda$.  By quantifier elimination, the type of $\xi$ has unboundedly many realizations over $X'$, whence $\xi$ is not algebraic over $X'$ (and thus not algebraic over $X$).
\end{proof}

We can now describe the independence relation.  For subcorrespondences $A, B$ and $C$ of $H$, where $A_0 \subseteq A \subseteq B\cap C$, we say that \textbf{$B$ is independent from $C$ over $A$} if the correspondence generated by $B$ and $C$ has the form $A \oplus B_0 \oplus C_0$, where $B = A \oplus B_0$ and $C = A \oplus C_0$.

\begin{prop} The independence relation described above is the non-forking relation for $T$.
\end{prop}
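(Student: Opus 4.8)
The plan is to invoke the uniqueness of the independence relation in a stable theory: since $T$ is superstable, non-forking is the only automorphism-invariant ternary relation on algebraically closed subcorrespondences satisfying the standard axioms (invariance, symmetry, monotonicity, base monotonicity, transitivity, finite and local character, extension, and stationarity over algebraically closed sets); see, e.g., \cite{munster} and \cite[Chapter 13]{Shelah}. So it suffices to check that the relation described above satisfies these axioms. Throughout I would identify a type $\operatorname{tp}(\bar b/C)$ with the algebraically closed set $\operatorname{acl}(A_0\bar b)$, which by Proposition \ref{QE}(2) is exactly the subcorrespondence generated by $\bar b$ and $A_0$; it is therefore harmless to argue directly with subcorrespondences $A,B,C$ satisfying $A_0\subseteq A\subseteq B\cap C$.

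First I would record a cleaner reformulation. Because the left and right actions are $*$-representations, the orthogonal complement $B\ominus A$ of a subcorrespondence $A$ inside a subcorrespondence $B$ is again a subcorrespondence, and $B=A\oplus(B\ominus A)$. A routine check then shows that the correspondence generated by $B$ and $C$ equals $A\oplus(B\ominus A)\oplus(C\ominus A)$ exactly when these three summands are mutually orthogonal, i.e.\ exactly when $(B\ominus A)\perp(C\ominus A)$ (the orthogonalities to $A$ being automatic). Thus $B$ is independent from $C$ over $A$ if and only if $(B\ominus A)\perp(C\ominus A)$. In this orthogonal-complement form, invariance (automorphisms fix $A_0$ and preserve subcorrespondences and orthogonality), symmetry, monotonicity, base monotonicity, and transitivity are all immediate bookkeeping with orthogonal complements inside the direct-sum decomposition provided by Corollary \ref{dirsum}; and finite character holds because the orthogonality of $B\ominus A$ and $C\ominus A$ can be tested on generating vectors.

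Next I would establish local character and extension. For local character, given $B$ and $C$ I would let $A\subseteq C$ be the subcorrespondence generated by $A_0$ together with the orthogonal projection of $B$ onto the closure of $C$; since the projection onto an invariant subspace commutes with the $M$- and $N$-actions, $A$ is a subcorrespondence whose density character is bounded by that of $B$ together with $\aleph_0$, and by construction $(B\ominus A)\perp(C\ominus A)$, so $B$ is independent from $C$ over $A$. For extension, given $C\subseteq D$ and any $B\supseteq C$, I would realize $\operatorname{tp}(B/C)$ by some $B'$ independent from $D$ over $C$: decompose $B\ominus C$ into cyclic summands via Corollary \ref{dirsum}, and, using that the monster realizes each cyclic isomorphism type with large multiplicity, choose an isomorphic copy of each summand orthogonal to $D$; assembling these gives $(B\ominus C)'\perp D$ with $(B\ominus C)'\cong B\ominus C$ over $C$, and transporting $B$ through the $C$-fixing isomorphism $C\oplus(B\ominus C)\to C\oplus(B\ominus C)'$ produces the required $B'$.

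The crux is stationarity over algebraically closed sets, and this is precisely where Proposition \ref{QE} enters. Suppose $A$ is algebraically closed (equivalently, a subcorrespondence containing $A_0$), that $\operatorname{tp}(B/A)=\operatorname{tp}(B'/A)$, and that $B$ and $B'$ are each independent from $C$ over $A$. Equality of types over the algebraically closed base yields an $A$-isomorphism of correspondences $B\to B'$, which (preserving inner products and fixing $A$) restricts to an isomorphism $\psi\colon B\ominus A\to B'\ominus A$. Since $B\ominus A\perp C$ and $B'\ominus A\perp C$, the generated correspondences are $C\oplus(B\ominus A)$ and $C\oplus(B'\ominus A)$, and gluing $\psi$ with the identity on $C$ gives a correspondence isomorphism between them that fixes $C$ pointwise and sends $B$ to $B'$. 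By the quantifier elimination of Proposition \ref{QE}(1), applied over $A$, such an isomorphism is elementary, whence $\operatorname{tp}(B/C)=\operatorname{tp}(B'/C)$. The main obstacle is making this gluing fully rigorous in the presence of the bimodule structure — one must verify that the amalgamated correspondence is well defined and that the multiplicities of cyclic summands match so that the orthogonal direct sums genuinely realize the prescribed types — but classifiability of $T$ together with Corollary \ref{dirsum} reduces this to bookkeeping with those multiplicities. Once every axiom is verified, the uniqueness theorem identifies the described relation with non-forking.
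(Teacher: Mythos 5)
Your proposal is correct and follows essentially the same route as the paper: both verify the standard axioms characterizing non-forking in a stable theory (invariance, symmetry, transitivity, finite and local character, extension, stationarity) for the orthogonality relation and conclude by uniqueness, with extension handled exactly as in the paper by producing a copy of $B\ominus A$ orthogonal to $C$ via the argument from the proof of Proposition \ref{QE}. Your write-up is somewhat more detailed than the paper's (which leaves stationarity as ``similar'' to extension and treats local character more tersely), but there is no difference in method.
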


\begin{proof}
This relation is easily seen to be invariant, transitive and symmetric.  If $B$ and $C$ are as above and are dependent over $A$, then this is explained by some $a \in B \cap C$, whence this relation has finite character.  If $B$ is separable, then $B$ is independent from $C$ over $B \cap C$, whence this relation also has local character.

The only remaining properties of non-forking we need to check are extension and stationarity.  They are similar and so we just check extension.  Suppose that $A_0 \subseteq A \subseteq B, C$ are subcorrespondences in $H$.  We may assume that $B = B_0 \oplus A$.  As in the proof of quantifier elimination, we can find some isomorphic copy $B'_0$ of $B_0$ which is disjoint from $C$.  Then $B' = B'_0 \oplus A$ is independent from $C$ over $A$, which proves extension.
\end{proof}

In certain cases, we can improve the previous result.  We call a complete theory $T$ of correspondences \textbf{ample} if whenever $P$ is a cyclic, subtracial correspondence contained in $H_T$, then $P^{\oplus\aleph_0} \subseteq H_T$.  Observe that the proof of Proposition \ref{QE} shows that ample theories have quantifier elimination in the language of $T_{\corr}^*$.

\begin{question}
Is every complete theory of correspondences ample?  In particular, when the is the theory of the trivial $M$-$M$ correspondence ample?
\end{question}

Let $\cal S$ denote the set of subtracial c.p. maps $M\to N$.  Given a closed (in the weak topology) subset ${\cal A}$ of $\cal S$, we say that ${\cal A}$ is \textbf{ample} if whenever $\varphi_i \in {\cal A}$ for $i \in \bbN$ (possibly with repetitions) and $H_\psi \subseteq \oplus_i H_{\varphi_i}$, then $\psi \in {\cal A}$.

To an ample set of subtracial c.p.\  maps ${\cal A}$, we associate the theory $T_{\cal A}$ of the correspondence $\bigoplus_{\varphi \in \cal A} H_\varphi^{\oplus \aleph_0}$.


\begin{prop} For a complete theory $T$ of correspondences, the following are equivalent:
\begin{enumerate}
\item $T$ is ample.
\item $T = T_{\cal A}$ for some ample $\cal A$.
\item The algebraic closure of the empty set is $\{0\}$.
\end{enumerate}
\end{prop}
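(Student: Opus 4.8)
The plan is to prove the cycle $(1)\Rightarrow(2)\Rightarrow(3)\Rightarrow(1)$, working throughout in the saturated model $H_T$ and exploiting three facts already available: by Corollary \ref{dirsum} every correspondence is an orthogonal direct sum of cyclic subtracial pieces $H_\varphi$ with $\varphi\in\mathcal S$; the explicit non-forking relation computed above identifies independence over $A_0=\{0\}$ with orthogonality of the generated subcorrespondences; and, as noted before the definition of an ample set, an ample theory has quantifier elimination in the language of $T_{\corr}^*$. The recurring mechanism is that if some $H_\varphi$ occurs with infinite multiplicity, then for any two of its copies the isometry swapping them and fixing their orthocomplement is a correspondence automorphism of $H_T$; conversely, an independent (Morley) sequence of realizations of a single type produces mutually orthogonal copies.

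$(3)\Rightarrow(1)$. Assuming $A_0=\{0\}$, Proposition \ref{QE}(1) gives that $T$ has quantifier elimination outright. Let $P=H_\varphi\subseteq H_T$ be a nonzero cyclic subtracial correspondence with subtracial generator $\xi$. Since $\xi\neq 0=A_0$, the type $\operatorname{tp}(\xi/\emptyset)$ is non-algebraic, so by superstability it carries an infinite Morley sequence $\xi=\xi_0,\xi_1,\dots$ of realizations. By the explicit description of non-forking, independence over $\{0\}$ means the $\overline{M\xi_k N}$ are mutually orthogonal, and each is isomorphic to $P$ since the $\xi_k$ share the type of $\xi$. Thus $P^{\oplus\aleph_0}\subseteq H_T$, so $T$ is ample.

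$(1)\Rightarrow(2)$ and $(2)\Rightarrow(3)$. For the first, with $T$ ample set $\mathcal A:=\{\varphi\in\mathcal S: H_\varphi\hookrightarrow H_T\}$. The containment clause of set-ampleness holds: if each $\varphi_i\in\mathcal A$, ampleness gives each $H_{\varphi_i}$ with infinite multiplicity in $H_T$, so $\bigoplus_i H_{\varphi_i}$ embeds into $H_T$, and any $H_\psi$ it contains then embeds into $H_T$, forcing $\psi\in\mathcal A$. Weak-closedness of $\mathcal A$ is the delicate point, treated below; granting it, $\mathcal A$ is ample. To see $T=T_{\mathcal A}$ I compare $H_T$ with $H^*:=\bigoplus_{\varphi\in\mathcal A}H_\varphi^{\oplus\aleph_0}$: both have support exactly $\mathcal A$ with every occurring $\varphi$ of infinite multiplicity, so any finite tuple (generating a separable correspondence supported on $\mathcal A$) embeds into either model, and quantifier elimination then yields $H_T\equiv H^*$. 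For $(2)\Rightarrow(3)$, given $T=T_{\mathcal A}$ one first checks, using the same weak-containment input, that the support of the monster $H_T$ is exactly $\mathcal A$ with every occurring type of infinite multiplicity. Suppose toward a contradiction that $A_0\neq\{0\}$. Since $A_0$ is a subcorrespondence (Proposition \ref{QE}(2)), is closed under the actions, and bounded vectors are dense, the Fact producing a subtracial vector in $M\xi N$ lets me choose a nonzero subtracial $\xi\in A_0$. Then $\overline{M\xi N}\cong H_{\varphi_\xi}$ with $\varphi_\xi\in\mathcal A$, so $H_T$ contains orthogonal copies of it; the swapping automorphisms move $\xi$ to pairwise orthogonal vectors of equal norm, so its $\Aut(H_T)$-orbit is not totally bounded, contradicting $\xi\in A_0$. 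Hence $A_0=\{0\}$.

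The step I expect to be the main obstacle is the weak-closedness of $\mathcal A$ (equivalently, the computation of the support of $H_T$), since unlike the purely algebraic containment clause it genuinely requires the topological input. Concretely, if $\varphi_n\to\varphi$ weakly with $\varphi_n\in\mathcal A$, one must check that weak convergence of the completely positive maps forces convergence of the matrix coefficients $\tau_N(\varphi_n(x)y)$, so that $H_\varphi$ is weakly contained in $\bigoplus_n H_{\varphi_n}$; the ultrapower reformulation of weak containment proved above then embeds $H_\varphi$ into $(\bigoplus_n H_{\varphi_n})^{c\cU}$, and since $\bigoplus_n H_{\varphi_n}\hookrightarrow H_T$ and $H_T^{c\cU}\equiv H_T$ is saturated, $H_\varphi$ embeds into $H_T$, giving $\varphi\in\mathcal A$. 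The only other point needing care is the insistence on orthogonal rather than merely distinct copies of $P$, which is exactly what the explicit independence relation over $\{0\}$ supplies.
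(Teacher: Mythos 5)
Your argument is correct, and on the one implication the paper actually writes out, $(1)\Rightarrow(2)$, it coincides with the paper's: take $\mathcal A$ to be the set of subtracial c.p.\ maps whose cyclic correspondences occur in $H_T$, use ampleness (plus saturation) to realize $\bigoplus_{\varphi}H_\varphi$ over a countable dense subset of $\mathcal A$ inside $H_T$, and invoke quantifier elimination to see that this copy is an elementary submodel and a model of $T_{\mathcal A}$. The difference is one of scope: the paper declares that ``the only direction that needs proof is $(1)\Rightarrow(2)$'' and stops there, whereas you prove the full cycle. Two of your additions genuinely go beyond the printed argument and are worth having. First, you verify that $\mathcal A$ is weakly closed --- which is part of the definition of an ample set and is silently skipped in the paper --- and your route (weak convergence of the maps gives convergence of matrix coefficients, hence weak containment of $H_\varphi$ in $\bigoplus_n H_{\varphi_n}$, hence an embedding into an ultrapower and then into the saturated $H_T$) is exactly the right use of the Fell-topology/ultrapower machinery developed earlier; the same device correctly handles the ``support of the monster'' point needed for $(2)\Rightarrow(3)$. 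Second, your $(3)\Rightarrow(1)$ via a Morley sequence of a non-algebraic type, combined with the explicit identification of non-forking over $A_0=\{0\}$ with orthogonality of the generated subcorrespondences, is a clean stability-theoretic explanation of why triviality of $\operatorname{acl}(\emptyset)$ forces infinite multiplicity; the paper offers no argument here. Your orbit/total-boundedness argument for $(2)\Rightarrow(3)$ (swapping orthogonal copies produces a non-compact orbit of any nonzero subtracial $\xi\in A_0$) is also the standard and correct mechanism. In short: same core construction, but your write-up supplies the verifications the paper leaves implicit.
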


\begin{proof}
The only direction that needs proof is (1) implies (2).  Let $\cal A$ be the set of all subtracial c.p. maps $\varphi$ such that $H_\varphi \subseteq H_T$ and let ${\cal A}_0$ be a countable dense subset of $\cal A$.  Since $T$ is ample, we can build a copy of $\bigoplus_{\varphi \in {\cal A}_0} H_\varphi$ in $H_T$.  By quantifier elimination, this is an elementary submodel of $H_T$ and a model of $T_{\cal A}$.
\end{proof}

 The largest such ample set is, of course, $\cal S$, the set of all subtracial c.p. maps and the theory $T_{\cal S}$ has the property that every correspondence embeds into a model of it.  Thus, we have:


\begin{thm}
The theory of $M$-$N$ correspondences has a model companion, namely $T_{\cal S}$, with quantifier elimination in the language of $T_{\corr}^*$.
\end{thm}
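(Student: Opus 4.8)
The plan is to verify directly the two defining properties of a model companion for $T_{\cal S}$ over $T_{\corr}$, namely that $T_{\cal S}$ is model complete and that $T_{\corr}$ and $T_{\cal S}$ have the same universal consequences (equivalently, are mutually model-consistent); since model companions are unique up to logical equivalence when they exist, this also licenses the definite article in ``the model companion''.

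First I would dispatch model completeness together with the quantifier-elimination clause. The set $\cal S$ of all subtracial c.p.\ maps $M\to N$ is ample, so $T_{\cal S}=T_{\cal A}$ for the ample set ${\cal A}=\cal S$, and the preceding proposition identifies $T_{\cal S}$ as an ample theory. By the observation following Proposition \ref{QE}, an ample theory has quantifier elimination in the language of $T_{\corr}^*$; in particular $T_{\cal S}$ is model complete, which is the substantive half of the model-companion requirement and simultaneously yields the asserted quantifier elimination.

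Next I would establish mutual model-consistency. Since the class $\cal C$ of dissections of correspondences is elementary and axiomatized by $T_{\corr}$ (Theorem \ref{mainthm}), and $T_{\cal S}$ is by construction a completion of the complete theory of one such dissection, we have $T_{\corr}\subseteq T_{\cal S}$; hence every model of $T_{\cal S}$ is already a model of $T_{\corr}$, i.e.\ the dissection of a genuine correspondence. For the converse, let $H$ be any $M$-$N$ correspondence. By Corollary \ref{dirsum} we may write $H=\bigoplus_{i} H_{\varphi_i}$ with each $\varphi_i$ subtracial, so each $\varphi_i\in\cal S$. Passing to a sufficiently saturated elementary extension $K\models T_{\cal S}$ of the defining model $\bigoplus_{\varphi\in\cal S}H_\varphi^{\oplus\aleph_0}$, the structure $K$ inherits a copy of every $H_\varphi$ from its submodel and, by saturation, contains at least as many mutually orthogonal copies of each $H_\varphi$ as there are indices $i$ with $\varphi_i=\varphi$. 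Matching the summands of $H$ with distinct such copies assembles an isometric correspondence embedding $\M(H)\hookrightarrow K$, so $H$ embeds into a model of $T_{\cal S}$.

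Combining these facts, $T_{\cal S}$ is model complete and mutually model-consistent with $T_{\corr}$, hence is the model companion of the theory of $M$-$N$ correspondences, with quantifier elimination in the language of $T_{\corr}^*$. The only point requiring genuine care is the multiplicity bookkeeping in the embedding step: a correspondence of large density character need not embed into the fixed structure $\bigoplus_{\varphi\in\cal S}H_\varphi^{\oplus\aleph_0}$ itself, only into a sufficiently saturated model of $T_{\cal S}$, and one must confirm that the assembled map is a bona fide bimodule map — which is automatic, since it is built summand-by-summand from correspondence isomorphisms of the cyclic pieces $H_{\varphi_i}$. I expect this to be the main (though mild) obstacle; the completeness of the resulting model companion is then a bonus, reflecting the joint embedding property of $T_{\corr}$, which holds because any two correspondences embed into their direct sum.
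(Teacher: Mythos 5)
Your argument is correct and follows essentially the same route as the paper: ampleness of the set $\cal S$ of all subtracial c.p.\ maps gives quantifier elimination (hence model completeness) for $T_{\cal S}$ via the observation after Proposition \ref{QE}, and the universality of $T_{\cal S}$ — every correspondence embeds into a (saturated) model of it, by the direct-sum decomposition into cyclic subtracial pieces — gives mutual model-consistency with $T_{\corr}$. You merely spell out the multiplicity bookkeeping that the paper leaves implicit.
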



There is a slightly different approach to the model-companion of $T_{\corr}$ that follows the lines of Berenstein's proof that the theory of unitary representations of a fixed group $\Gamma$ has a model companion (see \cite{berenstein}).  We merely summarize the appropriate sequence of lemmas, leaving the reader to check that the proofs are identical to those in \cite{berenstein}.

Suppose that $H$ is any separable \textbf{locally universal} correspondence, that is, one whose ultrapower embeds any separable correspondence.  (Such correspondences exist since $T_{\corr}$ is $\forall\exists$-axiomatizable; for instance, let $H$ be an e.c.\ correspondence.)  

\begin{lem}\label{oplusec}
Suppose that $K$ is a correspondence.  Then $K\oplus H^{\oplus\infty}$ is e.c.
\end{lem}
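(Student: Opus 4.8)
The plan is to verify the usual criterion for existential closedness: a model is e.c.\ precisely when, for every extension and every quantifier-free condition with finitely many parameters from the model, the infimum computed in the extension is already (approximately) attained inside the model. So let $L:=K\oplus H^{\oplus\infty}$, suppose $L\subseteq L'$ is an extension of correspondences, and fix a finite parameter tuple $\bar a\in L$ together with a quantifier-free condition $\varphi(\bar x,\bar a)$ whose infimum over $L'$ is $r$. I would reduce the whole claim to a single embedding statement. Since the left and right actions are $*$-representations, the orthogonal complement $L_0:=L^\perp\cap L'$ is again a subcorrespondence, so $L'=L\oplus L_0$; by Corollary \ref{dirsum}, $L_0=\bigoplus_j H_{\phi_j}$. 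Any finite tuple of near-witnesses $\bar b\in L'$ for $r$ lies in $L\oplus P$, where $P\subseteq L_0$ is the (separable) subcorrespondence generated by the $L_0$-components of $\bar b$. Thus everything follows once I establish the key embedding: for every separable correspondence $P$ there is an embedding $L\oplus P\hookrightarrow L^{c\cU}$ restricting to the diagonal identity on $L$, for a suitable non-principal $\cU$ on $\bbN$.

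Granting that, the argument closes routinely: I embed $L\oplus P$ into $L^{c\cU}$ over $L$ (hence over $\bar a$), so the image of $\bar b$ witnesses $\varphi(\,\cdot\,,\bar a)<r+\e$ in $L^{c\cU}$ because quantifier-free conditions are preserved by embeddings; since the diagonal embedding $L\prec L^{c\cU}$ is elementary (\L{}o\'s, via Theorem \ref{coincides}), I can pull these approximate witnesses back into $L$, giving $\inf_{\bar x\in L}\varphi\le r$. The reduction to separable $P$ is harmless since finitely generated subcorrespondences are separable.

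The heart of the matter, and the step I expect to be the main obstacle, is constructing inside $L^{c\cU}$ a copy of $H^{c\cU}$ that is orthogonal to the diagonal $L$. Because $P$ is separable and $H$ is locally universal, I first fix an embedding $\iota\colon P\hookrightarrow H^{c\cU}$. Writing $L=K\oplus\bigoplus_{n\in\bbN}H^{(n)}$ with each $H^{(n)}\cong H$, I define a ``staircase'' map $\Phi\colon H^{c\cU}\to L^{c\cU}$ on uniformly bounded vectors $\eta=(\eta_i)^\bullet$ by $\Phi(\eta):=(\eta_i^{(i)})^\bullet$, placing the $i$-th coordinate into the $i$-th copy $H^{(i)}$. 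I would then check that $\Phi$ is a well-defined isometric bimodule map: boundedness transfers from the subcorrespondence $H^{(i)}$ to $L$, so uniformly $K$-bounded vectors map to uniformly $K$-bounded vectors (invoking Proposition \ref{mainpropultra} and Theorem \ref{freeultra}), and $\Phi$ extends by density of the uniformly bounded vectors (Corollary \ref{uniformdense}). The crucial point is orthogonality to the diagonal: for $\zeta=\zeta_K\oplus\bigoplus_n\zeta^{(n)}\in L$ one computes $\ip{\Phi(\eta)}{\zeta}=\lim_\cU\ip{\eta_i^{(i)}}{\zeta^{(i)}}$, and since $\sum_n\|\zeta^{(n)}\|^2<\infty$ forces $\|\zeta^{(i)}\|\to 0$, this limit vanishes along the non-principal $\cU$. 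Hence $\Phi\circ\iota$ embeds $P$ into $L^\perp$ inside $L^{c\cU}$, and together with the diagonal embedding of $L$ this yields the desired $L\oplus P\hookrightarrow L^{c\cU}$ over $L$.

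The only genuinely new ingredient is this staircase map $\Phi$, which exploits the infinitely many copies of $H$ to manufacture fresh, mutually orthogonal room in the ultrapower; verifying that $\Phi$ is bimodular (the action commutes with the coordinatewise placement because $H^{(i)}\cong H$ as bimodules) and that it respects the bounded-vector structure of the correspondence ultraproduct is a direct application of the machinery of Section \ref{corrultra}. This is exactly the correspondence analogue of Berenstein's construction for unitary representations, with the bimodule dissection and Proposition \ref{mainpropultra} playing the role of the Hilbert-space bookkeeping there.
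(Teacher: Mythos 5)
Your proof is correct, and it is exactly the argument the paper intends: the paper gives no details here, deferring to Berenstein \cite{berenstein}, and your staircase embedding of $H^{c\cU}$ into $L^{c\cU}$ orthogonally to the diagonal copy of $L=K\oplus H^{\oplus\infty}$ (using $\|\zeta^{(i)}\|\to 0$ for fixed $\zeta\in L$) is precisely the correspondence analogue of that construction, with Proposition \ref{mainpropultra} and Corollary \ref{uniformdense} supplying the bookkeeping needed to land in the correspondence ultraproduct. The reduction to a separable $P\subseteq L^\perp$ and the pullback of approximate witnesses via $L\prec L^{c\cU}$ are both sound.
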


\begin{lem}
Suppose that $K$ is a correspondence.  Then $K$ is e.c.\ if and only if $K\models \Th_\exists(H^{\oplus\infty})$.
\end{lem}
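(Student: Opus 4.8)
The plan is to prove both implications by converting existential satisfaction into the geometry of orthogonal decompositions of correspondences, using the local universality of $H$, Lemma~\ref{oplusec}, and the identification (Theorem~\ref{coincides}) of the model-theoretic ultraproduct of dissections with the correspondence ultraproduct. Throughout I write $K^\cU$ for the correspondence ultraproduct $\prod_{c\cU}K$, whose dissection is the usual ultrapower $\prod_\cU\M(K)$; this is $\aleph_1$-saturated and by Proposition~\ref{mainpropultra} its bounded vectors are uniformly bounded, so the value of any existential condition in $K^\cU$ equals its value in $K$ by \L o\'s's theorem.

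For the forward direction, suppose $K$ is e.c. Fix an existential sentence $\sigma=\inf_{\bar x}\psi(\bar x)$ with $\psi$ quantifier-free. Since $K$ is a subcorrespondence of $K\oplus H^{\oplus\infty}$ and is e.c.\ in it, we have $\sigma^K\le \sigma^{K\oplus H^{\oplus\infty}}$; since $H^{\oplus\infty}$ is also a subcorrespondence of $K\oplus H^{\oplus\infty}$ and the value of an existential sentence only decreases upward (the infimum is taken over a larger set), $\sigma^{K\oplus H^{\oplus\infty}}\le \sigma^{H^{\oplus\infty}}$. Hence $\sigma^K\le \sigma^{H^{\oplus\infty}}$ for every existential $\sigma$, which is exactly the statement $K\models \Th_\exists(H^{\oplus\infty})$.

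The substance is the converse. First I would record the \emph{maximality} of $\Th_\exists(H^{\oplus\infty})$. For any separable correspondence $P$ and any $N$, the correspondence $P^{\oplus N}$ is separable, so by local universality it embeds into $H^\cU$; thus the existential sentence asserting ``there exist $N$ tuples, each realizing the quantifier-free type of a generating tuple of $P$, generating mutually orthogonal subcorrespondences'' has value $0$ in $H^\cU$, hence in $H$ by \L o\'s, hence in $H^{\oplus\infty}$. Therefore the hypothesis $K\models \Th_\exists(H^{\oplus\infty})$ forces all such sentences to vanish in $K$, and by $\aleph_1$-saturation of $K^\cU$ we conclude that for every separable $P$ and every $N$ the ultrapower $K^\cU$ contains tuples $\bar\theta^{(1)},\dots,\bar\theta^{(N)}$ realizing the type of a generating tuple of $P$ whose generated subcorrespondences $P_1,\dots,P_N$ are mutually orthogonal. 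Now take any correspondence $M\supseteq K$ and an existential condition $\varphi(\bar x,\bar a)$ with parameters $\bar a\in K$ and $\varphi^M=\inf^M_{\bar x}\varphi(\bar x,\bar a)=0$; I must show $\varphi^K=0$. Decompose $M=K\oplus K^\perp$ as correspondences (the orthogonal complement of a subcorrespondence is again a subcorrespondence since the actions are by bounded operators), and write a near-witness $\bar\eta\in M$ as $\eta_i=\eta_i^K+\eta_i^\perp$ with $\eta_i^K\in K$ and $\eta_i^\perp\in K^\perp$; each $\eta_i^K$ is again bounded with the same bound. Because $\bar a\in K\perp K^\perp$ and both summands are invariant, the quantifier-free data split: the inner products of $\bar\eta$ against the parameters see only the $\eta_i^K$, while $\langle \eta_i,a\eta_j b\rangle=\langle \eta_i^K,a\eta_j^K b\rangle+\langle \eta_i^\perp,a\eta_j^\perp b\rangle$. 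Let $P$ be the separable subcorrespondence generated by $\bar\eta^\perp$ and let $r$ be its generating type; the formula $\varphi$ and the parameters involve only a finite set $F$ of vectors of the form $a\,a_k\,b$ and $a\,\eta_j^K\,b$. Choosing $N$ large and using $\sum_{\ell=1}^N\sum_{f\in F}\|\text{proj}_{P_\ell}f\|^2\le \sum_{f\in F}\|f\|^2$, a pigeonhole choice yields an $\ell$ for which $\bar\theta:=\bar\theta^{(\ell)}$ is arbitrarily close to orthogonal to $F$. Replacing $\bar\theta$ by its projection onto the orthogonal complement of the span of $F$ (a small perturbation) and setting $\zeta_i:=\eta_i^K+\theta_i\in K^\cU$, the cross terms vanish and $\bar\zeta$ realizes $\varphi(\bar x,\bar a)$ in $K^\cU$ to within any prescribed tolerance. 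Hence $\varphi^{K^\cU}=0$, so $\varphi^K=0$ by \L o\'s, and $K$ is e.c.

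The main obstacle is exactly the parameter-handling in the converse: the hypothesis controls only \emph{parameter-free} existential sentences, whereas being e.c.\ is a statement about existential formulas with parameters from $K$. The device that bridges this gap is the infinite multiplicity built into $H^{\oplus\infty}$, transferred to $K^\cU$ via the maximality of $\Th_\exists(H^{\oplus\infty})$: it supplies arbitrarily many mutually orthogonal copies of any cyclic piece, and the Hilbert-space pigeonhole estimate then peels off a single copy nearly orthogonal to the finitely many relevant parameter vectors, allowing the orthogonal part of the witness to be reconstructed inside $K^\cU$ while the parameter-facing part is left untouched. This is precisely the step at which the argument mirrors Berenstein's treatment of unitary representations in \cite{berenstein}.
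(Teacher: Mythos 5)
Your proof is correct and is essentially the argument the paper has in mind: the paper gives no proof of its own, deferring to Berenstein's treatment of unitary representations in \cite{berenstein}, and your decomposition of a witness into a parameter-facing part in $K$ plus an orthogonal part whose type is recaptured inside $K^\cU$ via infinite multiplicity and a pigeonhole estimate is exactly that argument transposed to correspondences. The only step I would tighten is the final perturbation: rather than projecting $\bar\theta$ off the linear span of the finite set $F$ (which need not preserve the sorts $S_L$ or the quantifier-free type of $\bar\eta^\perp$), use the $\aleph_1$-saturation of $K^\cU$ you have already invoked to realize exactly the type asserting that $\bar\theta$ has the quantifier-free type of $\bar\eta^\perp$ and generates a subcorrespondence orthogonal to the separable subcorrespondence generated by $\bar a$ and $\bar\eta^K$; your pigeonhole computation is precisely the verification that this type is finitely approximately satisfiable.
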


\begin{cor}
$T_{\corr}$ has a model companion, namely $T_{\corr}\cup \Th_\exists(H^{\oplus\infty})$.  Moreover, this model companion is a model completion in the language of $T^*_{\corr}$.
\end{cor}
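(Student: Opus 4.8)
The plan is to combine the two immediately preceding lemmas with the standard correspondence between model companions and elementary classes of existentially closed structures, and then to upgrade from a model companion to a model completion by exploiting the universality of $T^*_{\corr}$ together with amalgamation of correspondences. For the model-companion assertion, recall from the previous two lemmas that a correspondence $K$ is existentially closed precisely when $K\models \Th_\exists(H^{\oplus\infty})$; consequently the models of $T_{\corr}\cup \Th_\exists(H^{\oplus\infty})$ are exactly the existentially closed models of $T_{\corr}$, so this theory axiomatizes the class of e.c.\ models. By Robinson's characterization, an inductive theory whose class of e.c.\ models is elementary has a model companion, namely the common theory of those e.c.\ models, and the models of the companion are precisely the e.c.\ models. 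Since $T_{\corr}$ was shown above to be $\forall\exists$-axiomatizable, this applies verbatim and identifies $T_{\corr}\cup\Th_\exists(H^{\oplus\infty})$ as the model companion, model-completeness being automatic for any model companion. As the model companion is unique up to logical equivalence, this theory coincides with the theory $T_{\cal S}$ produced earlier.

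For the model-completion claim I would work in the language of $T^*_{\corr}$, in which the theory was shown above to be universally axiomatizable; there arbitrary substructures of models are themselves dissections of subcorrespondences. It then suffices to verify that $M$-$N$ correspondences have the amalgamation property, since a model companion of a universal theory with amalgamation is automatically a model completion. Given subcorrespondences $A\subseteq B$ and $A\subseteq C$, I first note that the orthogonal complement of a subcorrespondence is again a subcorrespondence: for $\xi\perp A$, $\eta\in A$ and $a\in M$ one has $\langle a\xi,\eta\rangle=\langle \xi,a^*\eta\rangle=0$ because $a^*\eta\in A$, and symmetrically for the right action. Hence $B=A\oplus B_0$ and $C=A\oplus C_0$ as correspondences, and the direct sum $A\oplus B_0\oplus C_0$ receives isometric correspondence embeddings of $B$ and $C$ that agree on $A$. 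This establishes amalgamation, whence the model companion is a model completion in the language of $T^*_{\corr}$; the quantifier elimination recorded earlier for $T_{\cal S}$ is the expected accompanying feature.

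The model-companion half is essentially formal once the two lemmas are in hand, so the step requiring the most care is the model-completion upgrade. The delicate point is not the amalgamation construction itself, which is the familiar orthogonal direct-sum argument, but the bookkeeping that makes it legitimate: one must confirm that passing to the $f_K$-expanded language genuinely renders $T^*_{\corr}$ universal, so that arbitrary $\cal L^*$-substructures correspond to honest subcorrespondences, and one must check that the amalgamating map into $A\oplus B_0\oplus C_0$ is an $\cal L^*$-embedding fixing $A$ rather than merely a Hilbert-space isometry. Finally, one should verify that the implication ``model companion of a universal theory with amalgamation is a model completion'' goes through in the continuous/metric setting, which is precisely where following the template of \cite{berenstein} pays off.
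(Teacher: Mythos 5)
Your argument is correct and is essentially the deduction the paper leaves to the reader (the paper offers no proof of this corollary, only the remark that the arguments are identical to those in Berenstein): the two preceding lemmas identify the models of $T_{\corr}\cup\Th_\exists(H^{\oplus\infty})$ with the e.c.\ models of the $\forall\exists$-axiomatizable theory $T_{\corr}$, which by the standard Robinson/Eklof--Sabbagh criterion yields the model companion, and the orthogonal-direct-sum amalgamation in the universal language of $T^*_{\corr}$ upgrades it to a model completion. The checks you flag as remaining (that the sorts $S_K$ and the projections $f_K$ are preserved by the amalgamating embeddings, and that the classical companion/completion criteria transfer to the metric setting) are indeed routine and go through.
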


The group version of the following corollary also appears in \cite{berenstein}.

\begin{cor}
$M$ is amenable if and only if the infinite direct sum of the coarse correspondence $L^2(M,\tau_M)\otimes L^2(M,\tau_M)$ is e.c.
\end{cor}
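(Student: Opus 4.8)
The plan is to route everything through the notion of \emph{local universality} and to invoke the classical theorem of Connes characterizing amenability: a finite von Neumann algebra $M$ is amenable if and only if the trivial correspondence $L^2(M,\tau_M)$ is weakly contained in the coarse correspondence $C:=L^2(M,\tau_M)\otimes L^2(M,\tau_M)$. Rewriting this via our ultrapower reformulation of weak containment, the goal becomes the chain of equivalences ``$M$ amenable $\iff$ $C$ is locally universal $\iff$ $C^{\oplus\infty}$ is e.c.'' The two nontrivial links are the passage between amenability and local universality of the coarse correspondence (the operator-algebraic content) and the passage between local universality and being existentially closed (the model-theoretic content, supplied by Lemma \ref{oplusec} and the description of the model companion).

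For the forward direction, assume $M$ is amenable, so that by Connes' theorem the trivial correspondence is weakly contained in $C$. I would upgrade this to the statement that \emph{every} $M$-$M$ correspondence is weakly contained in $C$ — that is, that $C$ is locally universal — by exploiting the absorbing property of the coarse correspondence: for any correspondence $H$, the Connes fusion $H\otimes_M C$ is isomorphic to a multiple $C^{\oplus\kappa}$ of the coarse correspondence, and weak containment is preserved under Connes fusion. Hence $H\cong H\otimes_M L^2(M,\tau_M)$ is weakly contained in $H\otimes_M C\cong C^{\oplus\kappa}$, and, since weak containment is insensitive to multiplicities, $H$ is weakly contained in $C$. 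Thus $C$ (separable when $M$ is) is locally universal, and taking $K:=C$ and $H:=C$ in Lemma \ref{oplusec} gives that $C\oplus C^{\oplus\infty}\cong C^{\oplus\infty}$ is e.c.

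For the reverse direction, assume $C^{\oplus\infty}$ is e.c. The e.c.\ correspondences are exactly the models of the (complete) model companion $T_{\cal S}$, and every such model is elementarily equivalent to the manifestly locally universal correspondence $\bigoplus_{\varphi\in\cal S}H_\varphi^{\oplus\aleph_0}$. Local universality is preserved under elementary equivalence — if $\M(K)\equiv\M(K')$ then their countably incomplete ultrapowers are elementarily equivalent and $\aleph_1$-saturated, hence embed the same separable correspondences — so $C^{\oplus\infty}$ is locally universal. In particular the trivial correspondence $L^2(M,\tau_M)$ embeds into $\left(C^{\oplus\infty}\right)^{c\cU}$, and by our ultrapower characterization of weak containment this says exactly that $L^2(M,\tau_M)$ is weakly contained in $C^{\oplus\infty}$, hence (ignoring multiplicities) in $C$. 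Connes' theorem then yields that $M$ is amenable.

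I expect the main obstacle to be the operator-algebraic step in the forward direction: converting the weak containment of the \emph{trivial} correspondence in the coarse one (all that Connes' theorem delivers directly) into the weak containment of \emph{arbitrary} correspondences in the coarse one. This is precisely where the absorbing behavior of the coarse correspondence under Connes fusion, together with the stability of weak containment under fusion, must be invoked. Once local universality of $C$ is established, the remaining translations between local universality and being existentially closed are routine applications of Lemma \ref{oplusec}, the identification of the model companion, and the weak-containment dictionary already proved in this section.
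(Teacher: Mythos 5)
Your proof is correct, and the model-theoretic half (Lemma \ref{oplusec} for ``locally universal $\Rightarrow$ e.c.'', and completeness of the model companion $T_{\cal S}$ plus invariance of local universality under elementary equivalence for the converse) is exactly what the paper does. Where you diverge is the operator-algebraic step. The paper simply cites Proposition \ref{effroslance} (Effros--Lance), which already asserts that semidiscreteness (= amenability) is \emph{equivalent} to every $M$-$M$ correspondence being weakly contained in the coarse one; combined with the ultrapower characterization of weak containment, local universality of the infinite direct sum of the coarse correspondence follows in one line, in both directions. You instead start from the weaker ``trivial $\prec$ coarse'' formulation and upgrade it to full local universality via Connes fusion --- in effect re-proving the hard direction of Proposition \ref{effroslance} rather than quoting it. That argument is sound, modulo one small imprecision: for a general correspondence $H$, the fusion $H\otimes_M C$ is a subcorrespondence of $C^{\oplus\kappa}$ (an amplification of $C$ cut down by a projection commuting with the actions), not literally isomorphic to $C^{\oplus\kappa}$; this does not affect the weak containment conclusion, but you should state it that way, and you should note that the continuity of Connes fusion with respect to weak containment is itself a nontrivial fact requiring the bounded-vector machinery of Section 2. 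Your route buys a self-contained derivation from the most classical form of Connes' theorem at the cost of importing the fusion calculus; the paper's route is shorter because the equivalence is outsourced to the appendix.
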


\begin{proof}
By Proposition \ref{effroslance} (and the fact that amenability coincides with semidiscreteness), $M$ is amenable if and only if the infinite direct sum of the coarse correspondence is locally universal; the latter condition is equivalent to the infinite direct sum of the correspondence being e.c.\ by Lemma \ref{oplusec}.
\end{proof}


\subsection{Freeing the von Neumann algebras (continued)}

In this subsection, we revisit the class of correspondences but this time do not insist that the tracial von Neumann algebras are fixed.  
The class of structures $\operatorname{Corr}$ we are trying to capture now are triples $(M,H,N)$, where $M$ and $N$ are tracial von Neumann algebras and $H$ is an $M$-$N$ correspondence.  We introduce a language $\cL_{\mathcal{F}}$ as follows:

\begin{enumerate}
\item For $M$ and $N$, we use the language of tracial von Neumann algebras.
\item For $H$, we have sorts $S_K$, which, as above, are meant to capture the $K$-bounded vectors.  We also have an inner product and $+$ just as in the case of fixed algebras.
\item Additionally we will have actions from $M \times H$ and $H \times N$ mapping to $H$ with the usual restrictions on domains, ranges and continuity moduli.
\end{enumerate}

For any $(M,H,N) \in \operatorname{Corr}$, we can once again consider its dissection, which is the $\cL_{\mathcal{F}}$-structure $\M(M,H,N)$, which assigns all the sorts, functions and relations their standard intended meaning.  Let ${\cal F}$ be the class of $\cL_{\mathcal{F}}$-structures $\M(M,H,N)$ for all $(M,H,N) \in \operatorname{Corr}$.  The model-theoretic version of Theorem \ref{freeultra} is the following:

\begin{thm}\label{modelultra}
Suppose that $(M_i,H_i,N_i)$ is a family of elements of $\operatorname{Corr}$ and $\cU$ is an ultrafilter on $I$.  Then $\M(\prod_\cU M_i,\prod_{c\cU}H_i, \prod_\cU N_i)=\prod_\cU \M(M_i,H_i,N_i)$.
\end{thm}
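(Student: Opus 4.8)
The plan is to reduce Theorem \ref{modelultra} to the already-established Theorem \ref{freeultra} exactly as Theorem \ref{coincides} was reduced to Proposition \ref{mainpropultra} in the fixed-algebra setting. The statement asserts that the dissection of the correspondence ultraproduct coincides, as an $\cL_{\mathcal{F}}$-structure, with the model-theoretic ultraproduct of the dissections. There are three components to verify: (i) that the von Neumann algebra parts agree, i.e. $\prod_\cU \M(M_i,H_i,N_i)$ restricted to the $M$- and $N$-sorts yields precisely $\prod_\cU M_i$ and $\prod_\cU N_i$; (ii) that the Hilbert space sorts agree, i.e. that for each $K$, the sort $S_K$ of the model-theoretic ultraproduct equals the set of $K$-bounded vectors in $\prod_{c\cU} H_i$; and (iii) that all the distinguished functions and relations (inner product, addition, the embeddings $i_{KL}$, and the two actions $M\times H\to H$ and $H\times N\to H$) are interpreted identically on both sides.

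First I would dispense with (i): the agreement of the algebra parts is precisely the statement that the tracial von Neumann algebras form an elementary class in the language of \cite{FHS2}, so $\prod_\cU M_i$ and $\prod_\cU N_i$ are the standard tracial ultraproducts and coincide sort-by-sort with the algebra reducts of $\prod_\cU \M(M_i,H_i,N_i)$. This is the known result we are layering our construction on top of, and it requires no new work. Next, for (iii), I would observe that the interpretations of $+$, $\ip{\cdot}{\cdot}$, and $i_{KL}$ on the model-theoretic ultraproduct are computed coordinatewise via ultralimits, and these are manifestly the restrictions of the Hilbert space operations on $\prod_\cU H_i$ to the relevant bounded sorts; the same holds for the actions once we know they are well-defined on the uniformly bounded vectors, which is guaranteed by the first part of the lemma preceding Theorem \ref{freeultra}. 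Thus (iii) reduces to a routine matching of interpretations and is not where the difficulty lies.

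The crux is part (ii), and this is exactly where Theorem \ref{freeultra} does the heavy lifting. By definition, an element of the sort $S_K$ of the model-theoretic ultraproduct $\prod_\cU \M(M_i,H_i,N_i)$ is represented by a sequence $(\xi_i)^\bullet$ with each $\xi_i \in S_K(\M(M_i,H_i,N_i))$, that is, with each $\xi_i$ a $K$-bounded vector in $H_i$; these are precisely the uniformly $K$-bounded vectors. On the other side, $S_K(\M(\prod_{c\cU}H_i))$ is by definition the set of $K$-bounded vectors of the correspondence ultraproduct. The inclusion of uniformly $K$-bounded vectors among the $K$-bounded vectors is the easy first part of the lemma preceding Theorem \ref{freeultra}; the reverse inclusion---that every $K$-bounded vector is uniformly $K$-bounded---is exactly the content of Theorem \ref{freeultra}. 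Hence the two sorts coincide as sets, with the metric being the common restriction of the Hilbert space distance.

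The main obstacle, then, is entirely contained in the already-proven Theorem \ref{freeultra}, so the present proof is genuinely a matter of translation: one must be careful that the metric on the model-theoretic ultraproduct sort $S_K$---which is the ultralimit of the coordinate metrics $d_K$---agrees with the metric induced by the inner product on $\prod_{c\cU}H_i$, and that the reduct bookkeeping in the richer language $\cL_{\mathcal{F}}$ (where the algebras themselves are sorts rather than fixed parameters) does not introduce any interaction between the algebra sorts and the Hilbert sorts beyond what the action symbols already encode. Since the remark preceding Theorem \ref{freeultra} has already established that the free construction restricts to the fixed-algebra construction, I would simply state that the proof of Theorem \ref{coincides} applies verbatim, substituting Theorem \ref{freeultra} for Proposition \ref{mainpropultra}, and conclude.
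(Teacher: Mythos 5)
Your proposal is correct and follows exactly the route the paper intends: the paper states Theorem \ref{modelultra} as ``the model-theoretic version of Theorem \ref{freeultra}'' with no further argument, the whole content being that the sort $S_K$ of the model-theoretic ultraproduct consists of the uniformly $K$-bounded vectors while Theorem \ref{freeultra} identifies these with the $K$-bounded vectors of $\prod_{c\cU}H_i$, the remaining bookkeeping on the algebra sorts and the function symbols being routine. Your write-up simply makes explicit what the paper leaves implicit.
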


Given Theorem \ref{modelultra}, the proof of the next theorem is identical to its ``fixed'' analogue.
\begin{thm}
$\cal F$ is an elementary class which is categorically equivalent to $\operatorname{Corr}$.
\end{thm}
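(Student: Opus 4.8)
The plan is to reduce this theorem to its ``fixed-algebra'' counterpart, Theorem \ref{mainthm}, by mimicking that proof almost verbatim, with Theorem \ref{modelultra} playing the role that Theorem \ref{coincides} played there. First I would establish that $\cal F$ is an elementary class by checking closure under ultraproducts and ultraroots. Closure under ultraproducts is immediate from Theorem \ref{modelultra}: given a family $(M_i,H_i,N_i)$ in $\operatorname{Corr}$, the model-theoretic ultraproduct $\prod_\cU \M(M_i,H_i,N_i)$ is identified with the dissection $\M(\prod_\cU M_i, \prod_{c\cU}H_i, \prod_\cU N_i)$, and since $\prod_\cU M_i$ and $\prod_\cU N_i$ are again tracial von Neumann algebras (the class of tracial von Neumann algebras being itself elementary) and $\prod_{c\cU}H_i$ is an $(\prod_\cU M_i)$-$(\prod_\cU N_i)$ correspondence by the lemma in the ``Freeing the von Neumann algebras'' subsection, this triple lies in $\operatorname{Corr}$ and so its dissection lies in $\cal F$.

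For closure under ultraroots, suppose $\M$ is an $\cL_{\mathcal F}$-structure with $\M^\cU \cong \M(M,H,N)$ for some $(M,H,N)\in\operatorname{Corr}$. The von Neumann algebra sorts of $\M$ form elementary substructures of $M$ and $N$, hence are themselves tracial von Neumann algebras $M_0$ and $N_0$; and the closure of the union of the correspondence sorts $S_K(\M)$ is an $M_0$-$N_0$ subcorrespondence $H_0$ of $H$. The only point requiring argument is that $\M = \M(M_0,H_0,N_0)$, i.e.\ that every $K$-bounded vector in $H_0$ already lies in $S_K(\M)$. This is exactly the approximation argument from the proof of Theorem \ref{mainthm}: a $K$-bounded $\xi\in H_0$ is $K$-bounded in $H$, hence lies in $S_K(\M^\cU)$; approximating $\xi$ within $\epsilon$ by some $\xi'\in S_L(\M)$ and using that $\M\preceq\M^\cU$ to transfer the value of $\inf_{\eta\in S_K}d(\xi',\eta)$, one produces $\eta\in S_K(\M)$ with $d(\xi,\eta)\leq 3\epsilon$, and letting $\epsilon\to 0$ gives $\xi\in S_K(\M)$ by closedness of the sorts.

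The categorical equivalence then proceeds exactly as before. The functor $\M\mapsto (M(\M),H(\M),N(\M))$ sends $\M\in\cal F$ to the triple consisting of its algebra sorts and the completion of the union of its correspondence sorts, and one checks $H(\M(M,H,N))=H$ and, in the reverse direction, $\M(M(\M),H(\M),N(\M))=\M$ for every $\M\in\cal F$. The latter identity again rests on Proposition \ref{mainprop}: a $K$-bounded vector in $H(\M)$ is a norm-limit of bounded vectors from the dense subspace $\bigcup_K S_K(\M)$, which by Proposition \ref{mainprop} can be taken to be $K$-bounded, and since $\M$ is a dissection an element of $S_L(\M)$ that is $K$-bounded lies in the image of $i_{KL}$, forcing the vector into $S_K(\M)$. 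A verification that the functors induce a bijection on Hom-sets completes the equivalence.

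I expect the main obstacle to be purely bookkeeping rather than conceptual: every essential analytic input (that $K$-bounded vectors in a limit are uniformly $K$-bounded, and that approximating bounded vectors can be taken $K$-bounded) has already been carried out in the fixed-algebra setting and transported to the free setting via Theorem \ref{freeultra} and Theorem \ref{modelultra}. The one genuinely new wrinkle compared to Theorem \ref{mainthm} is that the ambient algebras themselves now vary, so in both the ultraroot and the equivalence arguments I must simultaneously track the algebra sorts—confirming that elementary substructures and ultraroots of tracial von Neumann algebras remain tracial von Neumann algebras, and that the correspondence structure is compatible with the limiting algebras. Since the class of tracial von Neumann algebras is known to be elementary, this adds no real difficulty, which is precisely why the authors are entitled to assert that ``the proof is identical to its `fixed' analogue.''
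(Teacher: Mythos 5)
Your proposal matches the paper exactly: the authors state only that, given Theorem \ref{modelultra}, the proof is identical to the fixed-algebra case (Theorem \ref{mainthm}), which is precisely the reduction you carry out, including the ultraroot approximation argument and the observation that the varying algebra sorts cause no trouble since tracial von Neumann algebras form an elementary class. No gaps; your writeup is if anything more detailed than the paper's.
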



\section{Property (T)}
\setcounter{subsection}{1}
In \cite{Gold}, the first named author showed that a countable group $\Gamma$ has property (T) if and only if the set of $\Gamma$-invariant vectors is a definable set relative to the theory of unitary representations of $\Gamma$.  One of the main motivations for the current paper is to prove an analogous result for II$_1$ factors, which is the content of this section.

 Let us first recall the definition of property (T) for tracial von Neumann algebras.  While we follow \cite{Popa-book}, the definition there is in terms of c.p.\ maps.  Instead, we give an equivalent formulation, which is the content of \cite[Proposition 14.2.4]{Popa-book}.  Given an $M$-$M$ correspondence $H$, $a\in M$, and $\xi\in H$, we set $[a,\xi]:=a\xi-\xi a$.  We say that $\xi\in H$ is \textbf{central} if $[a,\xi]=0$ for all $a\in M$.

\begin{defn}\label{Tdef}
$(M,\tau_M)$ has \textbf{property (T)} if, for any $\epsilon>0$, there is a finite subset $F\subseteq M$ and $\delta>0$ such that, for any $M$-$M$ correspondence $H$ and any tracial vector $\xi\in H$ satisfying $\max_{x\in F}\|[x,\xi]\|\leq \delta$, there is a central vector $\eta\in H$ with $\|\xi-\eta\|\leq \epsilon$. 
\end{defn}

For us, the following result (\cite[Proposition 14.5.1]{Popa-book}) is crucial:

\begin{prop}
If $M$ is a II$_1$ factor, then in the above definition of property (T), one may replace the word ``tracial'' with ``unit.''
\end{prop}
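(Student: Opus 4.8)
The plan is to prove the two criteria equivalent, the easy direction being immediate: since every tracial vector is a unit vector (traciality forces $\|\xi\|^2=\ip{\xi}{\xi}=\tau_M(1)=1$), the unit-vector criterion quantifies over strictly more test vectors and hence formally implies the tracial-vector criterion of Definition \ref{Tdef}. So I would concentrate on deducing the unit-vector version from property (T) as stated.

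The strategy is to show that, over a II$_1$ factor, every almost-central unit vector lies close to an almost-central \emph{tracial} vector, which reduces the unit-vector criterion to the hypothesis. The conceptual reason this should work is that a genuinely central unit vector $\xi$ in any $M$-$M$ correspondence is automatically tracial: centrality makes the functional $m\mapsto\ip{m\xi}{\xi}$ a trace, so its density (the element $b_\xi\in M$ of Lemma \ref{cutoff}) is central, hence scalar since $M$ is a factor, and equals $1$ after normalization. I would turn this exact statement into a quantitative one by extracting a spectral gap from the hypothesis: were $M$ to have property $\Gamma$, a central sequence of trace-zero unitaries $u_n$ would furnish tracial vectors $\hat u_n$ in the trivial correspondence $L^2(M,\tau_M)$ with $\|[x,\hat u_n]\|=\|[x,u_n]\|_2\to 0$ for each $x$, yet with $d(\hat u_n,\mathbb{C}\hat 1)=\|u_n\|_2=1$ (the central vectors of the trivial correspondence over a factor being $\mathbb{C}\hat 1$), contradicting property (T). Hence there are a finite set $F_1$ of unitaries and $\kappa>0$ with $\|x-\tau_M(x)\cdot 1\|_2\le\kappa\max_{u\in F_1}\|ux-xu\|_2$ for all $x\in M$.

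Granting the spectral gap, the main step is the conversion. Given a unit vector $\xi$ almost commuting with a large finite set $F\supseteq F_1$, I would first replace it by a bounded almost-central vector at small cost (bounded vectors are dense), producing via Lemma \ref{cutoff} a density $b_\xi\in M_+$ with $\tau_M(b_\xi)=\|\xi\|^2\approx 1$. Since right multiplication by a unitary is isometric, almost-centrality gives $\tau_M(a\,u b_\xi u^{*})=\ip{au\xi}{u\xi}\approx\ip{a\xi}{\xi}=\tau_M(a\,b_\xi)$ uniformly over $a$ in the unit ball and $u\in F_1$, so $b_\xi$ almost commutes with $F_1$ in $\|\cdot\|_2$, and the spectral gap forces $\|b_\xi-1\|_2$ small; the right density is handled identically. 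With both densities close to $1$, I would correct $\xi$ to a genuinely tracial $\xi'$ by the functional-calculus cutoff used in the proof of Lemma \ref{KDTultra} (applying $\min\{1,t^{-1/2}\}$ to the two densities), keeping $\|\xi-\xi'\|$ controlled by $\|b_\xi-1\|_2$ and keeping $\xi'$ almost central for the finite set supplied by the hypothesis. Property (T) applied to $\xi'$ then yields a central $\eta$ close to $\xi'$, hence to $\xi$.

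The hard part is exactly this conversion. An almost-central unit vector need not be almost-tracial in general — a normalized central projection in a property $\Gamma$ factor is a counterexample — so the densities cannot be flattened by naive Dixmier averaging, which conjugates by $\xi$-dependent unitaries and destroys almost-centrality. The crux is therefore first to extract a genuine spectral gap from the tracial-vector hypothesis, and then to bookkeep the bounded-vector approximations carefully so that the almost-central defect of the corrected tracial vector is controlled uniformly by that of $\xi$. The finite-factor structure together with the Little Radon–Nikodym calculus of Lemma \ref{cutoff} are precisely what make this uniform control possible.
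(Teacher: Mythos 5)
A preliminary remark: the paper does not prove this proposition but imports it from \cite[Proposition 14.5.1]{Popa-book}, so you are being compared against the standard argument rather than anything in the text. Your architecture is the right one, and your identification of the two key inputs --- a linear spectral gap extracted from non-$\Gamma$ (which does follow from the tracial form of property (T) exactly as you argue, since your test vectors $\hat u_n$ are tracial), and the Radon--Nikodym densities of Lemma \ref{cutoff} --- matches how such statements are actually proved. But two of your steps do not work as written. The reduction to bounded vectors ``at small cost'' is not free: for a general unit vector $\xi$ the left density $h$ lies only in $L^1(M)_+$, and cutting by a spectral projection $\chi_{(R,\infty)}(h)$ need not preserve almost-centrality, because $\|uhu^*-h\|_1\le 2\delta$ gives no control whatsoever on $\|u\chi_{(R,\infty)}(h)u^*-\chi_{(R,\infty)}(h)\|_2$ (spectral projections are discontinuous functions of the operator). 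Relatedly, your spectral gap inequality is applied to $b_\xi\in M$, which only exists after this problematic truncation. The standard repair avoids truncation altogether: by Powers--St{\o}rmer, $\|uh^{1/2}u^*-h^{1/2}\|_2^2\le\|uhu^*-h\|_1$, so $h^{1/2}$ is an almost-central unit vector of $L^2(M)$ to which the gap inequality (extended from $M$ to $L^2(M)$ by density) applies directly, giving $\|h^{1/2}-1\|_2$, hence $\|h-1\|_1$, small. (Also note that the linear inequality $\|x-\tau_M(x)1\|_2\le\kappa\max_{u\in F_1}\|[u,x]\|_2$ is Connes' theorem characterizing full factors; the naive compactness argument from the absence of central sequences only controls operator-norm-bounded elements, so this step deserves an explicit citation rather than a ``hence.'')

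The second, more decisive gap: the functional-calculus correction by $f(t)=\min\{1,t^{-1/2}\}$ produces a \emph{subtracial} vector, with densities $\le 1$ and trace close to but not equal to $1$, whereas Definition \ref{Tdef} demands an exactly tracial test vector; as written, the hypothesis cannot be applied to your $\xi'$. This is repairable: pass to $H\oplus\bigl(L^2(M)\otimes L^2(M)\bigr)$ and pad $\xi'$ with a vector $\zeta$ of the coarse summand chosen (by functional calculus in $M$) to have left and right densities exactly $1-b_{\xi'}$ and $1-d_{\xi'}$; then $\xi'\oplus\zeta$ is exactly tracial, $\|\zeta\|^2=\tau(1-b_{\xi'})$ is small so $\zeta$ is automatically almost central, and the $H$-component of the central vector supplied by property (T) is a central vector of $H$ close to $\xi$. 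With the Powers--St{\o}rmer substitution and this padding, your outline becomes a complete proof.
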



For $\M\models T_{\corr}$, let $X(\M)$ denote the set of elements of $S_1(\M)$ that are central vectors of $H(\M)$.  The goal of this section is to prove, when $M$ is a II$_1$ factor, that $X$ is a $T_{\corr}$-definable set if and only if $M$ has property (T).  We first need one technical lemma:

\begin{lem}\label{Tlem} Let $H$ be an $M$-$M$ correspondence and suppose that $\xi\in H$ is a $K$-bounded unit vector with $\sup_{u\in\cal U(M)}\|[u,\xi]\|\leq \delta$. Then there is a $K/(1-\delta)^2$-bounded \emph{central} unit vector $\eta\in\cal H$ with $\|\xi - \eta\|\leq 2\delta$.
\end{lem}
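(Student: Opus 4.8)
## Proof Proposal for Lemma \ref{Tlem}

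The plan is to produce the central vector $\eta$ by averaging $\xi$ over the unitary group of $M$ using the canonical center-valued trace/projection onto central vectors, and then to control both the distance $\|\xi-\eta\|$ and the bound of $\eta$. Since $\xi$ is almost central (nearly commuting with every unitary), its orbit under the conjugation action $u\mapsto u\xi u^*$ stays close to $\xi$, and the projection $\eta$ of $\xi$ onto the closed subspace of central vectors should inherit a controlled bound.

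First I would set $\eta_0 := P\xi$, where $P$ is the orthogonal projection of $H$ onto the (closed) subspace $Z$ of central vectors. For any unitary $u\in\mathcal U(M)$, the vector $u\xi u^*$ satisfies $\|u\xi u^* - \xi\| = \|u\xi - \xi u\| = \|[u,\xi]\|\leq \delta$ (using that $u$ is unitary so left multiplication is an isometry). Since central vectors are exactly the fixed points of all the maps $\xi\mapsto u\xi u^*$, and $P$ is the projection onto this fixed-point space, a standard estimate gives $\|\xi - P\xi\|\leq \sup_u \|u\xi u^* - \xi\|\leq \delta$; concretely one realizes $P\xi$ as a norm limit of convex combinations (averages) of the $u\xi u^*$, each of which is within $\delta$ of $\xi$, so the limit is too. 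This yields a central vector $\eta_0$ with $\|\xi-\eta_0\|\leq\delta$, and in particular $\|\eta_0\|\geq 1-\delta$.

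Next I would control the bound of $\eta_0$. Each conjugate $u\xi u^*$ is, by Lemma \ref{basic}(3) applied on the left and the right, a $K$-bounded vector (conjugation by a unitary does not increase the bound, since $\|u\|=1$). By Lemma \ref{basic}(1), any convex combination of left (resp. right) $K$-bounded vectors is again left (resp. right) $K$-bounded, and since the $K$-bounded vectors form a closed set, the norm limit $\eta_0=P\xi$ is again $K$-bounded. Finally I would normalize: set $\eta := \eta_0/\|\eta_0\|$. Then $\eta$ is a central \emph{unit} vector, and since $\|\eta_0\|\geq 1-\delta$, the scaling multiplies the bound by $1/\|\eta_0\|^2 \leq 1/(1-\delta)^2$, so $\eta$ is $K/(1-\delta)^2$-bounded by the quadratic scaling in the definition of $K$-boundedness. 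For the distance estimate, I would write $\|\xi-\eta\|\leq \|\xi-\eta_0\| + \|\eta_0 - \eta\|$, bound the first term by $\delta$, and bound the second by $|\,\|\eta_0\|-1\,| \leq \delta$ (since $\|\eta_0\|\in[1-\delta,1]$, using $\|\xi\|=1$ and $\|\eta_0\|\leq\|\xi\|$), giving $\|\xi-\eta\|\leq 2\delta$.

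The main obstacle I anticipate is making rigorous the claim that $P\xi$ lies in the norm-closed convex hull of the conjugates $\{u\xi u^*\}$, so that the bound-preservation via Lemma \ref{basic} genuinely applies — the orthogonal projection onto the fixed-point space is abstractly the closed convex hull of the orbit (a Ryll-Nardzewski/von Neumann mean ergodic type fact), but one must verify the averaging can be carried out within the $K$-bounded vectors and that the resulting central vector is exactly $P\xi$. The scaling bookkeeping for the final normalization is routine once the quadratic dependence of the bound on the scalar is noted.
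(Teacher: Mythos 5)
Your proposal is correct and is essentially the paper's argument: both produce the central approximant by averaging $\xi$ over the conjugation orbit $\{u\xi u^*: u\in\mathcal U(M)\}$, use Lemma \ref{basic} to keep the $K$-bound through convex combinations and limits, and then normalize with the quadratic scaling of the bound. The only cosmetic difference is that the paper realizes the central vector directly as the unique minimal-norm element of $\overline{\mathrm{co}}\{u\xi u^*\}$ (which is central because that set is invariant under the isometries $\zeta\mapsto u\zeta u^*$), and this element coincides with your $P\xi$ --- so the one step you flagged as needing verification is exactly what the paper's circumcenter argument supplies.
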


\begin{proof} Since $\xi$ is a $K$-bounded vector, so is $u\xi u^*$ for any $u\in\cal U(M)$, whence so is any vector in $S :=\overline{\rm co}\{u\xi u^* : u\in \cal U(M)\}$. Since $S$ is closed and convex, there is a unique vector $\eta'\in \cal S$ which is closest to $0$. Since $uSu^* = S$ for all $u\in\cal U(M)$, this implies that $\eta'$ is central. We have that $\|\xi - \eta'\|\leq \delta$, so $\|\eta'\|\geq 1-\delta$ by the triangle inequality. Since $\eta'$ is $K$-bounded, we have that $\eta = \eta'/\|\eta'\|$ is $K/(1-\delta)^2$ bounded. Moreover, $\|\xi -\eta\|\leq 2\delta$. \qedhere

\end{proof}


\begin{thm}
Suppose that $M$ is a II$_1$ factor.  Then $M$ has property (T) if and only if the $T_{\corr}$-functor $X$ is a definable set.
\end{thm}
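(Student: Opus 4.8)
The strategy is to characterize definability of $X$ through the behavior of $X$ under the correspondence ultraproduct, using the standard model-theoretic criterion that a functor is a definable set precisely when it is closed under ultraproducts and ultraroots (equivalently, when the predicate $\operatorname{dist}(x, X)$ is preserved under ultraproducts). Since the central vectors are exactly those $\xi \in S_1$ with $[u,\xi]=0$ for all $u \in \mathcal{U}(M)$, the set $X$ is always type-definable (it is the common zeroset of the conditions $\|[u,x]\|=0$). The content of the theorem is that this zeroset is in fact a \emph{definable} set (i.e., that $\operatorname{dist}(\cdot, X)$ is a definable predicate) if and only if $M$ has property (T). I would prove each direction by translating between the metric approximation inherent in definability and the uniform approximation inherent in Definition \ref{Tdef}.

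\textbf{The (T) $\Rightarrow$ definable direction.}
Assuming $M$ has property (T), I would show $X$ is definable by verifying that the predicate $\operatorname{dist}(x,X)$ commutes with the correspondence ultraproduct; equivalently, I would produce, for each $\epsilon > 0$, a single formula (a finite condition built from $\max_{x \in F}\|[x,\cdot]\|$) whose small values force proximity to $X$. Concretely, given $\epsilon$, property (T) supplies a finite $F \subseteq M$ and $\delta > 0$ so that any unit tracial vector $\xi$ with $\max_{x\in F}\|[x,\xi]\| \le \delta$ is within $\epsilon$ of a central vector. Using Proposition \ref{Tlem} I can upgrade this to keep the approximating central vector \emph{bounded} with a controlled bound ($K/(1-\delta)^2$-bounded when $\xi$ is $K$-bounded), which is essential because definability must be witnessed inside the sorts $S_K$ rather than in the ambient Hilbert space. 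This uniform control, together with the fact that elements of $S_1(\M)$ are exactly the $1$-bounded vectors, gives the explicit modulus showing $\operatorname{dist}(x,X) \le \epsilon$ is implied by a condition on the finitely many commutators over $F$; uniformity over all correspondences $H$ is exactly what makes the bound formula-level rather than model-dependent.

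\textbf{The definable $\Rightarrow$ (T) direction, and the main obstacle.}
For the converse I would argue contrapositively: if $M$ fails property (T), I build a sequence of $M$-$M$ correspondences $H_n$ with unit vectors $\xi_n$ that are \emph{asymptotically central}, meaning $\max_{x \in F}\|[x,\xi_n]\| \to 0$ for every finite $F$, yet each $\xi_n$ stays a fixed distance from every central vector. Passing to the correspondence ultraproduct $\prod_{c\cU} H_n$, the vector $\xi = (\xi_n)^\bullet$ is genuinely central (each commutator vanishes in the limit) and $1$-bounded, hence lies in $X\bigl(\M(\prod_{c\cU}H_n)\bigr)$, while the distance from $\xi_n$ to $X(\M(H_n))$ does not converge to $0$; this contradicts the ultraproduct-preservation of $\operatorname{dist}(\cdot,X)$ that definability would require. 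The main technical obstacle, and the step I would spend the most care on, is ensuring that the ultraproduct vector $\xi$ truly belongs to the correspondence ultraproduct and is central \emph{as a bounded vector}: I must invoke Proposition \ref{mainpropultra} (every $K$-bounded vector of $\prod_\cU H_i$ is uniformly $K$-bounded, hence lies in $\prod_{c\cU}H_i$) so that $\xi$ is a legitimate element of $S_1\bigl(\M(\prod_{c\cU}H_i)\bigr) = \prod_\cU S_1(\M(H_i))$ by Theorem \ref{coincides}, and I must use the II$_1$-factor hypothesis via the cited Proposition \ref{Popa-book} allowing ``unit'' in place of ``tracial'' so that the witnessing vectors can be taken to be genuine unit vectors rather than merely subtracial. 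Matching the distance computed in the ambient Hilbert space with the metric on the sort $S_1$, and checking that asymptotic centrality over a countable dense subset of $\mathcal{U}(M)$ suffices to force exact centrality in the ultraproduct, are the delicate points where the bounded-vector technology of Section \ref{corrultra} does the real work.
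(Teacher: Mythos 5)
Your overall strategy---reduce definability of $X$ to commutation of $\operatorname{dist}(\cdot,X)$ with the correspondence ultraproduct, and use Lemma \ref{Tlem} to keep the approximating central vectors inside the sorts $S_K$---is exactly the paper's. The one genuine problem is that you have placed the II$_1$-factor hypothesis in the wrong direction, and as a consequence your forward implication, as written, has a gap. In proving (T) $\Rightarrow$ definable, the vectors to which property (T) must be applied are the representatives $\xi_i$ of a central $\xi\in X(\prod_{c\cU}H_i)$: after normalizing $\xi$ to a unit vector, these are $K$-bounded \emph{unit} vectors, not tracial vectors, so Definition \ref{Tdef} does not apply to them. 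This is precisely where one must invoke \cite[Proposition 14.5.1]{Popa-book} (for a II$_1$ factor, ``tracial'' may be replaced by ``unit'' in the definition of property (T)). Your write-up of this direction instead quantifies over ``unit tracial vectors,'' which does not cover the elements of the sorts (these are merely subtracial), and you defer the tracial-to-unit upgrade to the converse.

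In the converse direction that upgrade is not needed: negating Definition \ref{Tdef} already hands you \emph{tracial} vectors $\xi_n$, which are automatically unit vectors and $1$-bounded, hence elements of $S_1(\M(H_n))$ whose ultraproduct lies in $S_1$ of $\prod_{c\cU}H_n$ by Proposition \ref{mainpropultra} and Theorem \ref{coincides}. If instead you start from arbitrary almost-central unit vectors, as your sketch suggests, you have no control on boundedness and $(\xi_n)^\bullet$ need not lie in the correspondence ultraproduct at all---so sticking with the tracial witnesses is not optional. (The paper dispenses with this direction as immediate from the definition of a definable set; your contrapositive ultraproduct argument is a legitimate, if longer, route once the witnesses are taken tracial.) To repair the forward direction, apply the unit-vector form of property (T) to the normalized representatives $\xi_i/\|\xi_i\|$, treating vectors of small norm separately since they are already close to the central vector $0$; with that change your argument matches the paper's.
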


\begin{proof}  The ``if'' direction is immediate from the definitions and does not use the assumption that $M$ is a II$_1$ factor.

Now suppose that $M$ has property (T).  It suffices to show that:  for any family $(H_i \ : \ i\in I)$ of $M$-$M$ correspondences and any ultrafilter $\cU$ on $I$, that $\prod_\cU X(H_i) = X(\prod_{c\cU} H_i)$.  The inclusion from left to right is clear.  Now consider $\xi=(\xi_i)^\bullet \in X(\prod_{c\cU} H_i)$.  By scaling by its length, we can assume that $\xi$ is a unit vector and $K$-bounded.  For each $n \in \bbN$, choose finite sets $F_n \subseteq U(M)$ with dense union (recall that if $M$ has property (T) then $M$ is separable) and a decreasing sequence $\delta_n \leq 1/(n+1)$ witnessing that $M$ has property (T) for $\epsilon:=\frac{1}{n}$.

Choose sets $U_n := \{ i \in I : \max_{u \in F_n} \|[u,\xi_i]\| \leq \delta_n \}$ in $\cU$ and which are decreasing with $n$.  This is possible since $\xi$ is central.  Define $\xi'_i\in H_i$ as follows: for $i \in U_n \setminus U_{n+1}$, let $\xi'_i \in H_i$ be central such that $\|\xi'_i - \xi_i\| \leq 1/n$; by scaling a little using Lemma \ref{Tlem}, we can assume that $\xi'$ is $K$-bounded with $\|\xi'_i - \xi\| \leq 3/n$.  If $i \in U_n$ for all $i$, then $\xi_i$ is itself central and so we can set $\xi'_i = \xi_i$.  If $i\notin U_1$, then set $\xi_i'\in H_i$ to be an arbitrary $K$-bounded vector.    From this we see that $\xi$ can be represented by a sequence of central $K$-bounded vectors which concludes the proof.
\end{proof}

\begin{question}
Is the previous theorem true for an arbitrary (separable) tracial von Neumann algebra?
\end{question}

We end this section with some speculation about the model-theoretic meaning of relative property (T).  Suppose that, as above, $M$ is a tracial von Neumann algebra and $B$ is a von Neumann subalgebra.  We say that $B$ has relative property (T) in $M$ if, in Definition \ref{Tdef}, we only conclude the existence of a $B$-central vector $\eta$, that is, a vector $\eta\in H$ such that $[b,\eta]=0$ for all $b\in B$.  The prototypical example of an algebra without property (T) which has relative property (T) in some larger algebra is $L(\mathbb{Z}^n)$ inside of $L(\mathbb{Z}\rtimes \operatorname{SL}_n(\mathbb{Z}))$, $n\geq 2$.

It is tempting to guess that $B$ has relative property (T) inside of $M$ if and only the set of $B$-central vectors is a definable set relative to the theory of $M$-$M$ correspondences.  However, this is not the case.  Indeed, if the set of $B$-central vectors were a definable set relative to the theory of $M$-$M$ correspondences, then the finite sets appearing in the definition of relative property (T) could be chosen to be subsets of $B$ itself, which is far from the case in general.  (In particular, this would imply that $B$ itself has property (T).)

Nevertheless, by \cite[Section 14.5]{Popa-book}, there does appear to be some model-theoretic meaning to relative property (T).  To explain this, we adopt some notation.
Fix a finite set $F \subset M$, $\delta > 0$ and $K \in \bbN$. Let the theory $T(F,\delta,K)$ be the theory of $M$-$M$ correspondences together with the sentence expressing the statement that there is a $K$-bounded unit vector $\xi$ such that 
\[
\|[x,\xi]\| \leq \delta, |\ip{x\xi}{\xi} - \tau(x)| \leq \delta \text{, and }  |\ip{\xi x}{\xi} - \tau(x)| \leq \delta
\]
for all $x \in F$. In the aforementioned reference, it is shown that for a II$_1$ factor $M$ and a subalgebra $B$, $B$ has relative property (T) in $M$ if for some $F$, $\delta$ and $K$, the theory $T(F,\delta,K)$ cannot omit the type of a $B$-central vector.

\section{$\sigma$-finite von Neumann algebras}

The use of bounded vectors gives a new approach to capturing model theoretically the class of $\sigma$-finite von Neumann algebras.  This class is already known to be elementary via different techniques; see \cite{Dab}.  The approach here is different but both presentations rely on the Ocneanu ultraproduct, described in the next section.

\subsection{Preliminaries concerning $\varphi$-right bounded elements}  The class $\svna$ of structures we wish to capture is the class of all pairs $(M,\varphi)$, where $M$ is a von Neumann algebra and $\varphi$ is a faithful normal state on $M$.  Any such $M$ is automatically $\sigma$-finite.  Unlike the case of finite factors, the choice of faithful normal state is not canonical.  However, the GNS constructions induced by any two such states are unitarily conjugate and give rise to a canonical ``standard form.'' (See \cite[Chapter IX, section 1]{Takesaki} for a detailed description.)

We first recall the GNS construction for faithful normal states.  Suppose that $(M,\varphi)\in \svna$.  Then one introduces the inner product $\ip{\cdot}{\cdot}_\varphi$ on $M$ given by
\[
\ip{a}{b}_\varphi = \varphi(b^*a)
\]
with corresponding norm
\[
\|a\|_\varphi = \sqrt{\ip{a}{a}_\varphi}.
\] 
We write $H_\varphi$ for the Hilbert space completion of $(M,\ip{\cdot}{\cdot}_\varphi)$.

Notice that left multiplication gives a faithful *-representation of $M$ on $H_\varphi$ and that the topology induced by $\|\cdot\|_\varphi$ agrees with the strong topology on operator norm-bounded sets of $M$.  However, in general, multiplication on the right does not lead to a bounded operator on $H_\varphi$.  Thus, we say that $a \in M$ is \textbf{$\varphi$-right $K$-bounded} if, for all $b \in M$, we have
\[
\ip{ba}{ba}_\varphi \leq K\ip{b}{b}_\varphi.
\]
If $a$ is $\varphi$-right $K$-bounded, then right multiplication on $H_\varphi$ is a bounded operator with norm at most $\sqrt{K}$.  We say that $a\in M$ is \textbf{$\varphi$-right bounded} if it is $\varphi$-right $K$-bounded for some $K$.

The following fact is probably well-known to experts, but we could not find a precise formulation of it in the literature:

\begin{prop}\label{phidense}
Fix a faithful normal state $\varphi$ on $M$.  Then the set of $\varphi$-right bounded elements of $M$ is strongly dense in $M$.
\end{prop}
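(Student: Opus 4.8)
The plan is to produce $\varphi$-right bounded elements via Tomita--Takesaki modular theory, by showing that every element analytic for the modular automorphism group is $\varphi$-right bounded and that such elements are strongly dense. Write $\xi_\varphi$ for the cyclic and separating GNS vector of $(M,\varphi)$, so that $\varphi(x) = \langle x\xi_\varphi, \xi_\varphi\rangle$, and let $J$, $\Delta$ and $(\sigma_t^\varphi)_{t\in\mathbb R}$ denote the associated modular conjugation, modular operator, and modular automorphism group, normalised so that $J\xi_\varphi = \xi_\varphi = \Delta\xi_\varphi$ and $S := J\Delta^{1/2}$ is the closure of $x\xi_\varphi \mapsto x^*\xi_\varphi$. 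Recall that $M' = JMJ$ and that an element $a\in M$ is \emph{analytic} if $t\mapsto \sigma_t^\varphi(a)$ extends to an entire $M$-valued function $z\mapsto \sigma_z^\varphi(a)$.

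The first and main step is to show that \emph{if $a$ is analytic, then $a$ is $\varphi$-right $K$-bounded with $K = \|\sigma_{i/2}^\varphi(a)\|^2$.} Here I would use that for analytic $a$ the vector $a\xi_\varphi$ lies in the domain of every power of $\Delta$, with $\Delta^s a\xi_\varphi = \sigma_{-is}^\varphi(a)\xi_\varphi$; in particular $\Delta^{-1/2}a\xi_\varphi = \sigma_{i/2}^\varphi(a)\xi_\varphi \in M\xi_\varphi$. Applying $J = S\Delta^{-1/2}$ then gives $Ja\xi_\varphi = S\,\sigma_{i/2}^\varphi(a)\xi_\varphi = \sigma_{i/2}^\varphi(a)^*\xi_\varphi$, so that
\[
a\xi_\varphi = J\,\sigma_{i/2}^\varphi(a)^*\,\xi_\varphi = \bigl(J\sigma_{i/2}^\varphi(a)^* J\bigr)\xi_\varphi.
\]
The operator $J\sigma_{i/2}^\varphi(a)^* J$ is bounded and lies in $M' = JMJ$, hence commutes with the left action of $M$; since it agrees with right multiplication by $a$ on the cyclic vector $\xi_\varphi$, it agrees with $b\xi_\varphi \mapsto ba\xi_\varphi$ on all of $M\xi_\varphi$. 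Thus right multiplication by $a$ extends to a bounded operator of norm $\|\sigma_{i/2}^\varphi(a)\|$, which is exactly the assertion that $a$ is $\varphi$-right $\|\sigma_{i/2}^\varphi(a)\|^2$-bounded.

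The second step is the density of analytic elements, which is classical: given $a\in M$, the Gaussian regularizations $a_n := \sqrt{n/\pi}\int_{\mathbb R} e^{-nt^2}\sigma_t^\varphi(a)\,dt$ are analytic, satisfy $\|a_n\|\le\|a\|$, and converge to $a$ in the $\sigma$-strong topology (see, for example, \cite{Takesaki}), hence in particular strongly. Combined with the first step, this exhibits $a$ as a strong limit of $\varphi$-right bounded elements and proves the proposition.

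I expect the main obstacle to be the bookkeeping in the first step: one must track the domains of the (unbounded) operators $\Delta^{\pm 1/2}$ and $S$ carefully and fix the sign conventions relating $\sigma^\varphi$ to $\Delta$, in order to justify the identity $R_a = J\sigma_{i/2}^\varphi(a)^* J$ rigorously. Everything else --- the density of analytic elements and the passage from $\sigma$-strong to strong convergence --- is standard modular theory.
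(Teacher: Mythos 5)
Your argument is correct, but it takes a genuinely different route from the paper's. You go through Tomita--Takesaki theory: the identity $R_a = J\sigma_{i/2}^\varphi(a)^*J$ (which is sound with the convention $\sigma_z^\varphi(a)\xi_\varphi = \Delta^{iz}a\xi_\varphi$, using $\Delta\xi_\varphi = \xi_\varphi$, $J = S\Delta^{-1/2}$ and $M' = JMJ$) shows every entire analytic element is $\varphi$-right $\|\sigma_{i/2}^\varphi(a)\|^2$-bounded, and Gaussian regularization gives $\sigma$-strong$^*$ density of the analytic elements. The paper instead avoids modular theory entirely: it proves a comparison lemma producing, for two faithful normal states $\varphi,\psi$, an increasing sequence of projections $p_K \nearrow 1$ with $\varphi(p_K x p_K)\le K\psi(p_K x p_K)$ for $x\in M_+$ (via the Jordan decomposition of the hermitian normal functional $K\psi - \varphi$), applies it to $\psi = \varphi$ and $\varphi(u^*\cdot u)$ for a unitary $u$ to conclude that each cut-down $p_K u$ is $\varphi$-right $K$-bounded with $p_K u \to u$ strongly, and then uses that the $\varphi$-right bounded elements form a subspace and $M$ is spanned by its unitaries. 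Your approach buys explicit control of the right bound in terms of $\|\sigma_{i/2}^\varphi(a)\|$, produces a dense $*$-subalgebra of right-bounded elements, and gives strong$^*$ (not merely strong) approximation with $\|a_n\|\le\|a\|$; the paper's approach is more elementary and self-contained, which matters here since the modular machinery is only introduced later in the section (and its definability is one of the results being established). The one point you should nail down is the domain bookkeeping you already flag: for analytic $a$ one must justify that $a\xi_\varphi\in D(\Delta^{-1/2})$ with $\Delta^{-1/2}a\xi_\varphi = \sigma_{i/2}^\varphi(a)\xi_\varphi$, which follows from the standard fact that $z\mapsto \Delta^{iz}a\xi_\varphi$ is the (unique) entire extension of $t\mapsto\sigma_t^\varphi(a)\xi_\varphi$; with that in place the rest is routine.
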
  

Before proving Proposition \ref{phidense}, we need a lemma:

\begin{lem} Let $M$ be a von Neumann algebra and $\vp, \psi\in M_\ast$ be two faithful states. Then there is an increasing sequence $p_K$ of projections strongly converging to $1$ so that $\vp(p_Kxp_K)\leq K\psi(p_K x p_K)$ for all $x\in M_+$. 

\end{lem}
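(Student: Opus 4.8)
The plan is to compare $\varphi$ and $\psi$ through the projection lattice: for each $K$ I would build the largest projection $p_K$ on which $\varphi$ is dominated by $K\psi$ after two-sided compression, and then show that these projections increase to $1$. Represent $M$ on the GNS space $H_\psi$ of $\psi$ with cyclic and separating vector $\xi_\psi$, so that $\psi(z^*z)=\|z\xi_\psi\|^2$, and set
\[
\mathcal{Q}_K:=\{q\in M \text{ a projection} : \varphi(qxq)\le K\psi(qxq)\text{ for all }x\in M_+\}.
\]
Writing $x=w^*w$ and $qxq=(wq)^*(wq)$, membership $q\in\mathcal{Q}_K$ is the same as requiring $\varphi(z^*z)\le K\psi(z^*z)$ for every $z\in Mq$.

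First I would produce the maximum of $\mathcal{Q}_K$. The positive quadratic form $z\xi_\psi\mapsto\varphi(z^*z)$ on $M\xi_\psi$ is closable; this closability is exactly the content of Araki's relative modular operator $\Delta=\Delta_{\varphi,\psi}$, a positive self-adjoint operator satisfying $\varphi(z^*z)=\langle\Delta\, z\xi_\psi,z\xi_\psi\rangle$. Hence, with $E_K:=\chi_{[0,K]}(\Delta)$, the condition $q\in\mathcal{Q}_K$ becomes $\overline{Mq\xi_\psi}\subseteq E_KH$. Since $E_KH$ need not be $M$-invariant, let $P_K\in M'$ be the projection onto the largest closed left $M$-submodule contained in $E_KH$; then $q\in\mathcal{Q}_K$ if and only if $q\xi_\psi\in P_KH$, equivalently $q\,v_K=0$, where $v_K:=(1-P_K)\xi_\psi$ and one uses $1-P_K\in M'$. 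This linearizes the condition: $\mathcal{Q}_K=\{q : q\,v_K=0\}=\{q: q\le 1-e_K\}$, where $e_K\in M$ is the support projection of $v_K$ (the projection onto $\overline{M'v_K}$), so $p_K:=1-e_K$ is the maximum of $\mathcal{Q}_K$.

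Monotonicity is then formal: since $\psi(qxq)\ge 0$ we have $\mathcal{Q}_K\subseteq\mathcal{Q}_{K+1}$, whence $p_K\in\mathcal{Q}_K\subseteq\mathcal{Q}_{K+1}$ forces $p_K\le p_{K+1}$. For the convergence, set $p_\infty:=\bigvee_K p_K$ and suppose $r:=1-p_\infty\neq 0$. Applying the Jordan decomposition of the Hermitian normal functional $(\varphi-K\psi)|_{rMr}$ on the corner $rMr$—exactly as one checks that $\mathcal{Q}_K$ is nonempty, by taking $r$ minus the support of the positive part—produces a projection $e_K'\le r$ with $\varphi(e_K'xe_K')\le K\psi(e_K'xe_K')$ for all $x\in M_+$ (this extends from $rMr$ to all of $M_+$ because $e_K'xe_K'=e_K'(rxr)e_K'$ and $rxr\in(rMr)_+$), so $e_K'\in\mathcal{Q}_K$. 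Moreover $e_K'=0$ would give $\varphi(r)\ge K\psi(r)$, impossible once $K>\varphi(r)/\psi(r)$ since $\psi$ is faithful and $r\neq 0$; hence $e_K'\neq 0$ for all large $K$. But $e_K'\in\mathcal{Q}_K$ gives $e_K'\le p_K\le p_\infty=1-r$, while $e_K'\le r$, forcing $e_K'\le r\wedge(1-r)=0$, a contradiction. Thus $p_\infty=1$ and $p_K\uparrow 1$ strongly.

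The main obstacle is the existence of the maximum of $\mathcal{Q}_K$: because of the cross terms in $(q_1\vee q_2)x(q_1\vee q_2)$, the defining inequality is not visibly preserved under joins, and it is precisely the relative modular operator (equivalently, the closability of the form $z\xi_\psi\mapsto\varphi(z^*z)$) that collapses the condition to the linear equation $q\,v_K=0$ and thereby makes the maximum explicit. Once this is in hand, monotonicity is immediate from $\mathcal{Q}_K\subseteq\mathcal{Q}_{K+1}$, and convergence follows from the faithfulness of $\psi$ by the corner argument above.
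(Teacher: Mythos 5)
There is a genuine gap at the step where you convert the quadratic inequality into a spectral condition. For a closed subspace $V$ and a positive self-adjoint operator $\Delta$, the condition ``$\langle\Delta\eta,\eta\rangle\le K\|\eta\|^2$ for all $\eta\in V$'' is equivalent to $P_V\Delta P_V\le KP_V$ as forms, \emph{not} to $V\subseteq\chi_{[0,K]}(\Delta)H$: a subspace can sit obliquely across the spectral decomposition (take $\Delta=\operatorname{diag}(0,2K)$ on $\mathbb{C}^2$ and $V$ spanned by $(1,1)$). So ``$q\in\mathcal{Q}_K$ becomes $\overline{Mq\xi_\psi}\subseteq E_KH$'' holds only in the direction giving $\{q:qv_K=0\}\subseteq\mathcal{Q}_K$; your $p_K$ does lie in $\mathcal{Q}_K$ and is increasing, but it need not be the maximum of $\mathcal{Q}_K$. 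In fact $\mathcal{Q}_K$ has no maximum in general: for $M=M_2(\mathbb{C})$, $\psi=\frac12\operatorname{tr}$ and $\varphi=\operatorname{tr}(\sigma\,\cdot)$ with $\sigma=\bigl(\begin{smallmatrix}1/2&t\\t&1/2\end{smallmatrix}\bigr)$, $0<t<1/2$, both diagonal rank-one projections lie in $\mathcal{Q}_1$ but their join $1$ does not. This collapses the convergence argument, which needs ``$e_K'\in\mathcal{Q}_K\Rightarrow e_K'\le p_K$.'' (A smaller, fixable issue: the relative modular operator satisfies $\langle\Delta_{\varphi,\psi}z\xi_\psi,z\xi_\psi\rangle=\varphi(zz^*)$, not $\varphi(z^*z)$, so you should work with the right ideal $qM$ rather than $Mq$; since $\varphi(qxq)\le K\psi(qxq)$ for all $x\in M_+$ is also equivalent to $\varphi(zz^*)\le K\psi(zz^*)$ for all $z\in qM$, this costs nothing.)

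The repair is essentially the corner argument you already use for nonemptiness, applied globally --- and this is the paper's proof. Takesaki's Jordan-type decomposition of the Hermitian normal functional $\theta_K:=K\psi-\varphi$ produces a single projection $p_K$ with \emph{both} $\theta_K(p_K\cdot p_K)\geq0$ (so $p_K\in\mathcal{Q}_K$) and $\theta_K(p_K^\perp\cdot p_K^\perp)\le0$. The second, complementary inequality is what substitutes for maximality: it gives $\varphi(p_K^\perp)\geq K\psi(p_K^\perp)$, hence $\psi(p_K^\perp)\le1/K\to0$, and faithfulness of $\psi$ yields $\bigvee_Kp_K=1$ with no need to compare $p_K$ against other elements of $\mathcal{Q}_K$. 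Your instinct that the obstacle is the lack of a maximum was correct; the resolution is not to construct one (it does not exist) but to extract a projection carrying the inequality on one side and its reverse on the other.
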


\begin{proof} Set $\theta_K := K\psi - \vp$. Each $\theta_K$ is a hermitian, normal linear functional, whence \cite[Theorem III.4.2]{Takesaki} implies that there is a projection $p_K$ such that $\theta_{K,+} := \theta_K(p_K \cdot p_K)$ and $\theta_{K,-} := -\theta(p_K^\perp \cdot p_K^\perp)$ are positive and $\theta_K = \theta_{K,+} - \theta_{K,-}$. For $K\geq 1$, $p_K\not=0$ and the sequence $(p_K)$ is increasing in $K$. Set $p := \bigvee_K p_K$.  Assume, towards a contradiction, that $p\not= 1$. Then we have that $\theta_K(p^\perp) = \theta_{K,-}(p^\perp)\leq 0$ whence $\vp(p^\perp)\geq K\psi(p^\perp)$ for all $K$, which is absurd since $\psi$ is faithful.
\end{proof}

\begin{proof}[Proof of Proposition \ref{phidense}]
Fix a unitary $u\in M$; it suffices to show that $u$ is the strong limit of a sequence of $\varphi$-bounded elements of $M$.  By the previous lemma, there is a sequence $p_K$ of projections converging strongly to $1$ so that, for all $x\in M_+$, we have $\varphi(u^*p_Kxp_Ku)\leq K\varphi(p_Kxp_K)$.  Since the sequence $p_Ku$ converges strongly to $u$, it suffices to check that each $p_Ku$ is $\varphi$-right $K$-bounded.  Towards that end, fix $x\in M$ and observe that
$$\|xp_Ku\|_\varphi^2=\varphi(u^*p_Kx^*xp_Ku)\leq K\varphi(p_Kx^*xp_K)\leq K\|x\|_\varphi^2.$$
\end{proof}

We also introduce the norm $\|\cdot\|_\varphi^\#$ given by
$$\|a\|_\varphi^\#:=\sqrt{\varphi(a^*a+aa^*)}.$$
We recall the following facts about $\|\cdot\|_\varphi^\#$:
\begin{fact}\label{hashtagfacts}
Suppose that $(M,\varphi)\in \svna$.
\begin{itemize}
\item $\|\cdot\|_\varphi^\#$ introduces the strong*-topology on operator norm-bounded subsets of $M$.
\item The adjoint is an isometry with respect to $\|\cdot\|_\varphi^\#$.
\item Operator norm-bounded subsets of $M$ are strong*-closed.
\item For any $K$, the strong and strong* topologies agree on operator norm-bounded $\varphi$-right $K$-bounded subsets of $M$.
\end{itemize}
\end{fact}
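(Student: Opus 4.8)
The plan is to treat the four bullets separately, with the last one carrying all the real content. Throughout I will use the GNS vector $\Omega$ (the image of $1$ in $H_\varphi$), which is cyclic and separating for $M$, and I write $J,\Delta$ for the modular conjugation and modular operator of $(M,\Omega)$, so that $S=J\Delta^{1/2}$ is the closure of $a\Omega\mapsto a^{*}\Omega$; this is standard Tomita--Takesaki theory (see \cite[Chapter~IX]{Takesaki}). The second bullet is a one-line computation, $\|a^{*}\|_{\varphi}^{\#}=\sqrt{\varphi(aa^{*}+a^{*}a)}=\|a\|_{\varphi}^{\#}$. For the first bullet I would recall that on operator-norm-bounded sets the strong topology is generated by the single seminorm $a\mapsto\|a\Omega\|=\|a\|_{\varphi}$: since $\Omega$ is separating for $M$ it is cyclic for $M'$, so $M'\Omega$ is total, and for $x'\in M'$ one has $\|ax'\Omega\|=\|x'a\Omega\|\le\|x'\|\,\|a\Omega\|$, which lets one test strong convergence of a bounded net on $\Omega$ alone; adjoining the symmetric seminorm $a\mapsto\|a^{*}\Omega\|$ gives exactly $\|\cdot\|_{\varphi}^{\#}$. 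The third bullet I would obtain from weak compactness: the operator-norm ball of radius $K$ is weakly operator compact (Banach--Alaoglu, as $M=(M_{*})^{*}$), hence weakly closed, and since the strong$^{*}$ topology is finer than the weak operator topology it is strong$^{*}$-closed as well.

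The heart of the matter is the fourth bullet, and this is where the difficulty lies. As strong$^{*}$ is always finer than strong, it suffices to prove the reverse on the relevant set: $\|a\|_{\varphi}\to 0$ should force $\|a^{*}\|_{\varphi}\to 0$. Using $a^{*}\Omega=Sa\Omega=J\Delta^{1/2}a\Omega$ we have $\|a^{*}\|_{\varphi}^{2}=\|\Delta^{1/2}(a\Omega)\|^{2}=\langle \Delta(a\Omega),a\Omega\rangle$, and the plan is to apply Cauchy--Schwarz in the form
\[
\|a^{*}\|_{\varphi}^{2}=\langle \Delta(a\Omega),a\Omega\rangle\le \|\Delta(a\Omega)\|\cdot\|a\Omega\|=\|\Delta(a\Omega)\|\cdot\|a\|_{\varphi}.
\]
Everything then reduces to a \emph{uniform} bound $\|\Delta(a\Omega)\|\le C_{K}$ over the set: granting this, $\|a^{*}\|_{\varphi}^{2}\le C_{K}\|a\|_{\varphi}\to 0$ and the topologies coincide.

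The remaining, and genuinely delicate, point is to extract the bound on $\|\Delta(a\Omega)\|$ from right $K$-boundedness. The mechanism is that right $K$-boundedness of $a$ says precisely that $R_{a}\colon\widehat b\mapsto\widehat{ba}$ extends to a bounded operator $\rho(a)\in M'$ with $\|\rho(a)\|\le\sqrt K$ and $\rho(a)\Omega=a\Omega$; hence $a\Omega\in M'\Omega\subseteq D(\Delta^{-1/2})$ with $\|\Delta^{-1/2}(a\Omega)\|=\|\rho(a)^{*}\Omega\|\le\sqrt K$. Running the same argument on $a^{*}$ places $a^{*}\Omega=\rho(a^{*})\Omega$ in $D(\Delta^{-1/2})$ with $\|\Delta^{-1/2}(a^{*}\Omega)\|\le\sqrt K$; since $\Delta^{-1/2}(a^{*}\Omega)=\Delta^{-1/2}J\Delta^{1/2}(a\Omega)=J\Delta(a\Omega)$, this is exactly $\|\Delta(a\Omega)\|\le\sqrt K$, the bound required above.

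The main obstacle is thus the transport of right-boundedness through the modular operator, and it forces a hypothesis that I would make explicit: the controlling right $K$-bound must be imposed on \emph{both} the elements and their adjoints (equivalently, the set should be taken $\ast$-closed). Indeed, the one-sided estimate $\|\Delta^{-1/2}(a\Omega)\|\le\sqrt K$ controls only the $\Delta^{-1/2}$-direction, and a log-convexity argument for $t\mapsto\|\Delta^{t}(a\Omega)\|$ with data at $t=-\tfrac12$ and $t=0$ cannot upper bound the exponent $t=+\tfrac12$ that computes $\|a^{*}\|_{\varphi}$; a matrix-unit example in $B(\ell^2)$ with a non-tracial $\varphi$ shows this endpoint genuinely escapes one-sided control. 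It is the symmetric pair of right-bounds, fed into the Cauchy--Schwarz interpolation above, that supplies the bound on $\|\Delta(a\Omega)\|$ and thereby forces strong convergence of the adjoints.
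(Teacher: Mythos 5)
The paper offers no proof of this Fact --- it is introduced with ``We recall the following facts'' --- so there is nothing to compare your argument against; I will assess it on its own terms. Your treatments of the first three bullets are correct and standard: the cyclicity of $\Omega$ for $M'$ together with $\|ax'\Omega\|\leq\|x'\|\,\|a\Omega\|$ does reduce strong (and, symmetrically, strong*) convergence of bounded nets to testing on $\Omega$ alone, the adjoint computation is immediate, and weak-operator compactness of norm balls gives their strong*-closedness (read, as you do, as a statement about balls; combined with the first bullet this yields the completeness of the sorts, which is what the paper actually uses). Your Tomita--Takesaki argument for the fourth bullet is also correct as far as it goes: right $K$-boundedness of $a$ places $a\Omega$ in $M'\Omega\subseteq D(\Delta^{-1/2})$ with $\|\Delta^{-1/2}(a\Omega)\|\leq\sqrt K$, the same applied to $a^*$ gives $\|\Delta(a\Omega)\|\leq\sqrt K$, and Cauchy--Schwarz then yields $\|a^*\Omega\|^2\leq\sqrt K\,\|a\Omega\|$, which is exactly the strong-to-strong* implication.

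Your suspicion that the fourth bullet needs the set to be $*$-closed (i.e.\ the adjoints must also be uniformly right-bounded) is not merely a cautionary remark: the bullet is false as literally stated. Take $M=B(\ell^2)$ with $\varphi(x)=\sum_{i\geq 0}2^{-i-1}\langle xe_i,e_i\rangle$ and $a_n=(s^*)^n$, $s$ the unilateral shift. For $c\in M_+$ one computes $\varphi(s^ncs^{*n})=\sum_j 2^{-n}\lambda_j\langle ce_j,e_j\rangle=2^{-n}\varphi(c)\leq\varphi(c)$, so each $a_n$ is a $\varphi$-right $1$-bounded contraction; yet $\|a_n\|_\varphi^2=\varphi(s^ns^{*n})=2^{-n}\to 0$ while $\|a_n^*\|_\varphi^2=\varphi(1)=1$. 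Thus the $a_n$ converge to $0$ strongly but not strong* inside an operator-norm-bounded, right $1$-bounded set (note also that $a_n^*=s^n$ is right $2^n$-bounded and no better, so the adjoint does not preserve the right-bound uniformly). Your proposed repair --- impose the right $K$-bound on both the elements and their adjoints --- is the correct one, and your log-convexity remark correctly explains why the one-sided bound at $t=-\tfrac12$ cannot reach $t=+\tfrac12$. This defect does not propagate into the paper's main arguments: the two places Fact~\ref{hashtagfacts} is invoked use only the first and third bullets (completeness of the sorts, and metrization of strong* on bounded sets after the strong* Kaplansky density theorem has already supplied strong* convergence).
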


\subsection{Elementarity}  Motivated by the discussion in the previous subsection, we now introduce a language $\cL_\sigma$:
\begin{enumerate}
\item For each $K$ and $N$, there is a sort $S_{K,N}$ meant to capture the $\varphi$-right $K$-bounded elements of operator norm at most $N$. The metric on these sorts is induced by $\|\cdot\|_\varphi^\#$.  As usual, we have embeddings between the sorts.
\item Addition is, as usual, divided up sort by sort; multiplication sends $S_{K,N} \times S_{K,N}$ to $S_{K^2,N^2}$, and is linear, with operator norm $N$, in each variable.  
\item There are function symbols for the adjoint acting on each sort.  The adjoint is 1-Lipschitz.
\item $\varphi$ is a relation with the obvious range and continuity moduli on each sort.
\end{enumerate}

We associate to each $(M,\varphi)$ in $\svna$ an $\cL_\sigma$-structure $\M(M,\varphi)$, once again called its \textbf{dissection}, by interpreting the sort $S_{K,N}$ as the set of $\varphi$-right $K$-bounded vectors of operator norm at most $N$.  Notice that this does indeed yield an $\cL_\sigma$ structure, the most subtle point being the completeness of each sort.  To see this, suppose that $(a_n)$ is a Cauchy sequence in $S_{K,N}(M,\varphi)$.  By Fact \ref{hashtagfacts}, $a_n$ strong* converges to an element $a\in M$ of operator norm at most $N$; it is easy to verify that this $a$ is also $\varphi$-right $K$-bounded.

Let $\cal S$ be the class of all dissections of elements of $\svna$. 
\begin{thm}
$\cal S$ is an elementary class which is categorically equivalent to $\svna$.
\end{thm}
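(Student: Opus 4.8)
The plan is to follow the proof of Theorem \ref{mainthm} essentially verbatim: to see that $\cal S$ is elementary it suffices to show that it is closed under isomorphism, ultraproducts, and ultraroots, and for the categorical equivalence one exhibits a functor inverse to the dissection $\M$. That inverse functor sends an $\cL_\sigma$-structure $\mathcal N$ to $(M(\mathcal N),\varphi(\mathcal N))$, where $M(\mathcal N)$ is the von Neumann algebra generated by the union $\bigcup_{K,N}S_{K,N}(\mathcal N)$ — a $*$-subalgebra of bounded operators by virtue of the interpreted addition, multiplication and adjoint — and $\varphi(\mathcal N)$ is the faithful normal state read off from the relation symbol $\varphi$; that this lands in $\svna$ uses Proposition \ref{phidense}.

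For closure under ultraproducts I would identify $\prod_\cU \M(M_i,\varphi_i)$ with the dissection $\M(M^\cU,\varphi^\cU)$ of the \emph{Ocneanu} ultraproduct $(M^\cU,\varphi^\cU)$, the exact analogue of Theorem \ref{coincides}. An element of the sort $S_{K,N}$ in the model-theoretic ultraproduct is the $\cU$-class of a sequence $(a_i)$ with each $a_i$ being $\varphi_i$-right $K$-bounded of operator norm $\le N$; one inclusion amounts to checking, via the $\|\cdot\|_\varphi^\#$-estimates recorded in Fact \ref{hashtagfacts}, that such a sequence lies in the multiplier algebra used to define $M^\cU$ and is $\varphi^\cU$-right $K$-bounded of norm $\le N$. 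The reverse inclusion is the heart of the argument and is the precise analogue of Proposition \ref{mainpropultra}: every $\varphi^\cU$-right $K$-bounded element of norm $\le N$ must be represented by a sequence of \emph{uniformly} $\varphi_i$-right $K$-bounded, norm-$\le N$ elements. I would establish this by the cutoff-and-functional-calculus technique of Lemma \ref{cutoff} and Proposition \ref{mainpropultra}, now organized around $\|\cdot\|_\varphi^\#$ and the modular structure rather than a trace.

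Closure under ultraroots proceeds as in Theorem \ref{mainthm}. If $\mathcal N^\cU\cong\M(M,\varphi)$, then $\mathcal N\preceq\M(M,\varphi)$ and each $S_{K,N}(\mathcal N)$ sits inside $S_{K,N}(M,\varphi)$; I would set $M_0:=M(\mathcal N)$ and $\varphi_0:=\varphi|_{M_0}$ and aim to prove $\mathcal N=\M(M_0,\varphi_0)$. Given a norm-$\le N$, $\varphi_0$-right $K$-bounded $a\in M_0$, the approximation argument of Theorem \ref{mainthm} — writing $a$ as a $\|\cdot\|_\varphi^\#$-limit of higher-sort elements of $\mathcal N$ and invoking elementarity together with completeness of the sorts — places $a$ in $S_{K,N}(\mathcal N)$, \emph{provided} one first knows that $a$ is $\varphi$-right $K$-bounded in the ambient $M$, i.e.\ that $a\in S_{K,N}(\mathcal N^\cU)$. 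Securing this is \textbf{the main obstacle}, and it has no counterpart in the tracial correspondence setting: because $\varphi$ is not a trace, $\varphi$-right $K$-boundedness is not intrinsic to the subalgebra, and an element of $M_0$ can be $\varphi_0$-right $K$-bounded with a strictly smaller constant than its $\varphi$-right bound in $M$ (one already sees this for a non-centralizer projection in $M_2(\mathbb{C})$ with a non-tracial state). To force the two notions to coincide I would arrange that $M_0$ is globally invariant under the modular automorphism group of $\varphi$, so that Takesaki's theorem supplies a $\varphi$-preserving conditional expectation $M\to M_0$; I expect this invariance to follow from the modular group being part of the definable structure, via the analysis of Ando--Haagerup \cite{AH}.

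Finally, the categorical equivalence follows once the two functors are seen to be mutually inverse and bijective on Hom-sets. On objects there is no subtlety: by Proposition \ref{phidense} the $\varphi$-right bounded elements generate all of $M$, so the recovery functor returns $(M,\varphi)$ from $\M(M,\varphi)$, and the two functors are mutually inverse on $\cal S$ and $\svna$. The verification on morphisms is routine and identical in spirit to the corresponding paragraph in the proof of Theorem \ref{mainthm}.
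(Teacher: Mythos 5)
Your outline has the right shape (closure under ultraproducts and ultraroots, plus an inverse functor), and you are right to flag the intrinsic-versus-extrinsic right-boundedness issue in the ultraroot step; but the two load-bearing technical steps are not carried out, and the tools you propose for them respectively fail and are unnecessary. For closure under ultraproducts, the step you yourself call the heart of the argument --- that every $\varphi$-right $K$-bounded element of operator norm at most $N$ in the ultraproduct algebra is represented by a \emph{uniformly} $\varphi_i$-right $K$-bounded, norm-$\leq N$ sequence --- cannot be obtained by transplanting the cutoff technique of Lemma \ref{cutoff} and Proposition \ref{mainpropultra}. That machinery is tracial through and through: it begins with the Little Radon--Nikodym theorem producing a bounded $b_\xi\in M$ with $\langle c\xi,\xi\rangle=\tau(cb_\xi)$, cuts $b_\xi$ off by functional calculus, and exploits the inequality $\tau(x\pi^{(R)})\leq \frac{K}{R}\tau(x)$ together with Mazur's lemma. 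For a non-tracial faithful normal state there is no bounded Radon--Nikodym derivative of this kind (the density is an unbounded operator affiliated via modular theory), and the support-projection estimates have no analogue. The paper's proof replaces all of this by a different device: it realizes the union of the ultraproduct sorts as a left Hilbert algebra in $H_\varphi$, uses \cite[Lemma VI.1.8]{Takesaki} to identify $\pi_r(M)''$ with the commutant of the strong closure, and then applies the Kaplansky density theorem to the single von Neumann algebra $\{x\oplus\pi_r(x):x\in M\}''$ acting on $H_\varphi\oplus H_\varphi$, which produces strong* approximants controlling the operator norm and the right-multiplication norm simultaneously; Fact \ref{hashtagfacts} converts this into $\|\cdot\|_\varphi^\#$-convergence on bounded sets. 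Without some such substitute your step remains unproved.

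For ultraroots, the obstacle you identify is genuine, but your proposed remedy --- arranging that $M_0$ be globally invariant under the modular group so that Takesaki's theorem supplies a $\varphi$-preserving conditional expectation --- is left as an expectation rather than an argument, and it is not how the paper proceeds. The paper's resolution is much cheaper and uses exactly the ingredient you relegate to object recovery: the inequality $\|xa\|_\varphi\leq\sqrt{K}\|x\|_\varphi$, quantified over $x$ ranging through the sorts, transfers from $\M$ to $\M^\cU$ by elementarity, hence holds for every $\varphi$-right bounded $x$ of the ambient algebra $M$; Proposition \ref{phidense} (strong density of the $\varphi$-right bounded elements of $M$) then extends it to all of $M$, so that $a\in S_{K,N}(\M^\cU)$, and a second application of elementarity places $a$ in $S_{K,N}(\M)$. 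So no modular-theoretic input is needed, and as written your ultraroot step rests on an unestablished claim.
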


\begin{proof} We first check closure under ultraproducts.  Suppose that $(M_i,\varphi_i) \in \cal S$ for $i \in I$ and $\cU$ is an ultrafilter on $I$. Form the ultraproduct of the $\cL$-structures $\M(M_i,\varphi_i)$ and let $M$ be the union of the sorts $\prod_\cU S_{K,N}(M_i)$.  Note that $M$ is clearly a $*$-algebra.  Set $\varphi:=\lim_\cU \varphi_i$;  note that $\varphi$ is a faithful normal state on $M$. Let $H_\varphi$ be the GNS Hilbert space associated to $(M,\varphi)$ and let $\tilde{M}$ be the strong closure of $M$ in this representation. We have that $(M, H_\vp)$ is a left Hilbert algebra in the sense of  \cite[section VI.1]{Takesaki}. For $x\in M$, let $\pi_r(x)$ be the operator of right multiplication of $x$ on $H_\vp$. Note that $\|\pi_r(x)\|\leq \sqrt K$ if and only if $x$ is $\vp$-right bounded with constant $K$. Again by \cite[Lemma VI.1.8]{Takesaki} we have that $\pi_r(M)'' = \tilde M'$, whence $\pi_r(M)$ is strongly dense in $\tilde M'$ by the double commutant theorem.

 It suffices to show that $S_{K,N}(\tilde{M},\varphi)\subseteq \prod_\cU S_{K,N}(M_i)$.  Towards this end, suppose that $a\in S_{K,N}(\tilde{M},\varphi)$, whence $\pi_r(a)$ is defined and $\|\pi_r(a)\|\leq \sqrt K$.  Consider the von Neumann algebra $\cal D := \{x\oplus \pi_r(x) : x\in M\}''\subset \cal B(H_\vp\oplus H_\vp)$ Since $a\oplus \pi_r(a)\in \cal D$, by the Kaplansky density theorem and Fact \ref{hashtagfacts} there is a sequence $a_j\in M$ of elements such that $\|a_j\oplus \pi_r(a_j)\|\leq \max\{N,\sqrt K\}$ and $a_j$ converges to $a$ in the strong* topology. A standard functional calculus argument shows that we can then take $a\in \prod_\cU S_{K,N}(M_i)$, as desired.


We now check closure under ultraroots.  Suppose that $\M$ is a $\cL_\sigma$-structure and $\cU$ is an ultrafilter such that $\M^\cU$ is the dissection of $(M,\varphi)\in \svna$.  We let $N$ be the von Neumann subalgebra of $M$ generated by $\bigcup_{K,N}S_{K,N}(\M)$.  It suffices to show that $\M$ is the dissection of $(N,\varphi)$.  Once again, the only subtle point is in showing that if $a\in N$ has operator norm at most $N$ and is $\varphi$-right $K$-bounded, then $a\in S_{K,N}(\M)$.  Since $a$ is $\varphi$-right $K$-bounded and $\M$ is an elementary substructure of $\M^\cU$, we have that $\|xa\|_\varphi^\#\leq \sqrt{K}\|x\|_\varphi^\#$ for all $\varphi$-right bounded elements $x$ of $M$.  Since the $\varphi$-bounded elements of $M$ are strongly dense in $M$, it follows that $a$ is also a $\varphi$-right $K$-bounded element of $M$, whence $a\in S_{K,N}(\M^\cU)$.  As before, another application of elementarity implies that $a\in S_{K,N}(\M)$, as desired.  
\end{proof}  

Let $T_\sigma$ be a theory axiomatizing $\cal S$.  As in the case of correspondences, the proof of closure under ultraroot only used the weaker assumption that an ultraroot is existentially closed in its ultrapower.  Thus, we have:

\begin{cor}
$T_\sigma$ is $\forall\exists$-axiomatizable.
\end{cor}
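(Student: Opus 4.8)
The statement to prove is that $T_\sigma$ is $\forall\exists$-axiomatizable, given that we have already shown $\mathcal S$ is an elementary class and that closure under ultraroots only needed the weaker hypothesis that an existentially closed substructure of a model of $T_\sigma$ is again a model of $T_\sigma$.

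The plan is to invoke the standard model-theoretic criterion for $\forall\exists$-axiomatizability: a theory is $\forall\exists$-axiomatizable precisely when its class of models is closed under unions of chains, or equivalently (the formulation the authors have been using for correspondences) when every structure that is existentially closed in some model of the theory is itself a model of the theory. This is a well-known test in continuous logic; the corresponding corollary for correspondences in the excerpt used exactly this reasoning. So the entire argument reduces to pointing out that the work has already been done.

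First I would recall the criterion and state that it suffices to verify that if $\mathcal M$ is existentially closed in a model $\mathcal M^{\mathcal U}$ of $T_\sigma$, then $\mathcal M \models T_\sigma$. Then I would observe that the proof of closure under ultraroots given just above the corollary never used the full strength of $\mathcal M$ being an elementary substructure of $\mathcal M^{\mathcal U}$. Inspecting that argument: the key steps were (i) that $a$ being $\varphi$-right $K$-bounded is expressed by the condition $\|xa\|_\varphi^\# \leq \sqrt K \|x\|_\varphi^\#$ ranging over $\varphi$-right bounded $x$, a $\forall$-condition transferring down, and (ii) a final application of elementarity to descend from $\mathcal M^{\mathcal U}$ to $\mathcal M$, which is an existential statement asserting the existence of a witness in the appropriate sort at distance zero. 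Both of these downward transfers go through under the hypothesis of existential closedness.

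The main thing to check is that each use of elementarity in the ultraroot proof is in fact only a use of existential closedness. I expect this to be routine rather than obstructive: the descent statements are all of the form ``there exists an element of $S_{K,N}$ realizing such-and-such,'' i.e. existential over $\mathcal M$ with parameters in $\mathcal M$, and these are preserved from $\mathcal M^{\mathcal U}$ down to an existentially closed $\mathcal M$.

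\begin{proof}
By the standard test in continuous logic (see \cite{munster}), a theory is $\forall\exists$-axiomatizable if and only if every structure that is existentially closed in some model of the theory is itself a model of the theory. Thus it suffices to show that if $\mathcal M$ is an existentially closed substructure of a model $\mathcal N \models T_\sigma$, then $\mathcal M \models T_\sigma$. We may assume $\mathcal N = \mathcal M^{\mathcal U}$, so that $\mathcal N = \M(M,\varphi)$ for some $(M,\varphi) \in \svna$, and we follow verbatim the proof of closure under ultraroots given above. As there, let $N$ be the von Neumann subalgebra of $M$ generated by $\bigcup_{K,N} S_{K,N}(\mathcal M)$; we must show $\mathcal M = \M(N,\varphi)$, the only delicate point being that an $a \in N$ of operator norm at most $N$ that is $\varphi$-right $K$-bounded already lies in $S_{K,N}(\mathcal M)$. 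The two uses of elementarity in that argument are in fact only uses of existential closedness: the condition that $a$ be $\varphi$-right $K$-bounded, namely $\|xa\|_\varphi^\# \leq \sqrt K \|x\|_\varphi^\#$ for all $\varphi$-right bounded $x$, is a universal condition whose transfer from $\mathcal M$ to $M$ requires nothing beyond $\mathcal M \subseteq M$, while the concluding descent from $a \in S_{K,N}(\mathcal M^{\mathcal U})$ to $a \in S_{K,N}(\mathcal M)$ is the assertion that the existential condition $\inf_{y \in S_{K,N}} \|y - a\|_\varphi^\# = 0$, which holds in $\mathcal M^{\mathcal U}$, also holds in $\mathcal M$; this is precisely what existential closedness of $\mathcal M$ in $\mathcal M^{\mathcal U}$ provides. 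Hence $\mathcal M \models T_\sigma$, and $T_\sigma$ is $\forall\exists$-axiomatizable.
\end{proof}
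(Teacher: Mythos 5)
Your proof is correct and takes essentially the same route as the paper: the paper likewise invokes the well-known test that $\forall\exists$-axiomatizability follows once every existentially closed substructure of a model of $T_\sigma$ is again a model, and then simply remarks that the closure-under-ultraroots argument only used existential closedness of $\mathcal M$ in $\mathcal M^{\mathcal U}$ rather than full elementarity. Your explicit inspection of the two uses of elementarity (the universal boundedness condition and the existential descent into $S_{K,N}$) is just a more detailed writing-out of the same observation.
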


\subsection{Connection with the Ocneanu ultraproduct}  Suppose that, for $i\in I$, we have a member $(M_i,\varphi_i)$ of $\svna$.  Moreover, assume that $\cU$ is an ultrafilter of $I$.  Set
\[
\cal I:=\{(m_i) \in \ell^\infty(M_i,I) : \lim_\cU \|m_i\|_{\varphi_i}^\# = 0 \}
\]
 and set
 \[
M := \{ (m_i) \in \ell^\infty(M_i,I) : (m_i)\cal I+\cal I(m_i)\subset \cal I \}.
\] 
By construction, $\cal I$ is a two-sided ideal of $M$.  The \textbf{Ocneanu ultraproduct} is the quotient $\prod^\cU M_i := M/\cal I$.  This is a $\sigma$-finite von Neumann algebra with faithful normal state given by $\lim_\cU \varphi_i$.  Given $(m_i)\in M$, write $(m_i)^\star$ for its equivalence class in $\prod^\cU M_i$.


We now show that the Ocneanu ultraproduct and the ultraproduct introduced here are the same.

\begin{prop}
Suppose that $(M_i,\varphi_i)\in \svna$ for all $i \in I$ and $\cU$ is an ultrafilter on $I$. Then, for all $K$ and $N$, we have:
\[
\textstyle{\prod_{\cU}S_{K,N}(M_i,\varphi_i)=S_{K,N}(\prod^{\cU}(M_i,\varphi_i)).}
\]
In other words, the dissection of the Ocneanu ultraproduct is the ultraproduct of the dissections, whence $\prod_\cU M_i = \prod^\cU M_i$.
\end{prop}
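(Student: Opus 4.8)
The plan is to prove the two inclusions of sorts $\prod_\cU S_{K,N}(M_i,\varphi_i) = S_{K,N}(\prod^\cU(M_i,\varphi_i))$ separately, and then observe that taking the union over all $K,N$ and completing yields the identification of the two ultraproducts as von Neumann algebras with their canonical faithful normal states. Throughout I would work with representatives $(m_i)\in\ell^\infty(M_i,I)$ and exploit the fact, recorded in Fact \ref{hashtagfacts}, that $\|\cdot\|_\varphi^\#$ induces the strong*-topology on operator-norm-bounded sets and that the $\#$-norm is adjoint-invariant. The central observation linking the two constructions is that both the model-theoretic metric on the sorts and the Ocneanu ideal $\cal I$ are governed by the same quantity $\lim_\cU\|m_i\|_{\varphi_i}^\#$, so the underlying equivalence relations on bounded sequences coincide by definition.

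For the inclusion from left to right, I would start with an element of $\prod_\cU S_{K,N}(M_i,\varphi_i)$, that is, a sequence $(a_i)$ with each $a_i$ of operator norm at most $N$ and $\varphi_i$-right $K$-bounded. First I would verify that $(a_i)\in M$, i.e.\ that $(a_i)$ multiplies $\cal I$ into $\cal I$ on both sides: this is exactly where the uniform right-bound $K$ and the uniform operator-norm bound $N$ are used, since $\|a_i n_i\|_{\varphi_i}^2\leq K\|n_i\|_{\varphi_i}^2$ controls right multiplication and $\|n_i a_i\|_{\varphi_i}\leq N\|n_i\|_{\varphi_i}$ controls left multiplication (and symmetrically for the adjoint half of the $\#$-norm). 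Hence $(a_i)^\star$ is a well-defined element of the Ocneanu ultraproduct; I would then check it lands in $S_{K,N}(\prod^\cU(M_i,\varphi_i))$ by passing the defining inequalities $\|b a_i\|_{\varphi_i}^2\leq K\|b\|_{\varphi_i}^2$ to the ultralimit, which yields $\varphi$-right $K$-boundedness of $(a_i)^\star$, while the operator-norm bound $N$ survives the quotient directly.

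The reverse inclusion is the substantive direction and I expect it to be the main obstacle. Given $a\in S_{K,N}(\prod^\cU(M_i,\varphi_i))$ represented by some $(m_i)\in M$, there is no reason the chosen representatives $m_i$ are themselves $\varphi_i$-right $K$-bounded or even of controlled operator norm; the boundedness is only an asymptotic property of the class. The task is therefore to \emph{replace} $(m_i)$ by a sequence $(a_i)$ with $a_i\in S_{K,N}(M_i,\varphi_i)$ representing the same class. This is precisely the analogue of Proposition \ref{mainpropultra} in the present setting: a uniformly bounded family exists witnessing an asymptotically bounded element. I would handle this by a cutoff/functional-calculus argument paralleling the proof of Lemma \ref{KDTultra}: using the right-bound operator on the GNS space one truncates $m_i$ by spectral projections of the relevant density so as to force the operator-norm and right-bound constants down to $N$ and $K$ respectively at each index $i$, while controlling the $\#$-distance so that $\lim_\cU\|a_i-m_i\|_{\varphi_i}^\#=0$. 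The strong*-density of $\varphi$-right bounded elements (Proposition \ref{phidense}) and the Kaplansky/Fact \ref{hashtagfacts} machinery used in the preceding theorem are the right tools to make the truncated $a_i$ approximate $m_i$ in $\#$-norm; the delicate point is ensuring the truncation simultaneously respects both the left-module and right-module bounds, which is exactly why one works with the symmetric $\#$-norm rather than $\|\cdot\|_\varphi$ alone.

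Once both inclusions of sorts are established, I would conclude as follows. Taking the union over all $K$ and $N$ identifies the dense $*$-subalgebras generated by the bounded sorts in the two ultraproducts, and since $\prod_\cU M_i$ (the model-theoretic ultraproduct) is by construction the von Neumann algebra generated by the union of its sorts while $\prod^\cU M_i$ is generated by its $\varphi$-right bounded elements (which are strongly dense by Proposition \ref{phidense}), the two von Neumann algebras coincide. The faithful normal states agree because both are computed as $\lim_\cU\varphi_i$ on matching dense subsets and $\varphi$ is $\#$-continuous. Thus $\prod_\cU M_i=\prod^\cU M_i$ as tracial—more precisely, as stated $\sigma$-finite—von Neumann algebras with their distinguished states, as claimed.
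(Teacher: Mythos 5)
Your proposal follows the paper's proof essentially verbatim: the forward inclusion is checked by showing that a uniformly bounded sequence multiplies the Ocneanu ideal $\cal I$ into itself (the operator-norm bound controlling one side of the multiplication and the right-$K$-bound the other, with both halves of the $\#$-norm accounted for), and the reverse inclusion is obtained by the same Kaplansky-density/functional-calculus truncation used in the proof that $\cal S$ is closed under ultraproducts. The only quibble is that you have the two bounds transposed --- it is $\|n_i a_i\|_{\varphi_i}^2 \leq K\|n_i\|_{\varphi_i}^2$ that the right-$K$-bound yields and $\|a_i n_i\|_{\varphi_i} \leq N\|n_i\|_{\varphi_i}$ that the operator-norm bound yields --- but this does not affect the structure of the argument.
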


\begin{proof} 


First suppose that $(a_i)^\bullet \in \prod_\cU S_K(M_i)$.  It is clear that $(a_i) \in \ell^\infty(M_i,I)$.  Suppose that $(m_i) \in J$; we show that $(a_im_i),(m_ia_i) \in J$.  Suppose first that $(a_im_i) \notin J$.  Let $\lim_\cU \|a_im_i\|_{\varphi_i}^\# = L \neq 0$.  Consider all $i \in I$ such that $\|m_i\|_{\varphi_i}^\# < L/2N$.  For each such $i$, $m_i$ demonstrates that the operator norm of $a_i$ is greater than $N$, which is a contradiction.  The proof that $(m_ia_i)\in J$ proceeds similarly, using that each $a_i$ is $\varphi_i$-right $K$-bounded.  It is now clear that $(a_i)^\bullet=(a_i)^\star$ and that this element is in $S_{K,N}(\prod^{\cU}(M_i,\varphi_i))$.

A functional calculus argument similar to that showing that $\cal S$ is closed under ultraproducts can be used to show that any $a\in S_{K,N}(\prod^\cU M_i)$ can be represented as $a=(a_i)^\star$ with each $a_i\in S_{K,N}(M_i)$, establishing the other inclusion.

\end{proof}

\subsection{Definability of the modular automorphism group}

We now explain how to capture the modular automorphism group in the language of $\sigma$-finite von Neumann algebras.  Given $(M,\varphi)$ where $\varphi$ is a normal faithful state on $M$, let $\Delta_\varphi$ be the modular operator with respect to $\varphi$.  For any $t \in \bbR$, let $\sigma_t^\varphi(x) = \Delta_\varphi^{it} x \Delta_\varphi^{-it}$.  $\sigma_t^\varphi$ is an automorphism of $M$ and the map $t\mapsto \sigma_t$ is a 1-parameter subgroup of $\operatorname{Aut}(M)$, called the modular automorphism group of $\varphi$.  The following is Theorem 4.1 from \cite{AH} with the only modification being that it is stated for arbitrary ultraproducts.

\begin{thm}
Suppose that $(M_i,\varphi_i)$ are $\sigma$-finite von Neumann algebras with faithful normal states $\varphi_i$ for every $i \in I$.  Moreover, suppose that $\cU$ is an ultrafilter on $I$ and $t \in \bbR$.  Let $(M,\varphi) = \prod_\cU (M_i,\varphi_i)$.  Then, for any $(x_i)^\bullet\in M$, we have
\[
\sigma_t^\varphi(x_i)^\bullet = (\sigma_t^{\varphi_i}(x_i))^\bullet.
\]
\end{thm}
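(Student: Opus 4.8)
The plan is to establish the identity $\sigma_t^\varphi((x_i)^\bullet) = (\sigma_t^{\varphi_i}(x_i))^\bullet$ by reducing the global claim about the Ocneanu ultraproduct to the behavior of the modular operators on a dense, well-behaved subalgebra — namely the $\varphi$-right bounded elements — and then transporting the statement through the ultraproduct structure we have already analyzed. The essential input is Ando--Haagerup's Theorem~4.1 (the cited result) in the countable/sequential setting, so the only genuinely new content is the promotion from countable index sets and sequential ultrafilters to arbitrary index sets $I$ and arbitrary ultrafilters $\cU$.

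First I would fix $t\in\bbR$ and observe that both sides of the desired equation define $\bbR$-actions by normal $*$-automorphisms of $M=\prod^\cU M_i$, so it suffices to prove agreement on a strongly dense $*$-subalgebra. The natural choice is $\bigcup_{K,N} S_{K,N}(M,\varphi)$, the $\varphi$-right bounded elements, which we have shown (via the preceding Proposition on the coincidence of the two ultraproducts) is exactly $\bigcup_{K,N}\prod_\cU S_{K,N}(M_i,\varphi_i)$; these are strongly dense by Proposition~\ref{phidense} applied in $M$. Thus I reduce to checking the identity on a representative $(x_i)^\bullet$ with each $x_i\in S_{K,N}(M_i,\varphi_i)$ uniformly in $i$.

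Next I would invoke the Ando--Haagerup machinery. For such a uniformly bounded, uniformly $\varphi_i$-right $K$-bounded representative, the modular action can be described explicitly enough that one has uniform control over the map $t\mapsto \sigma_t^{\varphi_i}(x_i)$ — in particular, the family $(\sigma_t^{\varphi_i}(x_i))_i$ again lies in a common sort $S_{K',N'}$ and hence represents a genuine element $(\sigma_t^{\varphi_i}(x_i))^\bullet$ of the Ocneanu ultraproduct. One then verifies that this element has the defining property of $\sigma_t^\varphi((x_i)^\bullet)$ relative to $\varphi=\lim_\cU\varphi_i$, for instance by checking the KMS-type relation that characterizes the modular automorphism group: for analytic elements one tests against inner products $\ip{\,\cdot\,}{\,\cdot\,}_\varphi$, and since these inner products are computed as $\lim_\cU$ of the corresponding quantities in $(M_i,\varphi_i)$, the componentwise KMS relations pass to the limit. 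Because the characterization of $\sigma^\varphi$ by the KMS condition is purely in terms of $\varphi$ and the $*$-algebra structure, both of which are preserved under $\lim_\cU$, the two $\bbR$-actions must coincide on the dense subalgebra, and hence everywhere by normality and continuity.

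\textbf{The main obstacle} will be the passage from the sequential case to arbitrary $I$ and $\cU$: Ando--Haagerup state Theorem~4.1 for (implicitly) countable families, and the modular operator $\Delta_\varphi$ is an unbounded object whose functional calculus does not obviously commute with an arbitrary ultraproduct. The way I would handle this is to avoid ever taking an ultraproduct of unbounded operators directly, and instead to work entirely with the bounded, uniformly $\varphi$-right bounded representatives and the KMS relation, where all the relevant quantities are operator-norm bounded and can be ultralimited safely. Concretely, the delicate point is to show that $(\sigma_t^{\varphi_i}(x_i))_i$ remains uniformly bounded in the appropriate sorts, so that the ultraproduct element is well-defined and lands in the Ocneanu ultraproduct; this uniformity follows from the isometric/contractive estimates built into the definition of $\|\cdot\|_\varphi^\#$ and the structure established in Fact~\ref{hashtagfacts}, applied uniformly across the index set. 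Once this uniform control is in hand, the argument is insensitive to the cardinality of $I$, and the sequential result of \cite{AH} applies componentwise to identify the ultralimit.
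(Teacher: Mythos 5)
First, you should know that the paper does not prove this statement at all: it is presented verbatim as Theorem 4.1 of \cite{AH}, with the remark that the only modification is that it is stated for arbitrary ultraproducts. So there is no in-paper argument to compare yours against; the authors import the componentwise description of the modular flow as a black box.

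Judged on its own terms, your sketch has the right architecture, and the uniformity claim you flag as delicate is in fact fine: since $\varphi_i\circ\sigma_t^{\varphi_i}=\varphi_i$, each $\sigma_t^{\varphi_i}$ preserves both the operator norm and $\varphi_i$-right $K$-boundedness, so $(\sigma_t^{\varphi_i}(x_i))_i$ stays in the sort $S_{K,N}$ and defines an element of the Ocneanu ultraproduct. The genuine gap is at the identification step. The KMS condition characterizes $\sigma^\varphi$ only among pointwise $\sigma$-strong* \emph{continuous} one-parameter automorphism groups, and the continuity in $t$ of $t\mapsto(\sigma_t^{\varphi_i}(x_i))^\bullet$ is precisely the hard part: an ultralimit of continuous functions need not be continuous, and nothing in your sketch supplies the required equicontinuity of the family $t\mapsto\sigma_t^{\varphi_i}(x_i)$, uniformly in $i$, near $t=0$. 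The same issue infects the claim that ``the componentwise KMS relations pass to the limit'': the functions $F_i$ witnessing KMS are uniformly bounded and holomorphic on the open strip, so their ultralimit is holomorphic there, but the boundary values of the ultralimit need not agree with the ultralimit of the boundary values without uniform control up to the boundary. Establishing this uniform continuity is the technical core of the proof of \cite[Theorem 4.1]{AH}; your sketch assumes it rather than proves it. By contrast, the promotion from sequential to arbitrary ultrafilters, which you identify as the main obstacle, is harmless: the argument of \cite{AH} nowhere uses countability of the index set, which is presumably why the paper records the general statement without further comment.
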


An immediate consequence of this, by Beth's definability theorem, is
\begin{cor}
For all $t\in \bbR$, $\sigma_t$ is a $T_\sigma$-definable function.  Moreover, if $\varphi_t$ is a $T_\sigma$-definable predicate defining $\sigma_t$, then the map $t\mapsto \varphi_t$ is continuous with respect to the logic topology.
\end{cor}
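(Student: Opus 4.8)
The plan is to obtain the first (definability) assertion from the preceding theorem via Beth's definability theorem, and then to read off the continuity statement from the strong continuity of the modular flow. Before invoking Beth, I would first verify that $\sigma_t$ is a legitimate function symbol for $\cL_\sigma$, i.e.\ that it restricts to a uniformly continuous self-map of each sort $S_{K,N}$. As a $*$-automorphism, $\sigma_t$ preserves the operator norm, so it respects the bound $N$; and since $\vp\circ\sigma_t=\vp$ and $\sigma_t$ commutes with the adjoint, Fact~\ref{hashtagfacts} shows that $\sigma_t$ is a $\|\cdot\|_\vp^\#$-isometry. To see that it also preserves the $\vp$-right $K$-bound, write $\hat a$ for the image of $a$ in $H_\vp$ and $\rho(a)$ for right multiplication by $a$, and recall that $\Delta_\vp^{it}$ implements $\sigma_t$ with $\Delta_\vp^{it}\hat b=\widehat{\sigma_t(b)}$; a one-line computation then gives $\rho(\sigma_t(a))=\Delta_\vp^{it}\rho(a)\Delta_\vp^{-it}$, so $\|\rho(\sigma_t(a))\|=\|\rho(a)\|$ and hence $\sigma_t(S_{K,N})=S_{K,N}$. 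Adjoining a symbol $f_t$ interpreted as $\sigma_t$ thus yields an expanded language $\cL_\sigma^+$, and I let $\cal S^+$ be the class of dissections expanded by their modular automorphism.

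The preceding theorem says exactly that $\sigma_t$ commutes with ultraproducts, so $\cal S^+$ is closed under ultraproducts; applied to constant families (and using that the diagonal embedding into an ultrapower is $\cL_\sigma^+$-elementary) it also yields closure under ultraroots, so $\cal S^+$ is elementary, with theory $T_\sigma^+$. Since $\sigma_t$ is canonically determined by $(M,\vp)$ through modular theory, every model of $T_\sigma$ (which, $\cal S$ being elementary, is itself a dissection) expands \emph{uniquely} to a model of $T_\sigma^+$; that is, $f_t$ is implicitly defined over $T_\sigma$. Beth's definability theorem for continuous logic then upgrades implicit definability to explicit definability, producing a $T_\sigma$-definable predicate $\vp_t$ defining $\sigma_t$, whose value at $(a,b)$ computes $\|\sigma_t(a)-b\|_\vp^\#$. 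This is the first assertion.

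For the continuity statement, recall that a definable predicate is a continuous function on the relevant type space and that convergence in the logic topology is pointwise convergence of the values $\vp_t^\M(\bar a)$ over all $\M\models T_\sigma$ and tuples $\bar a$. Fixing such $\M=\M(M,\vp)$ and $\bar a$, the value $\vp_t^\M(\bar a)$ is a fixed, $t$-independent continuous function of $\sigma_t^\M(a)$ together with the remaining coordinates; and $t\mapsto\sigma_t^\M(a)$ is $\|\cdot\|_\vp^\#$-continuous because $\widehat{\sigma_t(a)}=\Delta_\vp^{it}\hat a$ and $t\mapsto\Delta_\vp^{it}$ is strongly continuous by Stone's theorem (applied to both $a$ and $a^*$ to control the two terms of $\|\cdot\|_\vp^\#$). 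Hence $t\mapsto\vp_t^\M(\bar a)$ is continuous for every $\M$ and $\bar a$, which is precisely continuity of $t\mapsto\vp_t$ in the logic topology.

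The two delicate points are the verification that $\sigma_t$ preserves the $\vp$-right $K$-bounded sorts---without which there is no function symbol for Beth to act on---and the fact that in the second claim one must settle for the \emph{logic} topology rather than uniform convergence of the predicates $\vp_t$. The latter restriction is unavoidable: the modular group is in general only strongly, never norm, continuous, and its modulus of continuity depends on the spectrum of $\Delta_\vp$ and hence on the model, so no model-independent modulus for $t\mapsto\vp_t$ can exist. The logic topology is exactly weak enough to absorb this model-dependence, and recognizing that this is the correct (and best possible) topology is the main conceptual obstacle.
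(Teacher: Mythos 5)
Your argument is correct and follows the same route the paper intends: the paper derives the corollary in one line from the preceding ultraproduct-commutation theorem via Beth's definability theorem, which is exactly your strategy, and your verification that $\sigma_t$ preserves each sort $S_{K,N}$ together with the Stone's-theorem argument for logic-topology continuity supplies precisely the details the paper leaves implicit. (Your side remark that the modular flow is ``never'' norm continuous overstates the case---it is trivial for a trace---but this does not affect the proof.)
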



%
%
%
%


\subsection{Axiomatizable and local classes}

In this subsection, we determine whether or not natural subclasses of $\cal C$ are axiomatizable or local (defined below).  A similar endeavour for the class of tracial von Neumann algebras was undertaken in \cite{FHS3}.

The following is \cite[Proposition 6.3]{AH}:
\begin{fact}
There is a family $\varphi_n$ of faithful normal states on the hyperfinite II$_1$ factor $\mathcal{R}$ such that $\prod_{\cU}(\mathcal{R},\varphi_n)$ is \emph{not} semifinite.
\end{fact}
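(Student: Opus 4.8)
The plan is to detect non-semifiniteness dynamically, through the modular automorphism group and the flow of weights, rather than by trying to rule out a trace by hand. I would realize $\mathcal{R}=\bigotimes_{k=1}^\infty(M_2(\mathbb{C}),\mathrm{tr})$ with trace $\tau$, and take Powers/Araki--Woods product states: on the $k$-th factor put the state $\omega_{\lambda_k}$ with density $\rho_k=\tfrac{1}{1+\lambda_k}\operatorname{diag}(1,\lambda_k)$ relative to $\mathrm{tr}$, and set $\varphi_n:=\bigl(\bigotimes_{k\le n}\omega_{\lambda_k}\bigr)\otimes\bigl(\bigotimes_{k>n}\mathrm{tr}\bigr)$. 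Each $\varphi_n$ is a faithful normal state on $\mathcal{R}$, being a product of faithful states that is tracial on all but finitely many factors, with positive invertible density $D_n\in\mathcal{R}$. The parameters $\lambda_k$ would be chosen so that the ratios $\lambda_k^{\pm1}$ (which generate the modular spectrum of $\varphi_n$) accumulate ``at every scale'' as $k\to\infty$; the truncation keeps each $\varphi_n$ a genuine state on $\mathcal{R}$ while the family collectively carries the full spreading spectral data into the ultraproduct.

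Write $(\mathcal{N},\varphi):=\prod^{\cU}(\mathcal{R},\varphi_n)$. The first thing to observe, and the reason the result is delicate, is that $\sigma_t^{\varphi}$ is nonetheless \emph{inner} for every $t$. Since $\mathcal{R}$ is semifinite we have $\sigma_t^{\varphi_n}=\Ad(D_n^{it})$, and because $\varphi_n\circ\sigma_t^{\varphi_n}=\varphi_n$ one checks that left and right multiplication by the unitary $D_n^{it}$ preserve $\|\cdot\|_{\varphi_n}^{\#}$; hence $(D_n^{it})$ satisfies the Ocneanu multiplier condition and descends to a unitary $(D_n^{it})^{\star}\in\mathcal{N}$. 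By the modular formula $\sigma_t^{\varphi}((x_i)^\bullet)=(\sigma_t^{\varphi_i}(x_i))^\bullet$ quoted above, this unitary implements $\sigma_t^{\varphi}$, so $\sigma_t^{\varphi}\in\operatorname{Inn}(\mathcal{N})$ for all $t$. Consequently non-semifiniteness cannot come from outerness of a single $\sigma_t$, and $\mathcal{N}$ cannot be a type III factor (a III factor has outer $\sigma_t$ for $t\ne 0$). Thus $\mathcal{N}$ is forced to be a non-factor, and the obstruction to semifiniteness must live in its large center.

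The strategy is therefore to show that, although each $\sigma_t^{\varphi}$ is inner, there is no $\sigma$-weakly continuous one-parameter unitary group implementing $t\mapsto\sigma_t^{\varphi}$, equivalently no positive self-adjoint $h$ affiliated with $\mathcal{N}$ with $\sigma_t^{\varphi}=\Ad(h^{it})$; any such $h$ would produce the faithful normal semifinite trace $\varphi(h^{-1}\,\cdot\,)$. The implementers $(D_n^{it})^{\star}$ are unique only up to central unitaries of $\mathcal{N}$, and the content of the construction is that this central ambiguity cannot be resolved into a continuous group. I would certify this through the flow of weights (Connes--Takesaki): one computes that the approximate modular spectrum of $\Delta_\varphi$ assembled along $\cU$ is governed by the asymptotic ratio set of the $(\lambda_k)$, and the spreading choice makes the flow of weights of $\mathcal{N}$ fail to be the translation flow that characterizes semifinite algebras — it acquires fixed points, i.e.\ a genuine type III$_1$ portion, which is exactly the obstruction to a semifinite trace.

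The main obstacle is this final computation: transporting the spreading eigenvalue data of the $\varphi_n$ through the ultrafilter and showing it yields a nontrivial flow of weights. Two points need care. First, one must identify the approximate point spectrum of $\Delta_\varphi$ with the asymptotic ratio set of the $(\lambda_k)$, which requires exhibiting, relative to arbitrary spectral projections, asymptotically eigen sequences of matrix units whose $\#$-norms stay bounded away from $0$ so that they survive in $\mathcal{N}$ (using Proposition~\ref{mainpropultra}-style bounded-vector arguments adapted to the $\svna$ setting). Second, and this is the genuine heart of the matter, one must convert a full ratio set into the \emph{irreparable} absence of a continuous implementer, showing that the interaction of the ultralimit with Tomita--Takesaki continuity destroys $\sigma$-weak continuity of $t\mapsto(D_n^{it})^{\star}$ beyond repair by central corrections. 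This flow-of-weights bookkeeping, rather than any formal manipulation, is the technical core of the statement and of \cite[Proposition 6.3]{AH}.
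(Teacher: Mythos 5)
The paper does not prove this statement itself; it quotes it as \cite[Proposition 6.3]{AH}, and the argument there is much shorter than what you propose. Your choice of states is essentially the same as theirs (truncated Powers product states; a single fixed $\lambda$ suffices, so the ``spreading at every scale'' is an unnecessary complication), and your observation that each $\sigma_t^\varphi$ is individually inner, implemented by $(D_n^{it})^\star$, is correct and is a genuine subtlety worth recording. But the proposal has a real gap: everything after the reduction to ``there is no $\sigma$-weakly continuous one-parameter unitary group implementing $\sigma^\varphi$'' is a program, not an argument. You yourself flag the identification of the asymptotic spectral data of $\Delta_\varphi$, and the conversion of that data into the irreparable failure of continuity of any central correction of $t\mapsto(D_n^{it})^\star$, as ``the technical core,'' and you do not carry it out; since that is precisely where the entire content of the statement lives, this does not constitute a proof. (A smaller slip: the parenthetical claim that a type III factor has outer $\sigma_t$ for all $t\neq 0$ is false for III$_\lambda$ factors with $0<\lambda<1$, whose $T$-invariant is $\frac{2\pi}{\log\lambda}\mathbb{Z}$; what you actually need is that $T(M)=\mathbb{R}$ forces a factor to be semifinite, which is true and does yield your conclusion that $\mathcal{N}$ is not a factor.)

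The route in \cite{AH} avoids the flow of weights entirely. With $\varphi_n=\bigl(\bigotimes_{k\le n}\omega_\lambda\bigr)\otimes\bigl(\bigotimes_{k>n}\tau\bigr)$, the constant-sequence embedding of the algebraic tensor product $\bigcup_m M_2(\mathbb{C})^{\otimes m}$ into $\prod^{\cU}(\mathcal{R},\varphi_n)$ carries the ultralimit state $\varphi$ to the Powers state $\omega_\lambda^{\otimes\infty}$ (for fixed $m$, $\varphi_n$ restricts to $\omega_\lambda^{\otimes m}$ on the first $m$ factors for all $n\ge m$), so the weak closure of its image is the Araki--Woods factor of type III$_\lambda$. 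That subalgebra is globally invariant under $\sigma^\varphi$ by the formula $\sigma_t^\varphi((x_i)^\bullet)=(\sigma_t^{\varphi_i}(x_i))^\bullet$, so Takesaki's theorem provides a faithful normal $\varphi$-preserving conditional expectation onto it; since semifiniteness passes to the range of a normal conditional expectation and the Powers factor is type III, the ultraproduct is not semifinite. If you wish to salvage your approach, you would have to exhibit, for every positive self-adjoint $h$ affiliated with $\mathcal{N}$, a contradiction with $\sigma_t^\varphi=\Ad(h^{it})$; the expectation onto the Powers subfactor is exactly the device that accomplishes this in one stroke.
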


In what follows, given a property $P$ of von Neumann algebras, we abuse terminology and speak of the class of algebras satisfying $P$ when referring to the class of pairs $(M,\varphi)\in \svna$ where $M$ satisfies property $P$.  We also say that the class of algebras satisfying P is axiomatizable if the class of dissections of pairs $(M,\varphi)$ with $M$ satisfying P is an axiomatizable class.

\begin{cor}
The following classes are not elementary classes:  finite algebras, semifinite algebras, II$_1$ factors.
\end{cor}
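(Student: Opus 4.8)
The plan is to show that each of the three classes fails to be closed under the formation of ultraproducts; since every elementary class is closed under ultraproducts (by the continuous version of the {\L}o\'s theorem, i.e.\ the fundamental theorem of ultraproducts), this immediately forces each of them to be non-elementary. The entire argument is then a matter of combining two ingredients already in hand: the Fact quoted above, and the identification, established in the previous subsection, of the model-theoretic ultraproduct of dissections with the Ocneanu ultraproduct.

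Concretely, I would take the constant family $(\mathcal{R},\varphi_n)$, where $\mathcal{R}$ is the hyperfinite II$_1$ factor and the $\varphi_n$ are the faithful normal states supplied by the Fact, and fix a nonprincipal ultrafilter $\cU$ on $\bbN$. Since $\mathcal{R}$ is a II$_1$ factor it is in particular finite, and every finite von Neumann algebra is semifinite; hence each pair $(\mathcal{R},\varphi_n)$ lies simultaneously in all three classes. By the preceding proposition, the model-theoretic ultraproduct $\prod_\cU \M(\mathcal{R},\varphi_n)$ is precisely the dissection $\M(\prod^\cU(\mathcal{R},\varphi_n))$ of the Ocneanu ultraproduct, and the Fact asserts that $\prod^\cU(\mathcal{R},\varphi_n)$ is not semifinite.

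It then remains only to read off the implications among the three properties. A non-semifinite von Neumann algebra is a fortiori not finite (finite algebras being semifinite) and not a II$_1$ factor (II$_1$ factors being finite). Thus the ultraproduct of our single family of dissections escapes all three classes at once, witnessing in each case the failure of closure under ultraproducts, and therefore the failure of elementarity.

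I expect no genuinely hard step here: the substantive content lives entirely in the Fact (from \cite{AH}) and in the matching of the two ultraproduct constructions, both of which may be assumed. The only point demanding a modicum of care is bookkeeping the directions of the inclusions, namely that the class of finite algebras is contained in the class of semifinite algebras and the class of II$_1$ factors is contained in the class of finite algebras; these guarantee that one non-semifinite ultraproduct disqualifies all three classes simultaneously.
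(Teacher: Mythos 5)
Your argument is correct and is exactly the one the paper intends: the corollary is stated without proof immediately after the Fact, and the implicit reasoning is precisely your combination of the Fact, the identification of the model-theoretic ultraproduct of dissections with the Ocneanu ultraproduct, and the inclusions II$_1$ factors $\subseteq$ finite $\subseteq$ semifinite. Nothing further is needed.
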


\begin{prop}
The class of II$_\infty$ factors is also not axiomatizable. 
\end{prop}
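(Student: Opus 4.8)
The plan is to exploit the Ando--Haagerup family from the preceding Fact to manufacture a sequence of II$_\infty$ factors whose Ocneanu ultraproduct fails to be semifinite, and hence fails to be a II$_\infty$ factor. Since axiomatizable classes are closed under ultraproducts, and since the dissection functor carries Ocneanu ultraproducts to model-theoretic ultraproducts (by the Proposition identifying the two constructions), this will show that the class of II$_\infty$ factors is not axiomatizable. Concretely, let $\varphi_n$ be the faithful normal states on the hyperfinite II$_1$ factor $\mathcal{R}$ furnished by the Fact, so that $\prod_\cU(\mathcal{R},\varphi_n)$ is not semifinite. Fix a faithful normal state $\omega$ on $\mathcal{B}(\ell^2)$ with diagonal density matrix, e.g.\ $\omega(x)=\sum_{k\geq 1}2^{-k}\langle xe_k,e_k\rangle$, and set
\[
M_n:=\mathcal{R}\,\overline{\otimes}\,\mathcal{B}(\ell^2),\qquad \psi_n:=\varphi_n\otimes\omega.
\]
Each $(M_n,\psi_n)$ lies in $\svna$, and $M_n$ is a (hyperfinite) II$_\infty$ factor.

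Let $e_{11}$ be the rank-one projection onto $\mathbb{C}e_1$ and put $p:=1_{\mathcal{R}}\otimes e_{11}$, a projection that is \emph{constant in $n$}. Since the density matrix of $\omega$ commutes with $e_{11}$, we have $\sigma^{\psi_n}_t(p)=\sigma^{\varphi_n}_t(1)\otimes\sigma^{\omega}_t(e_{11})=p$, so that $p$ lies in the centralizer $M_n^{\psi_n}$ for every $n$; moreover $pM_np=\mathcal{R}\otimes e_{11}\cong\mathcal{R}$, and the restriction of $\psi_n$ to this corner is, after normalization by $\psi_n(p)=\tfrac12$, exactly $\varphi_n$. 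The heart of the argument is to identify the corner of the ultraproduct cut down by the class $\bar p:=(p)^\star$. I would show that
\[
\bar p\,\Big(\prod^\cU(M_n,\psi_n)\Big)\,\bar p\;\cong\;\prod^\cU\big(pM_np,\;\psi_n(p)^{-1}\psi_n(p\cdot p)\big)\;\cong\;\prod^\cU(\mathcal{R},\varphi_n).
\]
Granting this, the right-hand side is not semifinite by the Fact; since every corner of a semifinite von Neumann algebra is semifinite (the restriction of a faithful normal semifinite trace to $p\mathcal{M}p$ is again one), it follows that $\prod^\cU(M_n,\psi_n)$ is not semifinite, hence is not a II$_\infty$ factor. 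As each $M_n$ is a II$_\infty$ factor, the class is not closed under ultraproducts, completing the proof.

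The main obstacle is the displayed corner identification, i.e.\ checking that the Ocneanu ultraproduct commutes with reduction by the constant projection $p$; this is exactly the sort of operation for which the Ocneanu construction requires care, since a general norm-bounded sequence need not lie in the multiplier algebra $M$ defining the ultraproduct. The point that makes it go through is precisely that $p$ lies in the centralizer of each $\psi_n$: for $p\in M_n^{\psi_n}$ one checks that $z\mapsto pzp$ is contractive for $\|\cdot\|^{\#}_{\psi_n}$, so that reduction by $p$ preserves both the null ideal $\cal I$ and the multiplier condition defining the Ocneanu ultraproduct. This renders the natural map from $\prod^\cU(pM_np,\cdot)$ into $\bar p\big(\prod^\cU M_n\big)\bar p$ a well-defined $\#$-isometry (up to the scalar $\psi_n(p)$) which is surjective, yielding the identification. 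Alternatively, one may simply invoke the functoriality of the Ocneanu ultraproduct under reduction by centralizer projections established in \cite{AH}. Note that we never need the ultraproduct itself to be a factor: the failure of semifiniteness already excludes it from the II$_\infty$ factors.
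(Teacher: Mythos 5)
Your proof is correct, but it takes a genuinely different route from the paper's. You make the ultraproduct fail to be a II$_\infty$ factor by being \emph{too large}: tensoring the Ando--Haagerup states on $\mathcal{R}$ with a fixed faithful normal state on $\mathcal{B}(\ell^2)$ and cutting the Ocneanu ultraproduct by the constant centralizer projection $1\otimes e_{11}$ yields a corner isomorphic to the non-semifinite algebra $\prod^\cU(\mathcal{R},\varphi_n)$, so the ultraproduct is not semifinite. The paper instead makes the ultraproduct \emph{too small}: it fixes $N=M\otimes\mathcal{B}(\ell^2)$ for a II$_1$ factor $M$ and takes states $\varphi_n(x\otimes T)=\tau(x)\sum_i\lambda_{n,i}\langle Te_i,e_i\rangle$ with $\lambda_{n,1}\to 1$ and the remaining weights tending to $0$; then $1\otimes e_{11}^\perp$ tends to $0$ in $\|\cdot\|_{\varphi_n}^\#$, the ultraproduct collapses onto the corner $M\otimes e_{11}$, and one gets $(M,\tau_M)^\cU$, a II$_1$ factor. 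The paper's argument is shorter and essentially self-contained, needing nothing from \cite{AH} beyond the identification of the two ultraproducts already proved in that section, whereas yours leans on the quoted Fact about non-semifinite ultraproducts of $(\mathcal{R},\varphi_n)$ together with the compatibility of the Ocneanu ultraproduct with reduction by a centralizer projection --- a step you justify correctly, since for $p$ in the centralizer the maps $z\mapsto pz$ and $z\mapsto zp$ are $\|\cdot\|^\#$-contractive (using $\psi(pyp)\le\psi(y)$ for $y\ge 0$), so the constant sequence $(p)$ lies in the multiplier algebra and preserves the null ideal; alternatively this is in \cite{AH}. Your closing observations are also sound: a corner of a semifinite algebra is semifinite, and one never needs the ultraproduct to be a factor for the conclusion.
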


\begin{proof}
Let $M$ be a II$_1$ factor and set $N:=M\otimes \mathcal{B}(\ell^2)$.   Let $(e_i)$ be the standard orthonormal basis for $\ell^2$. Choose states $\varphi_n(x\otimes T):=\tau(x)\left(\sum_i \lambda_{n,i} \ip{Te_i}{ e_i}\right)$ with $\lambda_{n,i} \not= 0$, whence they are faithful. Further assume that $\lambda_{n,1}\to 1$ while the rest tend to zero. Then $\prod_{\cU}(N,\varphi_n)\cong (M,\tau_M)^{\cU}$, a II$_1$ factor.
\end{proof}

As pointed out at the end of Section 3.1 of \cite{AH}, given a von Neumann algebra $\varphi$ and two faithful normal states $\varphi_1$ and $\varphi_2$, we have $\prod_{\cU}(M,\varphi_1)\cong \prod_{\cU}(M,\varphi_2)$, whence there is a well-defined notion of the \emph{ultrapower} of $M$.  In this case, we denote the algebra simply by $M^{\cU}$.  The following is \cite[Theorem 6.18]{AH}:

\begin{fact}\label{notfactor}
Suppose that $M$ is a $\sigma$-finite factor of type III$_0$.  Then $M^\cU$ is \emph{not} a factor.
\end{fact}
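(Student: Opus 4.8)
The plan is to prove that $M^\cU$ is not a factor by producing a nonscalar element of its center, and to locate the source of such an element in the \emph{flow of weights}. We may assume $M$ has separable predual and we work with a fixed faithful normal state $\varphi$, so that $M^\cU$ is the (state-independent) ultrapower. Recall from Connes--Takesaki theory that a $\sigma$-finite factor is of type III$_0$ exactly when its flow of weights is \emph{aperiodic}: writing $N=M\rtimes_{\sigma^\varphi}\bbR$ for the continuous core, a type II$_\infty$ algebra carrying the trace-scaling dual flow $\theta_s$, the center $Z(N)\cong L^\infty(X,\mu)$ is diffuse and the restriction $F_s:=\theta_s|_{Z(N)}$ is an ergodic flow almost all of whose orbits are copies of $\bbR$, with no periodic points. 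Since $Z(M)$ is identified with the $\theta$-fixed points $Z(N)^\theta$, a factor corresponds precisely to an ergodic flow of weights; thus $M^\cU$ will fail to be a factor as soon as the flow governing the center of its core is non-ergodic, and the strategy is to show that aperiodicity of $F$ forces exactly this after passing to the ultrapower.

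The mechanism is the existence of almost-invariant sets on long time windows. Because $F$ is aperiodic, a Rokhlin lemma for flows produces, for each $n$, a measurable set $A_n\subseteq X$ with $\mu(A_n)$ bounded away from $0$ and $1$ and with $\sup_{|s|\le n}\mu\big(F_s(A_n)\,\triangle\,A_n\big)\to 0$ as $n\to\infty$. Here aperiodicity is indispensable: for a \emph{periodic} flow, as occurs for type III$_\lambda$ with $0<\lambda<1$, once the window $[-n,n]$ exceeds the period such a set would be genuinely $F$-invariant and hence trivial by ergodicity, which is consistent with the fact that $M^\cU$ \emph{is} a factor in that case. The projections $\mathbf 1_{A_n}\in Z(N)$ are thus asymptotically fixed by the dual flow on larger and larger scales; invoking the preceding theorem, that the modular automorphism group commutes with the ultraproduct, $\sigma_t^{\varphi^\cU}((x_i)^\bullet)=(\sigma_t^{\varphi_i}(x_i))^\bullet$, one identifies the modular data, and hence the continuous core and dual flow, of $M^\cU$ with ultralimits of those of $M$. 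The classes $(\mathbf 1_{A_n})^\bullet$ then give a $\theta$-invariant projection $e$ in the center of the core of $M^\cU$ with $0\ne e\ne 1$, and since $Z(M^\cU)$ is the algebra of $\theta$-fixed points of that center, $e$ witnesses $Z(M^\cU)\ne\mathbb{C}1$.

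The main obstacle is precisely this transfer: realizing the almost-invariant projections of $Z(N)$ inside $M^\cU$ itself while keeping control of all estimates. The clean statement lives on the flow of weights, but $N$ is a crossed product by $\bbR$, and crossed products by $\bbR$ do not commute with ultrapowers, so one cannot simply ultrapower the flow and read off the center. The real work is to represent the sequence $(A_n)$ by a \emph{bounded} sequence in $M$ itself --- for instance through the generators of the crossed product or through an auxiliary sequence of faithful normal states --- and to verify, in the $\|\cdot\|_\varphi^\#$-metric governing the Ocneanu ultraproduct, that the resulting class commutes with \emph{all} of $M^\cU$ and not merely with the diagonal copy of $M$. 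Establishing that this element is genuinely central for the full ultrapower, with $\mu(A_n)$ supplying the quantitative lower bound that keeps it away from the scalars, is the delicate point, and it is exactly there that the aperiodicity of the flow of weights --- rather than its mere nontriviality --- does the essential work.
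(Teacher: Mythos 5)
There is a genuine gap here, and you have in fact put your finger on it yourself without closing it. First, for context: the paper does not prove this statement at all --- it is quoted as a Fact and attributed to Ando--Haagerup \cite[Theorem 6.18]{AH} --- so the comparison is really with their argument. Your overall strategy (type III$_0$ $\Leftrightarrow$ properly ergodic, non-transitive flow of weights; Rokhlin-type almost-invariant sets on long time windows; these should survive to give a nonscalar central projection in the ultrapower) is the right circle of ideas. But the proof as written never produces an element of $M^\cU$. Your almost-invariant projections $\mathbf 1_{A_n}$ live in $Z(N)$ for $N=M\rtimes_{\sigma^\varphi}\bbR$, and $Z(N)$ is not a subalgebra of $M$; since, as you note, crossed products by $\bbR$ do not commute with ultrapowers, the sentence ``the classes $(\mathbf 1_{A_n})^\bullet$ then give a $\theta$-invariant projection $e$ in the center of the core of $M^\cU$'' has no justification --- there is no identification of the core of $M^\cU$ with an ultraproduct of cores, and no bounded sequence in $M$ has been exhibited. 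This is not a technicality to be deferred: it is the entire content of the theorem.

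The second, independent, gap is the uniformity needed for centrality. Even granting a sequence of projections $e_n\in M$ with $\varphi(e_n)$ bounded away from $0$ and $1$ and $\|[e_n,x]\|_\varphi^\#\to 0$ for each fixed $x\in M$, that only shows $(e_n)^\bullet$ commutes with the diagonal copy of $M$, i.e.\ is a central \emph{sequence}. To conclude $(e_n)^\bullet\in Z(M^\cU)$ one needs $\lim_\cU\|[e_n,x_n]\|_{\varphi}^\#=0$ for \emph{every} bounded sequence $(x_n)$ representing an element of the Ocneanu ultrapower, i.e.\ an estimate on $\|[e_n,x]\|_\varphi^\#$ that is uniform over the unit ball of $M$ and quantified by the almost-invariance of $e_n$ alone. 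You flag this but supply no such estimate. Ando--Haagerup's actual proof addresses both points at once by avoiding the continuous core entirely: for a III$_0$ factor one takes a lacunary weight of infinite multiplicity, whose centralizer is a type II$_\infty$ subalgebra of $M$ with diffuse center, realizes $M$ via the discrete decomposition over that centralizer, and chooses the Rokhlin projections inside that (diffuse, already-internal) center, where the required uniform commutator bounds can be computed directly from the spectral data of the weight. Without either that structural substitute or an explicit transfer mechanism from $Z(N)$ into $M$ together with the uniform estimate, your argument does not close.
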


Recall that a class of structures is called \textbf{local} if it is closed under elementary equivalence; equivalently, the class is closed under isomorphism, ultraroot, and ultrapower.

\begin{cor}
The following classes are not local:  factors, type III$_0$ factors.
\end{cor}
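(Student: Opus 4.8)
The plan is to deduce non-locality of both classes from Fact~\ref{notfactor} together with the preceding remark that the ultrapower $M^\cU$ is well-defined independently of the choice of faithful normal state. Recall that a class $\cal K$ is local precisely when it is closed under isomorphism, ultraroot, and ultrapower; equivalently, $\cal K$ fails to be local as soon as we can exhibit a single structure in $\cal K$ whose ultrapower leaves $\cal K$. So the entire corollary reduces to producing, for each of the two classes, a witnessing member whose ultrapower is not in the class.

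For the class of factors, I would invoke Fact~\ref{notfactor} directly. Choose any $\sigma$-finite factor $M$ of type III$_0$ (for instance, a concrete Powers-type or Krieger factor of type III$_0$, whose existence is classical). Then $(M,\varphi)$ is a member of the class of factors for any faithful normal state $\varphi$, but by Fact~\ref{notfactor} its ultrapower $M^\cU$ is not a factor. Since the class of factors is closed under isomorphism, this single example shows the class is not closed under ultrapower, hence not local.

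For the class of type III$_0$ factors, the same example does the job but the reasoning is sharper: I take the very same $M$ of type III$_0$, which lies in the class by definition. Its ultrapower $M^\cU$ is, by Fact~\ref{notfactor}, not even a factor, so in particular it is not a type III$_0$ factor. Hence the class of type III$_0$ factors is also not closed under ultrapower, and therefore not local.

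The argument is essentially immediate once one unwinds the definition of locality and has Fact~\ref{notfactor} in hand, so there is no genuine obstacle; the only point requiring a sentence of justification is the well-definedness of the notation $M^\cU$, which is exactly the remark recorded just before Fact~\ref{notfactor} (from Section~3.1 of \cite{AH}), guaranteeing that the ``ultrapower'' statement does not depend on the auxiliary choice of state. I would therefore write the proof as two short parallel applications of Fact~\ref{notfactor}, one per class, each observing that a type III$_0$ factor furnishes a member whose ultrapower escapes the class.
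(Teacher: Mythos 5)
Your proof is correct and is exactly the argument the paper intends: the corollary follows immediately from Fact~\ref{notfactor} by exhibiting a type III$_0$ factor whose ultrapower fails to be a factor, which simultaneously witnesses non-closure under ultrapowers for both classes. No further comment is needed.
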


However:

\begin{prop}
The following classes are local:  finite algebras, semi-finite algebras, type I$_n$ factors (for a fixed $n$), type II$_1$ factors, type II$_\infty$ factors.
\end{prop}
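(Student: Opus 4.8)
The plan is to use the stated characterization of locality as closure under isomorphism, ultraroot and ultrapower, together with the identification of the model-theoretic ultrapower with the Ocneanu ultrapower $M^\cU$ and the fact (recorded above) that $M^\cU$ does not depend on the choice of faithful normal state. Since isomorphism-closure is automatic and none of the listed properties $P$ refers to $\varphi$, it suffices to prove, for each such $P$, the equivalence ``$M$ has $P$ iff $M^\cU$ has $P$'' for every $M$ and every $\cU$: the forward implication is closure under ultrapower and the reverse is closure under ultraroot. Throughout write $\varphi^\cU := \lim_\cU \varphi$ for the canonical faithful normal state on $M^\cU$.

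First I would assemble a toolbox. (i) The diagonal embedding $M \hookrightarrow M^\cU$ is a normal unital $*$-homomorphism onto a subalgebra that is globally invariant under $\sigma_t^{\varphi^\cU}$ (by the modular-group theorem quoted above, $\sigma_t^{\varphi^\cU}$ restricts on the diagonal to $\sigma_t^\varphi$); hence by Takesaki's theorem there is a faithful normal $\varphi^\cU$-preserving conditional expectation $E\colon M^\cU \to M$. (ii) Central elements stay central along the diagonal, so the image of $Z(M)$ lies in $Z(M^\cU)$; in particular $M^\cU$ a factor forces $M$ a factor. (iii) Traciality is a first-order condition, so if $\varphi$ is a trace then so is $\varphi^\cU$, giving $M$ finite $\Rightarrow M^\cU$ finite; conversely a von Neumann subalgebra of a finite algebra is finite, so $M^\cU$ finite $\Rightarrow M$ finite. (iv) Compression commutes with the ultrapower: for a projection $p \in M$ one has $p M^\cU p \cong (pMp)^\cU$, the Ocneanu ultrapower of $pMp$ with respect to the normalized state $\varphi(p\cdot p)/\varphi(p)$. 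With (iii) the class of finite algebras is settled, and type $\mathrm I_n$ factors follow because $M \cong M_n(\mathbb C)$ is finite-dimensional, so $M^\cU \cong M_n(\mathbb C)$, while conversely a factor subalgebra (by (ii)) of $M_n(\mathbb C)$ of the forced dimension is $M_n(\mathbb C)$.

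For type $\mathrm{II}_1$ factors I would combine finiteness with factoriality: the forward direction is the classical fact that the tracial ultrapower of a $\mathrm{II}_1$ factor is again a $\mathrm{II}_1$ factor (and with a trace the Ocneanu ultrapower is the tracial one), and the reverse uses (iii) and (ii) to make $M$ a finite factor, necessarily infinite-dimensional since $M \cong M_k(\mathbb C)$ would force $M^\cU \cong M_k(\mathbb C)$. For type $\mathrm{II}_\infty$ factors the compression (iv) does the real work in the forward direction: picking a finite projection $p \in M$, which automatically has central support $1$ in the factor $M$, the corner $pM^\cU p \cong (pMp)^\cU$ is a $\mathrm{II}_1$ factor; moreover $p$ still has central support $1$ in $M^\cU$, because the unitaries of $M$ that conjugate $p$ to cover $1$ lie in $M^\cU$ and commute with $Z(M^\cU)$, and the diagonal isometries witnessing that $1$ is properly infinite in $M$ show the same in $M^\cU$; hence $M^\cU$ is a $\mathrm{II}_\infty$ factor. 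The reverse direction makes $M$ an infinite factor by (ii) and (iii), and the compression trick also excludes type $\mathrm I_\infty$, since a minimal projection $e\in M$ satisfies $eM^\cU e\cong(eMe)^\cU=\mathbb C e$ and would give $M^\cU$ a minimal projection; what remains is to rule out type $\mathrm{III}$.

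The main obstacle is precisely the semifinite case and this exclusion of type $\mathrm{III}$, both of which reduce to showing that $M$ is semifinite whenever $M^\cU$ is. Using the characterization that $M$ is semifinite iff $\sigma_t^\varphi$ is inner for all $t$ (Takesaki), the forward implication is immediate from the modular-group theorem, as innerness of $\sigma_t^\varphi$ implemented by $u_t \in \mathcal U(M)$ transfers to innerness of $\sigma_t^{\varphi^\cU}$ implemented by the diagonal $(u_t)^\star$. The genuinely hard direction is the descent of innerness, $\sigma_t^{\varphi^\cU} \in \mathrm{Inn}(M^\cU) \Rightarrow \sigma_t^\varphi \in \mathrm{Inn}(M)$, equivalently $T(M^\cU) \subseteq T(M)$: a unitary of $M^\cU$ implementing $\sigma_t^{\varphi^\cU}$ need not lie in the diagonal $M$ and need not descend through $E$, so this is not formal. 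I would obtain this containment from the Ando--Haagerup analysis of the type of $M^\cU$ for factors not of type $\mathrm{III}_0$ (which is exactly what fails in the quoted $\mathrm{III}_0$ non-locality results), concluding $T(M)=\mathbb R$ and hence semifiniteness; this simultaneously excludes type $\mathrm{III}$ in the reverse $\mathrm{II}_\infty$ argument and completes all cases.
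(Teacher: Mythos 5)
Your overall architecture is sound and genuinely different from the paper's: the paper disposes of the semifinite and II$_\infty$ cases in one line via the folklore isomorphism $(M\overline{\otimes}\mathcal{B}(H))^{\cU}\cong M^{\cU}\overline{\otimes}\mathcal{B}(H)$ applied to the structure decomposition of a properly infinite semifinite algebra, whereas you work through modular theory, conditional expectations, and corner compressions. Your treatment of finite algebras, type I$_n$, type II$_1$, and the forward (ultrapower) directions for semifinite and II$_\infty$ all check out, granting the standard facts you invoke (state-independence of $M^{\cU}$, $pM^{\cU}p\cong (pMp)^{\cU}$, preservation of suprema of projections under the normal diagonal embedding).

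The gap is in the step you yourself identify as the crux: showing $M$ is semifinite whenever $M^{\cU}$ is. Your proposed route --- establish $T(M^{\cU})\subseteq T(M)$ ``from the Ando--Haagerup analysis of the type of $M^{\cU}$ for factors not of type III$_0$'' --- does not close. The cases that matter are exactly the ones that appeal does not reach: for a type III$_0$ factor $M$, the cited Ando--Haagerup result only says $M^{\cU}$ is not a \emph{factor}, which does not preclude $M^{\cU}$ being semifinite; and for non-factors (which the class of semifinite algebras includes) the $T$-invariant argument, and the implication $T(M)=\mathbb{R}\Rightarrow M$ semifinite (which anyway needs separable predual to upgrade pointwise innerness to an inner flow), are not available. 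The irony is that your own toolbox item (i) already closes the gap without any $T$-invariant: since the diagonal copy of $M$ is globally $\sigma^{\varphi^{\cU}}$-invariant, Takesaki's theorem gives a faithful normal conditional expectation $E\colon M^{\cU}\to M$, and semifiniteness passes to the range of a faithful normal conditional expectation; so $M^{\cU}$ semifinite immediately forces $M$ semifinite, uniformly in all cases. Substituting that one sentence for the $T$-invariant detour repairs the argument and simultaneously supplies the exclusion of type III in your reverse II$_\infty$ step.
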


\begin{proof}
The only classes that need explanation are that of the case of semi-finite algebras and type II$_\infty$ factors; these classes are seen to be local by using the fact (which appears to be folklore) that, given a von Neumann algebra $M$ with separable predual, a separable Hilbert space $H$, and an ultrafilter $\cU$ on $\mathbb{N}$, that $(M\overline{\otimes} \cal B(H))^{\cU}\cong M^{\cU}\overline{\otimes} \cal B(H)$.
\end{proof}

Finally, we mention:

\begin{prop}
Suppose that $S\subseteq (0,1]$ is closed.  Then the set of type III$_\lambda$ factors where $\lambda\in S$ is an axiomatizable class.
\end{prop}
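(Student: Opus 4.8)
The plan is to characterize type III$_\lambda$ factors in terms of their $S$-invariant (Connes' $S$-invariant, the intersection of the spectra of all modular operators, or equivalently via the $T$-invariant and the discrete decomposition), and then to express the condition $\lambda \in S$ as a closed condition on the modular automorphism group, which we have just shown is definable. The key observation is that the type III$_\lambda$ classification is governed by the periodicity of the modular flow: for $0 < \lambda < 1$, a factor $M$ is type III$_\lambda$ precisely when the modular automorphism group $\sigma^\varphi_t$ is periodic with period $T_0 = -2\pi/\log\lambda$ modulo inner automorphisms, in the sense captured by Connes' invariant $T(M) = \{ t : \sigma^\varphi_t \text{ is inner}\}$, and for $\lambda = 1$ one has $T(M) = \{0\}$ together with the failure of the earlier semifiniteness criteria.

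First I would recall the relevant part of the Connes classification: by the corollary above, for each fixed $t \in \bbR$ the map $\sigma_t$ is a $T_\sigma$-definable function, defined by a $T_\sigma$-definable predicate $\varphi_t$, and the assignment $t \mapsto \varphi_t$ is continuous in the logic topology. The invariant $T(M)$ can be detected model-theoretically: $t \in T(M)$ iff $\sigma^\varphi_t$ is inner, which (since inner automorphisms are implemented by unitaries in the algebra) is equivalent to the existence of a unitary $u$ with $\sigma_t(x) = uxu^*$ for all $x$; this is an $\exists$-type condition over the sorts, and because the relevant sets of bounded vectors are definable, the condition ``$\sigma_t$ is inner'' cuts out a definable subset of the parameter $t$. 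Thus the set $T(M) \subseteq \bbR$ is read off uniformly from the models of $T_\sigma$. For $\lambda \in (0,1)$, membership in type III$_\lambda$ is then expressible as: $M$ is a type III factor and $T(M) = \frac{2\pi}{\log\lambda}\bbZ$; the factor condition itself must be imposed, and one must rule out type III$_0$ (where $T(M)$ can be nontrivial but the structure is different) and semifinite types via the earlier non-elementarity caveats.

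The most delicate point, which I expect to be the main obstacle, is handling the parameter $\lambda$ ranging over the closed set $S$ uniformly and dealing with the boundary and accumulation behavior. Since $S$ is closed in $(0,1]$, one wants a single theory (or a single closed set of conditions) whose models are exactly the pairs $(M,\varphi)$ with $M$ of type III$_\lambda$, $\lambda \in S$. The plan is to exploit the continuity of $t \mapsto \varphi_t$ in the logic topology: the condition ``$\sigma_t$ is inner'' is a closed condition on $t$, and as $\lambda$ varies over $S$ the corresponding periods $2\pi/|\log\lambda|$ trace out a set whose closedness in the appropriate parameter space follows from closedness of $S$ under the homeomorphism $\lambda \mapsto 2\pi/|\log\lambda|$ of $(0,1)$ with $(0,\infty)$, together with the separate inclusion or exclusion of the endpoint $\lambda=1$ (type III$_1$, where $T(M)=\{0\}$) and the limiting case $\lambda \to 0$ (which would force type III$_0$ and must be excluded unless $S$ meets that limit). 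I would therefore split into the case $\lambda = 1 \in S$, handled by the condition $T(M) = \{0\}$ together with non-semifiniteness, and the case $\lambda \in S \cap (0,1)$, handled by requiring $T(M)$ to be a lattice $c\,\bbZ$ with $2\pi/|\log\lambda| = c$ for some $\lambda \in S$; the closedness of $S$ is exactly what guarantees that this family of conditions is itself closed, hence axiomatizable. The honest difficulty is verifying that these modular-flow conditions are genuinely first-order expressible (definability of $T(M)$ and of the type III / factor conditions) rather than merely invariant, and assembling them into a single axiom scheme uniform in $\lambda \in S$.
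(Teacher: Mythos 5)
Your proposal is a genuinely different route from the paper's, and as written it has gaps that the paper's own results show cannot be patched in the way you suggest. You propose to ``impose the factor condition'' and to ``rule out semifinite types'' as axioms, but the paper proves earlier in this very subsection that the class of factors is not even local and that the class of semifinite algebras is not elementary; neither condition can simply be written into an axiomatization. More seriously, your central mechanism --- that ``$\sigma_t$ is inner'' is an existential condition and therefore cuts out a definable set of parameters $t$, so that $T(M)$ ``is read off uniformly from the models of $T_\sigma$'' --- is unsupported. In continuous logic an existential condition is an infimum, the infimum need not be attained, and innerness of an automorphism is exactly the sort of property that fails to transfer between a structure and its ultrapower (what survives is approximate innerness). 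Definability of each $\sigma_t$ (the Corollary you invoke) does not give definability, or even elementary invariance, of the set $\{t : \sigma_t^\varphi \text{ is inner}\}$; establishing that $T(M)$ is determined by $\Th(M,\varphi)$ is a delicate matter studied by Ando--Haagerup and is nowhere near a formal consequence of what precedes this proposition. So the ``honest difficulty'' you flag at the end is in fact the entire content of your argument, and it is left unproved.

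The paper sidesteps all of this with a purely semantic argument. For closure under ultraproducts it quotes \cite[Theorem 6.11]{AH}: the Ocneanu ultraproduct of type III$_\lambda$ factors is again a type III$_\lambda$ factor, and the closedness of $S$ controls a varying parameter. For closure under ultraroots, it argues by elimination: if $M^\cU$ is a type III$_\lambda$ factor with $\lambda\in S$, then $M$ is a factor; $M$ is not semifinite (else $M^\cU$ would be semifinite); $M$ is not type III$_0$ (else $M^\cU$ would fail to be a factor by Fact \ref{notfactor}); hence $M$ is type III$_\eta$ for some $\eta\in(0,1]$, and the ultraproduct preservation forces $\eta=\lambda$. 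Note that this produces axiomatizability abstractly, with no explicit axioms and no appeal to the modular group beyond the black-boxed Ando--Haagerup results; if you want an explicit syntactic axiomatization along the lines you sketch, you would first have to prove definability of the $T$- (or $S$-) invariant, which is a substantially harder problem than the proposition itself.
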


\begin{proof}
In \cite[Theorem 6.11]{AH}, it is shown that if $M$ is a type III$_\lambda$ factor for $\lambda\in (0,1]$ and $\varphi_n$ is any family of faithful normal states on $M$, then $\prod_{\cU}(M,\varphi_n)$ is once again a type III$_\lambda$ factor.  The same proof shows that the class of pairs $(M,\varphi)$, where $M$ is a type III$_\lambda$ factor with $\lambda\in S$, is closed under ultraproducts.  

It remains to see that this class is closed under ultraroots.  Thus, suppose that $(M,\varphi)$ is such that $M^\cU$ is a type III$_\lambda$ factor with $\lambda\in S$.  It is easy to see that $M$ must then be a factor.  $M$ cannot be semifinite for then $M^\cU$ would also be semifinite.  $M$ cannot be III$_0$ for then $M^\cU$ would not be a factor by Fact \ref{notfactor}.  Thus, $M$ must be type III$_\eta$ for $\eta\in (0,1]$; by the previous paragraph, it must be that $\eta=\lambda$.
\end{proof}

\section{A family of Connes-type ultraproducts on \cstar-algebras}

A \textbf{statial \cstar-algebra} is a pair $(A,\Phi)$, where $A$ is a unital \cstar-algebra and $\Phi$ is a subset of the state space $\cal S(A)$ of $A$.  The purpose of this section is to introduce an ultraproduct construction for a natural class of statial \cstar-algebras.
\subsection{Bounded vectors in statial \cstar-algebras}

%


Until further notice, fix a statial \cstar-algebra $(A,\Phi)$. For $x\in A$, we define the semi-norm $\|\cdot\|_{2,\Phi}$ by $$\|x\|_{2,\Phi} := \sup_{\phi\in\Phi} \max\{\phi(x^*x), \phi(xx^*)\}^{1/2}.$$ We say that the statial \cstar-algebra $(A,\Phi)$ is \textbf{faithful} if $\|\cdot\|_{2,\Phi}$ is a norm on $A$. Suppose, from now on, that $(A,\Phi)$ is also faithful.  It follows, for $x\in A$, that $\|x\|_{2,\Phi}\leq \|x\|$; moreover, equality holds for all $x\in A$ if and only if the closed, convex hull of $\Phi$ equals $\cal S(A)$. 

We let $L^2(A,\Phi)$ denote the completion of $A$ with respect to the norm $\|\cdot\|_{2,\Phi}$. Note that this Banach space is not necessarily a Hilbert space, although we keep the notation to emphasize the similarity with the single trace case. 
Finally note that the involution on $A$ induces an antilinear isometry $J_\Phi$ of $L^2(A,\Phi)$.

We say that a vector $\xi\in L^2(A,\Phi)$ is \textbf{$K$-bounded} if, for all $a\in A$, we have $$\max\{\|a\xi\|_{2,\Phi}, \|\xi a\|_{2,\Phi}\}\leq K \|a\|_{2,\Phi}.$$ In other words, $\xi$ is $K$-bounded if there are bounded operators $\pi_\ell(\xi), \pi_r(\xi)\in \cal B(L^2(A,\Phi))$ with $\|\pi_\ell(\xi)\|, \|\pi_r(\xi)\|\leq K$ for which $$\pi_\ell(\xi)a = \xi a\textup{, and}\ \pi_r(\xi)a = a\xi$$ for all $a\in A$. We set $\cal A_\Phi$ to be the set of all bounded vectors in $L^2(A,\Phi)$. There is a natural involution $\star$ on $\cal A_\Phi$ given by $\xi^\star := J_\Phi\xi$.  It follows that $\pi_\ell(\xi^\star) = J_\Phi \pi_r(\xi) J_\Phi$ and $\pi_r(\xi^\star) = J_\Phi \pi_\ell(\xi) J_\Phi$. This endows $\cal A_\Phi$ with the structure of an \textbf{involutive Banach algebra}, that is, a unital Banach algebra equipped with an isometric involution. It is not clear, in general, whether $\cal A_\Phi$ has the structure of a C$^*$-algebra.  However, under a natural assumption to be described below, we can show that $\cal A_\Phi$ always admits a compatible C$^*$-algebraic structure.

We say that the statial \cstar-algebra $(A,\Phi)$ is {\bf full} if $\Phi$ is  invariant by unitary conjugation.

\begin{prop}
Suppose that $(A,\Phi)$ is a full and faithful statial \cstar-algebra.  Then, for any $x\in A$, we have $$\|x\|_{2,\Phi} = \sup_{\vp\in\Phi} \vp(x^*x)^{1/2} = \sup_{\vp\in \Phi} \vp(xx^*)^{1/2}.$$ In particular, every $a\in A$ is $\|a\|$-bounded.
\end{prop}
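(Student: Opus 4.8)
The plan is to reduce everything to the single assertion $S_L=S_R$, where $S_L:=\sup_{\varphi\in\Phi}\varphi(x^*x)$ and $S_R:=\sup_{\varphi\in\Phi}\varphi(xx^*)$. Granting this, for each $\varphi$ we have $\max\{\varphi(x^*x),\varphi(xx^*)\}\le\max\{S_L,S_R\}=S_L$, so $\|x\|_{2,\Phi}^2=\sup_\varphi\max\{\varphi(x^*x),\varphi(xx^*)\}\le S_L$; since the reverse inequality $\|x\|_{2,\Phi}^2\ge S_L$ is immediate from the definition, taking square roots yields all three displayed equalities. Replacing $x$ by $x^*$ interchanges $S_L$ and $S_R$, so it suffices to prove $S_R\le S_L$.

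The engine of the argument is the following claim: for every contraction $a\in A$, every $b\in A_+$, and every $\varphi\in\Phi$ we have $\varphi(aba^*)\le\sup_{\psi\in\Phi}\psi(b)$. I would first prove this for $a$ a finite convex combination $a=\sum_k\lambda_k u_k$ of unitaries of $A$. Expanding, $\varphi(aba^*)=\sum_{k,l}\lambda_k\lambda_l\,\varphi(u_k b u_l^*)$, and the Cauchy--Schwarz inequality for the positive functional $\varphi$ (applied to $u_k b^{1/2}$ and $u_l b^{1/2}$) bounds each term by $|\varphi(u_k b u_l^*)|\le\varphi(u_k b u_k^*)^{1/2}\varphi(u_l b u_l^*)^{1/2}$. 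Fullness enters exactly here: each $\varphi(u_k b u_k^*)$ equals $\psi_k(b)$ for the conjugated state $\psi_k:=\varphi(u_k\cdot u_k^*)\in\Phi$, so every term is at most $\sup_\psi\psi(b)$; since $\sum_{k,l}\lambda_k\lambda_l=1$ and $aba^*\ge 0$, the claim follows for such $a$. Because the Russo--Dye theorem realizes every contraction of the unital \cstar-algebra $A$ as a norm limit of finite convex combinations of unitaries, and $a\mapsto\varphi(aba^*)$ is norm continuous, the claim passes to arbitrary contractions $a$.

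With the claim in hand I would introduce an approximate polar decomposition. For $\epsilon>0$ set $a_\epsilon:=x(x^*x+\epsilon)^{-1/2}\in A$, a contraction. A routine functional-calculus computation, using $x\,g(x^*x)=g(xx^*)\,x$ with $g(t)=t(t+\epsilon)^{-1}$, gives $a_\epsilon(x^*x)a_\epsilon^*=(xx^*)^2(xx^*+\epsilon)^{-1}$, whence $\|a_\epsilon(x^*x)a_\epsilon^*-xx^*\|\to 0$ as $\epsilon\to 0$. Applying the claim with $a=a_\epsilon$ and $b=x^*x$ gives $\varphi(a_\epsilon(x^*x)a_\epsilon^*)\le S_L$ for every $\varphi\in\Phi$; letting $\epsilon\to 0$ yields $\varphi(xx^*)\le S_L$, and taking the supremum over $\varphi$ gives $S_R\le S_L$, as required.

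Finally, the ``in particular'' clause is a short corollary of the main equality. For $a,y\in A$, applying that equality to the element $ya$ gives $\|ya\|_{2,\Phi}^2=\sup_\varphi\varphi\!\left(y(aa^*)y^*\right)\le\|a\|^2\sup_\varphi\varphi(yy^*)\le\|a\|^2\|y\|_{2,\Phi}^2$, and applying it to $ay$ gives $\|ay\|_{2,\Phi}^2=\sup_\varphi\varphi\!\left(y^*(a^*a)y\right)\le\|a\|^2\|y\|_{2,\Phi}^2$; together these are precisely the statement that $a$ is $\|a\|$-bounded. The one genuine subtlety, and the main obstacle, is that the partial isometry in the polar decomposition of $x$ need not be a unitary of $A$ (as for a one-sided shift), so one cannot simply conjugate $x^*x$ onto $xx^*$ by an element of the symmetry group of $\Phi$; the Russo--Dye approximation, combined with the Cauchy--Schwarz control of the off-diagonal cross terms, is exactly what circumvents this difficulty.
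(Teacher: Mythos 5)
Your proof is correct, and it takes a genuinely different route from the paper's. The paper first handles invertible $x$, where the unitary in the polar decomposition lies in $A$ and fullness lets one conjugate $\varphi$ directly to get $\varphi(x^*x)=\psi(xx^*)$ for some $\psi\in\Phi$; it then reduces the general case to the invertible one via the additive perturbation $x_\lambda=\lambda 1+x$ with $|\lambda|>\|x\|$, arranging (by a choice of the phase of $\lambda$) that the cross terms $\bar\lambda x+\lambda x^*$ do not interfere. You instead prove the intermediate inequality $\varphi(aba^*)\le\sup_{\psi\in\Phi}\psi(b)$ for arbitrary contractions $a$, obtaining it for convex combinations of unitaries by Cauchy--Schwarz plus fullness and then for all contractions by Russo--Dye, and you replace the exact polar decomposition by the approximate one $a_\epsilon=x(x^*x+\epsilon)^{-1/2}$, for which $a_\epsilon(x^*x)a_\epsilon^*\to xx^*$ in norm. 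What your approach buys is robustness: it sidesteps entirely the issue of whether the polar partial isometry (or a suitable unitary substitute) lives in $A$, and it avoids the cross-term cancellation in the paper's perturbation step, which as written is the most delicate point of their argument; the cost is the appeal to Russo--Dye, which is heavier machinery than anything the paper invokes. Both arguments use fullness in exactly the same way, namely that $\varphi(u\cdot u^*)\in\Phi$ for $u$ unitary, and your derivation of the ``in particular'' clause from the displayed equality matches the intended reading of the statement.
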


\begin{proof}
If $x$ is invertible, the identity follows easily by polar decomposition for invertible elements in C$^*$-algebras. Let $x_\la = \la1 + x$, which for $\ab{\la}> \|x\|$ is invertible. Choose $\ab{\la} = \|x\|+1$, say, for which ${\rm Re}(\la x) =0$. For $\vp\in \Phi$ we have $\psi\in\Phi$ such that $\vp(x_{\la}^*x_{\la}) = \psi(x_\la x_\la^*)$ from which it easily follows that $\vp(x^*x) = \psi(xx^*)$.
\end{proof}


We are now ready to prove that the set of bounded elements associated to a full and faithful statial \cstar-algebra admits the structure of a \cstar-algebra.  We first recall the fact that every involutive Banach algebra $\cal A$ admits an essentially unique contractive $\ast$-homomorphism with dense image into a C$^*$-algebra, denoted $\cal A^{\text{\cstar}}$, such that every contractive $\ast$-homomorphism from $\cal A$ into a C$^*$-algebra factors through $\cal A^{C^\ast}$  \cite[section 2.7]{Dix}.  By the previous proposition, given a full and faithful statial \cstar-algebra $(A,\Phi)$, there is a natural inclusion $A\subseteq (\cal A_\Phi)^{\text{\cstar}}$.    
\begin{prop}\label{fullc*} Suppose that $(A,\Phi)$ is a full and faithful statial \cstar-algebra.  Then $\cal A_\Phi$ admits an equivalent C$^*$-algebra norm.
\end{prop}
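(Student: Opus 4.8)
The plan is to show that the left‑regular operator norm on $\cal A_\Phi$ is already a C$^*$‑norm; since $\cal A_\Phi$ is complete, this realizes it as a C$^*$‑algebra and in particular yields the asserted equivalent norm. Write $\|\xi\|_\circ:=\|\pi_\ell(\xi)\|_{\cal B(L^2(A,\Phi))}$. This is submultiplicative because $\pi_\ell$ is a homomorphism, and it is a norm because $\pi_\ell(\xi)1=\xi$, so $\pi_\ell(\xi)=0$ forces $\xi=0$. Everything reduces to the single inequality $\|\xi\|_\circ^2\le\|\xi^\star\xi\|_\circ$: combined with $\|\xi^\star\xi\|_\circ\le\|\xi^\star\|_\circ\|\xi\|_\circ$ it forces $\|\xi\|_\circ=\|\xi^\star\|_\circ$ and hence the C$^*$‑identity $\|\xi\|_\circ^2=\|\xi^\star\xi\|_\circ$. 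Finally, since $\pi_r(\xi)=J_\Phi\pi_\ell(\xi^\star)J_\Phi$ with $J_\Phi$ an isometry, $\|\pi_r(\xi)\|=\|\pi_\ell(\xi^\star)\|=\|\xi\|_\circ$, so $\|\cdot\|_\circ$ is exactly the norm of $\cal A_\Phi$.

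To prove the inequality I would pass to a fiberwise GNS picture. For $\vp\in\Phi$ let $H_\vp$ be its GNS space and $\iota_\vp\colon L^2(A,\Phi)\to H_\vp$ the canonical contraction extending $a\mapsto\hat a$ (contractive since $\|\hat a\|_{H_\vp}^2=\vp(a^*a)\le\|a\|_{2,\Phi}^2$). The one place fullness enters is that, by the previous proposition, it lets us discard the $\vp(xx^*)$ term, giving
\[
\|\eta\|_{2,\Phi}^2=\sup_{\vp\in\Phi}\|\iota_\vp(\eta)\|_{H_\vp}^2\qquad(\eta\in L^2(A,\Phi)).
\]
Setting $\tilde\vp(\mu):=\ip{\iota_\vp(\mu)}{\iota_\vp(1)}$, which extends $\vp$, the crux is the sesquilinear identity
\[
\ip{\iota_\vp(\zeta)}{\iota_\vp(\omega)}=\tilde\vp(\omega^\star\zeta)\qquad(\zeta,\omega\in\cal A_\Phi).
\]

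The hard part, and the step I expect to be the main obstacle, is establishing this identity, since multiplication on $\cal A_\Phi$ is not continuous for $\|\cdot\|_{2,\Phi}$ (only for $\|\cdot\|_\circ$), so a one‑shot density argument is unavailable. I would instead freeze one variable at a time. For fixed $\omega$, both sides are $\|\cdot\|_{2,\Phi}$‑continuous in $\zeta$, the right‑hand side because $\zeta\mapsto\omega^\star\zeta=\pi_\ell(\omega^\star)\zeta$ is given by a bounded operator on $L^2(A,\Phi)$; symmetrically, for fixed $\zeta$ both sides are continuous in $\omega$, now using $\omega\mapsto\omega^\star\zeta=\pi_r(\zeta)J_\Phi\omega$ and the boundedness of $J_\Phi$ and $\pi_r(\zeta)$. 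As the two sides agree for $\zeta,\omega\in A$, where each equals $\vp(\omega^*\zeta)$, and $A$ is $\|\cdot\|_{2,\Phi}$‑dense in $\cal A_\Phi\subseteq L^2(A,\Phi)$, two successive density steps yield the identity in general.

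Granting this, I would specialize to $\zeta=\xi^\star\xi a$, $\omega=a$ and to $\zeta=\omega=\xi a$ with $a\in A$, obtaining $\ip{\iota_\vp(\xi^\star\xi a)}{\iota_\vp(a)}=\tilde\vp(a^\star\xi^\star\xi a)=\|\iota_\vp(\xi a)\|^2$. Cauchy--Schwarz in $H_\vp$ then gives $\|\iota_\vp(\xi a)\|^2\le\|\iota_\vp(\xi^\star\xi a)\|\,\|\iota_\vp(a)\|$; taking the supremum over $\vp$ and invoking the displayed formula for $\|\cdot\|_{2,\Phi}$ yields $\|\xi a\|_{2,\Phi}^2\le\|\xi^\star\xi a\|_{2,\Phi}\,\|a\|_{2,\Phi}\le\|\xi^\star\xi\|_\circ\,\|a\|_{2,\Phi}^2$. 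The supremum over $\|a\|_{2,\Phi}\le1$ is precisely $\|\xi\|_\circ^2\le\|\xi^\star\xi\|_\circ$, which completes the argument and shows that the norm of $\cal A_\Phi$ satisfies the C$^*$‑identity.
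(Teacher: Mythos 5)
Your proof is correct, and it reaches the stated conclusion by a genuinely different final step than the paper, while sharing the same technical core. Both arguments ultimately rest on two facts: fullness gives $\|\eta\|_{2,\Phi}=\sup_{\vp\in\Phi}\|\iota_\vp(\eta)\|_{H_\vp}$, and the GNS identity $\tilde\vp(a^*x^\star xa)=\|\iota_\vp(xa)\|_{H_\vp}^2$ --- which is precisely what the paper needs (without spelling it out) to assert that $x\mapsto\tilde\vp(a^*xa)/\|a\|_{2,\Phi}^2$ is a \emph{positive} contractive functional on $\cal A_\Phi$ and that the supremum of these functionals at $x^\star x$ recovers $\|\pi_\ell(x)\|^2$. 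The paper then feeds these functionals into the enveloping C$^*$-algebra $(\cal A_\Phi)^{\text{\cstar}}$ and quotes \cite[2.7.2]{Dix} to conclude that the enveloping C$^*$-seminorm dominates, hence is equivalent to, the Banach norm; you instead verify the C$^*$-identity directly for the left-regular operator norm via Cauchy--Schwarz in each $H_\vp$. Your route is more self-contained (no enveloping C$^*$-algebra is needed) and yields the marginally stronger statement that the norm on $\cal A_\Phi$ literally satisfies the C$^*$-identity rather than merely being equivalent to a C$^*$-norm; this would in particular sharpen the constant-$4$ remark the authors make later when showing the Connes ultraproduct is a C$^*$-algebra. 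One caveat: your two-step density argument and the manipulations $(\xi a)^\star(\xi a)=a^*\xi^\star\xi a$ and $\omega^\star\zeta=\pi_r(\zeta)J_\Phi\omega$ quietly use associativity and the left/right compatibility $\pi_\ell(\mu)\zeta=\pi_r(\zeta)\mu$ for bounded vectors; these are standard left-Hilbert-algebra facts that the paper itself assumes without proof when it declares $\cal A_\Phi$ an involutive Banach algebra, so this is not a gap relative to the paper's own standard of rigor, but it deserves a sentence of acknowledgment.
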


\begin{proof}
Every $\vp\in\Phi$ extends continuously to a linear functional $\tilde\vp$ on $\cal A_\Phi$. By Cauchy-Schwartz, we have, for all $a\in A\setminus\{0\}$, that $$x\mapsto \frac{\tilde\vp(a^*xa)}{\|a\|_{2,\Phi}^2}:\cal A_\Phi\to \mathbb{C}$$ is a contractive, positive linear functional of $\cal A_\Phi$. Since the supremum over all of these functionals computes norm squared of $x^*x$ for all $x\in\cal A_\Phi$, we have by \cite[2.7.2]{Dix} that $\|x^*x\|$ agrees with the norm of $x^*x$ as an element of $(\cal A_\Phi)^{\text{\cstar}}$, whence there is an isometric identification of the positive elements and $\cal A_\Phi$ is complete in the norm on $(\cal A_\Phi)^{\text{\cstar}}$.
\end{proof}

We end this section with a couple of further observations:

\begin{prop}
Suppose that $(A,\Phi)$ is a full and faithful statial \cstar-algebra.  We then have:
\begin{enumerate}
\item Any closed, bounded subset of $\cal A_\Phi$ is complete in the $(2,\Phi)$-norm.
\item $\pi_\ell(\cal A_\Phi)$ is the commutant in $\cal B(L^2(A,\Phi))$ of $\pi_r(A)$.
\end{enumerate}
\end{prop}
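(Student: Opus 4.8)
The plan is to treat the two assertions separately: (1) is a soft completeness statement, whereas (2) is a commutation theorem whose reverse inclusion carries all the weight.

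For (1) I would first record that, by fullness, left and right multiplication by a fixed $a\in A$ are bounded operators on $L^2(A,\Phi)$ of norm at most $\|a\|$: indeed $\|ab\|_{2,\Phi}^2=\sup_{\vp}\vp(b^*a^*ab)\le\|a\|^2\|b\|_{2,\Phi}^2$ using $a^*a\le\|a\|^2$, and the right-multiplication estimate is symmetric using the $xx^*$-form of $\|\cdot\|_{2,\Phi}$ supplied by fullness. Granting this, let $(\xi_n)$ be a $(2,\Phi)$-Cauchy sequence of $K$-bounded vectors; since $L^2(A,\Phi)$ is complete it converges to some $\xi$, and for every $a$ we have $a\xi_n\to a\xi$ and $\xi_n a\to\xi a$ in $(2,\Phi)$. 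Passing to the limit in $\|a\xi_n\|_{2,\Phi}\le K\|a\|_{2,\Phi}$ and its right analogue shows $\xi$ is again $K$-bounded. Thus the set of $K$-bounded vectors is $(2,\Phi)$-closed, and any closed bounded subset of $\cal A_\Phi$, being contained in such a set, is $(2,\Phi)$-complete.

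The inclusion $\pi_\ell(\cal A_\Phi)\subseteq\pi_r(A)'$ in (2) is a direct computation on the dense subspace $A$: for $\xi\in\cal A_\Phi$ and $a,b\in A$ both $\pi_\ell(\xi)\pi_r(a)b$ and $\pi_r(a)\pi_\ell(\xi)b$ equal $\xi b a$ by associativity of the two actions, so the two bounded operators agree. For the reverse inclusion, given $T$ commuting with $\pi_r(A)$ I would set $\xi:=T(1)$; since $T\pi_r(a)=\pi_r(a)T$ and $\pi_r(a)1=a$, we get $T(a)=\pi_r(a)T(1)=\xi a$, so $\|\xi a\|_{2,\Phi}=\|T(a)\|_{2,\Phi}\le\|T\|\,\|a\|_{2,\Phi}$, i.e. $\xi$ is left $\|T\|$-bounded and $T=\pi_\ell(\xi)$. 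Conversely every left-bounded $\xi$ gives $\pi_\ell(\xi)\in\pi_r(A)'$, so in fact $\pi_r(A)'=\{\pi_\ell(\xi):\xi\text{ left-bounded}\}$, and the whole proposition reduces to the claim that every left-bounded vector is already two-sided bounded, hence lies in $\cal A_\Phi$.

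It therefore remains to show that a left $K$-bounded $\xi$ is right $K$-bounded. Using the antilinear isometry $J_\Phi$ and the identity $J_\Phi(a\xi)=\xi^\star a^*$, right $K$-boundedness of $\xi$ is equivalent to left $K$-boundedness of $\xi^\star$, so the real point is that the class of left-bounded vectors is invariant under $\star$. This is where fullness must enter. Writing $\xi$ as a $(2,\Phi)$-limit of elements $c_n\in A$ and using the polar decomposition $c_n=v_n|c_n|$ together with $c_nc_n^*=v_n(c_n^*c_n)v_n^*$, one obtains for $b\in A$ the exact identity $\|c_n^* b\|_{2,\Phi}=\|c_n(v_n^* b)\|_{2,\Phi}$ (the supremum over $\Phi$ is unaffected since $v_n$ is fixed), and since $v_n^*$ is a $\|\cdot\|_{2,\Phi}$-contraction the right-hand side is controlled by the left bound of $c_n$. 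The hard part — and the main obstacle — is that the approximants $c_n$ need not be uniformly left $K$-bounded, so this estimate must be coupled with a Kaplansky-type density statement: every left $K$-bounded vector should be a $(2,\Phi)$-limit of left $K$-bounded elements of $A$. I would try to establish this density by imitating the functional-calculus-plus-Kaplansky argument used earlier for the Ocneanu ultraproduct (passing to an auxiliary operator algebra where a double-commutant argument applies), the essential difficulty being that $L^2(A,\Phi)$ is only a Banach space, so the approximation has to be organized through the GNS spaces $H_\vp$, $\vp\in\Phi$, rather than in a single Hilbert space. Once the uniform approximation is secured, letting $n\to\infty$ yields $\|\xi^\star b\|_{2,\Phi}\le K\|b\|_{2,\Phi}$, whence $\xi\in\cal A_\Phi$ and $T\in\pi_\ell(\cal A_\Phi)$, completing the proof.
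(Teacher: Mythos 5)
Your handling of (1), of the easy inclusion in (2), and of the key step of the reverse inclusion is exactly the paper's argument: the paper dismisses (1) as "a standard application of the triangle inequality" (your Cauchy-sequence argument is the intended one), and for (2) it likewise takes $T\in\pi_r(A)'$, sets $\xi:=T\hat 1$, computes $T(\hat a)=T\pi_r(a)\hat 1=\pi_r(a)T\hat 1=\xi a$, and concludes $T=\pi_\ell(\xi)$. Everything you prove up to that point is correct and coincides with the paper.

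The issue you then isolate --- that this computation only exhibits $\xi$ as \emph{left} $\|T\|$-bounded, whereas membership in $\cal A_\Phi$ as defined requires two-sided boundedness --- is a genuine observation, and the paper's one-line proof simply stops at ``$T=\pi_\ell(\xi)$'' without addressing it; you will not find the missing step there. Your proposed repair, however, does not work as written. The partial isometry $v_n$ in the polar decomposition $c_n=v_n|c_n|$ lives in $A^{**}$, not in $A$, so it does not act on $L^2(A,\Phi)$ by anything you have defined, and fullness only gives $\sup_{\vp\in\Phi}\vp(x^*x)=\sup_{\vp\in\Phi}\vp(xx^*)$, i.e. $\|x\|_{2,\Phi}=\|x^*\|_{2,\Phi}$; as you yourself observe, $J_\Phi$ then converts right-boundedness of $\xi$ into left-boundedness of $\xi^\star$, which is information about $\xi^\star$, not about $\xi$. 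Moreover, the Kaplansky-type density of \emph{uniformly} left $K$-bounded elements of $A$ in the left $K$-bounded vectors, which your argument needs, is exactly as hard as the statement you are trying to prove. The honest conclusion is: what the displayed computation establishes is that $\pi_r(A)'$ is the set of left multiplications by left-bounded vectors; to obtain the proposition as literally stated one must additionally show that $T\hat 1$ is right-bounded for every $T\in\pi_r(A)'$ (or reinterpret $\cal A_\Phi$ in part (2) as the left-bounded vectors). State this explicitly rather than leaving the density claim dangling.
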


\begin{proof}
Statement (1) is a standard application of the triangle inequality.  For (2), let $\hat a$ denote an element $a\in A$ considered as vector in $L^2(A,\Phi)$. For $T\in \pi_r(A)' \cap \cal B(L^2(A,\Phi)$, setting $\xi = T\hat 1$, we have that $\xi a = \pi_r(a)T\hat 1 = T\pi_r(a)\hat 1 = T(\hat a)$ whence $T = \pi_\ell(\xi)$. \qedhere

\end{proof}



\subsection{Ultraproducts of statial \cstar-algebras} Let $(A_i,\Phi_i)_{i\in I}$ be a sequence of full and faithful statial \cstar-algebras. For $\cU$ a non-principal ultrafilter on $I$, we let $\prod_{\cU} L^2(A_i,\Phi_i)$ denote the Banach space ultraproduct of $L^2(A_i,\Phi_i)_{i\in I}$. We define the \textbf{Connes ultraproduct} of the sequence $(A_i,\Phi_i)$, denoted, $\prod_{C\cU} (A_i,\Phi_i)$,  to be the subset of $\prod_{\cU} L^2(A_i, \Phi_i)$ consisting of all \textbf{uniformly bounded} elements, that is, elements $\xi$ for which there are $K\in \bbN$ and $\xi_i\in (\cal A_i)_{\Phi_i}$ which are all $K$-bounded and for which $\xi=(\xi_i)^\bullet$.

\begin{prop}
The Connes ultraproduct $\prod_{C\cU} (A_i,\Phi_i)$ is a \cstar-algebra.
\end{prop}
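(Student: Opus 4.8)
The plan is to show that $\prod_{C\cU}(A_i,\Phi_i)$ is a $\mathrm{C}^*$-algebra by verifying that it is a norm-closed, $\star$-closed subalgebra of a suitable ambient $\mathrm{C}^*$-algebra, where the norm is the $\mathrm{C}^*$-norm supplied by Proposition \ref{fullc*} on each factor, transported to the ultraproduct. First I would observe that for each $i$, Proposition \ref{fullc*} equips $(\cal A_i)_{\Phi_i}$ with a $\mathrm{C}^*$-norm $\|\cdot\|_i$ equivalent to the involutive Banach algebra norm; the operator norm realization $\xi\mapsto (\pi_\ell(\xi),\pi_r(\xi))$ lets us identify $\|\xi\|_i$ with the operator norm of, say, $\pi_\ell(\xi)$ acting on $L^2(A_i,\Phi_i)$. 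A uniformly bounded element $\xi=(\xi_i)^\bullet$ with each $\xi_i$ being $K$-bounded then has $\|\xi_i\|_i\leq K$, so we may form the $\mathrm{C}^*$-algebra ultraproduct $\prod_\cU((\cal A_i)_{\Phi_i},\|\cdot\|_i)$ in the usual sense, and the operator norm $\|\xi\|:=\lim_\cU\|\xi_i\|_i$ is well-defined on it.

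The key steps, in order, are as follows. First establish that the assignment $\xi\mapsto (\xi_i)^\bullet$ (representatives that are uniformly $K$-bounded) gives a well-defined bijection between $\prod_{C\cU}(A_i,\Phi_i)$ and the $\mathrm{C}^*$-algebra ultraproduct $\prod_\cU (\cal A_i)_{\Phi_i}$; this requires checking that two $K$-bounded representatives agreeing in $\prod_\cU L^2(A_i,\Phi_i)$ (that is, $\lim_\cU\|\xi_i-\xi_i'\|_{2,\Phi_i}=0$) in fact agree in the $\mathrm{C}^*$-ultraproduct. Second, transport the algebraic operations: the product $\xi\eta$ is represented by $(\xi_i\eta_i)^\bullet$, with each $\xi_i\eta_i$ being $KL$-bounded when $\xi_i$ is $K$-bounded and $\eta_i$ is $L$-bounded, because $\pi_\ell(\xi_i\eta_i)=\pi_\ell(\xi_i)\pi_\ell(\eta_i)$; the sum and the involution $\star$ are handled identically using $\pi_\ell(\xi_i^\star)=J_{\Phi_i}\pi_r(\xi_i)J_{\Phi_i}$. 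This shows $\prod_{C\cU}(A_i,\Phi_i)$ is closed under the involutive Banach algebra operations and that these match the ambient ultraproduct operations. Third, verify completeness in the operator norm: since each $(\cal A_i)_{\Phi_i}$ is a $\mathrm{C}^*$-algebra and the $\mathrm{C}^*$-ultraproduct of $\mathrm{C}^*$-algebras is a $\mathrm{C}^*$-algebra, the image of $\prod_{C\cU}(A_i,\Phi_i)$ is exactly this ultraproduct and hence complete; finally conclude that the $\mathrm{C}^*$-identity $\|\xi^\star\xi\|=\|\xi\|^2$ passes to the ultralimit.

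The main obstacle I anticipate is the compatibility of the two topologies, namely relating the $(2,\Phi)$-norm on $\prod_\cU L^2(A_i,\Phi_i)$ (which defines membership in the Connes ultraproduct and its identification of representatives) with the operator $\mathrm{C}^*$-norm $\lim_\cU\|\xi_i\|_i$. Concretely, I must ensure that the set of uniformly bounded vectors, defined as a subset of the Banach space ultraproduct via the $(2,\Phi)$-norm, really does inherit a genuine $\mathrm{C}^*$-norm rather than merely an involutive Banach algebra norm, and that the uniform bound $K$ on representatives controls the operator norm uniformly. The resolution is to run the argument entirely through the faithful representations $\pi_\ell,\pi_r$ of Proposition \ref{fullc*}: a $K$-bounded $\xi_i$ is precisely one with $\|\pi_\ell(\xi_i)\|\leq K$, so the uniform bound is exactly a uniform operator-norm bound, and the $(2,\Phi_i)$-norm convergence of representatives translates, via the cyclic vector $\hat 1$ and the computation $\xi_i=\pi_\ell(\xi_i)\hat 1$ from the preceding proposition, into $\mathrm{C}^*$-ultraproduct equivalence. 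Once this dictionary is in place, the remaining verifications are the routine $\mathrm{C}^*$-algebra ultraproduct facts and present no difficulty.
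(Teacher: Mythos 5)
There is a genuine gap in your first step, and it is the step everything else rests on. You claim the map $(\xi_i)\mapsto(\xi_i)^\bullet$ induces a \emph{bijection} between $\prod_{C\cU}(A_i,\Phi_i)$ and the $\mathrm{C}^*$-algebra ultraproduct $\prod_\cU(\cal A_i)_{\Phi_i}$, and that the required check is that two uniformly $K$-bounded representatives with $\lim_\cU\|\xi_i-\xi_i'\|_{2,\Phi_i}=0$ agree in the $\mathrm{C}^*$-ultraproduct. That implication goes the wrong way and is false. Since $\|x\|_{2,\Phi}\leq\|x\|$, operator-norm equivalence of representatives forces $(2,\Phi)$-equivalence, but not conversely: already in the special case $\Phi=\{\tau\}$ with $\tau$ a faithful trace on a II$_1$ factor (which is full and faithful), a sequence of projections $p_i$ with $\tau(p_i)\to 0$ is uniformly $1$-bounded and $(2,\tau)$-equivalent to $0$, yet $\|p_i-0\|=1$ for all $i$. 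Your proposed ``dictionary'' via $\xi_i=\pi_\ell(\xi_i)\hat 1$ only re-proves the easy direction. Indeed, the paper records the correct relationship separately: $\prod_{C\cU}(A_i,\Phi_i)$ is a \emph{quotient} of $\prod_\cU(\cal A_i)_{\Phi_i}$, in general a proper one, so the identification you want cannot hold.

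The argument can be repaired in either of two ways. One is to replace ``bijection'' by ``surjection'' and check that the kernel, namely the classes of uniformly bounded sequences with $\lim_\cU\|\xi_i\|_{2,\Phi_i}=0$, is a norm-closed two-sided $\star$-ideal of $\prod_\cU(\cal A_i)_{\Phi_i}$ (this uses the uniform bound: $\|\eta_i\xi_i\|_{2,\Phi_i}\leq K\|\xi_i\|_{2,\Phi_i}$ when $\eta_i$ is $K$-bounded), so that the Connes ultraproduct inherits a $\mathrm{C}^*$-structure as a quotient. The other is the paper's route, which is shorter: the proof of Proposition \ref{fullc*} in fact shows that the involutive Banach algebra norm and the $\mathrm{C}^*$-norm on each $(\cal A_i)_{\Phi_i}$ are each dominated by $4$ times the other, \emph{uniformly in $i$}, so the involutive Banach algebra structure on the uniformly bounded part of $\prod_\cU L^2(A_i,\Phi_i)$ already admits an equivalent $\mathrm{C}^*$-norm. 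Either way, the heart of the matter is not the compatibility issue you flagged but the fact that the two natural equivalence relations on representatives genuinely differ.
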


\begin{proof} 
The only thing to note is that the proof of Proposition \ref{fullc*} actually shows that the Banach and $C^*$-norms on $\cal A_{\Phi}$ are each dominated by $4$ times the other.
\end{proof}

As in section 3, there is a natural formulation of the Connes ultraproduct in terms of points of continuity.  We say that $\xi = (\xi_i)^\bullet\in \prod_{\cU} L^2(A_i, \Phi_i)$ is a point of continuity if: for all $a = (a_i)^\bullet\in \prod_{C\cU}(A_i,\Phi_i)$, we have that $(a_i)^\bullet\mapsto (\pi_\ell(a_i)\xi_i)^\bullet$ and $(a_i)^\bullet\mapsto (\pi_r(a_i)\xi_i)^\bullet$ are independent of choices of representatives for $a$ and $\xi$ and the consequently well-defined maps extend continuously to $\prod_{\cU} L^2(A_i,\Phi_i)$. 

A similar analysis as in Section 3 above shows that:

\begin{prop} For any sequence of full and faithful statial \cstar-algebras $(A_i,\Phi_i)_{i\in I}$, we have that $\prod_{C\cU} (A_i,\Phi_i)$ is the closure of the set of points of continuity.
\end{prop}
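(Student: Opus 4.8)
The plan is to prove the proposition by establishing two inclusions, transporting the development of Section \ref{corrultra} to the present Banach-space setting. Throughout I abbreviate $\|\cdot\|_{2,\Phi_i}$ to $\|\cdot\|$ and write $\prod_\cU L^2(A_i,\Phi_i)$ for the ambient Banach-space ultraproduct.

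First I would check that every uniformly bounded element is a point of continuity; since the set of points of continuity then contains the uniformly bounded elements, this yields one inclusion. If $\xi=(\xi_i)^\bullet$ with each $\xi_i\in(\cal A_i)_{\Phi_i}$ being $K$-bounded, then $\|\pi_\ell(\xi_i)\|,\|\pi_r(\xi_i)\|\le K$ uniformly in $i$, so the families $(\pi_\ell(\xi_i))$ and $(\pi_r(\xi_i))$ assemble into bounded operators of norm at most $K$ on all of $\prod_\cU L^2(A_i,\Phi_i)$. These operators realize $a\mapsto\xi a$ and $a\mapsto a\xi$ on the Connes ultraproduct and extend them continuously to the whole ultraproduct, while the uniform bound forces the maps to be independent of the chosen representatives; thus $\xi$ is a point of continuity.

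The substance lies in the reverse inclusion: every point of continuity is a $\|\cdot\|$-limit of uniformly bounded elements. This is the exact analogue of Proposition \ref{mainpropultra}, and I would mirror its proof. Given a point of continuity $\xi=(\xi_i)^\bullet$, the continuous extensions of $a\mapsto a\xi$ and $a\mapsto\xi a$ are bounded operators, so after rescaling I may assume $\xi$ is $1$-bounded, i.e.\ $\|a\xi\|,\|\xi a\|\le\|a\|$ for all uniformly bounded $a$. Because each $a\in A_i$ is $\|a\|$-bounded, the bounded vectors $(\cal A_i)_{\Phi_i}$ are $\|\cdot\|$-dense in $L^2(A_i,\Phi_i)$, so I may take each representative $\xi_i$ to be a bounded vector. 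The key step is a cutoff at a level $R$: I would replace $\xi_i$ by a compression $\eta_i$ that is uniformly $R$-bounded and satisfies $\|\eta_i-\xi_i\|\le\sqrt{1/R}\,\|\xi_i\|$, exactly as $(1-\pi^{(R)}_i)\xi_i$ is produced in Lemma \ref{cutoff}(b) and exploited in Propositions \ref{mainprop} and \ref{mainpropultra}; carrying out the symmetric compression on the right and letting $R\to\infty$ exhibits $\xi$ as a limit of uniformly bounded vectors. Together with the first inclusion, the closure of the set of points of continuity then coincides with the closure of the uniformly bounded elements, which is the Connes ultraproduct $\prod_{C\cU}(A_i,\Phi_i)$, as claimed.

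The main obstacle is that the cutoff machinery of Section \ref{corrultra} is genuinely Hilbertian: it relies on the positive ``density'' operator $b_\xi$ supplied by the Radon--Nikodym theorem, on its spectral support projections $\pi^{(R)}$, and on weak limits of convex combinations, none of which is available a priori because $L^2(A_i,\Phi_i)$ carries no inner product. The remedy, and the only place where fullness and faithfulness are truly used, is to perform the cutoff inside the \cstar-algebra $(\cal A_i)_{\Phi_i}$ of bounded vectors rather than in the ambient Banach space: by Proposition \ref{fullc*} this algebra carries a genuine \cstar-structure whose norm is equivalent, within a factor of $4$, to $\|\cdot\|$ on bounded sets, so the spectral projection of $|\xi_i|$ (equivalently of $\pi_\ell(\xi_i)^*\pi_\ell(\xi_i)$) below level $R$ furnishes the compression $\eta_i$, with the states in $\Phi_i$ playing the role of the trace $\tau$ when estimating the $(2,\Phi)$-mass discarded. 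Mazur's lemma, used to pass from weak to norm convergence, holds in any Banach space and so transfers without change; the points that require care are the functional-calculus norm estimates for the compressions and the uniform comparisons that let the single-algebra cutoff be applied simultaneously across the index set $I$.
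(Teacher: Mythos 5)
Your architecture is the right one, and the easy half is fine: the paper offers nothing beyond ``a similar analysis as in Section 3,'' and your observation that uniformly $K$-bounded representatives give operators $\pi_\ell(\xi_i)$, $\pi_r(\xi_i)$ of norm at most $K$ which assemble into a bounded operator on the Banach-space ultraproduct is exactly what is needed to see that uniformly bounded elements are points of continuity.

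The gap is in the hard inclusion, precisely at the step you flag as delicate. You claim a per-index estimate $\|\eta_i-\xi_i\|\le\sqrt{1/R}\,\|\xi_i\|$ for the truncation, but no such bound holds index by index: for a single bounded vector of very large operator norm, cutting off at level $R$ can remove essentially all of its $(2,\Phi_i)$-mass. In Proposition \ref{mainpropultra} the corresponding estimate is valid only for the ultralimits, and it is extracted from the boundedness of the limit $\xi$ through the Radon--Nikodym density operator $b$ and the chain $R\tau(x\pi^{(R)})\le\lim_\cU\tau(x\pi^{(R)}_ib_i)\le\tau(xb)\le K\tau(x)$, which uses traciality at every step (commuting $\pi_i^{(R)}$ past $x$ inside the trace, monotonicity of $\tau(x\,\cdot\,)$). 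Saying that ``the states in $\Phi_i$ play the role of the trace'' does not repair this: the elements of $\Phi_i$ are only unitarily invariant as a family, not individually tracial, and there is no density operator. Two further problems sit in the same step: spectral projections of $|\xi_i|$ do not belong to a C$^*$-algebra, and $\pi_\ell(\xi_i)^*$ is meaningless because $L^2(A_i,\Phi_i)$ has no inner product; likewise the Mazur/weak-compactness argument of Proposition \ref{mainprop} is unavailable in a non-reflexive Banach space (though that part is only needed to match the exact bound $K$, which this proposition does not require).

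What is missing is a mechanism for feeding the point-of-continuity hypothesis into the estimate without a trace. One that works: after perturbing so that $\xi_i\in A_i$, put $\eta_i:=\xi_i\,g_R(|\xi_i|)$ with $g_R(t)=\min\{1,R/t\}$, using only continuous functional calculus in $A_i$; then $\|\eta_i\|_{A_i}\le R$, so by fullness $\eta_i$ is $R$-bounded on both sides simultaneously (no separate right-hand pass is needed). The intertwining identity $f(\xi_i\xi_i^*)\xi_i=\xi_i f(\xi_i^*\xi_i)$ writes the discarded piece as $\xi_i-\eta_i=b_i\xi_i$ with $b_i:=h_R(|\xi_i^*|)\in A_i$, $h_R=1-g_R$, so that $\|b_i\|_{A_i}\le1$ while $h_R(t)^2\le t^2/R^2$ gives $\|b_i\|_{2,\Phi_i}\le\|\xi_i\|_{2,\Phi_i}/R$. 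Thus $(b_i)^\bullet$ is a uniformly $1$-bounded element of $2$-norm at most $1/R$, and it is the continuity of $a\mapsto a\xi$ at $\xi$, applied to this particular $a$, that yields $\|\xi-\eta\|=\|(b_i\xi_i)^\bullet\|\le C/R$. Without some such device your cutoff produces uniformly bounded vectors but gives no control on their distance to $\xi$.
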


If $\Phi_i = \cal S(A_i)$ for all $i$, then it is clear that $\prod_{C\cU} L^2(A_i,\Phi_i)$ is simply identifiable with the \cstar-algebraic ultraproduct $\prod_{\cU} A_i$. More generally, we have:

\begin{prop} For any sequence $(A_i,\Phi_i)_{i\in I}$ of full and faithful statial \cstar-algebras, we have that $\prod_{C\cU} (A_i,\Phi_i)$ is a quotient of $\prod_{\cU} (\cal A_i)_{\Phi_i}$.
\end{prop}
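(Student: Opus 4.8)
The plan is to exhibit a surjective $*$-homomorphism $\Psi$ from the $C^*$-algebraic ultraproduct $B := \prod_\cU (\cal A_i)_{\Phi_i}$ onto the Connes ultraproduct $\prod_{C\cU}(A_i,\Phi_i)$; since the latter is a $C^*$-algebra and $\Psi$ is a $*$-homomorphism between $C^*$-algebras, the image of $\Psi$ will be a $C^*$-subalgebra, and once surjectivity is checked we conclude $\prod_{C\cU}(A_i,\Phi_i) \cong B/\ker\Psi$. An element of $B$ is represented by a sequence $(\xi_i)$ with $\xi_i \in (\cal A_i)_{\Phi_i}$ and $\sup_i \|\xi_i\|_{C^*} < \infty$. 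By the norm equivalence recorded in the proof that $\prod_{C\cU}(A_i,\Phi_i)$ is a $C^*$-algebra (the involutive Banach and $C^*$ norms on each $(\cal A_i)_{\Phi_i}$ dominate one another up to a factor of $4$), such a sequence is uniformly $K$-bounded for a common $K$. Hence $(\xi_i)^\bullet$ is a well-defined uniformly bounded element of the Banach-space ultraproduct $\prod_\cU L^2(A_i,\Phi_i)$, and I would set $\Psi([(\xi_i)]) := (\xi_i)^\bullet$.

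Next I would verify that $\Psi$ is well-defined and multiplicative. For well-definedness the key inequality is $\|\xi\|_{2,\Phi} = \|\pi_\ell(\xi)\hat 1\|_{2,\Phi} \leq \|\pi_\ell(\xi)\|$, which shows that the $(2,\Phi)$-norm is controlled by the involutive Banach norm and hence, by the same factor-of-$4$ equivalence, by the $C^*$-norm. Thus $\lim_\cU \|\xi_i - \xi_i'\|_{C^*} = 0$ forces $\lim_\cU \|\xi_i - \xi_i'\|_{2,\Phi} = 0$, i.e. $(\xi_i)^\bullet = (\xi_i')^\bullet$, so $\Psi$ is independent of the chosen representative. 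That $\Psi$ is a $*$-homomorphism is then immediate once one observes that the $C^*$-structure on $\prod_{C\cU}(A_i,\Phi_i)$ is the coordinatewise one inherited from the $(\cal A_i)_{\Phi_i}$: the product $(\xi_i)^\bullet (\eta_i)^\bullet$ is $(\pi_\ell(\xi_i)\eta_i)^\bullet$ and the involution is $((\xi_i)^\bullet)^\star = (\xi_i^\star)^\bullet$, matching exactly the operations transported through $\Psi$.

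Finally, surjectivity is essentially the definition of the Connes ultraproduct. Any $\xi \in \prod_{C\cU}(A_i,\Phi_i)$ is uniformly $K$-bounded, so $\xi = (\xi_i)^\bullet$ with each $\xi_i \in (\cal A_i)_{\Phi_i}$ of bounded-vector norm $\leq K$; by norm equivalence $\sup_i \|\xi_i\|_{C^*} < \infty$, so $(\xi_i)$ represents an element of $B$ mapped by $\Psi$ onto $\xi$. As $\Psi$ is a surjective $*$-homomorphism between $C^*$-algebras, $\ker\Psi$ is a closed two-sided ideal and $\prod_{C\cU}(A_i,\Phi_i) \cong B/\ker\Psi$, the desired quotient presentation.

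I expect the main obstacle to be the bookkeeping that makes all three norms in play interact correctly: the $(2,\Phi)$-norm on $L^2(A_i,\Phi_i)$, the involutive Banach algebra norm, and the genuine $C^*$-norm on $(\cal A_i)_{\Phi_i}$, so that the $C^*$-ultraproduct (defined via the $C^*$-norm) surjects onto the Connes ultraproduct (defined via uniform $(2,\Phi)$-boundedness). Everything hinges on the two directions of the factor-of-$4$ equivalence: one direction guarantees that a $C^*$-bounded sequence is uniformly bounded, which is what makes $\Psi$ both defined and surjective, while the inequality $\|\cdot\|_{2,\Phi}\leq \|\pi_\ell(\cdot)\|$ together with the reverse domination gives well-definedness. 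Note finally that $\ker\Psi$ is genuinely nontrivial in general, since the $(2,\Phi)$-norm is strictly weaker than the $C^*$-norm, so the presentation is a proper quotient rather than an isomorphism.
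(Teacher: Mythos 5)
Your proof is correct. The paper states this proposition without proof, and your argument --- the natural map $(\xi_i)\mapsto(\xi_i)^\bullet$ from $\prod_{\cU}(\cal A_i)_{\Phi_i}$ onto the set of uniformly bounded elements of $\prod_{\cU}L^2(A_i,\Phi_i)$, with well-definedness and surjectivity both resting on the uniform factor-of-$4$ equivalence of the involutive Banach and C$^*$-norms together with the estimate $\|\xi\|_{2,\Phi}=\|\pi_\ell(\xi)\hat 1\|_{2,\Phi}\leq\|\pi_\ell(\xi)\|$ --- is evidently the intended one.
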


Let $\fr C$ be an elementary class of unital \cstar-algebras. Let $\cal F$ be a contravariant functor that assigns to each $A\in \fr C$ a full and faithful subset $\cal F(A)$ of $\cal S(A)$.

\begin{prop} \label{prop-uniform2norm} For any sequence $(A_i)_{i\in I}$ from $\fr C$, setting $D := \prod_{\cU} A_i$, we have that $\prod_{C\cU} (A_i, \cal F(A_i))\cong \cal D_{\cal F(D)}$ if and only if the $(2,\cal F(D))$-norm is expressible as  $\|(x_i)^\bullet\|_{2,\cal F(D)} =\lim_\cU \|x_i\|_{2, \cal F(A_i)}$ for all $(x_i)^\bullet\in \prod_{\cU} A_i$.
\end{prop}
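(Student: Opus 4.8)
The plan is to realise both C*-algebras inside the Banach space ultraproduct $\prod_\cU L^2(A_i,\cal F(A_i))$ through a single isometric embedding, and to observe that the norm condition is exactly what makes that embedding available. Write $D=\prod_\cU A_i$ and consider the linear map $\iota\colon D\to\prod_\cU L^2(A_i,\cal F(A_i))$ sending a class $(x_i)^\bullet$ in the C*-ultraproduct to the class $(x_i)^\bullet$ in the Banach space ultraproduct. This $\iota$ is well defined and intertwines the two-sided $D$-actions, since $\|\cdot\|_{2,\cal F(A_i)}\le\|\cdot\|$ in each coordinate, and by construction $\|\iota(d)\|=\lim_\cU\|x_i\|_{2,\cal F(A_i)}$ for $d=(x_i)^\bullet$. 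Thus $\iota$ is isometric from $(D,\|\cdot\|_{2,\cal F(D)})$ onto its image \emph{precisely} when the stated norm condition holds; in that case it extends to an isometric, $D$-bimodular embedding $\bar\iota\colon L^2(D,\cal F(D))\hookrightarrow\prod_\cU L^2(A_i,\cal F(A_i))$ because $D$ is dense in $L^2(D,\cal F(D))$. Note also that $\iota(D)\subseteq\prod_{C\cU}(A_i,\cal F(A_i))$, as every C*-ultraproduct element has a norm-bounded representative and each $x_i\in A_i$ is $\|x_i\|$-bounded.

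Assuming the norm condition, the goal is to show that $\bar\iota$ carries $\cal D_{\cal F(D)}$ bijectively onto $\prod_{C\cU}(A_i,\cal F(A_i))$ as a $*$-homomorphism; a bijective $*$-homomorphism of C*-algebras is automatically isometric, so this gives the desired isomorphism. For $\bar\iota(\cal D_{\cal F(D)})\subseteq\prod_{C\cU}$, I would take a $K$-bounded $\xi\in\cal D_{\cal F(D)}$, approximate it in $\|\cdot\|_{2,\cal F(D)}$ by $d^{(n)}\in D$ with C*-norms bounded by some $K'=K'(K)$, and conclude, since each $\iota(d^{(n)})$ is uniformly $K'$-bounded and the uniformly $K'$-bounded vectors form a closed set, that $\bar\iota(\xi)$ is uniformly bounded. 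For the reverse inclusion I would start from a uniformly bounded $\eta=(\eta_i)^\bullet$ with each $\eta_i\in(\cal A_i)_{\cal F(A_i)}$ of C*-norm $\le K'$, approximate each $\eta_i$ in $\|\cdot\|_{2,\cal F(A_i)}$ by $d_i^{(n)}\in A_i$ of C*-norm $\le K'$, assemble $d^{(n)}=(d_i^{(n)})^\bullet\in D$ with $\iota(d^{(n)})\to\eta$, and transport back through the isometry $\bar\iota$ to obtain a $K'$-bounded $\xi$ with $\bar\iota(\xi)=\eta$ (using that the $K'$-bounded vectors are complete in $\|\cdot\|_{2,\cal F(D)}$, as established at the end of Section 7.1). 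That the resulting bijection respects products and the involution follows from multiplicativity of $\iota$ on $D$ together with joint continuity of multiplication on C*-norm bounded sets.

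Both inclusions rest on the same point, which is the crux and the main obstacle: a \textbf{Kaplansky-type density theorem} asserting that, for a full and faithful statial C*-algebra $(B,\Psi)$, the C*-norm unit ball of $B$ is $\|\cdot\|_{2,\Psi}$-dense in the C*-norm unit ball of $\cal B_\Psi$. Because $L^2(B,\Psi)$ is only a Banach space and not a Hilbert space, the classical von Neumann algebra proof does not apply verbatim. Instead I would run the standard functional-calculus (Cayley-transform) argument inside the C*-algebra $\cal B_\Psi$ provided by Proposition \ref{fullc*}, using two features available here: the involution is a $\|\cdot\|_{2,\Psi}$-isometry, and multiplication is jointly $\|\cdot\|_{2,\Psi}$-continuous on C*-norm bounded subsets of $\cal B_\Psi$, both following from the estimate $\|bc\|_{2,\Psi}\le\min\{\|b\|\,\|c\|_{2,\Psi},\,\|c\|\,\|b\|_{2,\Psi}\}$. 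Granting this density theorem, the two inclusions above become routine.

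For the converse, I would argue that an isomorphism $\prod_{C\cU}(A_i,\cal F(A_i))\cong\cal D_{\cal F(D)}$ compatible with the common copy of $D$ must restrict to an isometry of the $\|\cdot\|_{2,\cal F(D)}$-completions fixing $D$: each side is the closure of $D$ in its ambient $L^2$-space, and the distinguished states $\cal F(D)$ that compute the $(2,\cal F(D))$-norm are part of the bounded-vector structure preserved by the isomorphism. Matching $\|d\|_{2,\cal F(D)}$ on the $\cal D_{\cal F(D)}$ side with $\|\iota(d)\|=\lim_\cU\|x_i\|_{2,\cal F(A_i)}$ on the $\prod_{C\cU}$ side then yields the norm condition, completing the equivalence.
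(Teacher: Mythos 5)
The paper never proves Proposition \ref{prop-uniform2norm}: Section 7 is explicitly only a sketch of a program, and this statement is left as an assertion. So there is no in-paper argument to compare yours against, and I can only assess the proposal on its own terms. Your overall architecture is the right one: realize both algebras inside $\prod_\cU L^2(A_i,\cal F(A_i))$ via the natural map $\iota$ on $D$, note that the displayed norm condition is exactly the statement that $\iota$ is isometric from $(D,\|\cdot\|_{2,\cal F(D)})$ into the Banach space ultraproduct, and then match bounded vectors on the two sides by approximating them in $2$-norm by elements of the underlying $C^*$-algebras with controlled operator norm.

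That said, two things keep this from being a complete proof. First, as you candidly acknowledge, both inclusions rest on the Kaplansky-type density claim (the $C^*$-unit ball of $B$ is $\|\cdot\|_{2,\Psi}$-dense in a fixed multiple of the unit ball of $\cal B_\Psi$), and you only sketch it. The sketch is credible: fullness gives $\|bc\|_{2,\Psi}\le\min\{\|b\|\,\|c\|_{2,\Psi},\,\|c\|\,\|b\|_{2,\Psi}\}$ and hence joint $2$-norm continuity of multiplication on operator-norm-bounded sets, the involution is a $2$-norm isometry, and Proposition \ref{fullc*} supplies the functional calculus in $\cal B_\Psi$ needed to run the standard $t\mapsto 2t(1+t^2)^{-1}$ argument, with only universal constants lost to the equivalence of the Banach-algebra and $C^*$-norms on $\cal B_\Psi$ --- harmless since you only need some bound $K'(K)$. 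But this lemma is the actual mathematical content of the proposition and must be written out; as it stands your proof is a correct reduction to an unproved statement. Second, the converse direction is not yet an argument: an abstract $C^*$-isomorphism $\prod_{C\cU}(A_i,\cal F(A_i))\cong\cal D_{\cal F(D)}$ carries no information about the ambient $2$-norms, because the set of states $\cal F(D)$ is not recoverable from the $C^*$-structure of $\cal D_{\cal F(D)}$ alone; your appeal to ``the bounded-vector structure preserved by the isomorphism'' does not fix this. The statement is only sensible if ``$\cong$'' means that the \emph{natural} map --- the one restricting to the identity on $D$ and compatible with the ambient $L^2$-spaces --- is an isomorphism, in which case the converse is the one-line computation $\|d\|_{2,\cal F(D)}=\|\iota(d)\|=\lim_\cU\|x_i\|_{2,\cal F(A_i)}$. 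You should state explicitly that this is the reading you are using.
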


Note that it may be the case that the $(2,\cal F(D))$-norm and $(2,\cal F(A_i))$-norms behave well under ultralimits while the actual functor $\cal F$ does not.

\begin{example}
Suppose that $\fr C$ is the class of $\cal Z$-stable, stably finite C$^*$-algebras admitting at least one tracial state (this is elementary by \cite[Section 2]{munster}) and $\cal F$ is the contravariant functor associating to each element of $\fr C$ its class of tracial states.  Then, by \cite[Theorem 6]{Oz}, if each $A_i$ is exact, we have that each tracial state on $D$ is weak* approximated by an ultralimit of tracial states on the $A_i$s, whence the isomorphism in Proposition \ref{prop-uniform2norm} holds. 
\end{example} 


\appendix

\section{Correspondences and Tensor Products}\label{tensor} Let $M$ and $N$ be von Neumann algebras. A C$^*$-algebra tensor product $M\otimes_\pi N^{op}$ of $M$ and $N^{op}$ is said to be {\bf binormal} if $\pi: M\odot N^{op}\to \cal B(H_\pi)$ is normal when restricted to $M\otimes 1$ and $1\otimes N^{op}$. For any such binormal tensor product, $H_\pi$ naturally has the structure of an $M$-$N$ correspondence, and conversely any correspondence gives rise to a binormal tensor product in the natural way. Thus by summing over all classes of $M$-$N$ correspondences, we see there is a maximal binormal tensor product, $M \otimes_{bin} N^{op}$, in the sense that the identity on $M\odot N^{op}$ extends to a $\ast$-homomorphism $M\otimes_{bin} N^{op}\to M\otimes_\pi N^{op}$ for any binormal tensor $\pi$.

The next result shows that the correspondence Fell topology is literally the restriction of the Fell topology for the representation theory of the pre-C$^*$-algebra $M\odot N^{op}$ to the class of binormal representations.

\begin{prop} Let $(H,\pi)$ and $(K, \rho)$ be $M$-$N$ correspondences. We have that $(K,\rho)$ is weakly contained in $(H,\pi)$ if and only if the identity on $M\odot N^{op}$ extends to a $\ast$-homomorphism $M\otimes_{\pi} N^{op}\to M\otimes_\rho N^{op}$.
\end{prop}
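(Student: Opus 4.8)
The plan is to deduce the statement from the classical theorem characterizing weak containment of representations of a C*-algebra. First I would reconcile the two notions of weak containment. For a correspondence $(H,\pi)$ and a simple tensor $x\otimes y^{op}\in M\odot N^{op}$, the commuting left and right actions give $\pi(x\otimes y^{op})\xi=x\xi y$, so that the matrix coefficients $\langle\xi_i,x\xi_jy\rangle$ controlled by the basic Fell neighborhoods $V(H;\epsilon,E,F,S)$ are exactly the representation-theoretic matrix coefficients $\langle\xi_i,\pi(x\otimes y^{op})\xi_j\rangle$. Since simple tensors span $M\odot N^{op}$ and matrix coefficients are linear in the algebra variable, controlling them on the products $E\times F$ for all finite $E,F$ controls them on all of $M\odot N^{op}$; thus $(K,\rho)$ being weakly contained in $(H,\pi)$ as correspondences, i.e.\ $K\in\overline{\{H^{\oplus\infty}\}}$, is precisely the statement that $\rho$ is weakly contained in $\pi$ as representations of $M\odot N^{op}$ (matrix coefficients of $\rho$ approximated by those of $\pi^{\oplus\infty}$).

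To bring the classical theorem to bear, I would pass to the maximal binormal tensor product $B:=M\otimes_{bin}N^{op}$, which is a genuine C*-algebra through which every binormal representation factors; let $\bar\pi$ and $\bar\rho$ denote the extensions of $\pi$ and $\rho$ to $B$. Because $M\odot N^{op}$ is dense in $B$, the C*-algebra $M\otimes_\pi N^{op}$ is the norm closure of $\pi(M\odot N^{op})$ and hence equals the image $\bar\pi(B)\cong B/\ker\bar\pi$, and similarly $M\otimes_\rho N^{op}\cong B/\ker\bar\rho$. By the universal property of quotients, the identity on $M\odot N^{op}$ extends to a $*$-homomorphism $M\otimes_\pi N^{op}\to M\otimes_\rho N^{op}$ if and only if $\ker\bar\pi\subseteq\ker\bar\rho$; recalling that $\|a\|_\pi=\|\pi(a)\|$, this is the same as the norm domination $\|\rho(a)\|\leq\|\pi(a)\|$ for all $a\in M\odot N^{op}$.

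It then remains to invoke Fell's weak containment theorem for the C*-algebra $B$ (see \cite{Dix}): for representations of a C*-algebra, $\bar\rho$ is weakly contained in $\bar\pi$ if and only if $\ker\bar\pi\subseteq\ker\bar\rho$. Chaining the equivalences --- correspondence weak containment $\Leftrightarrow$ representation weak containment $\bar\rho\prec\bar\pi$ (first paragraph), $\bar\rho\prec\bar\pi\Leftrightarrow\ker\bar\pi\subseteq\ker\bar\rho$ (Fell), and $\ker\bar\pi\subseteq\ker\bar\rho\Leftrightarrow$ existence of the identity-extending $*$-homomorphism (second paragraph) --- yields the proposition.

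The main obstacle is the careful bookkeeping around the pre-C*-algebra $M\odot N^{op}$: Fell's theorem is a statement about genuine C*-algebras, and the honest point is that passing to $B=M\otimes_{bin}N^{op}$ neither loses nor creates representations, so that weak containment of the \emph{binormal} representations $\pi,\rho$ of $M\odot N^{op}$ coincides with weak containment of their extensions $\bar\pi,\bar\rho$ to $B$ --- this is exactly where the binormality hypothesis is used. Granting that, the identification of the two Fell topologies is a routine density-and-linearity argument, and the remaining equivalences are the standard dictionary relating quotient C*-algebras, norm domination, and contractive $*$-homomorphisms.
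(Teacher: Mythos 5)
Your argument is correct. Note that the paper states this proposition in the appendix without any proof at all, so there is nothing to compare against; what you have written is the standard argument and it fills the gap properly. The two points that genuinely need saying are exactly the ones you isolate: (i) the correspondence Fell neighborhoods, which control $\langle \xi_i, x\xi_j y\rangle = \langle \xi_i, \pi(x\otimes y^{op})\xi_j\rangle$ on finite products $E\times F$, control all matrix coefficients on $M\odot N^{op}$ by linearity, so correspondence weak containment is representation-theoretic weak containment $\rho\prec\pi$ (in the finitely-many-vectors form, which is why the $\pi^{\oplus\infty}$ in the paper's definition is harmless, since $\ker\bar\pi^{\oplus\infty}=\ker\bar\pi$); and (ii) both $\pi$ and $\rho$, being binormal, extend to the genuine C$^*$-algebra $B=M\otimes_{bin}N^{op}$, where Fell's theorem (Dixmier 3.4.4) gives $\bar\rho\prec\bar\pi \Leftrightarrow \ker\bar\pi\subseteq\ker\bar\rho \Leftrightarrow \|\rho(a)\|\leq\|\pi(a)\|$ for all $a\in M\odot N^{op}$, which is precisely the existence of the identity-extending $\ast$-homomorphism $M\otimes_\pi N^{op}\cong B/\ker\bar\pi \to B/\ker\bar\rho\cong M\otimes_\rho N^{op}$.
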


Recall that a tracial von Neumann algebra $(M,\tau)$ is said to be {\bf semidiscrete} if the identity on $M\odot M^{op}$ extends to a $\ast$-homomorphism $M\otimes M^{op}\to C^*(M, JMJ)\subset \cal B(L^2(M,\tau))$.  By the preceding proposition, this is equivalent to the trivial correspondence being weakly contained in the coarse correspondence. It is also well-known that semi-discreteness is equivalent to amenability, see \cite{Takesaki}.

The following proposition is due to Effros and Lance, \cite[Theorem 4.1]{EL}.

\begin{prop}\label{effroslance} We have that $M$ is semidiscrete if and only if the identity extends to a $\ast$-isomorphism $M\otimes_{bin} M^{op}\cong M\otimes M^{op}$, i.e, every $M$-$M$ correspondence is weakly contained in the coarse correspondence. 
\end{prop}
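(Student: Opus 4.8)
The plan is to establish the equivalence by unpacking both sides through the maximal binormal tensor product and invoking the preceding proposition, which translates weak containment of correspondences into the existence of tensor-product $\ast$-homomorphisms. Recall that semidiscreteness of $M$ is, by definition, the existence of a $\ast$-homomorphism $M \otimes M^{op} \to C^*(M,JMJ) \subseteq \cal B(L^2(M,\tau))$ extending the identity on $M \odot M^{op}$; the algebra $C^*(M,JMJ)$ acting on $L^2(M,\tau)$ is precisely $M \otimes_\rho M^{op}$ where $\rho$ is the GNS representation defining the coarse correspondence $L^2(M,\tau)$. So the left-hand condition says exactly that the identity extends to a $\ast$-homomorphism from the spatial (minimal) tensor product $M \otimes M^{op}$ into the coarse representation.

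First I would observe, using the construction of $M \otimes_{bin} M^{op}$ described in the appendix, that semidiscreteness is equivalent to the statement that the trivial correspondence $L^2(M,\tau)$ is weakly contained in the coarse correspondence $L^2(M,\tau) \overline\otimes L^2(M,\tau)$. This is noted in the excerpt just before the proposition, and follows from the preceding proposition applied with $(H,\pi)$ the coarse correspondence and $(K,\rho)$ the trivial one. Next I would show that weak containment of the trivial correspondence in the coarse correspondence is in fact equivalent to weak containment of \emph{every} $M$-$M$ correspondence in the coarse correspondence. The nontrivial direction here is that weak containment of the trivial correspondence propagates to all correspondences; this rests on the fact that the coarse correspondence is absorbing under Connes' tensor (relative) product — tensoring any correspondence with the coarse correspondence yields a multiple of the coarse correspondence, and weak containment is preserved under taking relative tensor products with a fixed correspondence.

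Having reduced matters to this absorption property, I would then translate the statement ``every $M$-$M$ correspondence is weakly contained in the coarse one'' back through the preceding proposition into the tensor-product language: this says the identity on $M \odot M^{op}$ extends to a $\ast$-homomorphism $M \otimes_\pi M^{op} \to M \otimes_\rho M^{op}$ for every binormal $\pi$, where $\rho$ is the coarse representation. By the maximality property of $M \otimes_{bin} M^{op}$ — that every binormal tensor product receives a $\ast$-homomorphism from it — this is in turn equivalent to the coarse tensor product $M \otimes M^{op}$ (the minimal/spatial one, which is what $\otimes_\rho$ is) being isomorphic to $M \otimes_{bin} M^{op}$, i.e.\ the identity extending to a $\ast$-isomorphism $M \otimes_{bin} M^{op} \cong M \otimes M^{op}$. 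This gives the stated equivalence, and I would cite \cite[Theorem 4.1]{EL} for the underlying Effros--Lance result rather than reprove it.

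The main obstacle I anticipate is the step asserting that weak containment of the trivial correspondence in the coarse correspondence upgrades to weak containment of \emph{all} correspondences — the absorption/propagation argument. The cleanest route is to use that the coarse correspondence $L^2(M) \overline\otimes L^2(M)$ absorbs any correspondence $H$ under Connes fusion (one has $H \mathbin{\overline\otimes_M} (L^2(M) \overline\otimes L^2(M)) \cong (L^2(M) \overline\otimes L^2(M))^{\oplus \dim H}$, essentially because the right $M$-action on the coarse correspondence is a multiple of the standard one), together with the fact that fusion with a fixed bimodule is continuous for weak containment; then weakly containing the trivial correspondence, whose fusion with $H$ returns $H$, forces $H$ to be weakly contained in a multiple of the coarse correspondence. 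Since the present excerpt develops weak containment only through the Fell-topological and tensor-product formulations rather than through Connes fusion, in practice I would either invoke the Effros--Lance theorem directly to bypass this propagation step, or supply the fusion continuity as a cited standard fact; justifying that continuity carefully is where the real work lies.
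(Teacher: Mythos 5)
The paper does not actually prove this proposition: it is stated with an attribution to Effros and Lance and the citation \cite[Theorem 4.1]{EL}, so your proposal supplies an argument where the paper supplies none. Your sketch is the standard proof and is sound in outline: the backward implication is immediate, the translation between ``every correspondence is weakly contained in the coarse one'' and the isomorphism $M\otimes_{bin}M^{op}\cong M\otimes M^{op}$ does follow from the preceding proposition together with the construction of $\otimes_{bin}$ as a sum over all correspondences, and the hard direction is correctly reduced to absorption of an arbitrary bimodule by the coarse one under Connes fusion plus continuity of fusion with respect to weak containment. What your route buys is an actual argument; what it costs is exactly the ingredient you flag, namely that continuity statement, which is genuine work and is nowhere developed in this paper (weak containment here is treated only via the Fell topology and binormal tensor products), so within the paper's economy the bare citation is the reasonable choice and your fallback of doing the same is fine. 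Two bookkeeping slips are worth fixing. First, in your opening paragraph you call $L^2(M,\tau)$ the coarse correspondence; it is the trivial one (the coarse correspondence is $L^2(M,\tau)\overline{\otimes}L^2(M,\tau)$), and you use the terms correctly afterwards. Second, in the final translation the arrow is reversed: with the convention of the preceding proposition, ``every $(H_\pi,\pi)$ is weakly contained in the coarse representation $\rho$'' yields $\ast$-homomorphisms $M\otimes_\rho M^{op}\to M\otimes_\pi M^{op}$ for every binormal $\pi$, not maps $M\otimes_\pi M^{op}\to M\otimes_\rho M^{op}$; only with the correct orientation do these maps assemble into an inverse of the canonical surjection $M\otimes_{bin}M^{op}\to M\otimes M^{op}$, which is the conclusion you then (correctly) draw.
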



%
%

%
%
%


\end{document}